\numberwithin{equation}{section}
\newcommand{\Lip}{\text{Lip}}
\newcommand{\Rm}{{\rm Rm}}
\newcommand{\Ric}{{\rm Ric}}
\newcommand{\Vol}{{\rm Vol}}
\newcommand{\diam}{{\rm diam}}
\newcommand{\inj}{{\rm inj}}
\newcommand{\rv}{{\rm v}}
\newcommand{\dN}{\mathds{N}}
\newcommand{\dP}{\mathds{P}}
\newcommand{\dQ}{\mathds{Q}}
\newcommand{\dR}{\mathbb{R}}
\newcommand{\dZ}{\mathds{Z}}
\newcommand{\cA}{\mathcal{A}}
\newcommand{\cC}{\mathcal{C}}
\newcommand{\cE}{\mathcal{E}}
\newcommand{\cH}{\mathcal{H}}
\newcommand{\cL}{\mathcal{L}}
\newcommand{\cN}{\mathcal{N}}
\newcommand{\cR}{\mathcal{R}}
\newcommand{\cS}{\mathcal{S}}
\newcommand{\cV}{\mathcal{V}}
\newtheorem{theorem}[equation]{Theorem}
\newtheorem{proposition}[equation]{Proposition}
\newtheorem{lemma}[equation]{Lemma}
\theoremstyle{definition}
\newtheorem{definition}[equation]{Definition}
\theoremstyle{remark}
\newtheorem{remark}[equation]{Remark}
\theoremstyle{remark}
\newtheorem{example}[equation]{Example}
\theoremstyle{remark}
\theoremstyle{remark}
\theoremstyle{remark}
\begin{document}

\author{Wenshuai Jiang and Aaron Naber}
\thanks{}
\thanks{}

\title[$L^2$ Curvature Bounds on Manifolds with Bounded Ricci Curvature]{$L^2$ Curvature Bounds on Manifolds with Bounded Ricci Curvature}

\address{W. Jiang, School of mathematical sciences, Zhejiang University, Zheda Road 38, hangzhou, zhejiang, China, 310027}
\address{W. Jiang,  School of Mathematics and Statistics, University of Sydney, Carslaw Building F07, Eastern Ave, Camperdown NSW 2006, Australia}
\email{wsjiang@zju.edu.cn}
\address{A. Naber, Department of Mathematics, 2033 Sheridan Rd., Evanston, IL 60208-2370}
\email{anaber@math.northwestern.edu}
\date{\today}
\maketitle

\begin{abstract}
Consider a Riemannian manifold with bounded Ricci curvature $|\Ric|\leq n-1$ and the noncollapsing lower volume bound $\Vol(B_1(p))>\rv>0$.  The first main result of this paper is to prove that we have the $L^2$ curvature bound $\fint_{B_1(p)}|\Rm|^2(x)\,dx < C(n,\rv)$, which proves the $L^2$ conjecture.  In order to prove this, we will need to first show the following structural result for limits.  Namely, if $(M^n_j,d_j,p_j) \longrightarrow (X,d,p)$ is a $GH$-limit of noncollapsed manifolds with bounded Ricci curvature, then the singular set $\cS(X)$ is $n-4$ rectifiable with the uniform Hausdorff measure estimates $H^{n-4}\big(\cS(X)\cap B_1\big)<C(n,\rv)$, which in particular proves the $n-4$-finiteness conjecture of Cheeger-Colding.  We will see as a consequence of the proof that for $n-4$ a.e. $x\in \cS(X)$ that the tangent cone of $X$ at $x$ is unique and isometric to $\dR^{n-4}\times C(S^3/\Gamma_x)$ for some $\Gamma_x\subseteq O(4)$ which acts freely away from the origin.
\end{abstract}

\tableofcontents

\section{Introduction}\label{s:intro}

The focus of this paper is to understand the regularity of Riemannian manifolds under the bounded Ricci and noncollapsing assumptions
\begin{align}\label{e:ricci_assumptions}
|\Ric_{M^n}|\leq n-1\, ,\;\;\;\;\Vol(B_1(p))>\rv>0\, .
\end{align}
A closely related problem, which will also play a central focus in this paper, is the study of Gromov-Hausdorff limit spaces
\begin{align}\label{e:ricci_assumptions_limit}
	(M^n_j,g_j,p_j)\stackrel{d_{GH}}{\longrightarrow} (X,d,p)\, ,
\end{align}
where the $M_j$ satisfy \eqref{e:ricci_assumptions}.\\

There is a good deal of history in studying the regularity of spaces satisfying \eqref{e:ricci_assumptions} or \eqref{e:ricci_assumptions_limit}.  Much of the early work focused on closed $4$-manifolds under the additional assumptions of bounded topology and bounded diameter.  The key use of these assumptions is that by the Chern-Gauss-Bonnet formula one can conclude in the four dimensional case that
\begin{align}
\chi(M^4)<A \implies \int_M |\Rm|^2(x)dx < C(A,\diam(M^4),\rv)\, .	
\end{align}

Though a series of paper \cite{A89},\cite{BKN89},\cite{T90} this was used to conclude that a limit space $X^4$ of four manifolds with bounded Ricci, topology, diameter, and uniform lower volume bounds would be a Riemannian orbifold with at most isolated singularities.  In particular, one gets from this that the singular set is of codimension four with bounded $n-4$ measure.  Though the singular set may not be orbifold in higher dimensions, it was conjectured during this time period that the codimension four and uniform finiteness would continue to hold in arbitrary dimension under only the bounded Ricci and noncollapsing assumptions of \eqref{e:ricci_assumptions}.  In particular, even in dimension four this was not understood because of the need for the topology and diameter assumptions.  There is a related conjecture was by Anderson \cite{Anderson_ICM94} which can be summarized as saying that the results in dimension four should not require the bounded topology and diameter assumptions.\\

A good deal of work toward these conjectures were done over the years.  In \cite{CCTi_eps_reg}, \cite{Cheeger}, \cite{CheegerTian05} a variety of new estimates were proved for spaces with bounded Ricci curvature with $L^p$ bounds on curvature.  In particular, the codimension four conjecture was proved in any dimension under the additional assumption of a $L^2$-bound on the curvature.  Though the statement was similar in nature to the four dimensional result, the techniques to exploit the $L^2$-bound in higher dimensions are substantially more involved.  This was extended in \cite{Cheeger} where under an assumed $L^2$ curvature bound it was proved that the {\it nonexceptional} part of the singular set was rectifiable.\\

In \cite{CheegerNaber_Codimensionfour} the codimension four conjecture was resolved in full generality, the proof of which required a variety of new estimates and techniques.  In addition, it was shown in \cite{CheegerNaber_Codimensionfour} that an improved result held in dimension four, where it could be proved under the assumptions of \eqref{e:ricci_assumptions} the $L^2$-bound on curvature was in fact automatic, and did not require any apriori assumptions on topology and diameter.  As a consequence, one could in fact show the topology was itself automatically bounded, which was a conjecture of Anderson \cite{Anderson_ICM94}.  In higher dimensions, only weaker curvature estimates were obtained in \cite{CheegerNaber_Codimensionfour}, where it was shown that the curvature $\Rm$ had apriori $L^p$-bounds for all $p<2$.  It was conjectured however in \cite{CheegerNaber_Codimensionfour},\cite{Na_14ICM} that the full $L^2$ curvature bound should hold in any dimension.\\

After the work of \cite{CheegerNaber_Codimensionfour}, there were still several open questions left over.  First, although the singular set was shown to be of codimension four, the $n-4$ finiteness conjecture of Cheeger-Colding \cite{ChC2} was still left open.  Additionally, no actual structure of the singular set was itself understood from the results of \cite{CheegerNaber_Codimensionfour}.  Fundamentally, this is because the new techniques of \cite{CheegerNaber_Codimensionfour} are built to deal with the codimension two part of the singular set under lower Ricci curvature assumptions, so that one can eventually rule out its existence entirely under the two sided Ricci bound.  Once the singular set is shown to not have a codimension two or three piece to it, it is automatically codimension four, but no information about the singular set is actually provided by such a construction.\\

The primary goal of this paper is to deal with these open questions.  We will  prove in Theorem \ref{t:main_L2_estimate} the $L^2$-conjecture of \cite{Na_14ICM},\cite{CheegerNaber_Codimensionfour}, by showing that under the assumptions of \eqref{e:ricci_assumptions} there exists the $L^2$ curvature bound
\begin{align}\label{e:intro:L2_est}
\fint_{B_1(p)} |\Rm|^2(x)\,dx < C(n,\rv).	
\end{align}

Additionally, in the case of a limit space $X$ of manifolds satisfying \eqref{e:ricci_assumptions} we will show that not only is the singular set $\cS(X)$ of codimension four, but we have the finiteness estimate
\begin{align}\label{e:intro:finiteness}
H^{n-4}(\cS(X)\cap B_1) < C(n,\rv)\, ,	
\end{align}
which proves the $n-4$ finiteness conjecture of Cheeger-Colding in \cite{ChC2}.  Structurally we will also show in Theorem \ref{t:main_limits} that the singular set $\cS(X)$ is $n-4$-rectifiable.  In fact, it follows from \cite{Cheeger}, \cite{Chen_Donaldson14} that \eqref{e:intro:L2_est} implies \eqref{e:intro:finiteness}, however our proof will necessarily go in the other direction, since we will be forced to tackle \eqref{e:intro:finiteness} first by very different means as a stepping stone toward \eqref{e:intro:L2_est}. \\

We will give a more complete description of the proof in Section \ref{ss:intro:outline}, however let us mention that there are several new points involved, many of which revolve around what we call a neck region.  In short, we will effectively decompose our manifold $M$ into two types of pieces: neck regions, which look like the singular space $\dR^{n-4}\times C(S^3/\Gamma)$ on many scales, and $\epsilon$-regularity regions which have scale invariant uniform curvature bounds.  The proof of the $L^2$ estimate will then rely on both our ability to give effective estimates for the number of pieces in this decomposition, and our ability to do more refined analysis on the neck regions themselves.  The challenge of doing analysis on the neck region is that there are an uncontrollable number of scales involved in a neck region, and to get global information one needs estimates which are {\it summable} and small over all these scales.  These estimates will depend heavily on a new type of superconvexity estimate which we will prove for the $L^1$ hessian of a harmonic function on these neck regions.  We refer the reader to Section \ref{ss:intro:outline}, where we give a much more detailed outline of the proof.\\

\subsection{Main Results on Manifolds}\label{ss:intro:main_manifolds}

Let us now discuss in precision our main results concerning pointed Riemannian manifolds $(M^n,g,p)$ with bounded Ricci curvature $|\Ric|\leq n-1$ and which satisfy the noncollapsing condition $\Vol(B_1(p))>\rv>0$.  Our main regularity result in this context is the following:

\begin{theorem}[$L^2$-Estimate]\label{t:main_L2_estimate}
Let $(M^n,g,p)$ be a pointed Riemannian manifold such that $|\Ric_{M^n}|\leq n-1$ and $\Vol(B_1(p))>\rv>0$, then there exists $C(n,\rv)>0$ such that
\begin{align}\label{e:main_Rm_est}
\fint_{B_1(p)} |\Rm|^2(x)\,dx\leq C\, .
\end{align}
\end{theorem}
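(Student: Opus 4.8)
The plan is to prove the $L^2$ estimate \eqref{e:main_Rm_est} by combining a quantitative decomposition of $M$ into \emph{neck regions} and \emph{$\epsilon$-regularity regions} with sharp scale-summable estimates on each type of piece. The starting point is the structural result for limits, Theorem \ref{t:main_limits}, which gives the $n-4$-rectifiability and the uniform Hausdorff bound $H^{n-4}(\cS(X)\cap B_1)<C(n,\rv)$ on limit spaces. By a standard contradiction/compactness argument, this structural information passes to a \emph{quantitative} statement on the manifolds themselves: one obtains, for each $\epsilon>0$ and each scale $0<r<1$, a Minkowski-type estimate on the effective singular stratum $\cS^{n-4}_{\epsilon,r}(M)$ (the set of points where no ball of radius between $r$ and $1$ is $\epsilon$-close to a ball in a space splitting off $\dR^{n-3}$), namely $\Vol\big(B_r(\cS^{n-4}_{\epsilon,r})\cap B_1(p)\big)\le C(n,\rv,\epsilon)\, r^{4}$.

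\medskip

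\textbf{Step 1: The covering.} First I would fix the threshold $\epsilon=\epsilon(n)$ coming from the $\epsilon$-regularity theorem of \cite{CheegerNaber_Codimensionfour} (which says that if a ball is sufficiently GH-close to one in $\dR^{n-3}\times C(Y)$ then the curvature is bounded on a definite subball). Away from the effective singular set, the $\epsilon$-regularity region, we have a scale-invariant bound $|\Rm|\le C r^{-2}$ on balls of radius $r$. I would then build a Vitali-type covering of $B_1(p)$ by balls $\{B_{r_i}(x_i)\}$ such that on each ball either we are in the $\epsilon$-regularity regime (so $r_i^2\sup_{B_{r_i}}|\Rm|\le C$) — and we stop — or the ball is the center of a neck region $\cN=B_{r_i}(x_i)\setminus \overline{B}_{r_i/\,\text{(something)}}(\mathcal C)$ modeled on $\dR^{n-4}\times C(S^3/\Gamma)$ over many scales. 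The content of Theorem \ref{t:main_limits}, in its quantitative form, is precisely that the total count/measure of such pieces is controlled: $\sum_i r_i^{n-4}\le C(n,\rv)$.

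\medskip

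\textbf{Step 2: Estimating $\Rm$ on neck regions.} The integral of $|\Rm|^2$ over an $\epsilon$-regularity ball $B_{r_i}(x_i)$ is bounded by $C r_i^{-4}\cdot\Vol(B_{r_i})\le C(n,\rv)\, r_i^{n-4}$, so these contribute at most $C\sum_i r_i^{n-4}\le C(n,\rv)$. The real work is bounding $\int_{\cN}|\Rm|^2$ on a neck region $\cN$. Here I would use the slicing/harmonic-splitting technology: on $\cN$ the manifold admits $n-4$ harmonic functions $u=(u_1,\dots,u_{n-4})$ giving an almost-splitting, and the transversal $4$-dimensional slices look like cones $C(S^3/\Gamma)$ on all scales inside $\cN$. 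The key estimate — and this is where the \emph{superconvexity of the $L^1$ Hessian} advertised in the introduction enters — is to show that $\fint_{\cN}|\nabla^2 u|$ (suitably normalized) is summably small over the annular scales making up $\cN$. Via the Bochner formula $\Delta\frac12|\nabla u|^2=|\nabla^2 u|^2+\Ric(\nabla u,\nabla u)$, integrated against appropriate cutoffs on each dyadic annulus of $\cN$, one converts a scale-summable bound on $\int|\nabla^2 u|$ into a scale-summable bound on $\int_{\cN}|\nabla^2 u|^2$; the defect terms from $|\Ric|\le n-1$ are controlled because the neck has small volume at small scales (codimension $4$). Finally, in the cone directions the curvature of a neck region is controlled by the cone geometry $C(S^3/\Gamma)$, whose link $S^3/\Gamma$ contributes a bounded amount of curvature per scale, again summable against $\sum r_i^{n-4}$.

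\medskip

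\textbf{The main obstacle} I anticipate is precisely the neck analysis in Step 2: a neck region involves an \emph{a priori uncontrolled number of scales}, so any estimate that is merely bounded per scale is useless — one needs genuine smallness that is summable over all the annular scales, uniformly in the depth of the neck. The superconvexity estimate for the $L^1$ Hessian of the almost-splitting functions is the essential new ingredient that makes this summation converge, and proving it (presumably via a maximum-principle / ODE argument on the quantity controlling the Hessian defect across scales, exploiting that the cross-section is $3$-dimensional and the would-be obstruction to splitting a further factor of $\dR$ lives in codimension $\ge 4$) is the technical heart of the matter. A secondary difficulty is bookkeeping: one must ensure the covering in Step 1 is organized so that the neck regions and $\epsilon$-regularity balls fit together without overlap loss, and so that the constant $C(n,\rv)$ does not degenerate as the scales $r_i\to 0$; this is handled by the quantitative rectifiability of $\cS(X)$ from Theorem \ref{t:main_limits}, which is exactly what guarantees $\sum_i r_i^{n-4}<C(n,\rv)$. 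Combining the $\epsilon$-regularity contribution and the neck contribution then yields $\fint_{B_1(p)}|\Rm|^2\le C(n,\rv)$.
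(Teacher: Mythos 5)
Your overall architecture --- decompose $B_1(p)$ into $(\delta,\tau)$-neck regions and $\epsilon$-regularity balls, prove a scale-invariant bound $r^{4-n}\int|\Rm|^2\le C$ on each piece, and close with a content bound $\sum r_i^{n-4}\le C(n,\rv)$ --- is exactly the paper's strategy (Theorem \ref{t:neck_decomposition} combined with Theorem \ref{t:neck_region}). But two of your key steps have genuine gaps.

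First, you propose to obtain the content bound $\sum_i r_i^{n-4}\le C(n,\rv)$ from the limit-space structure theorem (Theorem \ref{t:main_limits}) ``by a standard contradiction/compactness argument'' yielding $\Vol\big(B_r(\cS^{n-4}_{\epsilon,r})\big)\le Cr^4$. This does not work, and the logical order is in fact reversed: the paper proves Theorem \ref{t:main_limits} \emph{as a consequence of} the neck decomposition, not the other way around. A compactness argument starting from a Hausdorff measure bound on the limiting singular set cannot produce a Minkowski-type bound with the sharp exponent $r^4$; the quantitative stratification of \cite{CheegerNaber_Ricci} only gives $r^{4-\eta}$, and the paper explicitly states that the improvement to $r^4$ is a consequence of the main theorems rather than an input. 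The sharp $n-4$ content estimate is instead obtained by the inductive covering construction of Section \ref{s:neck_decomposition}, whose convergence rests on the Ahlfors regularity of the neck packing measure $\mu=\sum r_x^{n-4}\delta_x$ (Theorem \ref{t:neck_region}.1) --- itself the hardest part of the paper, proved via the superconvexity/bilipschitz-splitting machinery. So the superconvexity estimate you cite is used to establish the \emph{Ahlfors bounds and hence the content estimate}, not (directly) the curvature bound on necks.

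Second, your Step 2 for $\int_{\cN}|\Rm|^2$ is incomplete. Hessian control on the $n-4$ almost-splitting functions $u_1,\dots,u_{n-4}$ only constrains the curvature components involving the $\dR^{n-4}$ directions (via $\Rm(\cdot,\cdot,\nabla u_i,\cdot)=\nabla^3u_i(\cdot,\cdot,\cdot)-\nabla^3u_i(\cdot,\cdot,\cdot)$); the components tangent to the $3$-dimensional level sets are not touched, and ``the cone directions are controlled by the cone geometry'' is an assertion, not an argument. The paper handles this with Lemma \ref{l:curvature_level_sets}: one adjoins an approximate squared-distance function $h$ to the singular ray, and uses the Gauss--Codazzi equations together with the fact that in dimension $3$ the Ricci tensor determines the full curvature tensor, to get a pointwise bound on $|\Rm|$ in terms of $|\nabla^2 h-2g|$, $|\nabla^2 u_i|$, $|\nabla^3 h|$, $|\nabla^3 u_i|$ and $|\Ric|$. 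The scale-summability that you correctly identify as the main obstacle is then supplied not by a Bochner/superconvexity argument but by the monotone $\cH$-volume: Proposition \ref{p:local_L2_neck} bounds $\int_{B_r(y)}|\Rm|^2$ by the $\cH$-volume drop $\int|\cH_{10r^2}-\cH_{r^2/10}|\,d\mu$ over that scale, and monotonicity lets the dyadic sum telescope into a single bounded (indeed small) quantity. Without these two ingredients --- the level-set curvature identity and the telescoping monotone quantity --- your outline does not close.
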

\vspace{.5cm}

Let us remark that the above is certainly sharp in that one cannot expect $L^p$ estimates on the curvature for any $p>2$ or for the $L^2$ estimate to hold without the noncollapsing assumption, see Example \ref{sss:Example1_EH} and Example \ref{sss:Example2_collapsing_torus}.  An application of the above is to prove the following weak $L^4$ estimate on the injectivity radius, which follows immediately from \cite{CheegerNaber_Ricci} once the $L^2$ curvature bound has been established:\\

\begin{theorem}[Injectivity Radius Estimate]\label{t:main_inj_estimate}
Let $(M^n,g,p)$ be a pointed Riemannian manifold such that $|\Ric_{M^n}|\leq n-1$ and $\Vol(B_1(p))>\rv>0$, then there exists $C(n,\rv)>0$ such that we have the weak $L^4$ estimate on the injectivity radius given by
\begin{align}
\Vol\big(\{x\in B_1(p): \inj_x<r\}\big)<C(n,\rv) r^{4}.
\end{align}
\end{theorem}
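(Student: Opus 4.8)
The plan is to deduce Theorem~\ref{t:main_inj_estimate} from the $L^2$ curvature bound of Theorem~\ref{t:main_L2_estimate} by feeding it into the $\epsilon$-regularity theorem for spaces with bounded Ricci curvature together with a Vitali-type covering argument; this is exactly the reduction indicated in \cite{CheegerNaber_Ricci}, and all the genuine analysis already sits inside Theorem~\ref{t:main_L2_estimate}. First I would record two standard consequences of \eqref{e:ricci_assumptions}. From $\Ric\ge -(n-1)$ and relative volume comparison one obtains scale-invariant volume control, $c(n,\rv)\,s^n\le\Vol(B_s(x))\le C(n)\,s^n$ for all $x\in B_1(p)$ and $0<s\le 1$; and applying Theorem~\ref{t:main_L2_estimate} on a fixed finite cover of $B_2(p)$ by unit balls $B_1(q_j)$ (each still satisfying $|\Ric|\le n-1$ and, by volume comparison, $\Vol(B_1(q_j))>c(n)\rv$) gives $\int_{B_2(p)}|\Rm|^2\le C(n,\rv)$.

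Next I would invoke $\epsilon$-regularity, as in \cite{CheegerNaber_Ricci},\cite{CheegerNaber_Codimensionfour}: there is $\epsilon=\epsilon(n,\rv)>0$ so that whenever $x\in B_1(p)$, $0<s\le 1$ and $s^{4-n}\int_{B_s(x)}|\Rm|^2<\epsilon$, one has $\sup_{B_{s/2}(x)}|\Rm|\le (s/2)^{-2}$; combining this with the volume lower bound and the standard Cheeger--Gromov--Taylor injectivity radius estimate forces $\inj_x\ge\delta\,s$ for a constant $\delta=\delta(n,\rv)>0$. Fix this $\delta$, and for $0<r\le\delta$ set $U_r:=\{x\in B_1(p):\inj_x<\delta r\}$. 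The contrapositive then gives $\int_{B_r(x)}|\Rm|^2\ge\epsilon\,r^{n-4}$ for every $x\in U_r$. Taking a maximal $r$-separated subset $\{x_i\}\subset U_r$, the balls $B_r(x_i)$ cover $U_r$, lie in $B_2(p)$, and by volume comparison have overlap multiplicity bounded by $N(n)$, so
\begin{equation*}
\#\{x_i\}\cdot\epsilon\,r^{n-4}\;\le\;\sum_i\int_{B_r(x_i)}|\Rm|^2\;\le\;N(n)\int_{B_2(p)}|\Rm|^2\;\le\;C(n,\rv).
\end{equation*}
Hence $\#\{x_i\}\le C(n,\rv)\,r^{4-n}$ and, using the volume upper bound, $\Vol(U_r)\le\sum_i\Vol(B_r(x_i))\le C(n,\rv)\,r^{4}$. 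Since $\{x\in B_1(p):\inj_x<r\}=U_{r/\delta}$ for $r\le\delta$, replacing $r$ by $r/\delta$ yields the claimed estimate for $r\le\delta$, while for $r>\delta$ it is trivial because $\Vol(B_1(p))\le C(n)$.

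I do not expect a real obstacle here: the only points needing attention are propagating the noncollapsing hypothesis from scale $1$ to all smaller scales and nearby centers, and keeping track of the dependence of the $\epsilon$-regularity and injectivity radius constants on $n$ and $\rv$ --- both routine. Equivalently, one may simply quote the relevant statement of \cite{CheegerNaber_Ricci}, which already packages $\epsilon$-regularity and the covering estimate into weak-type control of the regularity (hence injectivity) scale, and substitute the bound $\fint_{B_1(p)}|\Rm|^2\le C(n,\rv)$ supplied by Theorem~\ref{t:main_L2_estimate}.
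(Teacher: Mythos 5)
Your proposal is correct and follows exactly the route the paper intends: the paper gives no separate proof of Theorem \ref{t:main_inj_estimate}, stating only that it ``follows immediately from \cite{CheegerNaber_Ricci} once the $L^2$ curvature bound has been established,'' and your argument is precisely that reduction spelled out (the $L^2$ bound of Theorem \ref{t:main_L2_estimate}, the $\epsilon$-regularity plus Cheeger--Gromov--Taylor step converting small scale-invariant $L^2$ curvature into an injectivity radius lower bound, and the standard covering/weak-type estimate). No gaps.
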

\begin{remark}
One could replace the injectivity radius $\inj$ by the harmonic radius $r_h$ in the above theorem if one wishes.	 
\end{remark}
\vspace{.5cm}
Let us end by remarking on one more result, which tells us that if we additionally have a bound on the gradient of the Ricci curvature, in particular an Einstein manifold, then we also have a sharp apriori $L^p$ estimate on the gradient of the curvature.  Precisely:\\

\begin{theorem}[Gradient $L^{4/3}$-Estimate]\label{t:main_L4/3_estimate}
Let $(M^n,g,p)$ satisfy $|\Ric_{M^n}|\leq n-1$, $|\nabla \Ric|\leq A$ and the noncollapsing assumption $\Vol(B_1(p))>\rv>0$, then there exists $C(n,\rv,A)>0$ such that
\begin{align}\label{e:main_Rm_est}
\fint_{B_1(p)} |\nabla\Rm|^{4/3}(x)\,dx\leq C\, .
\end{align}
\end{theorem}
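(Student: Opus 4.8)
The plan is to run the argument behind Theorem \ref{t:main_L2_estimate} one derivative higher, using the extra hypothesis $|\nabla\Ric|\le A$ to absorb the new Ricci terms that then appear. First note that $4/3$ is sharp for the same reason that $2$ is sharp for $\Rm$: near the codimension-four singular region $|\nabla\Rm|$ can be of size $r^{-3}$, and $r^{-3p}$ is integrable against the codimension-four volume factor $r^{3}\,dr$ exactly when $p<4/3$. So the estimate is borderline, and --- exactly as for the $L^2$ bound --- it cannot come from a pointwise bound but must rest on estimates that are \emph{summable} over all scales of a neck.

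The local step is an estimate on a ball on which the harmonic radius $r_h(x)$ is comparable to the radius. Rescaling $B_{r_h(x)}(x)$ to unit size one has harmonic coordinates with $C^{1,\alpha}$ control of the metric, $|\Ric|\le n-1$, and $|\nabla\Ric|\le r_h(x)^{3}A$. Bootstrapping the harmonic-coordinate equation $\tilde g^{ab}\partial_a\partial_b\tilde g_{ij}=-2\tilde\Ric_{ij}+Q(\tilde g,\partial\tilde g)$ gives $\|\Rm\|_{C^{0}}\le C(n,A)$ on the rescaled ball. One then applies an interior $L^{4/3}$ estimate to the Bochner-type identity $\Delta\Rm=\Rm\ast\Rm+\nabla^{2}\Ric$, placing the Ricci term $\nabla^{2}\Ric=\divergence(\nabla\Ric)$ in divergence form so that it only costs $\|\nabla\Ric\|_{L^{4/3}}$, and estimates $\|\Rm\ast\Rm\|_{L^{4/3}}$ by interpolating the $C^{0}$ bound on $\Rm$ against $\|\Rm\|_{L^{2}}$. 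Undoing the rescaling yields
\[
\int_{B_{r_h(x)/4}(x)}|\nabla\Rm|^{4/3}\le C(n,A)\Big[\,r_h(x)^{\frac{n-4}{3}}\Big(\int_{B_{r_h(x)/2}(x)}|\Rm|^{2}\Big)^{2/3}+r_h(x)^{\,n}A^{4/3}\,\Big].
\]
The point is that the $\Rm\ast\Rm$ contribution is controlled by the \emph{local} $L^{2}$ energy of $\Rm$, and that the genuinely lower-order Ricci contribution scales like a volume, so it sums to $\le C(n,\rv)\Vol(B_{2}(p))A^{4/3}$ over a cover with disjoint cores.

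It remains to sum this over a cover of $B_1(p)$ adapted to $r_h$, i.e.\ over the neck / $\epsilon$-regularity decomposition of the body of the paper. On the $\epsilon$-regularity part $r_h$ is bounded below at the relevant scale and the $\Rm\ast\Rm$ terms sum by Theorem \ref{t:main_L2_estimate} (with Theorem \ref{t:main_inj_estimate} bounding the number of balls at each level). On a neck region the number of scales is not a priori bounded, and a crude H\"older estimate of $\sum_i r_h(x_i)^{(n-4)/3}\big(\int_{B_{r_h(x_i)/2}(x_i)}|\Rm|^2\big)^{2/3}$ loses a logarithm in that number. This is resolved exactly as the analogous point in the proof of the $L^2$ bound: the neck analysis delivers estimates that are summable over all scales --- the superconvexity estimate for the $L^1$ Hessian of harmonic functions, equivalently the geometric decay of the $L^2$ curvature excess through the tower of scales of the neck --- which turns $\sum_{\text{scales}}(\text{curvature energy at that scale})^{2/3}$ into a convergent series bounded by $C(n,\rv)$ rather than a divergent sum of bounded terms. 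Combining the three contributions gives $\int_{B_1(p)}|\nabla\Rm|^{4/3}\le C(n,\rv,A)$, and dividing by $\Vol(B_1(p))>\rv$ completes the proof.

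The main obstacle is precisely this scale-summation on the necks: it cannot be deduced from the $L^2$ bound and the Minkowski estimate as black boxes, but requires re-entering the neck construction and using that its estimates are summable --- the same feature, one differentiation up, that powers Theorem \ref{t:main_L2_estimate}. The hypothesis $|\nabla\Ric|\le A$ is exactly what is needed to control the $\nabla^{2}\Ric$ terms now present in the Bochner identity, and it enters the final bound only through the harmless volume-scaling term $r_h^{\,n}A^{4/3}$.
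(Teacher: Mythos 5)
Your proposal takes a genuinely different route from the paper, and it has a real gap at exactly the point you flag. The summation over the scales of a neck of the local terms $r_h(x_i)^{(n-4)/3}\big(\int_{B_{r_h(x_i)/2}(x_i)}|\Rm|^2\big)^{2/3}$ does lose a factor $N^{1/3}$ in the number $N$ of scales (take the dyadic energies all equal to $C/N$), and the fix you invoke does not exist in the form you need: the superconvexity/Dini estimate of Theorem \ref{t:superconvexity_estimate} controls the scale\--summed $L^1$ Hessian of a \emph{harmonic splitting function} on a neck, not the decay of the $L^2$ curvature excess through the tower of scales, and nothing in Theorem \ref{t:neck_region} or Theorem \ref{t:L2_neck} gives a geometric (or even summable-in-$2/3$-power) decay of $\int_{\{r_h\approx 2^{-k}\}\cap\cN}|\Rm|^2$ in $k$. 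As written, the neck contribution to $\sum_i(\cdots)^{2/3}$ is not bounded, so the argument does not close. Separately, your guiding claim that the estimate ``cannot be deduced from the $L^2$ bound \ldots as black boxes'' is false.

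The paper's proof is short and uses the $L^2$ bound precisely as a black box. The key point you are missing is an \emph{improved Kato inequality}: repeated use of the second Bianchi identity gives dimensional constants $\eta(n),C(n)>0$ with $|\nabla|\Rm||^2\le(1-\eta(n))|\nabla\Rm|^2+C(n)|\nabla\Ric|^2$. Feeding this into the Bochner-type identity for $\Delta\sqrt{|\Rm|^2+1}$ leaves a positive multiple of $|\nabla\Rm|^2/\sqrt{|\Rm|^2+1}$ on the good side, and one integration by parts against a cutoff yields
\begin{align}
\int_{B_1(p)}\frac{|\nabla\Rm|^2}{\sqrt{|\Rm|^2+1}}\le C(n)\int_{B_2(p)}\Big(|\nabla\Ric|^2+\sqrt{|\Rm|^2+1}+|\Rm|^2\Big)\le C(n,\rv,A)\,,
\end{align}
by Theorem \ref{t:main_L2_estimate}. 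H\"older then gives $\int|\nabla\Rm|^{4/3}\le\big(\int|\nabla\Rm|^2/\sqrt{|\Rm|^2+1}\big)^{2/3}\big(\int(|\Rm|^2+1)\big)^{1/3}$, and the $L^2$ bound finishes. No re-entry into the neck decomposition, no elliptic estimates on harmonic-radius balls, and no scale summation are needed; the entire difficulty is concentrated in the pointwise algebraic inequality, which is where your hypothesis $|\nabla\Ric|\le A$ enters (not merely through a volume term). If you want to salvage your approach, you would have to prove a new summable-through-scales curvature decay on necks, which is a substantially harder statement than the theorem itself.
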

\vspace{.5cm}

\subsection{Main Results on Limit Spaces}\label{ss:intro:main_limits}

We now turn our attention to our main results on pointed Gromov-Hausdorff limits
\begin{align}
\label{e:limits}
(M^n_j,g_j,p_j)\stackrel{d_{GH}}{\longrightarrow} (X,d,p)
\end{align}
of sequences of manifolds satisfying the Ricci curvature bounds and noncollapsing of \eqref{e:ricci_assumptions}.  Let us first recall the definition of rectifiablity for our context(see more details about rectifiablity in \cite{Fed}).
\begin{definition}
A metric space $Z$ is $k$-rectifiable if there exists a countable collection of $H^k$-measurable subsets $Z_i \subset Z$, and bi-Lipschitz maps $\phi_i : Z_i \to \mathbb{R}^k$ such that $H^k(Z \setminus \cup_{i} Z_i) = 0.$
\end{definition}

Our main result in this direction is the following, which concerns the structure of the singular sets of such limits:\\

\begin{theorem}\label{t:main_limits}
Let $(M^n_j,g_j,p_j)\stackrel{d_{GH}}{\longrightarrow} (X,d,p)$ be a Gromov-Hausdorff limit of manifolds  with $|\Ric_{M^n_j}|\leq n-1$ and $\Vol(B_1(p_j))>\rv>0$.  Then the following hold
\begin{enumerate}
\item The singular set $\cS(X)$ is $n-4$ rectifiable.
\item In particular, we have the hausdorff measure estimate $H^{n-4}\big(\cS(X)\cap B_1\big) < C(n,\rv)$.
\item For $n-4$ a.e. $x\in \cS(X)$ the tangent cone of $X$ is unique and isometric to the conespace $\dR^{n-4}\times C(S^3/\Gamma_x)$, where $\Gamma_x\leq O(4)$ acts freely away from the origin.
\end{enumerate}
\end{theorem}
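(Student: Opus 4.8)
The plan is to reduce the three assertions to a single quantitative statement about \emph{neck regions} and their content, following the strategy outlined in Section~\ref{ss:intro:outline}. First I would set up the decomposition: given a limit $X$ of manifolds satisfying \eqref{e:ricci_assumptions}, I would cover $\cS(X)\cap B_1$ up to a set of arbitrarily small $H^{n-4}$-content by neck regions $\cN = B_2(x)\setminus \bigcup_i \overline{B_{r_i}}(x_i)$, centered at points where the ($n-4$)-symmetric stratum is effectively spanned, together with a controlled number of $\epsilon$-regularity balls absorbing the remainder. The key technical input is a \emph{neck structure theorem}: on $\cN$ the ambient space stays uniformly Gromov--Hausdorff close, on every intermediate scale, to $\dR^{n-4}\times C(S^3/\Gamma)$, the centers $x_i$ lie on a uniformly bi-Lipschitz copy of $\dR^{n-4}$, and the packing measure $\mu = \sum_i r_i^{n-4}\delta_{x_i}$ is Ahlfors ($n-4$)-regular with constants depending only on $n$ and $\rv$. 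The finiteness estimate \eqref{e:intro:finiteness} then follows by summing the $r_i^{n-4}$ over the neck decomposition and over the finitely many $\epsilon$-regularity scales, using the uniform bound on the number of pieces.

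The heart of the matter is the quantitative rigidity that forces the neck to actually look conical, and this is where I expect the main obstacle to lie. The input one has for free from \cite{CheegerNaber_Codimensionfour} is codimension four, i.e.\ that no ($n-2$)- or ($n-3$)-symmetric tangent cone can split off a line of Euclidean factor beyond $n-4$; but that is a purely qualitative statement and gives no summability across scales. What is needed instead is a \emph{superconvexity / almost-monotonicity} estimate: on a neck region one studies a harmonic approximation $u : \cN \to \dR^{n-4}$ of the ($n-4$) near-splitting functions and shows that the scale-wise $L^1$ Hessian energy
\begin{equation}
W(r) = \fint_{B_r}\, r\,|\Hess u|\,,\qquad r\in[r_i,2]\,,
\end{equation}
satisfies a differential inequality forcing $\sum_{\text{dyadic }r} W(r)^2 < C(n,\rv)$, so that the total deviation from the model cone is finite and small. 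I would prove this by combining the Bochner formula for $|\nabla u|^2$, the sharp integral bound $\fint |\Hess u|^2 \le C$ coming from segment-type inequalities under $|\Ric|\le n-1$, and the cone-splitting comparison which upgrades smallness of $W$ on one scale to smallness of the GH-distance to $\dR^{n-4}\times C(Y)$ on comparable scales. The delicate point — the genuine novelty — is getting the estimate in $L^1$ rather than $L^2$: one cannot afford the $L^2$ Hessian to be summable (indeed it is not, by Example~\ref{sss:Example1_EH}), so the superconvexity must be extracted at the $L^1$ level, where the relevant differential inequality degenerates and requires a careful Reifenberg-type iteration to close.

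Granting the neck structure theorem, I would finish as follows. Assertion~(2) is the content sum described above. For assertion~(1), rectifiability, I would show that on each neck the map $x_i \mapsto $ (its projection to the bi-Lipschitz $\dR^{n-4}$) is bi-Lipschitz with uniform constants, so $\cS(X)$ is covered up to $H^{n-4}$-null sets by countably many such bi-Lipschitz images of subsets of $\dR^{n-4}$; the null set is handled by iterating the neck decomposition at finer and finer scales, using \eqref{e:intro:finiteness} to guarantee the leftover content tends to zero. For assertion~(3), at $H^{n-4}$-a.e.\ $x\in\cS(X)$ one is, by a density/covering argument, at the center of a neck region on all sufficiently small scales; the summability $\sum W(r)^2 < \infty$ forces $W(r)\to 0$, hence by the cone-splitting comparison every tangent cone at $x$ is isometric to $\dR^{n-4}\times C(Y_x)$ with $Y_x$ a smooth spherical space form $S^3/\Gamma_x$ (smoothness and the free action of $\Gamma_x \le O(4)$ away from the cone point coming from $\epsilon$-regularity, since any singularity of $Y_x$ would produce an ($n-3$)-symmetric point, contradicting codimension four). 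Uniqueness of the tangent cone then follows from the same summability: the Dini-type control $\int_0^1 W(r)\,\frac{dr}{r} < \infty$ prevents oscillation of the cross-section between scales, a standard Łojasiewicz/Reifenberg-style argument once the superconvexity is in hand.
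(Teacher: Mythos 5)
Your proposal follows essentially the same route as the paper: the neck decomposition of Theorem \ref{t:neck_decomposition}, the Ahlfors regularity and rectifiability of the neck singular sets from Theorem \ref{t:neck_region} (whose proof indeed hinges on the $L^1$ superconvexity estimate for the Hessian of splitting maps and the resulting bi-Lipschitz control of $\cC$), and summation of the $n-4$ content over the decomposition. The only real difference is in part (3), where the paper needs no separate Dini/Łojasiewicz argument for tangent cone uniqueness: $n-4$ a.e.\ singular point lies in the zero-radius set $\cC_{0,a}$ of a $(2^{-j},\tau)$-neck for every $j$, and condition (n2) of Definition \ref{d:neck} already forces every tangent cone at such a point to be $2^{-j}$-close to the fixed model $\dR^{n-4}\times C(S^3/\Gamma)$, so uniqueness and the isometry type follow directly by letting $j\to\infty$.
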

\begin{remark}
In fact, the proof gives stronger minkowski and packing estimates on the singular set.  Precisely, if $\{B_{r_i(x_i)}\}$ is any disjoint collection of balls with $x_i\in \cS(X)\cap B_1$ then $\sum r_i^{n-4}<C(n,\rv)$.  See \cite{NaVa_Rect_harmonicmap}. 	
\end{remark}

\vspace{.5cm}

In particular, the above proves the $n-4$ finiteness conjecture of Cheeger-Colding in \cite{ChC2}.\\

\subsection{Outline of the proof Theorem \ref{t:main_L2_estimate}}\label{ss:intro:outline}

 In this subsection we give an outline of the proofs of the main theorems of this paper.  Primarily, we will focus on the $L^2$-curvature estimate of Theorem \ref{t:main_L2_estimate}, however the same technical ingredients will go into the proofs of the other main results of the paper, including the structure results for limit spaces given by Theorem \ref{t:main_limits}.  This section is just an outline, and many of the computations are rough in nature, however the morals are the correct ones which will be applied throughout the paper.  For simplicity we will assume in the outline that $\Ric\equiv 0$, which will essentially save on having to discuss error terms that arise in the general case, which are of little consequence but often times quite involved (especially in the proof of the superconvexity estimate below).\\

 \subsubsection{$(\delta,\tau)$-Neck Regions}

 There are several new types of estimates as well as a new decomposition type theorem that the proofs rely on.  The new estimates and decompositions all center around the notion of what we call a $(\delta,\tau)$-neck region $\cN\subseteq B_2$.  The precise definition of a neck region is a bit technical, and we refer the reader to Section \ref{s:neck} for this, but roughly a $(\delta,\tau)$-neck is an open set
\begin{align}\label{e:intro:neck}
	\cN = B_2(p)\setminus \bigcup_{x\in \cC} \overline B_{r_x}(x) \equiv B_2\setminus \overline B_{r_x}(\cC)\, ,
\end{align}
where $\cC$ is a closed set of ball centers and $r_x<\delta$ is a radius function.  To qualify as a neck region, we will have for each $r_x<r<1$ that $B_{\delta^{-1}r}(x)$ is $\delta r $-Gromov-Hausdorff close to a ball in $\dR^{n-4}\times C(S^3/\Gamma)$, and that the ball centers look roughly like a covering of the singular set $\dR^{n-4}\times\{0\}$ at every scale.  Thus we can view $\cC$ as a discrete approximation to the singular set, and it is natural and convenient to associate to the neck region the $n-4$ packing measure given by
\begin{align}
\mu \equiv \sum r_x^{n-4}\,\delta_x\, .	
\end{align}

Before continuing let us discuss a simple example.  Analyzing this example will help solidify what it is we hope to hold for a general neck region:\\

\begin{example}\label{ex:neck_region}
Let $\cE^4$ be the standard Ricci flat four manifold given by the Eguchi-Hanson metric, see Example \ref{sss:Example1_EH}, and let $\cE^4_\eta\equiv \eta^{-1}\cE^4$ be the rescaled Ricci flat metric so that the central $2$-sphere has radius $\eta$.  Let us pick a point $y_c\in \cE^4$ which is an element of this central sphere.  Note then that $\big(\cE^4_\eta,y_c\big)\stackrel{\eta\to 0}{\longrightarrow} \big(\dR^4/\dZ_2, 0)$.  Now let us consider the space $M^n_\eta \equiv \dR^{n-4}\times \cE^4_\eta$, and let $r_x:\dR^{n-4}\to \dR^+$ be a positive function with $|\nabla r_x|<\delta$.  Then if we consider any discrete subset $\cC = \{x_i\}\subseteq B_2(0^{n-4})\times\{y_c\}$ with $\{B_{\tau^2 r_i}(x_i)\}$ a maximal disjoint subset, where $r_i=r_{x_i}$, then for all $\delta>0$ and $\tau<\tau(n)$ if $\eta\leq \eta(n,\delta)$ is sufficiently small then $\cN \equiv B_2(0,y_0)\setminus \bigcup \overline B_{r_x}(\cC)$ is a $(\delta,\tau)$-neck. $\square$
\end{example}
\vspace{.5cm}

There are two important pieces of information to take away from Example \ref{ex:neck_region}.  The first is that if we consider the packing measure
$\mu=\sum r_i^{n-4}\delta_{x_i}$ associated to the covering, then $\mu$ is uniformly Ahlfors regular.  More precisely, for all $r_i<r<2$ we have the estimate
\begin{align}
A(n,\tau)^{-1}r^{n-4}\leq \mu(B_r(x_i))\leq A(n,\tau) r^{n-4}\, .	
\end{align}
This holds because in the context of Example \ref{ex:neck_region} we have that $\{B_{r_i}(x_i)\}$ is a Vitali covering of $B_2(0^{n-4})$.  The second piece of information we get from Example \ref{ex:neck_region} is that we have curvature control on the neck region $\cN \equiv B_2\setminus \overline B_{r_x}(\cC)$.  In particular, regardless of the $(\delta,\tau)$-neck of the Example, there is a uniform $L^2$ bound on the curvature $\int_\cN |\Rm|^2 \leq \delta'$.  In fact, it is not hard to check that as $\delta\to 0$ we have that $\delta'\to 0$ in the example.  This is because for $\eta<<\delta$ a neck region is cutting out the central 2-spheres, which is where all the $L^2$ is concentrating.\\

Our main theorem on the structure of general neck regions is Theorem \ref{t:neck_region}, which tells us that the packing measure and $L^2$-curvature control which held in the previous easy example continue to hold on arbitrary $(\delta,\tau)$-neck regions.  The proofs of these points will take several new ingredients, which we will outline shortly, however let us first describe in detail what the results say.\\

\subsubsection{Structural Theorems on Neck Regions: Ahlfors Regularity}

Our first main structural result on neck regions given in Theorem \ref{t:neck_region} is that the packing measure of a neck region has uniform $n-4$-Ahlfors regularity bounds.  More precisely, with $\delta$ sufficiently small Theorem \ref{t:neck_region} tells us that:
\begin{align}\label{e:intro:ahlfor}
\text{ For each ball center $x\in \cC$ and $r_x<r$ with $B_{2r}(x)\subseteq B_2$ we have }A^{-1} r^{n-4} \leq \mu\Big(B_r(x)\Big)\leq A(n,\tau) r^{n-4}\, .
\end{align}
The proof of this uniform Ahlfors regularity is quite involved, and  we will see that the Ahlfors bound itself is tied into essentially every result of this paper.  The lower and upper bounds in the estimate are proved separately.  Let us mention just a few words about the lower bound now, and we will come back to the upper bound near the end of the outline.\\

The moral of the lower bound estimate is the following.  Roughly, the restriction of the ball centers $\cC\cap B_r(p)$ to any ball look {\it discretely} homeomorphic to a ball $B_r(0^{n-4})\subseteq \dR^{n-4}$.  This will be made precise by proving a discrete version of a Reifenberg theorem in Theorem \ref{t:neck_reifenberg}, which will find a collection $\cC'\subseteq B_r(0^{n-4})$ and a bih\"older mapping $\phi:\cC'\to \cC$ which satisfies a variety of properties.  Let us now also choose a $1$-lipschitz Gromov-Hausdorff map $u:B_r(p)\to B_r(0^{n-4})$.  Then the composition $u\circ\phi:\cC'\subseteq B_r(0^{n-4})\to B_r(0^{n-4})$ is a bilh\"older map which looks close to the identity.  If we ignore the discrete nature of the problem then we could pretend that $u\circ\phi$ is a degree 1 homeomorphism from $B_r$ to itself.  In particular, that would prove $A(n)^{-1}r^{n-4} =\Vol(u\circ\phi(B_r))\leq \Vol^{n-4}(\phi(B_r))\approx \Vol^{n-4}(\cC)\approx \mu(\cC\cap B_r)$, where we have used that $u$ is $1$-lipschitz and that $\mu$ is approximating the $n-4$ Hausdorff measure on $\cC$.  With a little work this argument will be made precise in Section \ref{ss:neck_region_smooth:lower_volume} .  We will come back to the outline of the upper Ahlfors bound, which is much more anlaytic in nature, at the end of the outline.\\

\subsubsection{Structural Theorems on Neck Regions: $L^2$-Estimate}

Let us now discuss our second structural result about $(\delta,\tau)$-neck regions from Theorem \ref{t:neck_region}.  Specifically, we have on a neck region $\cN\subseteq B_2(p)$ that we can prove the desired $L^2$-curvature bound
\begin{align}\label{e:intro:neck_L2}
\int_{\cN\cap B_1(p)} |\Rm|^2 (x)\,dx\leq \delta'\, .
\end{align}
The proof of the $L^2$ curvature estimate \eqref{e:intro:neck_L2} will, in fact, require that we have already proven the Ahlfors regularity \eqref{e:intro:ahlfor}.  In order to explain the main technical lemma involved in the proof, let us recall the definition of the $\cH$-volume given by
\begin{align}\label{e:intro:entropy}
\cH_t(x) \equiv \int_M (4\pi t)^{-\frac{n}{2}}e^{-\frac{d^2(x,y)}{4t}}\, dy\, .
\end{align}
We will see in Section \ref{s:L2_bound} that the $\cH$-volume is monotone nonincreasing.  Our main technical result toward the proof of the $L^2$-estimate \eqref{e:intro:neck_L2} on neck regions is Proposition \ref{p:local_L2_neck}, which will prove, under the assumption of the Ahlfors condition \eqref{e:intro:ahlfor}, that for each $x\in\cN$ with $2r=d(x,\cC)$ we have the estimate
\begin{align}\label{e:intro:neck_L2_entropy}
\int_{B_r(x)} |\Rm|^2(z)\,dz \leq C(n)\int_{B_{10r}(x)}|\cH_{10r^2}-\cH_{10^{-1}r^2}|(y)\,d\mu(y)\, .	
\end{align}
That is, if the measure $\mu$ satisfies the $n-4$ Ahlfors regularity condition, then we can measure the $L^2$ energy of a ball $B_r(x)$ in the neck region by the $\cH$-volume drop on that scale over $\mu$ in a slightly bigger ball.  Let us see how the $L^2$ estimate \eqref{e:intro:neck_L2} follows from the local estimate \eqref{e:intro:neck_L2_entropy}.  Indeed, it is not so difficult to build a Vitali covering
\begin{align}
\cN\cap B_1(p)\subseteq \bigcup_\alpha \bigcup_{i=1}^{N_\alpha} B_{s_{\alpha,i}}(x_{\alpha,i})\, ,	
\end{align}
where $d(x_{\alpha,i},\cC)=2s_{\alpha,i}$, $s_{\alpha,i}\in (2^{-\alpha-1}, 2^{-\alpha}]$, $s_\alpha=2^{-\alpha}$ and $\{B_{10^{-1}s_{\alpha,i}}(x_{\alpha,i})\}$ are disjoint.  Then we can roughly estimate
\begin{align}
	\int_{\cN\cap B_1(p)} |\Rm|^2(x)\,dx &\leq \sum_\alpha \sum_i \int_{B_{s_{\alpha,i}}(x_{\alpha,i})} |\Rm|^2(x)\,dx \leq C(n)\sum_\alpha \sum_i\int_{B_{10s_{\alpha,i}}(x_{\alpha,i})}|\cH_{10s_{\alpha,i}^2}-\cH_{10^{-1}{s_{\alpha,i}^2}}|(y)\,d\mu(y)\notag\\
	&\leq C(n)\sum_\alpha\int_{ B_{3/2}}\big|\cH_{40s^2_\alpha}-\cH_{40^{-1}s^2_\alpha}\big|(y)\,d\mu(y)\notag\\
	&\leq C(n)\int_{ B_{3/2}}\big|\sum_\alpha(\cH_{40s^2_\alpha}-\cH_{40^{-1}s^2_\alpha})\big|(y)\,d\mu(y)\notag\\
	&\leq C(n)\int_{B_{3/2}}\big|\cH_{40}-\cH_{40^{-1}r^2_y}\big|(y)\,d\mu(y)\, ,
\end{align}
where we have used the monotonicity of {$\cH$} to bring the sum inside the absolute value sign.  However, since $\cN$ is a neck region we have that $B_{\delta^{-1}}(y)$ and $B_{\delta^{-1}r_y}(y)$ are both Gromov-Hausdorff close to $\dR^{n-4}\times C(S^3/\Gamma)$, and therefore we have that $|\cH_{40}-\cH_{40^{-1}r_y^2}|(y)\to 0$ pointwise as $\delta\to 0$.  Using our Ahlfors condition again we see for $\delta$ sufficiently small that the $L^2$ estimate \eqref{e:intro:neck_L2} of Theorem \ref{t:neck_region} on the neck region is proved.

\subsubsection{Neck Decomposition Theorem}

Thus, we have now discussed several structural results from the paper which tell us that a general $(\delta,\tau)$-neck region analytically behaves much like we might hope from Example \ref{ex:neck_region}.  One is still in the position of understanding the relevance of these results, in particular in the context of proving the $L^2$ curvature estimate of Theorem \ref{t:main_L2_estimate}.\\

In more detail, in order to exploit the structural results of Theorem \ref{t:neck_region} about $(\delta,\tau)$-neck regions, we need to see that there are lots of such neck regions.  Otherwise, we are proving theorems about a set which may not really appear in practice.  Indeed, the next primary result of the paper which we wish to discuss is the neck decomposition of Theorem \ref{t:neck_decomposition}, which will show that {\it every} point either lies in a neck region or in an $\epsilon$-regularity ball.  More precisely, we can cover our space
\begin{align}\label{e:intro:neck_decomp}
	B_1(p)\subseteq \bigcup_a \cN_a \cup \bigcup_b B_{r_b}(x_b)\, ,
\end{align}
where $\cN_a\subseteq B_{2r_a}(x_a)$ are $(\delta,\tau)$-neck regions, and each ball $B_{r_b}(x_b)$ is a uniformly smooth ball in that the harmonic radius satisfies $r_h(x_b)>2r_b$, see Section \ref{ss:prelim:eps_reg} for a review of the harmonic radius.  The key aspect of the decomposition of Theorem \ref{t:neck_decomposition} is that we will prove the $n-4$ content bound
\begin{align}\label{e:intro:neck_decomp:content}
	\sum_a r_a^{n-4} + \sum_b r_b^{n-4} \leq C(n,\rv,\tau,\delta)\, .
\end{align}

We will mostly avoid discussing the proof of the above estimate in this outline, as it involves numerous technical covering arguments which first decomposes $B_1$ into five types of balls with the help of four parameters, and then proceeds to recover these balls until only the neck and $\epsilon$-regularity regions are left.  It is worth mentioning however that this decomposition, and in particular the content bound, relies heavily on the Ahlfors bound of \eqref{e:intro:ahlfor}.  Without \eqref{e:intro:ahlfor} the techniques of \cite{CheegerNaber_Ricci} could be used build this decomposition under the weaker content estimate $\sum_a r_a^{n-4-\delta} + \sum_b r_b^{n-4-\delta} \leq C(n,\rv,\delta)$, but as we will see shortly it is crucial for the applications in this paper, and in particular the $L^2$ curvature estimate of Theorem \ref{t:main_L2_estimate}, to have the sharp $n-4$ estimates.  \\

\subsubsection{Proving the $L^2$ Curvature Estimate}

Though we have not yet outlined the proof of the Ahlfors regularity result of Theorem \ref{t:neck_region}, let us now take a moment to see how the neck decomposition of Theorem \ref{t:neck_decomposition} combined with the structural results of Theorem \ref{t:neck_region} on neck regions we have discussed lead to a proof of the $L^2$-estimate of Theorem \ref{t:main_L2_estimate}.  Indeed, using the neck decomposition we can write
\begin{align}
	\int_{B_1(p)} |\Rm|^2(x)\,dx \leq \sum_a \int_{\cN_a} |\Rm|^2(x)\,dx + \sum_b \int_{B_{r_b}} |\Rm|^2(x)\,dx\, .
\end{align}

Let us now observe that on the regularity balls $B_{r_b}(x_b)$ that we can use the harmonic radius bound $r_h(x_b)>2r_b$ and standard elliptic estimates in order to prove the scale invariant curvature bound
\begin{align}\label{e:intro:reg_ball_L2}
	r_b^{4-n}\int_{B_{r_b}} |\Rm|^2(x)\,dx \leq C(n)\, .
\end{align}

On the other hand, applying the $L^2$ curvature estimate of Theorem \ref{t:neck_region} on $(\delta,\tau)$-neck regions explained in \eqref{e:intro:neck_L2} leads to the scale invariant estimate on neck regions given by
\begin{align}\label{e:intro:neck_L2:2}
	r_a^{4-n}\int_{\cN_a} |\Rm|^2(x)\,dx \leq C\, .
\end{align}

If we now combine the content estimate \eqref{e:intro:neck_decomp:content}, the regularity ball estimate \eqref{e:intro:reg_ball_L2}, and the $(\delta,\tau)$-neck estimate \eqref{e:intro:neck_L2:2}, then we see we can prove the desired $L^2$-curvature estimate by computing
\begin{align}
		\int_{B_1(p)} |\Rm|^2(x)\,dx &\leq \sum_a \int_{\cN_a} |\Rm|^2(x)\,dx + \sum_b \int_{B_{r_b}} |\Rm|^2(x)\,dx\, ,\notag\\
		&\leq C(n)\sum_a r_a^{n-4} + C(n)\sum_b r_b^{n-4} \leq C(n,\rv,\tau,\delta)\, ,
\end{align}
which would indeed finish the proof of Theorem \ref{t:main_L2_estimate}.\\

\subsubsection{Harmonic Splitting Functions on Neck Regions}

Therefore, what is left in our outline is to understand how to prove the upper bound of the Ahlfors regularity estimate \eqref{e:intro:ahlfor}, which is one of the main technical challenges of this paper.  This estimate itself is based heavily on a key new technical estimate for splitting functions.

More precisely, let $\cN\subseteq B_2(p)$ be a $(\delta,\tau)$-neck, so that in particular $B_{\delta^{-1}}(p)$ is Gromov-Hausdorff close to $\dR^{n-4}\times C(S^3/\Gamma)$.  Let $u:B_2(p)\to \dR^{n-4}$ be a harmonic $\delta$-splitting map associated to this geometry, so that we have the estimates
\begin{align}
	&\fint_{B_2(p)} |\nabla^2 u|^2(x)\,dx \, ,\,\,\, \fint_{B_2(p)} \big| \langle \nabla u_a,\nabla u_b \rangle - \delta_{ab}\big|(x)\,dx < \delta\, ,\notag\\
	&\sup_{B_2(p)}|\nabla u|\leq 1\, .
\end{align}

Our main new technical achievement is to show that if we restrict $u$ to the ball centers $\cC$, which recall act as a discretization of the singular set, then $u$ is a bilipschitz map over most of $\cC$.  To accomplish this we  need to address what is apriori a logical loop.  Namely, we will use this bilipschitz bound in order to prove the Ahlfors regularity, however we need the Ahlfors regularity in order to prove the bilipschitz bound in the first place.  We will address this issue in the next subsection of the outline, for now let us simply assume that for some $B>0$ that we have the (potentially weaker) Ahlfors regularity condition:
\begin{align}\label{e:intro:ahlfors_B}
\text{ For each ball center $x\in \cC$ with $B_{2r}(x)\subseteq B_2$ we have that }B^{-1} r^{n-4} \leq \mu\Big(B_r(x)\Big)\leq B r^{n-4}\, .
\end{align}

Mentally, one should view $B>>A$, where $A$ is the Ahlfors regularity constant we aim to prove.  Thus, our goal is effectively to show if one has a bad Ahlfors regularity constant $B$, then with $\delta$ small one in fact has a better bound of $A$ for free.  With a little technical footwork we will see how this can be used to deal with the apriori logical loop.

With this in mind, our main result for splitting functions on $(\delta,\tau)$-neck regions is Theorem \ref{t:splitting_neck_bilipschitz}, which tells us that for each $\epsilon>0$ if $\delta\leq \delta(n,\epsilon,\tau,B)$, then there exists a subset $\cC_\epsilon\subseteq \cC\cap B_1$ such that
\begin{align}\label{e:intro:bilipschitz}
\mu(\cC_\epsilon)>&\,(1-\epsilon)\mu(\cC\cap B_1)\, ,\notag\\
\forall x,y\in \cC_\epsilon \text{ we have }& \Big||u(x)-u(y)|-d(x,y)\Big|\leq \epsilon\,d(x,y)\, .
\end{align}

Let us spend a few moments on outlining the proof of the above.  To begin with, let us not try and hit every detail, and instead just focus on what is the main new technical estimate needed in the proof.  That is, in Theorem \ref{t:splitting_neck_summable_hessian} we prove the estimate
\begin{align}\label{e:intro:summable_hessian}
\int_{\cN\cap B_1}r_h^{-3}(x)|\nabla^2 u|(x)\,dx\approx \int_{B_1}\Big(\sum_{r_a=2^{-a}}\fint_{\cN\cap A_{r_{a+1,r_a}(x)}}r_a|\nabla^2 u|(z)\,dz\Big)d\mu[x]  < \epsilon^2\, ,	
\end{align}
where $\epsilon$ may be taken arbitrarily small so long as $\delta<\delta(n,\tau,\epsilon,B)$ is taken sufficiently small.  When combined with the Ahflors condition \eqref{e:intro:ahlfors_B} and a telescoping argument, one can conclude directly the bilipschitz estimate \eqref{e:intro:bilipschitz} from this, see Section \ref{s:neck_splitting} for details.

Therefore, we will focus on the estimate \eqref{e:intro:summable_hessian}.  Let us begin by considering the Green's function associated to the packing measure $\mu$ given by
\begin{align}
-\Delta G_\mu = \mu\, .	
\end{align}
Because $\mu$ has the Ahlfors regularity bounds \eqref{e:intro:ahlfors_B} and approximates the singular set of $\dR^{n-4}\times C(S^3/\Gamma)$, one can imagine $G_\mu$ being well approximated by $r_h^{-2}$, where $r_h$ is the harmonic radius, which is itself roughly the same as the distance to $\cC$.  Indeed, in Lemma \ref{l:green_function_distant_function} we will see that if we define the Green's distance function by $G_\mu = b^{-2}$, then we will have the estimates on the neck region $\cN$ given by
\begin{align}\label{e:intro:greens}
&C(n,B)^{-1} r_h \leq b \leq C(n,B) r_h\, ,\notag\\
&C(n,B)^{-1}  \leq |\nabla b| \leq C(n,B)\, ,\notag\\
&\Vol\big(\{b=r\}\cap B_{3/2}\big)\leq Cr^3\, .
\end{align}
Now if $\phi$ is a cutoff function for which $\phi\equiv 1$ on the neck region $\cN$, see Lemma \ref{l:neck:cutoff} for a precise construction, then we can define the quantities
\begin{align}
	&F(r) \equiv r^{-3}\int_{b=r} |\nabla^2 u|~|\nabla b|\phi(x)\, dx\, ,\notag\\
	&H(r)=rF(r)\, .
\end{align}
Let us observe that an estimate on $H(r)$ represents a scale invariant hessian estimate along the $b=r$ slice.  A key computation takes place in Proposition \ref{p:superconvexity}, where we will see that $H(r)$ satisfies the superconvexity type estimate
\begin{align}
r^2\ddot H + r\dot H - (1-\epsilon) H \geq -\epsilon\sum r_x^{n-4}\delta_{[C^{-1}r_x,Cr_x]} -\epsilon r \delta_{[0,C]}\, ,
\end{align}
where $C=C(n)$ and $\epsilon$ may be taken arbitrarily small so long as $\delta$ is sufficiently small (indeed, the above is actually simpler than Proposition \ref{p:superconvexity} due to the assumption ${\Ric}\equiv 0$ throughout the outline).  Combined with a maximum principle and the Ahlfors assumption \eqref{e:intro:ahlfors_B} one can easily conclude from this Theorem \ref{t:superconvexity_estimate}, which gives the Dini integral estimate
\begin{align}
	\int_0^\infty \frac{1}{r} H(r)\,dr\leq \epsilon\, .
\end{align}
Combining with the Green's function estimates \eqref{e:intro:greens} and a coarea formula one immediately concludes from this the estimate
\begin{align}
\int_{\cN} r_h^{-3}|\nabla^2 u|(x)\,dx &\leq C\int_M b^{-3}|\nabla^2 u|\phi(x)\,dx\, ,\notag\\
 &\leq C\int_0^\infty r^{-3}\int_{b=r} |\nabla^2 u| |\nabla b|\phi(x)\,dx dr\, ,\notag\\
 &= C\int_0^\infty \frac{1}{r}H(r)\,dr<\epsilon\, ,
\end{align}
which is the claimed estimate.

\subsubsection{Ahlfors Regularity on Neck Regions}

Now we end our outline by sketching how the bilipschitz estimate \eqref{e:intro:bilipschitz} implies the Ahlfors regularity estimate \eqref{e:intro:ahlfor}.  As mentioned previously, there is a seemingly logical loop in that we needed that Ahlfors condition in order to prove the bilipschitz estimate itself.\\

In order to circumnavigate this issue in Section \ref{s:neck_region_proof}, we will use an inductive procedure which is motivated from \cite{NaVa_Rect_harmonicmap}.  Precisely, let us consider the radii $r_\alpha\equiv 2^{-\alpha}$, and let our goal be to inductively prove the upper bound in \eqref{e:intro:ahlfor} for all $r\leq r_\alpha$, recalling that we have already outlined the lower bound independently.  First, since $M$ is a smooth manifold we have that $r_x>r_0>0$ must be uniformly bounded from below for all $x\in\cC$. In particular, if $r_\alpha$ is the largest radius for which $r_x\geq r_\alpha$ for all $x\in \cC$, then the result is trivial for this $\alpha$ by the simple definition of $\mu$.  This is the base step of our induction.\\

Therefore, our focus is to prove \eqref{e:intro:ahlfor} for some $r_\alpha$, assuming we have proved it for $r_{\alpha+1}$.  Let us begin with a weaker estimate, which will be useful to establish.  Namely, assume $B_{2r}(x)\subseteq B_2$ with $r\leq 10r_\alpha$.  Then by a standard covering argument we can cover $B_r(x)$ by at most $C(n)$ balls of radius $10^{-3}r\leq r_\alpha$, and therefore by applying our inductive assumption we have for some $B=C(n)A$ the strictly weaker estimate
\begin{align}
\mu\big( B_r(x)\big)\leq Br^{n-4}\, .	
\end{align}

While this estimate is not good enough for the inductive step, indeed if one were to iterate it in $\alpha$ the constant would blow up horribly, it is enough for us to apply Theorem \ref{t:splitting_neck_bilipschitz} and obtain the $\epsilon$-bilipschitz result of \eqref{e:intro:bilipschitz}.  More precisely, if $u:B_{2r}(x)\to \dR^{n-4}$ is a $\delta$-splitting function, which exists because we are in a $(\delta,\tau)$-neck, then there exists $\cC_\epsilon\subseteq \cC\cap B_r(x)$ such that
\begin{align}
\mu(\cC_\epsilon)>&\,(1-\epsilon)\mu(\cC\cap B_r)\, ,\notag\\
\forall x,y\in \cC_\epsilon \text{ we have }& \Big||u(x)-u(y)|-d(x,y)\Big|\leq \epsilon\,d(x,y)\, .
\end{align}

However, the set $\{B_{r_i}(x_i)\}_{\cC}$ is a Vitali covering, which tells us that for $\epsilon$ sufficiently small that $\{B_{r_i}(u(x_i))\}_{\cC\cap B_r}$ is itself a Vitali covering of $B_r(0^{n-4})$.  This immediately implies the improved Ahlfors upper bound
\begin{align}
	&\mu(\cC\cap B_r)\leq \frac{1}{1-\epsilon}\mu(\cC_\epsilon)=\sum_{\cC_\epsilon} r_i^{n-4}\leq C\,\Vol(B_{2r}(0^n))\leq A\, r^{n-4}\, .
\end{align}
This proves the inductive step, and therefore finishes the outline of the proof.
 \vspace{.5cm}

\section{Background and Preliminaries}\label{s:background}

In this section we review several standard constructions and techniques which will be used throughout the paper.

\subsection{Quantitative Stratification}\label{ss:prelim:quant_strat}

Let us briefly review the notion of symmetry and stratification, as these ideas are commonplace throughout this paper.  We begin by recalling the standard notion of symmetry:

\begin{definition}
Given a metric space $Y$ we define the following:
\begin{enumerate}
	\item We say $Y$ is {\it $k$-symmetric} at $y\in Y$ if there exists a pointed isometry $\iota:\dR^k\times C(Z)\to Y$ with $\iota(0^k,z_c)=y$, where $Z$ is compact and $z_c$ is a cone point.
	\item We say $Y$ is $k$-symmetric with respect to $L^k\subseteq Y$ if $L^k=\iota\big(\dR^k\times\{z_c\}\big)$.
\end{enumerate}
\end{definition}
\vspace{.3cm}

Associated to the notion of symmetry is that of stratification:

\begin{definition}
Given a metric space $Y$ we  define the {\it closed $k^{th}$-stratum} by
\begin{align}
\cS^k(X)=:\{x\in X: \text{ no tangent cone at $x$ is $(k+1)$-symmetric}\}
\end{align}
\end{definition}
\vspace{.3cm}

This notion of symmetry leads to a natural quantitative generalization, first introduced in \cite{CheegerNaber_Ricci}.  It is the notion of quantitative symmetry which will play the most important role for us in this paper, in particular in the discussion of neck regions.  Let us begin with a discussion of quantitative symmetry:\\

\begin{definition}\label{d:quant_symmetry}
Given a metric space $Y$ with $y\in Y$, $r>0$ and $\epsilon>0$, we say
\begin{enumerate}
\item $B_r(y)$ is $(k,\epsilon)$-symmetric if there exists a pointed $\epsilon r$-GH map $\iota:B_r(0^k,z_c)\subseteq \dR^k\times C(Z)\to B_r(y)\subseteq Y$ with $\iota(0^k,z_c)=y$.
\item $B_r(y)$ is $(k,\epsilon)$-symmetric with respect to $\cL^k_\epsilon\subseteq B_r(y)$ if $\cL^k_\epsilon \equiv \iota\big(\dR^k\times \{z_c\}\big)\cap B_r(y)$
\end{enumerate}
\end{definition}
\vspace{.5cm}

To state the definition in words, we say
that $B_r(y)\subseteq Y$ is $(k,\epsilon)$-symmetric if the ball $B_r(y)$ looks very close to having $k$-symmetries.  The quantitative stratification is then defined as follows:

\begin{definition}\label{d:quant_stratification}
For each $\epsilon>0$, $0<r<1$ and $k\in\dN$, define the closed quantitative $k$-stratum, $\cS^k_{\epsilon,r}(X)$, by
\begin{align}
\cS^k_{\epsilon,r}(X)\equiv \{x\in X:\text{ for no $r\leq s\leq 1$ is $B_s(x)$ a $(k+1,\epsilon)$-symmetric ball}\}\, .
\end{align}
\end{definition}
\vspace{.5cm}

Thus, the closed  stratum $\cS^k_{\epsilon,r}(X)$ is the collection of points such that no ball of size at least $r$ is almost $(k+1)$-symmetric.  The notion of the quantitative stratification plays an important role in our notion of a neck region, introduced in Section \ref{s:neck}.  The first main result of \cite{CheegerNaber_Ricci} is to show that for manifolds
 which are noncollapsed and have lower Ricci curvature bounds, the set $\cS^k_{\epsilon,r}(X)$ is small in a
 very strong sense.  To say this a little more carefully, if one pretends that the $k$-stratum
is a well behaved $k$-dimensional submanifold, then one would expect the volume of the $r$-tube
 around the set to behave like $Cr^{n-k}$. In \cite{CheegerNaber_Ricci} the following slightly weaker result was shown:

\begin{theorem}[Quantitative Stratification, \cite{CheegerNaber_Ricci}]\label{t:quant_strat}
Let $M^n$ satisfy $\Ric\geq -(n-1)$ with $\Vol(B_1(p))>{\rm v}>0$.  Then for every $\epsilon,\eta>0$
there exists $C=C(n,\rv,\epsilon,\eta)$ such that
\begin{align}
\Vol\left(B_r\left(\cS^k_{\epsilon,r}(M)\cap B_1(p)\right)\right)\leq C r^{n-k-\eta}\, .
\end{align}
\end{theorem}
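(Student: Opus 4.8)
The plan is to reduce the tube estimate to a covering count and then control that count by combining the monotonicity of the volume ratio with quantitative cone--splitting. First I would fix $\eta>0$ and a maximal $r/10$--separated subset $\{x_i\}\subseteq \cS^k_{\epsilon,r}(M)\cap B_1(p)$. Since the balls $B_{r/10}(x_i)$ are disjoint and contained in $B_2(p)$, and by Bishop--Gromov together with the noncollapsing bound each satisfies $\Vol(B_{r/10}(x_i))\geq c(n,\rv)\,r^n$ while $\Vol(B_{2r}(x_i))\leq C(n)\,r^n$, we have $\Vol\big(B_r(\cS^k_{\epsilon,r}(M)\cap B_1(p))\big)\leq \sum_i \Vol(B_{2r}(x_i))\leq C(n)\,r^n\cdot\#\{x_i\}$, so it suffices to prove
\begin{align}
\#\{x_i\}\leq C(n,\rv,\epsilon,\eta)\, r^{-k-\eta}\, .
\end{align}

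The two quantitative rigidity inputs are the following. (i) \emph{Almost volume annulus implies almost metric cone} (Colding, Cheeger--Colding): for every $\epsilon'>0$ there is $\delta(n,\epsilon')>0$ so that if $\Ric\geq -(n-1)\delta$ and the volume ratio $\mathcal V_s(x)=\Vol(B_s(x))/v_{n,-1}(s)$ (with $v_{n,-1}(s)$ the volume of an $s$--ball in the space form of curvature $-1$) satisfies $\mathcal V_{\delta s}(x)-\mathcal V_{\delta^{-1}s}(x)<\delta$, then $B_s(x)$ is $(0,\epsilon')$--symmetric. Because $s\mapsto\mathcal V_s(x)$ is monotone and, thanks to the noncollapsing bound, has total variation on $(0,2]$ bounded by $D(n,\rv)$, the contrapositive of (i) shows that for each $x$ all but at most $N_0=N_0(n,\rv,\delta)$ dyadic scales $s\in[r,1]$ have $B_s(x)$ being $(0,\delta)$--symmetric. (ii) \emph{Quantitative cone--splitting}: for every $\epsilon',\gamma>0$ there is $\delta(n,\epsilon',\gamma)>0$ so that if $B_s(x)$ is $(j,\delta)$--symmetric with respect to some $\cL^{j}$ and also $(0,\delta)$--symmetric about a point $y$ with $d(y,\cL^{j})>\gamma s$, then $B_s(x)$ is $(j+1,\epsilon')$--symmetric; this comes from the almost splitting theorem applied to the approximating lines through $y$ and the vertex of $\cL^{j}$.

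With these in hand I would run a scale--refinement covering of $\cS^k_{\epsilon,r}(M)\cap B_1(p)$ by balls whose radii decrease by a fixed factor $\rho=\rho(n,\epsilon)$ per generation. Along each branch one tracks a ``running'' degree of symmetry $0\le j\le k$ together with, when $j\ge1$, a $j$--plane $\cL^{j}$ realizing $(j,\delta)$--symmetry of the current ball. By input (i), all but at most $N_0(n,\rv,\delta)$ generations along a branch have the current ball $(0,\delta)$--symmetric; at such a generation, input (ii) forces the cone vertex to lie within $\gamma s$ of $\cL^{j}$ unless it produces a $(j+1,\delta)$--symmetry (increasing $j$). Since $x_i\in\cS^k_{\epsilon,r}$, no ball $B_s(x_i)$ with $s\in[r,1]$ is $(k+1,\epsilon)$--symmetric, so $j$ is increased at most $k$ times along a branch. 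Hence at all but $N:=N_0+k$ ``exceptional'' generations — where we simply cover the current $s$--ball by the full $C(n)\rho^{-n}$ sub--balls — the centers are $\gamma s$--concentrated near $\cL^{j}$ with $j\le k$, so only $C\rho^{-k}$ sub--balls are needed. After $\sim\log_{1/\rho}(1/r)$ generations the number of $r$--balls produced is at most $(\rho^{-k})^{\log_{1/\rho}(1/r)}\,(C(n)\rho^{-n})^{N}=C(n,\rv,\epsilon)\,r^{-k}$. The residual $r^{-\eta}$ (and the dependence of the final constant on $\eta$) accounts for the slow rotation of $\cL^{j}$, which can change by $O(\delta)$ between consecutive non--exceptional generations: choosing $\delta\le\delta(n,\eta)$ so that $(1+C\delta)^{\log_2(1/r)}\le r^{-\eta}$ absorbs this drift into the allowed loss. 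Combined with the reduction of the first paragraph, this gives the stated bound.

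The main obstacle is making precise the ``stable running plane'' bookkeeping: one must quantify how much the optimal symmetry plane of $B_s(x)$ can differ from that of $B_{2s}(x)$, show that this discrepancy is governed by the volume drop on the intervening scales together with the cone--splitting dichotomy, and verify that the accumulated error over all scales is controlled up to the $\eta$--loss. This is exactly where the full Cheeger--Colding almost--rigidity machinery (the segment inequality and the almost splitting theorem) enters and where the genuine analytic content lies; by contrast the covering combinatorics are routine once (i) and (ii) are in place.
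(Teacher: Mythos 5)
The paper does not prove this statement; it is quoted verbatim from \cite{CheegerNaber_Ricci}, so your proposal has to be measured against the proof there. Your two analytic inputs are the right ones — (i) is the almost-volume-cone/almost-metric-cone rigidity combined with the bounded total variation of the monotone volume ratio, giving at most $N_0(n,\rv,\delta)$ ``bad'' dyadic scales per point, and (ii) is the cone-splitting of Theorem \ref{t:cone_splitting}. The reduction of the tube volume to counting a maximal $r/10$-separated subset is also fine.

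The gap is in the covering bookkeeping, which is precisely the part you dismiss as routine. In your scheme a single refinement tree is run, and at ``all but $N$ exceptional generations'' the centers are assumed to concentrate near a $k$-plane. But the at most $N$ non-pinched scales occur at \emph{different} positions in $[r,1]$ for different points $x\in\cS^k_{\epsilon,r}\cap B_1(p)$: at a fixed generation $s$, some of the points you are trying to cover are volume-pinched at scale $s$ and others are not, so you can neither restrict to the $C\rho^{-k}$ sub-balls near $\cL^j$ (you would lose the unpinched points) nor take all $C\rho^{-n}$ sub-balls at that generation for every branch (your count $(\rho^{-k})^{\log_{1/\rho}(1/r)}(C\rho^{-n})^{N}$ silently assumes the exceptional generations are the same for every point). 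The resolution in \cite{CheegerNaber_Ricci} is to first decompose $\cS^k_{\epsilon,r}\cap B_1$ into sets $E_{T^j}$ indexed by the pattern $T^j\in\{0,1\}^j$ recording which of the $j\approx\log_{1/\rho}(1/r)$ scales are pinched, with at most $N$ bad entries; the covering you describe is then valid on each fixed $E_{T^j}$, and one sums over the $\binom{j}{\le N}\lesssim(\log(1/r))^{N}\le C(n,\rv,\epsilon,\eta)\,r^{-\eta}$ admissible patterns. This combinatorial sum — not the ``slow rotation of $\cL^j$'' — is the source of the $r^{-\eta}$ loss, and your proposed fix $(1+C\delta)^{\log_2(1/r)}\le r^{-\eta}$ does not address it, since the issue is a choice of exceptional scales rather than a multiplicative drift per scale. (Plane drift between consecutive good scales is handled inside the cone-splitting step by shrinking $\gamma$ and $\delta$, at no cost in the exponent.) With the $E_{T^j}$ decomposition inserted, the rest of your argument goes through.
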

\vspace{.5cm}

One of the consequences of the main Theorems of this paper is that for spaces with bounded Ricci curvature one can improve the above to $\Vol\left(B_r\left(\cS^{n-4}_{\epsilon,r}(M)\cap B_1(p)\right)\right)\leq C r^4$ for the top stratum of the singular set.\\

\subsection{Volume Monotonicity and Cone Structures}\label{ss:prelim:monotonicity}

When considering a Riemannian manifold with a lower Ricci bound $\Ric\geq -(n-1)\kappa g$, the key tool which separates the study of collapsed versus noncollapsed spaces is that of a monotone quantity.  Throughout this paper we will consider several, all of which are essentially equivalent, however it will be more convenient to work with one or another depending on the context.  Let us begin by recalling the classical volume ratio
\begin{align}\label{e:volume_ratio}
\cV_r(x) = \cV^\kappa_r(x) \equiv \frac{\Vol\big(B_r(x)\big)}{\Vol_{-\kappa}(B_r)}\, ,	
\end{align}
where $\Vol_{-\kappa}(B_r)$ is the volume of the ball of radius $r$ is the space form $M^n_{-\kappa}$ of constant curvature $-\kappa$.  It is a classic consequence of the Bishop-Gromov monotonicity that for each $x\in M$ that $\cV_r(x)$ is monotone nonincreasing in $r>0$ with $\cV_r(x)\to 1$ as $r\to 0$.  Note that the space being noncollapsed is now equivalent to there being a lower bound on $\cV_1$, so that we have a bounded monotone quantity.  The importance of this comes into play because there is a rigidity, which tells us that when this quantity is very pinched we must have symmetry.  Precisely:\\

\begin{theorem}[\cite{ChC1}]\label{t:metriccone_volumecone}
	Let $(M^n,g,p)$ with $\epsilon>0$, then there exists $\delta(n,\epsilon)>0$ such that if $\Ric\geq -\delta$ and $\cV_2(p)\geq (1-\delta)\cV_{1}(p)$, then $B_2(p)$ is $(0,\epsilon)$-symmetric.
\end{theorem}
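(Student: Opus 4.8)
The plan is to recognize this as the ``almost volume cone implies almost metric cone'' theorem of Cheeger--Colding \cite{ChC1} and to prove it by a contradiction-and-compactness argument, reducing the quantitative statement to the corresponding \emph{exact} rigidity. Suppose it fails for some fixed $\epsilon>0$: then there are pointed manifolds $(M^n_j,g_j,p_j)$ with $\Ric_{M^n_j}\geq -\delta_j$, $\delta_j\to 0$, and $\cV_2(p_j)\geq(1-\delta_j)\cV_1(p_j)$, for which no $B_2(p_j)$ is $(0,\epsilon)$-symmetric. By Gromov's pointed precompactness theorem, which needs only the uniform lower Ricci bound and permits collapse, we may pass to a subsequence with $(M^n_j,g_j,p_j)\stackrel{d_{GH}}{\longrightarrow}(X,d,p)$; we also pass to the renormalized limit measure $\mu=\lim_j \Vol(B_1(p_j))^{-1}\,dv_{g_j}$ on $X$.

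The first real step is to push the volume pinching into the limit. Since $\delta_j\to 0$ we have $\Vol_{-\delta_j}(B_r)\to\omega_n r^n$, so
\begin{align}
\frac{\cV_r(p_j)}{\cV_1(p_j)}=\frac{\Vol(B_r(p_j))/\Vol(B_1(p_j))}{\Vol_{-\delta_j}(B_r)/\Vol_{-\delta_j}(B_1)}\;\longrightarrow\;\frac{\mu(B_r(p))}{r^{n}}
\end{align}
for all but countably many $r$, and Bishop--Gromov monotonicity persists in the limit, so $r\mapsto r^{-n}\mu(B_r(p))$ is nonincreasing. The hypothesis $\cV_2(p_j)\geq(1-\delta_j)\cV_1(p_j)$ together with this monotonicity then forces $\cV_r(p_j)/\cV_1(p_j)\to 1$ for every $r\in[1,2]$, i.e. $r^{-n}\mu(B_r(p))$ is \emph{constant} on $[1,2]$: the limit $(X,d,\mu)$ carries an exact volume-cone pinching at $p$.

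Next I would invoke the exact volume-cone-implies-metric-cone rigidity of Cheeger--Colding \cite{ChC1}: a Ricci limit space, equipped with its renormalized limit measure, on which $r^{-n}\mu(B_r(p))$ is constant on an interval is, on the corresponding region, isometric to a metric cone with vertex $p$; this forces $B_2(p)\subseteq X$ to be isometric to $B_2(z_c)\subseteq \dR^0\times C(Z)$ for some compact $Z$. Since $B_2(p_j)\to B_2(p)$ in the Gromov--Hausdorff sense (adjusting radii by an arbitrarily small amount if necessary) and $B_2(p)$ is a cone ball, for $j$ large $B_2(p_j)$ admits a $2\epsilon$-GH map from $B_2(z_c)$, so it is $(0,\epsilon)$-symmetric --- contradicting the choice of the sequence, which proves the theorem.

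The step I expect to be the main obstacle is the exact rigidity itself: that constancy of the volume density ratio on an annulus forces the full metric, including the portion near the vertex, to split off as a cone is a substantial result resting on the Cheeger--Colding splitting theorem and on sharp $L^2$ Hessian control of $r_p^2/2$; in the possibly collapsed regime one must also be careful that $\mu(B_r(p))$ and its monotonicity are meaningful, which is precisely what the measured Gromov--Hausdorff / renormalized-measure framework provides. A more hands-on route, avoiding compactness, would run the Cheeger--Colding estimates directly on $M$: the pinching yields $\fint_{A_{1,2}(p)}|\Hess(r_p^2/2)-g|^2<\Psi$ with $\Psi=\Psi(\delta|n)\to 0$, and the segment inequality upgrades this to $d_{GH}(B_2(p),B_2(z_c))<\Psi$ --- in either approach the heart of the matter is the same (almost-)rigidity for the Hessian of the distance function.
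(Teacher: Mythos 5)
Your overall strategy --- contradiction plus Gromov precompactness, pushing the volume pinching into the renormalized limit measure and then invoking the exact ``volume cone implies metric cone'' rigidity --- is the standard proof of this statement, and the paper itself gives no proof beyond the citation to \cite{ChC1}, so there is nothing internal to compare against. The reduction of the hypothesis to the constancy of $r^{-n}\mu(B_r(p))$ on $[1,2]$ in the limit is correct.

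There is, however, one genuine gap at the point where you pass from that constancy to ``$B_2(p)\subseteq X$ is isometric to $B_2(z_c)$.'' The rigidity theorem applied to the interval on which the ratio is known to be constant produces a cone structure only on the \emph{corresponding region}, i.e.\ the annulus $A_{1,2}(p)$; it says nothing about $B_1(p)$, and in particular nothing about the geometry near the would-be vertex. Since $(0,\epsilon)$-symmetry of $B_2(p)$ in Definition \ref{d:quant_symmetry} requires a \emph{pointed} GH map from a cone ball centered at the vertex, you must first propagate the constancy of $r^{-n}\mu(B_r(p))$ down to $r=0$, and this does not follow from monotonicity of the ball-volume ratio alone (a nonincreasing function constant on $[1,2]$ need not be constant on $(0,1]$). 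The fix uses the surface-area form of Bishop--Gromov upstairs on $M_j$: with $A(s)=\Vol(\partial B_s(p_j))$ and $V(r)=\int_0^r A(s)\,ds$, the pinching $V(2)\geq(1-\delta_j)2^nV(1)$ together with the monotonicity of $A(s)/s^{n-1}$ forces $A(1)\geq\big(1-\Psi(\delta_j|n)\big)\,nV(1)$, hence $A(s)\geq(1-\Psi)\,nV(1)s^{n-1}$ for $s\leq1$, and integrating gives $\big|V(r)-V(1)r^n\big|\leq\Psi(\delta_j|n)\,V(1)$ for all $r\leq1$. Passing to the limit for each fixed $r$ then yields $r^{-n}\mu(B_r(p))\equiv\mu(B_1(p))$ on all of $(0,2]$, after which the exact rigidity does produce the full cone ball and your contradiction goes through. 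With this step inserted the argument is complete; your alternative direct route via the $L^2$ Hessian bound on $r_p^2/2$ and the segment inequality faces the same annulus-versus-ball issue and is resolved the same way.
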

\vspace{.5cm}

In Section \ref{s:L2_bound} we will discuss some generalizations of this point using some distinct monotone quantities.  The advantage of the approach of Section \ref{s:L2_bound} will be that we will be able to obtain some sharp estimates on the cone structures, which will be required in the proof of the $L^2$ estimates.\\

\subsection{Cone Splitting}

We saw in the previous subsection how to force $0$-symmetries by using a monotone quantity.  In this subsection we want to review one method of forcing higher orders of symmetries.  {The relevant concept for this paper is one introduced in \cite{CheegerNaber_Ricci} called cone splitting.  The main result in this direction from \cite{CheegerNaber_Ricci} is the following:}

\begin{theorem}[Cone-Splitting]\label{t:cone_splitting}
	For every $\epsilon,\tau>0$ there exists $\delta(n,\epsilon,\tau)>0$ such that if
	\begin{enumerate}
	\item $\Ric\geq -\delta$.
	\item $B_{2}(p)$ is $(k,\delta)$-symmetric with respect to $L^k_\delta\subseteq B_4(p)$.
	\item There exists $z\in B_1(p)\setminus B_\tau L^k_\delta$ such that $B_2(z)$ is $(0,\delta)$-symmetric.
	\end{enumerate}
Then $B_1(p)$ is $(k+1,\epsilon)$-symmetric.
\end{theorem}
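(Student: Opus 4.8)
The statement asserts: given $k$-symmetry on $B_2(p)$ with respect to a $k$-plane $L^k_\delta$, plus a single point $z$ off that plane at which the space is $0$-symmetric, one upgrades to $(k+1,\epsilon)$-symmetry on $B_1(p)$. The natural strategy is a contradiction/compactness argument. Suppose the conclusion fails: then there is a fixed $\epsilon_0>0$, a sequence $\delta_j\to 0$, and pointed spaces $(M_j^n,g_j,p_j)$ (or their rescalings) satisfying $\Ric\ge -\delta_j$, with $B_2(p_j)$ being $(k,\delta_j)$-symmetric with respect to some $L^k_{\delta_j}$, and with a point $z_j\in B_1(p_j)\setminus B_\tau L^k_{\delta_j}$ at which $B_2(z_j)$ is $(0,\delta_j)$-symmetric, yet $B_1(p_j)$ is not $(k+1,\epsilon_0)$-symmetric. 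Passing to a Gromov--Hausdorff subsequential limit — legitimate since we have a uniform lower Ricci bound and (implicitly, through the rescaling-invariant hypotheses that make this meaningful) a noncollapsing bound, or else one argues in the collapsed setting using the structure of limits under lower Ricci bounds — one obtains a limit space $(X,d,p_\infty)$ together with a limit point $z_\infty\in \overline{B_1(p_\infty)}$.

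The key point is to identify the symmetries of $X$. First, the $(k,\delta_j)$-symmetry hypothesis is closed under GH-convergence, so $X$ is exactly $k$-symmetric: $X\cong \mathbb{R}^k\times C(Z)$ for some compact $Z$, with $L^k:=\mathbb{R}^k\times\{z_c\}$ the limit of the $L^k_{\delta_j}$. Second, the $(0,\delta_j)$-symmetry of $B_2(z_j)$ passes to the limit to say that $B_2(z_\infty)$ is $0$-symmetric, i.e.\ a metric cone with vertex $z_\infty$; here one uses Theorem \ref{t:metriccone_volumecone} (or rather its limiting/iterated form) to see that the pinched volume-ratio condition forces an exact metric cone structure on the limit. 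Crucially $z_\infty\notin B_\tau L^k$ because the bound $z_j\in B_1\setminus B_\tau L^k_{\delta_j}$ is a closed condition. So $X$ is a metric cone with respect to a vertex point $z_\infty$ lying a definite distance off the $\mathbb{R}^k$-factor.

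The heart of the argument is now a purely geometric lemma: if a length space $X=\mathbb{R}^k\times C(Z)$ is also a metric cone with vertex at a point $q$ not lying on $\mathbb{R}^k\times\{z_c\}$, then $X$ in fact splits off an additional Euclidean line, hence is $(k+1)$-symmetric. The mechanism is the classical ``cone splitting'': the straight lines through the two cone structures generate translations in independent directions; concretely, the dilations fixing $q$ together with the $\mathbb{R}^k$-translations generate a larger group whose orbits force a line through $z_\infty$ that is not contained in $L^k$, and a line in a space with lower Ricci bound (or in a limit thereof, via the Cheeger--Gromoll splitting theorem applied to $\Ric\ge 0$ limits) yields an isometric $\mathbb{R}$-factor transverse to $\mathbb{R}^k$. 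Combining the new line with the existing $\mathbb{R}^k$ and the cone structure over $Z$ gives $X\cong \mathbb{R}^{k+1}\times C(Z')$, so $B_1(p_\infty)$ is genuinely $(k+1,0)$-symmetric. Since GH-closeness to this limit means $B_1(p_j)$ is $(k+1,\epsilon)$-symmetric for $j$ large with $\epsilon<\epsilon_0$, we contradict the assumption, completing the proof.

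The main obstacle is the geometric lemma about two interacting cone structures — making precise that a cone vertex off the axis really does generate a new translational symmetry, and handling the measure-theoretic/collapsing subtleties so that the ``line'' one produces genuinely splits (this is exactly where the lower Ricci bound, via the splitting theorem for limit spaces, is essential rather than cosmetic). Everything else — the compactness extraction, the closedness of the quantitative-symmetry conditions, and the use of Theorem \ref{t:metriccone_volumecone} to promote pinching to an exact cone — is standard, though one must be careful that the conclusion is stated on $B_1(p)$ while hypotheses live on $B_2(p)$ and $B_4(p)$, which only affects bookkeeping of radii in the contradiction setup.
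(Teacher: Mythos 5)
The paper does not prove Theorem \ref{t:cone_splitting}; it is quoted as a background result from \cite{CheegerNaber_Ricci}, so there is no in-paper argument to compare against. Your contradiction-and-compactness scheme, reducing to the rigid lemma that a space $\dR^k\times C(Z)$ admitting a second cone vertex off $\dR^k\times\{z_c\}$ must split $\dR^{k+1}$, is exactly the standard proof given in that reference, and the outline is correct. One small correction: you do not need Theorem \ref{t:metriccone_volumecone} to promote the $(0,\delta_j)$-symmetry of $B_2(z_j)$ to a cone structure in the limit --- by Definition \ref{d:quant_symmetry} this hypothesis is already a Gromov--Hausdorff closeness condition, so it passes to the limit directly; likewise no noncollapsing assumption is needed anywhere, since the entire argument is metric.
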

\vspace{.25cm}

Therefore, the above is telling us that nearby $0$-symmetries interact to force higher order symmetries.\\

\subsection{Harmonic Radius and $\epsilon$-Regularity Theorems}\label{ss:prelim:eps_reg}

In this subsection we review two $\epsilon$-regularity theorems which will play a prominent role in this paper.  To make these results precise let us recall the notion of the harmonic radius:\\

\begin{definition}\label{d:harmonic_radius}
	For $x\in M^n$ we define its harmonic radius $r_h(x)>0$ to be the maximum over all $r>0$ such that there exists a mapping $\phi:B_r(x)\to \dR^n$ with the following properties:
	\begin{enumerate}
	    \item $\phi$ is a harmonic mapping.
		\item $\phi$ is a diffeomorphism onto its image with $B_r(0^n)\subseteq \phi(B_r(x))$, and hence defines a coordinate chart.
		\item The coordinate metric $g_{ij} = \langle \nabla\phi_i,\nabla \phi_j\rangle$ on $B_{r}(0^n)$ satisfies $||g_{ij}-\delta_{ij}||_{C^1(B_r)}<10^{-n}$.
	\end{enumerate}
\end{definition}
\begin{remark}
Note that by a standard implicit function type argument one has that $r_h(x)>0$ on any smooth manifold.	
\end{remark}
\begin{remark}
	The $C^1$-norm above is taken with respect to the scale invariant Euclidean norm.  That is, $||g_{ij}-\delta_{ij}||_{C^1(B_r)} \equiv \sup_{B_r}|g_{ij}-\delta_{ij}|+ r\,\sup_{B_r}|\partial_k g_{ij}|$.
\end{remark}
\begin{remark}\label{r:harmonic_rad_high_est}
Note that if we have the Ricci bound $|\Ric|\leq A$, then for every $\alpha<1$ and $p<\infty$ we have the apriori estimate $||g_{ij}-\delta_{ij}||_{C^{1,\alpha}(B_{r/2})}\, , ||g_{ij}-\delta_{ij}||_{W^{2,p}(B_{r/2})}<C(n,A,\alpha,p)$.  This follows from elliptic estimates which exploit the harmonic nature of the coordinate system.	
\end{remark}
\vspace{.25cm}

Now the $\epsilon$-regularity theorems of this subsection are meant to find weak geometric conditions under which we can be sure there exists a definite lower bound on the harmonic radius.  The first which we will discuss goes back to Anderson and tells us that if the volume of a ball is sufficiently close to that of a Euclidean ball, then we must have smooth estimates.  Precisely:\\

\begin{theorem}[\cite{Anderson_Einstein}]\label{t:eps_reg_volume}
There exists $\epsilon(n)>0$ such that if $(M^n,g,p)$ satisfies $|{\Ric}|\leq \epsilon$ and $\cV_2(p)>(1-\epsilon)$, then we have that $r_h(p)>1$.
\end{theorem}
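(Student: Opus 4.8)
The plan is a compactness-and-contradiction argument that reduces everything to the construction of a single good harmonic coordinate chart. Suppose the statement fails, so that for $\epsilon_i=1/i$ there are pointed manifolds $(M_i^n,g_i,p_i)$ with $|\Ric_{g_i}|\le\epsilon_i$ and $\cV_2(p_i)>1-\epsilon_i$, yet $r_h(p_i)\le 1$. By Bishop--Gromov monotonicity the volume pinching propagates to every smaller scale, so $\cV^{\,\rm Eucl}_s(p_i)\to 1$ uniformly for $s\in(0,2]$. After passing to a subsequence, $(M_i,g_i,p_i)\to(X,d,x_\infty)$ in the pointed Gromov--Hausdorff sense, and by volume convergence together with the volume rigidity theorem of \cite{ChC1} (the limiting rigid form of Theorem~\ref{t:metriccone_volumecone}) the ball $B_2(x_\infty)$ is \emph{isometric} to the Euclidean ball $B_2(0^n)\subseteq\dR^n$.

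The core of the argument is to upgrade this Gromov--Hausdorff flatness to $C^{1,\alpha}$ control, which is where the two-sided Ricci bound is used. Fix Gromov--Hausdorff maps $\Phi_i=(\bar u_i^1,\dots,\bar u_i^n)\colon B_2(p_i)\to B_2(0^n)$ with error $\to 0$, and for each $a$ let $u_i^a$ solve $\Delta_{g_i}u_i^a=0$ on $B_2(p_i)$ with $u_i^a=\bar u_i^a$ on the boundary. The Ricci lower bound gives a uniform gradient estimate $\sup_{B_{3/2}}|\nabla u_i^a|\le C(n)$, and the $u_i^a$ subconverge to harmonic functions $u_\infty^a$ on $B_2(0^n)$ whose boundary data are the linear coordinates; by uniqueness $u_\infty^a$ is the $a$-th Euclidean coordinate, so $|\nabla u_\infty^a|\equiv 1$, $\langle\nabla u_\infty^a,\nabla u_\infty^b\rangle\equiv\delta^{ab}$ and $\nabla^2 u_\infty^a\equiv 0$. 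The Cheeger--Colding integral estimates (Bochner together with the gradient bound) then give $\fint_{B_{3/2}}|\nabla^2 u_i^a|^2\to 0$ and $\fint_{B_{3/2}}\big|\langle\nabla u_i^a,\nabla u_i^b\rangle-\delta^{ab}\big|\to 0$. Now the two-sided bound enters the Bochner identity $\Delta_{g_i}|\nabla u_i^a|^2=2|\nabla^2 u_i^a|^2+2\Ric(\nabla u_i^a,\nabla u_i^a)\ge-2\epsilon_i|\nabla u_i^a|^2$, and a Moser iteration promotes the $L^2$ smallness to $\sup_{B_1}\big|\langle\nabla u_i^a,\nabla u_i^b\rangle-\delta^{ab}\big|\to 0$. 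Hence for $i$ large $u_i=(u_i^1,\dots,u_i^n)$ is a diffeomorphism onto its image near $p_i$, and in these harmonic coordinates the identity $\Delta_{g_i}(g_i)_{ab}=-2(\Ric_{g_i})_{ab}+Q_{ab}(g_i,\partial g_i)$, with $Q$ quadratic in $\partial g$, combined with the $W^{1,2}$-smallness of $(g_i)_{ab}$ and elliptic $L^p$ bootstrapping, yields $(g_i)_{ab}\to\delta_{ab}$ in $C^{1,\alpha}$ on a fixed ball around $p_i$, say $B_{3/2}(p_i)$.

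Finally, since the harmonic radius is lower semicontinuous under $C^{1,\alpha}$ convergence and $r_h\equiv\infty$ for the flat metric, $\|(g_i)_{ab}-\delta_{ab}\|_{C^1(B_{3/2}(p_i))}\to 0$ forces $r_h(p_i)\ge\tfrac54>1$ once $i$ is large (the $o(1)$ bookkeeping between domain and image radii in the definition of $r_h$ is routine), contradicting $r_h(p_i)\le 1$. I expect the main obstacle to be the middle step: combining the Cheeger--Colding almost-splitting analysis (the $L^2$ Hessian and gradient control on the approximating harmonic functions) with the elliptic $L^p$ bootstrap that exploits the two-sided Ricci bound to reach $C^{1,\alpha}$ regularity; the compactness setup and the concluding semicontinuity argument are routine.
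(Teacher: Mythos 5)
First, note that the paper does not prove this statement at all: Theorem \ref{t:eps_reg_volume} is quoted as background from Anderson \cite{Anderson_Einstein}, so the comparison is against Anderson's argument rather than anything in this paper. Your overall frame (contradiction, Gromov--Hausdorff compactness, volume convergence and rigidity forcing the limit ball to be Euclidean) is the standard one and that part is fine.

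The genuine gap is in your middle step, where you claim that ``a Moser iteration promotes the $L^2$ smallness to $\sup_{B_1}\big|\langle\nabla u_i^a,\nabla u_i^b\rangle-\delta^{ab}\big|\to 0$.'' The Bochner inequality $\Delta|\nabla u|^2\ge 2|\nabla^2u|^2-2\epsilon_i|\nabla u|^2$ makes $|\nabla u_i^a|^2$ an almost \emph{sub}solution, so the mean value inequality gives only the one-sided bound $\sup_{B_1}|\nabla u_i^a|^2\le 1+o(1)$. What you actually need to conclude that $u_i$ is a nondegenerate chart is the pointwise \emph{lower} bound $\inf_{B_1}|\nabla u_i^a|^2\ge 1-o(1)$ (the off-diagonal terms reduce to this via $|\nabla(u^a\pm u^b)|^2$). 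But the relevant quantity $v_i=1+o(1)-|\nabla u_i^a|^2$ is a nonnegative almost \emph{super}solution ($\Delta v_i\le C\epsilon_i$) with small average, and such functions need not have small supremum --- a rescaled Green's function pole is the standard counterexample; equivalently, $L^1$ control of $\Delta v$ and of $v$ does not give $L^\infty$ control of $v$. This is precisely the hard point of the theorem (pointwise nondegeneracy of splitting functions is in general false and is the subject of the slicing technology in \cite{CheegerNaber_Codimensionfour}), so it cannot be waved through. Anderson's actual proof avoids constructing the chart at the original scale altogether: one argues by contradiction on the harmonic radius function itself, picks a point where $r_h$ is (almost) minimal relative to the distance to the boundary, rescales so that the rescaled harmonic radius equals $1$ at the basepoint and is bounded below on large balls, extracts a limit in the $C^{1,\alpha}\cap W^{2,p}$ topology (now legitimate because of the uniform harmonic radius bounds), shows the limit is flat $\dR^n$ using the volume pinching and weak Ricci-flatness plus elliptic regularity in harmonic coordinates, and contradicts the normalized value $r_h=1$ at the limit basepoint. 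Your final bootstrap and semicontinuity steps are fine, but they sit downstream of the unproven pointwise nondegeneracy, so as written the proof does not close.
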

\begin{remark}
Recall that $\cV_r(p) = \cV^\epsilon_r(p) \equiv \frac{\Vol(B_r(x)}{\Vol_{-\kappa}(B_r)}$, and thus $\cV_2>1-\epsilon$ gives us that the volume of $B_2(p)$ is close to the volume of $B_2(0^n)$.
\end{remark}
\vspace{.5cm}

We end with the following $\epsilon$-regularity result proved in \cite{CheegerNaber_Codimensionfour}.  This result can be viewed as a consequence of the proof of the codimension four conjecture, and tells us that if a ball has a sufficient amount of symmmetry, then the ball must be smooth.  Results like the following are why the notion of quantitative symmetry play such a crucial role in the analysis and estimates of this paper:\\

\begin{theorem}[\cite{CheegerNaber_Codimensionfour}]\label{t:eps_reg}
Let $(M^n,g,p)$ satisfy $|{\Ric}|\leq n-1$ and $\Vol(B_1(p))>\rv>0$.  Then there exists $\epsilon(n,\rv)>0$ such that if $B_2(p)$ is $(n-3,\epsilon)$-symmetric then $r_h(p)>1$.
\end{theorem}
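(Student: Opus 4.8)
The plan is to argue by contradiction with a compactness argument: one reduces to a structural statement about a Gromov--Hausdorff limit, rules that statement out using the codimension four theorem of \cite{CheegerNaber_Codimensionfour}, and finally bootstraps using Anderson's volume $\epsilon$-regularity (Theorem~\ref{t:eps_reg_volume}). Concretely, suppose the conclusion fails; then there is a sequence $(M_j^n,g_j,p_j)$ with $|\Ric_{M_j}|\le n-1$ and $\Vol(B_1(p_j))>\rv$, such that $B_2(p_j)$ is $(n-3,\epsilon_j)$-symmetric with $\epsilon_j\to 0$, yet $r_h(p_j)\le 1$. By Gromov precompactness, pass to a subsequence with $(M_j,g_j,p_j)\stackrel{d_{GH}}{\longrightarrow}(X,d,p)$, a noncollapsed limit of Hausdorff dimension $n$ along which volumes converge. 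Since $\epsilon_j\to 0$, the approximating Gromov--Hausdorff maps converge to a pointed isometry from $B_2(p)\subseteq X$ onto a ball in $\dR^{n-3}\times C(Z)$ carrying $p$ to $(0^{n-3},z_c)$, for some compact metric space $Z$ with $C(Z)$ three-dimensional.

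The crucial structural step is to show that $C(Z)=\dR^3$, so that $B_2(p)$ is isometric to the flat Euclidean ball $B_2(0^n)$. The tangent cone of $X$ at any point $(x,z_c)\in\dR^{n-3}\times\{z_c\}$ equals $\dR^{n-3}\times C(Z)$ itself, since the tangent cone of the metric cone $C(Z)$ at its vertex is $C(Z)$; hence $(x,z_c)$ is a regular point of $X$ precisely when $\dR^{n-3}\times C(Z)=\dR^n$, i.e.\ when $C(Z)=\dR^3$. If $C(Z)\ne\dR^3$, the whole $(n-3)$-plane $\dR^{n-3}\times\{z_c\}$ therefore lies in $\cS(X)$, so $\dim\cS(X)\ge n-3$, contradicting the codimension four theorem of \cite{CheegerNaber_Codimensionfour} which gives $\dim\cS(X)\le n-4$. (Alternatively, the three-dimensional factor $C(Z)$ carries a two-sided Ricci bound, where the full curvature $|\Rm|$ is algebraically controlled by $|\Ric|$, so Anderson's $C^{1,\alpha}$-compactness applies directly and forces a metric cone to be $\dR^3$.) Either way $B_2(p)$ is Euclidean.

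It remains to upgrade this to a harmonic radius bound on the $M_j$ and reach a contradiction. Fix $\rho=\rho(n)>0$, to be chosen small. For every $q\in\overline B_1(p)$ the ball $B_{2\rho}(q)\subseteq X$ is Euclidean, so by volume continuity under noncollapsed convergence $\Vol(B_{2\rho}(q_j))\to\Vol(B_{2\rho}(0^n))$ whenever $q_j\to q$. Rescaling $g_j$ by $\rho^{-2}$, the Ricci bound becomes $|\Ric|\le\rho^2(n-1)$, and choosing $\rho$ small enough in terms of $n$ the rescaled volume ratio $\cV_2(q_j)$ exceeds $1-\epsilon(n)$ for all large $j$, where $\epsilon(n)$ is the constant of Theorem~\ref{t:eps_reg_volume}; that theorem then gives $r_h(q_j)>\rho$ in the original scale. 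Applying this to a fixed finite $\rho/2$-net of $\overline B_1(p)$ yields $r_h>\rho/2$ on $\overline B_1(p_j)$ for all large $j$. By Remark~\ref{r:harmonic_rad_high_est} this provides uniform $C^{1,\alpha}$ and $W^{2,p}$ control of $g_j$ in harmonic coordinates at scale $\rho/2$, hence $M_j\to X$ in $C^{1,\alpha}$ on $\overline B_1(p)$ and the limit metric there is flat. Solving $\Delta_{g_j}\psi^i=0$ on a ball slightly larger than $B_1(p_j)$ with Euclidean boundary data, elliptic estimates show $\psi=(\psi^1,\dots,\psi^n)$ converges in $C^{2,\alpha}$ to the standard coordinates; so for $j$ large $\psi$ is a harmonic chart on $B_{1+\eta}(p_j)$ with $\|g_{ij}-\delta_{ij}\|_{C^1}<10^{-n}$, giving $r_h(p_j)>1$ and contradicting $r_h(p_j)\le 1$.

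I expect the main obstacle to be the structural step, namely recognizing that a noncollapsed limit which is locally $\dR^{n-3}\times C(Z)$ must in fact be Euclidean. This is exactly where the deep input enters --- through the codimension four theorem, or through the special rigidity of bounded Ricci curvature in dimension three, where the full curvature is controlled. The remaining ingredients, Gromov compactness, volume continuity, Anderson's $\epsilon$-regularity, and elliptic bootstrapping, are standard.
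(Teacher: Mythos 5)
The paper does not actually prove this statement --- it is imported from \cite{CheegerNaber_Codimensionfour} --- but your compactness argument is correct and is precisely the derivation the paper alludes to when it calls the result a consequence of the codimension four theorem: the contradiction limit is locally $\dR^{n-3}\times C(Z)$, whose axis $\dR^{n-3}\times\{z_c\}$ is an $(n-3)$-dimensional subset of $\cS(X)$ unless $C(Z)=\dR^3$ (the tangent cone along the axis being unique and equal to $\dR^{n-3}\times C(Z)$ itself), and the Euclidean conclusion is then correctly bootstrapped to $r_h(p_j)>1$ via volume convergence, Theorem~\ref{t:eps_reg_volume} after rescaling, and standard harmonic-coordinate elliptic estimates. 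The one caveat is your parenthetical alternative: a two-sided Ricci bound on a smooth three-dimensional metric cone only forces $C(Z)=\dR^3/\Gamma$, so ruling out nontrivial $\Gamma$ (a codimension-three singular axis) still requires the codimension four theorem; your main route handles this correctly, so the aside is dispensable.
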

\vspace{.5cm}

\subsection{Heat Kernel Estimates}\label{ss:heat_kernel}

In this subsection we record estimates on heat kernels on Riemannian manifolds with lower Ricci curvature bounds.  The estimates of this subsection are either classical or very minor modifications of classical estimates which are better suited to our purposes.  Let us summarize the basic estimates on heat kernels, which follow from the results in \cite{LiYau_heatkernel86}, \cite{SY_Redbook}, \cite{SoZha},\cite{Hamilton_gradient}, \cite{Kot_hamilton_gradient}:\\

\begin{theorem}[Heat Kernel Estimates]\label{t:heat_kernel}
	Let $(M^n,g,x)$ be a pointed Riemannian manifold with $\Vol(B_1(x))\ge \rv>0$ and ${\Ric}\ge -(n-1)$. Then for any $0<t\le 10$ and $\epsilon>0$, the heat kernel $\rho_t(x,y)=(4\pi t)^{-n/2}e^{-f_t}$ satisfies for all $y\in M$ that
	\begin{enumerate}
	\item $C(n,\epsilon) t^{-n/2} e^{-\frac{d(x,y)^2}{(4-\epsilon)t}}\leq \rho_t(x,y)$.
	\item $\rho_t(x,y)\leq C(n,\rv,\epsilon) t^{-n/2} e^{-\frac{d(x,y)^2}{(4+\epsilon)t}}$.
	\item $t|\nabla f_t|^2\leq C(n,\rv)\Big(1+\frac{d^2(x,y)}{t}\Big)$ . 
	\item $-C(n,\rv,\epsilon)+\frac{d^2(x,y)}{(4+\epsilon)t}\leq f_t \leq C(n,\rv,\epsilon)+\frac{d^2(x,y)}{(4-\epsilon)t }$
	\end{enumerate}
\end{theorem}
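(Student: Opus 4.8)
The plan is to establish the four estimates essentially in the order they are stated, each feeding into the next, the only genuinely analytic inputs being the classical Li--Yau parabolic Harnack inequality \cite{LiYau_heatkernel86} and the integrated maximum principle \cite{SY_Redbook} for the Gaussian bounds, and a Hamilton-type gradient estimate \cite{Hamilton_gradient,Kot_hamilton_gradient} for item (3). I would begin with the two \emph{on-diagonal} bounds $c(n)\,t^{-n/2}\le \rho_t(x,x)\le C(n,\rv)\,t^{-n/2}$ for $0<t\le 10$. For the upper bound one records first the uniform volume lower bound $\Vol(B_r(x))\ge c(n,\rv)\,r^n$ for $0<r\le 1$, which follows from Bishop--Gromov together with the noncollapsing hypothesis, and then the Li--Yau Harnack inequality gives $\rho_t(x,x)\le C(n)\,\Vol(B_{\sqrt t}(x))^{-1}e^{C(n)t}\le C(n,\rv)\,t^{-n/2}$ since $t\le 10$; this is the only place noncollapsing is used. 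For the lower bound one uses the integrated maximum principle, $\int_{M\setminus B_R(x)}\rho_t(x,y)\,dv_g(y)\le e^{-R^2/(5t)}$ on any complete manifold, to see that $\int_{B_{C(n)\sqrt t}(x)}\rho_t(x,y)\,dv_g(y)\ge 1/2$, and then Cauchy--Schwarz applied to $\rho_{2t}(x,x)=\int\rho_t(x,y)^2\,dv_g(y)$ together with Bishop--Gromov yields $\rho_t(x,x)\ge c(n)\,\Vol(B_{\sqrt t}(x))^{-1}\ge c(n)\,t^{-n/2}$, with no dependence on $\rv$, consistent with the constant $C(n,\epsilon)$ appearing in item (1).

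With the on-diagonal bounds in hand I would pass to the off-diagonal Gaussian estimates. For the upper bound, item (2), I feed the on-diagonal upper bound into the Davies--Grigor'yan exponential estimate: for any complete manifold and any $\epsilon>0$, $\rho_t(x,y)\le C_\epsilon\,\rho_{(1-\epsilon)t}(x,x)^{1/2}\,\rho_{(1-\epsilon)t}(y,y)^{1/2}\,\exp(-d^2(x,y)/((4+\epsilon)t))$, which combined with the above gives item (2) directly. For the lower bound, item (1), I would apply the Li--Yau--Cheeger--Yau Harnack inequality with parameter $\alpha=1+\epsilon'$, namely $\rho_s(x,x)\le \rho_t(x,y)\,(t/s)^{n\alpha/2}\exp(\alpha d^2(x,y)/(4(t-s))+C(n)(t-s))$ for $0<s<t$, insert the on-diagonal lower bound, and optimize over the free parameter $s$ (the optimal scale being $s\sim t^2/d^2(x,y)$); this produces $\rho_t(x,y)\ge c(n,\epsilon)\,t^{-n/2}\,d(x,y)^{-n(\alpha-1)}\,e^{-\alpha d^2(x,y)/(4t)}\,e^{-C(n)t}$, and since $\log d\ll d^2$, since $t\le 10$, and since $\alpha$ may be taken arbitrarily close to $1$, absorbing the polynomial factor and the bounded exponential into the Gaussian exponent gives item (1). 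The short-range regime $d(x,y)\le\sqrt t$ is handled separately by chaining the on-diagonal bounds through the local elliptic/parabolic Harnack inequality.

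The remaining items are then essentially formal. Writing $f_t=-\log((4\pi t)^{n/2}\rho_t(x,y))$ and taking logarithms of the two Gaussian bounds gives at once $d^2(x,y)/((4+\epsilon)t)-C(n,\rv,\epsilon)\le f_t\le d^2(x,y)/((4-\epsilon)t)+C(n,\epsilon)$, which is item (4). For the gradient bound, item (3), I would invoke the Hamilton-type gradient estimate for the heat kernel from \cite{Hamilton_gradient,Kot_hamilton_gradient}, run on the time interval $(t/2,t]$ using the uniform upper bound $\sup_{(t/2,t]}\rho_s(x,\cdot)\le C(n,\rv)\,(t/2)^{-n/2}$ established above; in its sharp form this yields $|\nabla f_t|=|\nabla_y\log\rho_t(x,y)|\le (1+\epsilon)\,d(x,y)/(2t)+C(n,\rv,\epsilon)\,t^{-1/2}$, and squaring, multiplying by $t$, and using Young's inequality on the cross term together with item (4) gives $t|\nabla f_t|^2\le C(n,\rv,\epsilon)+d^2(x,y)/((4-\epsilon)t)$ after renaming $\epsilon$.

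I expect the one delicate point to be the \emph{sharpness} of the constants $4\pm\epsilon$, as opposed to some large dimensional constant in place of $4$: in item (2) this is Davies' self-improving exponential argument; in item (1) it is the use of the Li--Yau Harnack with $\alpha$ arbitrarily close to $1$, the Harnack constant degenerating as $\alpha\to1$, which is exactly why an $\epsilon$-loss is built in; and in item (3) it is the sharp form of the Hamilton--Kotschwar estimate, whose leading coefficient matches the Euclidean value $1/2$. The curvature error terms coming from the one-sided bound $Rc\ge-(n-1)$ are harmless throughout, since the restriction $t\le 10$ keeps all the relevant curvature--time products bounded by dimensional constants; this is also the reason no upper Ricci bound is needed and the constants depend only on $n$, $\rv$, and $\epsilon$.
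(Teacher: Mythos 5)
Your overall route is sound and, if anything, more systematic than what the paper does: the paper simply asserts that these estimates ``follow from'' Li--Yau, Cheeger--Yau, and Hamilton--Kotschwar, and the auxiliary lemmas it works out in detail only reach the weaker exponents $d^2/3t$ and $d^2/6t$ rather than the sharp $d^2/((4\mp\epsilon)t)$; your use of the Davies--Grigor'yan self-improvement for the upper bound and the Li--Yau Harnack with $\alpha\to 1$ for the lower bound is the right way to recover the sharp constants, and your derivations of (3) and (4) from (1), (2) and the Hamilton estimate match the paper's.

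There is, however, one genuine gap. The noncollapsing hypothesis is $\Vol(B_1(x))\ge \rv$ \emph{at the single point $x$}, and you claim this is used only once, to get $\rho_t(x,x)\le C(n,\rv)t^{-n/2}$. But your proof of item (2) then feeds $\rho_{(1-\epsilon)t}(y,y)^{1/2}$ into the Davies--Grigor'yan bound, and the on-diagonal upper bound at $y$ requires $\Vol(B_{\sqrt t}(y))\ge c\,t^{n/2}$, which does \emph{not} follow uniformly in $y$ from the volume bound at $x$ alone. What Bishop--Gromov does give (this is exactly the ``claim'' proved in the paper's auxiliary heat-kernel lemma) is
\begin{align}
\Vol\big(B_{\sqrt t}(y)\big)\;\ge\; c(n,\rv)\,t^{n/2}\,e^{-C(n)\,d(x,y)/\sqrt t}\, ,
\end{align}
obtained by comparing $\Vol(B_{\sqrt t}(x))\le \Vol(B_{\sqrt t+d(x,y)}(y))$ and bounding the hyperbolic volume ratio. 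The resulting extra factor $e^{+C(n)d(x,y)/(2\sqrt t)}$ in the Gaussian upper bound must then be absorbed via $C(n)d/\sqrt t\le \epsilon' d^2/t + C(n,\epsilon')$, which is precisely (part of) why the exponent degrades from $4$ to $4+\epsilon$. Your write-up should include this step; as stated, the claim that item (2) follows ``directly'' from the on-diagonal bounds is not justified, since the on-diagonal bound at $y$ was never established. The same caveat does not affect items (1), (3), (4): the lower bound needs no volume hypothesis, and (3), (4) use only the already-corrected (1) and (2) together with the Hamilton gradient estimate run from the uniform upper bound on $[t/2,t]$, exactly as the paper does.
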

\begin{remark}
Noting that with the time $t$ bounded,  (1), (2)  and (4) follow directly from Li-Yau heat kernel estimates and the volume comparison.  (3) follows from (2) and \cite{SoZha},\cite{Hamilton_gradient}, \cite{Kot_hamilton_gradient}. 
\end{remark}

\vspace{.5cm}

\subsection{Harmonic Estimates on Spaces with Bounded Ricci}\label{ss:prelim:harmonic_est}

In this subsection we recall some estimates from \cite{CheegerNaber_Codimensionfour} on harmonic functions for noncollapsed spaces with bounded Ricci curvature.  We will also prove some generalizations of these estimates which will be used in the paper.  The main result is the following:

\begin{theorem}\label{t:prelim:harmonic_estimates}
	Let $(M^n,g,p)$ satisfy $|{\Ric}|\leq n-1$ with $\Vol(B_1(p))>\rv>0$, and let $u:B_2(p)\to \dR$ be a harmonic function with $|u|\leq 1$.  Then the following hold:
\begin{enumerate}
\item For each $0<q<4$ we have that $\int_{B_1(p)}|\nabla^2 u|^q(x)\,dx\leq C(n,\rv,q)$.
\item For each $0<q<2$ we have that $\int_{B_1(p)}|\nabla^3 u|^q(x)\,dx\leq C(n,\rv,q)$.
\end{enumerate}
\end{theorem}
\begin{remark}
Once the $L^2$ curvature bound of Theorem \ref{t:main_L2_estimate} has been established, we can use the techniques of \cite{Cheeger01} in order to produce full $L^4$ bounds on $|\nabla^2 u|$ and $L^2$ bounds on $|\nabla^3 u|$.  However, the weaker estimates of the above theorem will themselves be useful toward that goal.
\end{remark}

\begin{proof}
Note that by using standard Cheng-Yau estimates (or one of several others), we can conclude for every $r<2$ there exists $C_r(n)>0$ such that
	\begin{align}
		|\nabla u|\leq C_r\, .
	\end{align}
The estimate $(1)$ then follows directly from \cite{CheegerNaber_Codimensionfour}.  To prove $(2)$ will follow a similar proof structure.  First, let us consider the following:

{\bf Claim: } Let $B_{2r}(x)\subseteq B_2(p)$ with $r_{h}(x)\geq 2r$ and $q\leq 2$, then $r^{2q-n}\int_{B_r(x)}|\nabla^3 u|^q<C(n,q)$.\\

It is enough to prove the claim for $q=2$, the general case then follows from a H\"olders inequality.  For this let us compute the Bochner type formula
\begin{align}
	\Delta |\nabla^2 u|^2 = 2|\nabla^3 u|^2 + 2\big\langle\big(\nabla_jR_{ia}-\nabla_a R_{ij}+\nabla_i R_{ja}\big)\nabla^a u, \nabla^i\nabla^j u\big\rangle -4\text{Rm}(\nabla^2 u,\nabla^2 u) + 4{\Ric}(\nabla^a\nabla u,\nabla_a\nabla u)\, .\notag
\end{align}
Let $\phi:B_{2r}(x)\to \dR$ be such that $\phi\equiv 1$ on $B_r(x)$, $\phi\equiv 0$ outside of $B_{3r/2}(x)$ with $r|\nabla\phi|, r^2|\Delta\phi|\leq C(n)$.  Multiplying both sides of the above Bochner formula by $\phi$ and integrating and noting the pointwise upper bound $|\nabla^2u|\le C(n,\rv)r^{-1}$, we arrive, after a little manipulation, at
\begin{align}
2\int_{B_{2r}(x)} \phi(z) |\nabla^3 u|^2(z)\,dz \leq Cr^{-4} + 2\int_{B_{2r}(x)} \phi(z) \big\langle\big(\nabla_jR_{ia}-\nabla_a R_{ij}+\nabla_i R_{ja}\big)\nabla^a u, \nabla^i\nabla^j u\big\rangle(z)\,dz\, .
\end{align}
Integrating by parts the $\nabla {\Ric}$ term and using a H\"older's inequality we arrive at
\begin{align}
2\int_{B_{2r}(x)} \phi(z) |\nabla^3 u|^2(z)dz &\leq Cr^{-4} + C\int_{B_{2r}(x)} 
\Big(\phi |{\Ric}|\, |\nabla^2 u|^2 + |\nabla\phi|\, |{\Ric}| |\nabla^2 u| + \phi |{\Ric}| |\nabla^3 u| \Big)(z)dz\, ,\notag\\
&\leq Cr^{-4} + \frac{1}{10}\int_{B_{2r}(x)} \phi(z) |\nabla^3 u|^2(z)\, dz+ C\Vol(B_{2r}(x))\, .
\end{align}
Using the hessian estimate $(1)$ applied to $B_{2r}(x)$ we then get from this
\begin{align}
r^4\fint_{B_{2r}(x)} \phi(z) |\nabla^3 u|^2(z)\,dz\leq C+r^4\leq C\, ,
\end{align}
which proves the claim. $\square$\\

Now to finish the proof of the theorem we proceed as in \cite{CheegerNaber_Codimensionfour}.  Indeed, let $r_a\equiv 2^{-a}$, then for any $\epsilon>0$ we have by \cite{CheegerNaber_Codimensionfour} the estimate $\Vol\Big(B_{r_a}\Big(\{x\in B_1: r_{a+1}\leq r_{h}<r_{a+2}\}\Big)\Big)\leq C(n,\rv,\epsilon) r_a^{4-2\epsilon}$, where $r_h$ is the harmonic radius.  In particular, for each $r_a$ we can find a Vitali covering $\{B_{r_a}(x_{i,a})\}_1^{N_a}$ of $\{r_{a+1}\leq r_{h}<r_{a+2}\}$ with $N_a\leq C r_a^{4-n-2\epsilon}$.  Choosing $\epsilon<(2-q)/2$ and applying the Claim we arrive at the estimate
\begin{align}
\int_{B_1(p)}|\nabla^3 u|^q(z)dz \leq \sum_a \sum_a^{N_a} \int_{B_{r_a}(x_{i,a})}|\nabla^3 u|^q(z)dz \leq C\sum_a r_a^{n-2q} r_a^{4-n-2\epsilon} = C\sum_a r_a^{2(2-q-\epsilon)}\leq C(n,\rv,q)\, ,	
\end{align}
which proves the Theorem.
\end{proof}
\vspace{.5cm}

\subsection{$\delta$-Splitting Functions}\label{ss:splitting}

In this subsection we recall the basics about splitting functions, which act as a bridge between geometric notions and notions in analysis.  Let us begin with a definition, which is similar to the one introduced in \cite{CheegerNaber_Codimensionfour}:\\

\begin{definition}\label{d:splitting_function}
	We say $u=(u^1,u^2,\cdots,u^k):B_r(p)\to \dR^k$ is a $\delta$-splitting map if the following holds: 

\begin{enumerate}
	\item $\Delta u = 0$, which is to say that $\Delta u^i=0$ for all $i=1,\cdots, k$..
    \item $\sup_{a,b=1,\cdots,k}\fint_{B_r(p)}\big|\langle\nabla u^a,\nabla u^b\rangle - \delta^{ab}\big|(x)\,dx<\delta$.
    \item $r^2\fint_{B_r(p)}\big|\nabla^2 u\big|^2(x)\,dx:=\sum_{i=1}^kr^2\fint_{B_r(p)}\big|\nabla^2 u^i\big|^2(x)\,dx<\delta^2$.
	\item $\sup_{x\in B_{r}(p)} |\nabla u|(x) \leq 1$, where $|\nabla u|(x):=\sup_{v\in T_xM,|v|=1}|Du(v)|$. 
\end{enumerate}
\end{definition}
\vspace{.25cm}

The main result about splitting functions is that they are essentially equivalent to the ball $B_r$ splitting off an $\dR^k$-factor.  Precisely, we have the following:\\

\begin{theorem}\label{t:splitting_function}
	For every $\epsilon>0$ there exists $\delta(n,\epsilon)>0$ such that if $\Ric\geq -\delta$ then the following hold:
	\begin{enumerate}
		\item If $B_{\delta^{-1}}(p)$ is $\delta$-GH close to $B_{\delta^{-1}}(0^k,y)\subset \dR^k\times Y$, then there exists an $\epsilon$-splitting $u:B_2(p)\to \dR^k$.
		\item If there exists a $\delta$-splitting $u:B_{2}(p)\to \dR^k$, then $B_{1}(p)$ is $\epsilon$-GH close to $B_1(0^k,y)\subset\dR^k\times Y$.  Moreover, $(u,\phi): B_1(p)\to \mathbb{R}^k\times Y$ gives an $\epsilon$-GH map for some $\phi: B_1(p)\to Y$.
	\end{enumerate}
\end{theorem}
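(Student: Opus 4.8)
The plan is to prove both implications by a compactness--and--contradiction argument, in each case reducing to an exact rigidity statement of Cheeger--Colding. The mechanism is that a sequence of pointed spaces with $\Ric\ge -\delta_i$ and $\delta_i\to 0$ subconverges, in the pointed measured Gromov--Hausdorff sense, to a (possibly collapsed) metric measure space $X$ carrying $\Ric\ge 0$ synthetically, on which Dirichlet energies and solutions of Dirichlet problems converge, and on which the $L^2$ Hessian energy is lower semicontinuous; these are precisely the structural facts established in \cite{ChC1} and exploited in \cite{CheegerNaber_Codimensionfour}. Fixing $\epsilon$ and negating the conclusion produces, along some $\delta_i\to 0$, manifolds $M_i^n$ with $\Ric\ge -\delta_i$ violating item (1) for all $i$ or item (2) for all $i$, and I would treat the two cases separately.

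For (1): assume $B_{\delta_i^{-1}}(p_i)$ is $\delta_i$-GH close to $\dR^k\times Y_i$ but no $\epsilon$-splitting $B_2(p_i)\to\dR^k$ exists. Passing to a subsequence, $\dR^k\times Y_i\to\dR^k\times Z$ and $M_i\to X$, and since $\delta_i^{-1}\to\infty$ the ball $B_3(\bar p)\subseteq X$ is isometric to a ball $B_3\subseteq\dR^k\times Z$. Let $x_1,\dots,x_k$ be the Euclidean coordinates on the $\dR^k$ factor, and let $u_i^a:B_3(p_i)\to\dR$ be harmonic with boundary data a Lipschitz mollification of the pull-back of $x_a|_{\partial B_3}$ under the GH approximation. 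Interior elliptic estimates make the $u_i^a$ uniformly Lipschitz on $B_{5/2}(p_i)$, hence uniformly bounded in $H^{1,2}$, and by uniqueness of harmonic functions on the product limit they converge to $x_a$ in $H^{1,2}$. Feeding $\int_{B_2}\varphi\,|\nabla^2 u_i^a|^2 \le \frac12\int(\Delta\varphi)\,|\nabla u_i^a|^2 - \int\varphi\,\Ric(\nabla u_i^a,\nabla u_i^a)$ with a fixed cutoff $\varphi\equiv 1$ on $B_2$ supported in $B_{5/2}$, the bound $\Ric\ge -\delta_i$ together with $\int|\nabla u_i^a|^2\to\int|\nabla x_a|^2$ forces $\fint_{B_2}|\nabla^2 u_i^a|^2\to 0$; similarly $\fint_{B_2}|\langle\nabla u_i^a,\nabla u_i^b\rangle-\delta^{ab}|\to 0$. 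Finally, since $\Delta|\nabla u_i^a|^2\ge -2\delta_i|\nabla u_i^a|^2$, the gradient estimate gives $\sup_{B_2}|\nabla u_i^a|^2\le(1+C\delta_i)\fint_{B_{5/2}}|\nabla u_i^a|^2\le 1+o(1)$, so replacing $u_i$ by $(1+o(1))^{-1}u_i$ yields, for $i$ large, a genuine $\epsilon$-splitting on $B_2(p_i)$ --- a contradiction.

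For (2): assume $u_i:B_2(p_i)\to\dR^k$ are $\delta_i$-splittings but $B_1(p_i)$ is never $\epsilon$-GH close to a product $\dR^k\times Y$. By conditions (3)--(4) of Definition \ref{d:splitting_function} the $u_i$ are uniformly Lipschitz with $\int_{B_2}|\nabla^2 u_i|^2\to 0$, so along $M_i\to X$ they converge to a map $u_\infty:B_2(\bar p)\to\dR^k$ which is harmonic, satisfies $\langle\nabla u_\infty^a,\nabla u_\infty^b\rangle=\delta^{ab}$ a.e.\ by condition (2), and has $\nabla^2 u_\infty=0$ a.e.\ by lower semicontinuity of the Hessian energy. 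On $X$, which carries $\Ric\ge 0$, harmonic functions with orthonormal parallel gradients generate a local isometric $\dR^k$-action, so by the splitting theorem of \cite{ChC1} the ball $B_{3/2}(\bar p)$ is isometric to a product $B_{3/2}(0^k)\times Z$; hence $B_1(\bar p)$, and therefore $B_1(p_i)$ for $i$ large, is $\epsilon$-GH close to $\dR^k\times Z$ --- a contradiction. The only genuine content beyond bookkeeping here is the behaviour of the $L^2$ Hessian energy under measured Gromov--Hausdorff convergence: direction (1) needs the upper bound $\int_{B_2}|\nabla^2 u_i^a|^2\to 0$, which the integrated Bochner identity delivers once the convergence of $\int|\nabla u_i^a|^2$ and the almost-nonnegativity of $\Ric$ are in hand, whereas direction (2) needs the lower semicontinuity $\int|\nabla^2 u_\infty|^2\le\liminf\int|\nabla^2 u_i|^2$ --- exactly the sort of statement furnished by the Cheeger--Colding structure theory \cite{ChC1} and used in \cite{CheegerNaber_Codimensionfour}; a secondary point is the sharp normalization $\sup_{B_r}|\nabla u|\le 1$ in (1), handled by the gradient estimate for the almost-subharmonic function $|\nabla u^a|^2$ followed by a rescaling by $1+o(1)$.
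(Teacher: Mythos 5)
The paper offers no proof of this theorem: it is quoted as known, with the remark that everything except the sharp gradient bound in condition $(4)$ of Definition \ref{d:splitting_function} is due to \cite{ChC1}, and that $(4)$ is the contribution of \cite{CheegerNaber_Codimensionfour}. Your compactness-and-contradiction architecture is the standard route and is essentially sound for the Hessian and inner-product conditions in $(1)$ and for $(2)$; for $(2)$ you are implicitly invoking a \emph{local} functional splitting theorem on the limit space (harmonic $u_\infty$ with $|\nabla u_\infty|\equiv 1$ and $\nabla^2u_\infty=0$ forces a local metric product), which is available in the modern RCD/limit-space literature but is a heavier black box than the direct quantitative argument of \cite{ChC1} via the segment inequality and excess estimates; that is a difference of route, not a gap.

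The genuine gap is precisely at the point the paper flags as nontrivial, namely condition $(4)$. You assert $\sup_{B_2}|\nabla u_i^a|^2\le(1+C\delta_i)\fint_{B_{5/2}}|\nabla u_i^a|^2$ from ``the gradient estimate'' for the almost-subharmonic function $|\nabla u_i^a|^2$. No such inequality exists: the sub-mean-value inequality for a nonnegative $v$ with $\Delta v\ge-2\delta v$ under $\Ric\ge-\delta$ carries a constant $C(n)>1$, not $1+C\delta$. The Euclidean monotonicity of $r\mapsto\fint_{\partial B_r}v$ that yields constant $1$ breaks down because its derivative acquires the signless covariance term $\fint_{\partial B_r}\big(\Delta d-\overline{\Delta d}\big)v$; concretely, on a flat cone $C(S^1_\beta)$ (a Ricci-nonnegative limit) the subharmonic function $|\nabla u|^2$ for $u=r^{2\pi/\beta}\cos\big(\tfrac{2\pi}{\beta}\theta\big)$ has $\sup_{B_1}/\fint_{B_1}=2\pi/\beta$, which is arbitrarily large. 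Rescaling $u_i$ by a dimensional constant to restore $\sup|\nabla u|\le1$ would destroy condition $(2)$, so the argument as written does not close. The repair is the one carried out in \cite{CheegerNaber_Codimensionfour}: apply the $L^1$ mean value inequality (with its dimensional constant, or its heat-kernel form via Theorem \ref{t:heat_kernel}) not to $|\nabla u_i^a|^2$ but to the nonnegative subsolution $\big(|\nabla u_i^a|^2-1\big)_+$, which satisfies $\Delta w\ge-C(n)\delta_i$ weakly and whose average over $B_{5/2}$ you have already shown to be $o(1)$; the dimensional constant is then harmless because it multiplies a small quantity, giving $\sup_{B_2}|\nabla u_i^a|^2\le1+o(1)$, after which your final rescaling by $1+o(1)$ does go through.
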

\begin{remark}
The above theorem is a mild extension of one of the main accomplishments of \cite{ChC1}, with the sharp gradient bound of condition $(4)$ having been proved in \cite{CheegerNaber_Codimensionfour}.	
\end{remark}
\vspace{.5cm}

\subsection{Examples}\label{ss:examples}

In this subsection, we  recall some standard examples which play an important role in guiding the results of this paper.  We discuss these in only minimal detail, and refer the reader to the appropriate references for more.

\begin{example}[The Eguchi-Hanson manifold]
\label{sss:Example1_EH}
The Eguchi-Hanson space $\cE = (T^*S^2,g)$ is a complete Ricci flat metric which is diffeomorphic to the cotangent bundle of $S^2$.  The asymptotic cone of $\cE$ converges rapidly to $\dR^4/\dZ_2$, where $\dZ_2$ acts on $\dR^4$ by $x\to-x$. That is, if one considers the spaces $\cE_\eta \equiv (T^*S^2,\eta g)$, then we have the Gromov Hausdorff limit $\cE_\eta\stackrel{\eta\to 0}{\longrightarrow} C(\dR\dP(3))= \dR^4/\dZ_2$.  This is the simplest example which shows that even under the assumption of Ricci  flatness and noncollapsing, Gromov-Hausdorff limit spaces can contain codimension 4 singularities.

 It is an interesting exercise, using the Chern-Gauss-Bonnet formula, that one has $\int_{\cE_\eta} |\Rm|^2(x)\,dx = 16\pi^2$ independent of $\eta$, however for $q>2$ it is also easy to check by a rescaling argument that $\int_{\cE_\eta} |\Rm|^q(x)\,dx\to \infty$ as $\eta\to 0$, which shows us that an apriori $L^2$ bound on the curvature is the most one can expect.
\end{example}
\vspace{.25cm}

\begin{example}[$L^2$ Blow up under collapsing]\label{sss:Example2_collapsing_torus}

We now briefly study an example of Anderson \cite{Anderson_Hausdorff}, which will tell us that the noncollapsing assumption is necessary in that there exists a sequence of manifolds $M^4_j$ such that
\begin{align}\label{e:example:blowup:1}
|\Ric_j|\to 0\, ,\;\;\;\;\;\diam(M^4_j)= 1, \;\;\;\; \Vol(M^4_j)\to 0,\;\;\;\; \int_{M_j} |\Rm_j|^2(x)\,dx\to \infty\, .
\end{align}

Indeed, the rough construction is as follows.  Anderson built a complete simply connected Ricci flat four manifold $(\cS^4,g)$ such that outside a compact set we have that $\cS^4$ is diffeomorphic, and in fact quickly becoming isometric, to the metric product $\dR^3\times S^1$.  The space $\cS^4$ has the property that $\int_{\cS^4}|\Rm|^2 = C>0$.

In order to build $M_j$ let us begin with the flat four torus $T^3\times S^1_{j^{-1}}$, where $T^3=(S^1)^3$ is the standard three torus and $S^1_{j^{-1}}$ is the circle of radius $j^{-1}$.  One may then glue copies of $\cS^4_{j^{-1}}\equiv j\cS^4$ into $T^3\times S^1_{j^{-1}}$.  As $j\to \infty$ one may glue an arbitrarily large number of copies while perturbing the Ricci flat condition arbitrarily small amount.  Each glued copy of $\cS^4_{j^{-1}}$ contributes roughly $C$ to the total $L^2$ norm of $|\Rm|$, and therefore it is easy to check that \eqref{e:example:blowup:1} is satisfied.

\end{example}
\vspace{.5cm}

\section{$(\delta,\tau)$-Neck Regions}\label{s:neck}

A central theme of this paper will be that of a $(\delta,\tau)$-neck region.  In this section we will define this, and then state our main results on such regions.  The proofs themselves will take place over the next several sections, as they will be a bit involved.  We begin with a formal definition:\\

\begin{definition}\label{d:neck}
We call $\cN\subseteq B_2(p)$ a $(\delta,\tau)$-neck region if there exists a closed subset $\cC = \cC_0\cup \cC_+=\cC_0\cup\{x_i\}\subset B_2(p)$  with $p\in \cC$ and a radius function $r:\cC\to \dR^+$ on $\cC_+$ and $r_x=0$ on $\cC_0$ such that $\cN \equiv B_2\setminus \overline B_{r_x}(\cC)$ satisfies
\begin{enumerate}
	\item[(n1)] $\{B_{\tau^2 r_x}(x)\}$ are pairwise disjoint.
	\item[(n2)] For each $r_x\leq r\leq 1$ there exists a $\delta r$-GH map $\iota_{x,r}:B_{\delta^{-1}r}(0^{n-4},y_c)\subseteq\dR^{n-4}\times C(S^3/\Gamma)\to B_{\delta^{-1}r}(x)$, where $\Gamma\subseteq O(4)$ is nontrivial  and $\iota_{x,r}(0^{n-4},y_c)=x$.\footnote{{In particular, the harmonic radius satisfies $r_h(x)\le r_x$.}} 
	\item[(n3)] For each $r_x\leq r$ with $B_{2r}(x)\subseteq B_2(p)$ we have that $\cL_{x,r}\equiv \iota_{x,r}\big(B_r(0^{n-4})\times\{y_c\}\big)\subseteq B_{\tau r}(\cC)$ and $\cC\cap B_r(x)\subset B_{\delta r}(\cL_{x,r})$.
	\item[(n4)] $|\Lip\, r_x|\leq \delta$.
\end{enumerate}	
For each $\tau\leq s$ we define the region $\cN_s\equiv B_2\setminus \overline B_{s\cdot r_x}(\cC)$.
\end{definition}

\begin{remark}
If $\cA\subseteq M$ is a closed subset with $a_x:\cA\to \dR^+$ a nonnegative continuous function, then the closed tube $\overline B_{a_x}(\cA)$ is by definition the set $\bigcup_{x\in\cA} \overline B_{a_x}(x)$.
\end{remark}
\begin{remark}
Recall if $f:\cA\subseteq M\to \dR$ is a function defined on some subset $\cA\subseteq M$, then the lipschitz constant of $f$ is defined $|\Lip\, f|\equiv \sup_{x,y\in\cA} \frac{|f(x)-f(y)|}{d(x,y)}$.	
\end{remark}
\begin{remark}
The notation $\cL_{x,r}$ is based on the comparible notation for the quantitative stratification given in Definition \ref{d:quant_symmetry}, since $B_r(x)$ is $(n-4,\delta)$-symmetric.	
\end{remark}

\begin{remark}\label{r:neck_region:1}
In the above we may take $B_2(p)$ to be either in a manifold $M^n$ or a limit space $M^n_j\to X$.  In the case when $B_2(p)$ is in a manifold then necessarily  $\cC_0=\emptyset$ and $ \#\cC<\infty$, since $\cC_0\subseteq \text{Sing}(X)$ and $\inf r_x>0$.	
\end{remark}
\begin{remark}
We also say $\cN\subseteq B_{2R}(p)$ is a neck region for some $R>0$ if on the rescaled ball $B_2(\tilde p) = R^{-1}B_{2R}(p)$ with  $r_x$ becomes $r_{x}/R$,  we have that $\cN$ satisfies the above.
\end{remark}
\begin{remark}
Note that if $\cN\subseteq B_2(p)$ is a $(\delta,\tau)$-neck region and $B_{2s}(q)\subseteq B_2(p)$, then $\cN\cap B_{2s}(q)$ defines a $(\delta,\tau)$-neck on $B_{2s}(q)$.	
\end{remark}
\vspace{.25cm}

An important concept associated to any neck region is the induced packing measure.  In the same manner that the set $\cC$ is a potentially discrete approximation of the singular set, the packing measure is a potentially discrete approximation of the $n-4$ hausdorff measure on this set.  More precisely, we have the following:

\begin{definition}
	Let $\cN \equiv B_2\setminus \overline B_{r_x}(\cC)$ be a neck region, then we define the associated packing measure
	\begin{align}
    \mu=\mu_\cN \equiv \sum_{x\in \cC_+} r_x^{n-4}\delta_{x} + \lambda^{n-4}|_{\cC_0}\, ,
\end{align}
where $\lambda^{n-4}|_{\cC_0}$ is the $n-4$-dimensional Hausdorff measure restricted to $\cC_0$.
\end{definition}
\vspace{.5cm}

Let us now state what is our main result on neck regions.  The proof of this result will take place over the course of the next several sections, as it will be one of the main technical accomplishments of the paper:

\begin{theorem}\label{t:neck_region}
Let $(M^n_j,g_j,p_j)\to (X,d,p)$ be a Gromov-Hausdorff limit with $\Vol(B_1(p_j))>\rv>0$, $\tau<\tau(n)$ and $\epsilon>0$.  Then for $\delta\leq \delta(n,\rv,\tau,\epsilon)$ if $|{\Ric}_j|<\delta$ and $\cN = B_2(p)\setminus \overline B_{r_x}(\cC)$ is a $(\delta,\tau)$-neck region, then the following hold:
\begin{enumerate}
	\item For each $x\in \cC$ and $r>r_x$ such that $B_{2r}(x)\subseteq B_2(p)$ the induced packing measure $\mu$ satisfies the Ahlfors regularity condition $ A(n,\tau)^{-1}r^{n-4}<\mu(B_r(x)) <A(n,\tau)r^{n-4} $.
	\item $\cC_0$ is $n-4$ rectifiable.
	\item $X$ is a manifold on $\cN$ and we have the $L^2$-curvature bound $\int_{\cN\cap B_1(p)} |\Rm|^2(x)\,dx < \epsilon$. \footnote{{Strictly speaking, $X$ is a smooth manifold with a $W^{2,p}$-metric on $\cN$ for all $p<\infty$, in particular the curvature condition is well defined.}}
\end{enumerate}
\end{theorem}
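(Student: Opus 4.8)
The plan is to prove the three conclusions in the order they are listed, since the rectifiability of $\cC_0$ and the $L^2$ bound both rest on the Ahlfors regularity, which is itself the deepest point. Throughout one uses the neck axioms (n1)--(n4) together with the $\delta$-splitting technology of Section \ref{ss:splitting} and the $\cH$-volume monotonicity of Section \ref{s:L2_bound}. I would start with the lower bound $\mu(B_r(x))\ge A(n,\tau)^{-1}r^{n-4}$. Axioms (n2)--(n3) force the ball centers $\cC\cap B_r(x)$ to approximate, on every scale between $r_x$ and $r$, an $(n-4)$-plane in the sense of the discrete Reifenberg theorem; applying Theorem \ref{t:neck_reifenberg} yields a biH\"older parametrization $\phi:\cC'\subseteq B_r(0^{n-4})\to \cC\cap B_r(x)$ with controlled volume distortion. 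Composing with a $1$-Lipschitz Gromov--Hausdorff approximation $u:B_r(x)\to B_r(0^{n-4})$, the map $u\circ\phi$ is uniformly close to the identity, so its image exhausts a definite fraction of $B_r(0^{n-4})$, and since $u$ is $1$-Lipschitz this gives $A^{-1}r^{n-4}\le \Vol\big(u\circ\phi(\cC')\big)\le \lambda^{n-4}\big(\phi(\cC')\big)\approx \mu(\cC\cap B_r(x))$, using that $\mu$ is comparable to the $(n-4)$-dimensional Hausdorff content on $\cC$. This is carried out in Section \ref{ss:neck_region_smooth:lower_volume}.

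\emph{Upper Ahlfors bound (the main obstacle).} The sharp bound $\mu(B_r(x))\le A(n,\tau)r^{n-4}$ is the crux. The apparent circularity --- one extracts the bound from an $\epsilon$-biLipschitz property of splitting maps on $\cC$, yet that property is proved only under an a priori Ahlfors-type assumption --- is broken by an induction on dyadic scales $r_\alpha=2^{-\alpha}$, following \cite{NaVa_Rect_harmonicmap}. The base case is trivial: $M$ is smooth, so $r_x\ge r_0>0$ on $\cC$ and the bound holds for $r\le r_0$ by definition of $\mu$. For the step from $r_{\alpha+1}$ to $r_\alpha$ one covers $B_r(x)$ ($r\le 10 r_\alpha$) by $C(n)$ balls of radius $10^{-3}r\le r_\alpha$ and uses the inductive hypothesis to obtain the crude bound $\mu(B_r(x))\le B r^{n-4}$ with $B=C(n)A$. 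That crude bound is exactly what is needed to run the splitting analysis: by (n2) and Theorem \ref{t:splitting_function} there is a harmonic $\delta$-splitting $u:B_{2r}(x)\to\dR^{n-4}$, and Theorem \ref{t:splitting_neck_summable_hessian} gives $\int_{\cN\cap B_r(x)}r_h^{-3}|\nabla^2 u|\le \epsilon^2 r^{n-4}$. This last estimate is the genuinely hard step: one passes to the Green's distance $b$ with $G_\mu=b^{-2}$, $-\Delta G_\mu=\mu$, whose comparisons $b\approx r_h\approx d(\cdot,\cC)$, $|\nabla b|\approx 1$, $\Vol(\{b=\rho\})\lesssim\rho^3$ are Lemma \ref{l:green_function_distant_function}, and one proves the superconvexity inequality
$$\rho^2\ddot H+\rho\dot H-(1-\epsilon)H\ \ge\ -\epsilon\sum_{y\in\cC}r_y^{n-4}\,\delta_{[C^{-1}r_y,\,Cr_y]}\ -\ \epsilon\,\rho\,\delta_{[0,C]}$$
for $H(\rho)=\rho F(\rho)$, $F(\rho)=\rho^{-3}\int_{\{b=\rho\}}|\nabla^2 u|\,|\nabla b|\,\phi$ with $\phi$ a cutoff for $\cN$ (Lemma \ref{l:neck:cutoff}, Proposition \ref{p:superconvexity}); a maximum principle and the crude Ahlfors bound then yield the Dini estimate $\int_0^\infty\rho^{-1}H(\rho)\,d\rho\le\epsilon$ (Theorem \ref{t:superconvexity_estimate}), and a coarea argument converts this into the Hessian bound. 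Given the Hessian bound, Theorem \ref{t:splitting_neck_bilipschitz} produces $\cC_\epsilon\subseteq\cC\cap B_r(x)$ with $\mu(\cC_\epsilon)\ge(1-\epsilon)\mu(\cC\cap B_r(x))$ and $\big||u(y)-u(z)|-d(y,z)\big|\le\epsilon\,d(y,z)$ for all $y,z\in\cC_\epsilon$. Since the $\{B_{\tau^2 r_i}(x_i)\}$ are disjoint by (n1), the $\epsilon$-biLipschitz condition makes $\{B_{\frac{1}{2}\tau^2 r_i}(u(x_i))\}_{x_i\in\cC_\epsilon}$ a disjoint family inside a ball of radius $\approx 2r$ in $\dR^{n-4}$, so $\mu(\cC\cap B_r(x))\le(1-\epsilon)^{-1}\sum_{\cC_\epsilon}r_i^{n-4}\le C(n,\tau)\Vol\big(B_{2r}(0^{n-4})\big)\le A(n,\tau)r^{n-4}$, where $A(n,\tau)$ is the fixed constant $C(n,\tau)$ coming from this Euclidean packing. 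This closes the induction, provided $\epsilon$ is chosen small depending on $n,\tau$ and then $\delta\le\delta(n,\rv,\tau)$ small depending on $\epsilon$ and $B=C(n)A(n,\tau)$.

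\emph{Rectifiability of $\cC_0$ and manifold structure on $\cN$.} With (1) in hand, applying Theorem \ref{t:splitting_neck_bilipschitz} at all scales around each point of $\cC_0$ shows that for $\mu$-a.e.\ $x\in\cC_0$ there are arbitrarily small balls $B_s(x)$ on which a definite $\mu$-fraction of $\cC_0$ is carried biLipschitz by a splitting map onto a positive-measure subset of $\dR^{n-4}$; the covering argument of \cite{NaVa_Rect_harmonicmap} then upgrades this to $(n-4)$-rectifiability of $\cC_0$ and shows $\mu|_{\cC_0}=\lambda^{n-4}|_{\cC_0}$, proving (2). For the manifold statement in (3), fix $x\in\cN$ with $2r=d(x,\cC)$ and a nearby ball center $x_0\in\cC$; in the model $B_{\delta^{-1}r}(x_0)\approx\dR^{n-4}\times C(S^3/\Gamma)$ the point $x$ sits at distance $\approx r$ from the singular line $\dR^{n-4}\times\{y_c\}$, and --- using that $\Gamma$ acts freely on $S^3$ away from the origin, which one checks from the neck axioms and the codimension-four theorem --- the complement of that line is a smooth manifold, so $B_{c(n)r}(x)$ is $(n-3,\epsilon')$-symmetric with $\epsilon'\to0$ as $\delta\to0$. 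Theorem \ref{t:eps_reg} then gives $r_h(x)>c(n,\rv)r$, so $X$ is a smooth Riemannian manifold on $\cN$ with $r_h\gtrsim d(\cdot,\cC)$.

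\emph{The $L^2$ curvature bound.} Since (1) holds, Proposition \ref{p:local_L2_neck} applies: for each $x\in\cN$ with $2r=d(x,\cC)$,
$$\int_{B_r(x)}|\Rm|^2\ \le\ C(n)\int_{B_{10r}(x)}\big|\cH_{10r^2}-\cH_{10^{-1}r^2}\big|(y)\,d\mu(y).$$
Choosing a Vitali cover of $\cN\cap B_1$ by such balls $B_{s_{\alpha,i}}(x_{\alpha,i})$ with $2s_{\alpha,i}=d(x_{\alpha,i},\cC)$, $s_{\alpha,i}\in(2^{-\alpha-1},2^{-\alpha}]$ and $\{B_{10^{-1}s_{\alpha,i}}(x_{\alpha,i})\}$ disjoint, summing, and using the monotonicity of $\cH_t$ to pull the $\alpha$-sum inside the absolute value and telescope, one arrives at
$$\int_{\cN\cap B_1}|\Rm|^2\ \le\ C(n)\int_{B_{3/2}}\big|\cH_{40}-\cH_{40^{-1}r_y^2}\big|(y)\,d\mu(y).$$
Because $\cN$ is a $(\delta,\tau)$-neck, $B_{\delta^{-1}}(y)$ and $B_{\delta^{-1}r_y}(y)$ are both close to the metric cone $\dR^{n-4}\times C(S^3/\Gamma)$, on which $\cH_t$ is independent of $t$ at the cone-type point; hence the integrand tends to $0$ pointwise as $\delta\to0$, it is bounded uniformly by Theorem \ref{t:heat_kernel}, and $\mu(B_{3/2})\le C(n,\tau)$ by (1), so dominated convergence forces $\int_{\cN\cap B_1}|\Rm|^2<\epsilon$ once $\delta\le\delta(n,\rv,\tau,\epsilon)$. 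The superconvexity estimate for the $L^1$ Hessian of $u$, together with its summability over the uncontrollably many scales of the neck, is the step I expect to be the main obstacle; the rest is a careful assembly of the tools above.
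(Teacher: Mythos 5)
Your proposal follows the paper's own proof essentially step for step: the lower Ahlfors bound via the discrete Reifenberg map composed with a $1$-Lipschitz splitting/GH map and a degree argument, the upper bound via the dyadic induction with the crude constant $B=C(n)A$ feeding into Theorem \ref{t:splitting_neck_bilipschitz} (whose engine is the superconvexity/Dini estimate for the Green's distance $b$), the rectifiability of $\cC_0$ from the $\epsilon$-biLipschitz pieces plus a density-point argument, and the $L^2$ bound from Proposition \ref{p:local_L2_neck}, a Vitali cover, and telescoping the monotone $\cH$-volume. All of this is correct and is exactly the route taken in Sections \ref{s:L2_bound}--\ref{s:neck_region_proof}.

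The one genuine omission is the reduction of the general limit space to the smooth case. Your induction opens with ``the base case is trivial: $M$ is smooth, so $r_x\ge r_0>0$,'' but the theorem is stated for a GH-limit $X$, where $\cC_0$ may be nonempty and $\inf r_x=0$, so the induction as written has no base step. The paper resolves this with the approximation result of Theorem \ref{t:neck_approximation}: one approximates the neck region $\cN\subseteq X$ first by discrete neck regions in $X$ with $\inf r_x>0$ and then by neck regions $\cN_j\subseteq M_j$ on the smooth approximants, with packing measures satisfying the two-sided comparison $C(n,\tau)^{-1}\mu\le\mu_\infty\le C(n,\tau)\mu$ in the limit; the Ahlfors bounds, the biLipschitz structure, and the $L^2$ estimate proved on the $\cN_j$ then pass to $\cN$. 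Without some such statement your argument only proves the theorem when $X$ is itself a manifold. This is a fixable but necessary ingredient, and it is also the reason the constant in Theorem \ref{t:splitting_neck_bilipschitz} must be uniform in the approximating sequence rather than depending on the (non-effective) lower bound $r_0$.
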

\vspace{.5cm}

Let us briefly outline this section.  We begin in Section \ref{ss:neck:basic_properties} by discussing some basic properties of neck regions.  This includes the definition of wedge regions and the construction of a canonical cutoff function, which will be used frequently in the analysis.  In Section \ref{ss:neck_approximation} we will show that every neck region on a singular limit space $M_j\to X$ may be approximated by neck regions on the smooth spaces $M^n_j$ themselves.  This will allow us to primarily work on manifolds and then pass our results off to the limit automatically.  Finally in Section \ref{ss:neck:reifenberg} we will prove a discrete Reifenberg type theorem for the effective singular set $\cC$.  This will end up playing an important role in the proof of the lower bound in our Ahlfors regularity.\\

\subsection{Basic Properties of Neck Regions}\label{ss:neck:basic_properties}

In this subsection we record some simple properties of neck regions which will play a role in the analysis.

\subsubsection{Regularity in the Neck Region}

Let us begin with the following, which tells us that the harmonic radius at a point in the neck region is roughly the distance of that point to the effective critical set.  The proof is immediate using $(n3)$ and $(n4)$ of a neck region, but it is worth mentioning the result explicitly:\\

\begin{lemma}\label{l:neck:distance_harm_rad}
	Let $(M^n,g,p)$ satisfy $\Vol(B_1(p))>\rv>0$ with $|{\Ric}|<\delta$ and $\cN\subseteq B_2(p)$ a $(\delta,\tau)$-neck region.  Then for each $\epsilon>0$ we have for $\delta<\delta(n,\rv,\epsilon)$ and $\tau<\tau(n,\epsilon)$ that for each $y\in \cN_{10^{-6}}$ it holds that
	\begin{align}
		(1-\epsilon)d(y,\cC)\leq r_h(y)\leq (1+\epsilon)d(y,\cC)\, .
	\end{align}
\end{lemma}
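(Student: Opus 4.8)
\textbf{Proof plan for Lemma \ref{l:neck:distance_harm_rad}.}

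The plan is to exploit the scale-invariant rigidity built into the definition of a neck region. Fix $y \in \cN_{10^{-6}}$ and set $s \equiv d(y,\cC)$; by definition of $\cN_{10^{-6}}$ there is a ball center $x_0 \in \cC$ with $d(y,x_0) = s$ and $s \geq 10^{-6} r_{x_0}$ (if $x_0 \in \cC_+$), or $s > 0$ arbitrary (if $x_0 \in \cC_0$). I would first establish the \emph{upper bound} $r_h(y) \leq (1+\epsilon) s$, which is where condition $(n2)$ of Definition \ref{d:neck} does the work. Applying $(n2)$ at the center $x_0$ at some scale $r$ comparable to $s$ (say $r = Cs$ for a dimensional constant $C$ chosen so that $y$ sits near the singular fiber), the ball $B_{\delta^{-1} r}(x_0)$ is $\delta r$-GH close to a ball in $\dR^{n-4} \times C(S^3/\Gamma)$ with $\Gamma$ nontrivial, and by $(n3)$ the point $y$ lies within $\tau r$ of the set $\cL_{x_0,r}$ which models the singular line $\dR^{n-4}\times\{y_c\}$. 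Since $\Gamma \subseteq O(4)$ is nontrivial, the model space $\dR^{n-4}\times C(S^3/\Gamma)$ has a genuine metric singularity along $\dR^{n-4}\times\{y_c\}$, and points at distance $\approx s$ from this singular set cannot have harmonic radius much larger than $s$: if $r_h(y)$ were $\geq (1+\epsilon)s$, then $B_{(1+\epsilon)s}(y)$ would be uniformly close (in $C^{1,\alpha}$, by Remark \ref{r:harmonic_rad_high_est}) to a Euclidean ball, hence $\delta'$-GH close to $B_{(1+\epsilon)s}(0^n)$, but it is also $\delta'$-GH close (via the composition of GH maps from $(n2)$) to a ball of that radius in the singular model centered near the singular fiber — and these two are bounded away from each other in GH distance by a dimensional constant, a contradiction for $\delta, \tau$ small. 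This is the standard "harmonic radius cannot persist into a singular region" argument and can be phrased as a contradiction/compactness statement using Theorem \ref{t:metriccone_volumecone} and the structure of $C(S^3/\Gamma)$.

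For the \emph{lower bound} $r_h(y) \geq (1-\epsilon) s$, the point is that on the scale $s$ around $y$ the space looks smooth. Here I would use that $B_{\delta^{-1} r_{x_0}}(x_0)$ (or any appropriate scale $\geq r_{x_0}$) is $(n-4,\delta)$-symmetric with respect to $\cL_{x_0,r}$, together with the fact that $y$ is at definite distance $\tau$-comparable to the full scale away from the singular fiber: on $B_{cs}(y)$ for a small dimensional $c$, the space is $\delta$-GH close to a ball in $\dR^{n-4}\times C(S^3/\Gamma)$ that \emph{avoids the cone tip factor}, i.e. lies in the smooth part $\dR^{n-4}\times (C(S^3/\Gamma)\setminus\{y_c\})$, which is a smooth Riemannian manifold. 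Combined with the $\epsilon$-regularity theorem (Theorem \ref{t:eps_reg}, or more directly Theorem \ref{t:eps_reg_volume} applied after rescaling, since a ball avoiding the tip in the model is almost-Euclidean in volume), this forces $r_h(y) \geq (1-\epsilon)s$. More precisely: rescale so $s=1$; then $B_{c}(y)$ is $\delta$-close to an almost-Euclidean ball (a ball in the smooth locus of the model, which itself has harmonic radius bounded below in terms of its distance to the tip), so by continuity of the harmonic radius under GH convergence with two-sided Ricci bounds — equivalently by the $\epsilon$-regularity theorems — we get $r_h(y) > 1-\epsilon$ after undoing the scaling.

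\textbf{Main obstacle.} The delicate part is making the upper bound quantitative and clean: one must rule out that the harmonic radius "reaches across" the singular fiber, and the cleanest route is a contradiction argument. Suppose $r_h(y_j) > (1+\epsilon) d(y_j,\cC_j)$ along a sequence with $\delta_j, \tau_j \to 0$; rescale $d(y_j,\cC_j) = 1$ and pass to a limit. The neck conditions $(n2)$, $(n3)$ force the limit pointed space to be (a ball in) $\dR^{n-4}\times C(S^3/\Gamma)$ with the base point at distance $1$ from the singular set $\dR^{n-4}\times\{y_c\}$, while lower semicontinuity of the harmonic radius under such limits (valid because $|Rc_j| \to 0$, using Remark \ref{r:harmonic_rad_high_est} for the needed $C^{1,\alpha}$ compactness) gives $r_h \geq 1+\epsilon$ at the limit base point — but a ball of radius $1+\epsilon$ around that point must then engulf part of the singular fiber, contradicting that the model is genuinely singular there (since $\Gamma$ is nontrivial, $C(S^3/\Gamma)$ is not a smooth manifold near $y_c$, so no harmonic coordinate chart of radius $> 1$ exists at a point of distance $1$). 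The only real care needed is checking that $\Gamma$ nontrivial indeed prevents $C(S^3/\Gamma)$ from being smooth at the tip, which is classical: $S^3/\Gamma$ has volume $< \Vol(S^3)$, so the cone is not flat $\dR^4$ and the tip is a bona fide singular point of the limit space, incompatible with a harmonic radius lower bound there. The lower bound, by contrast, is routine given the $\epsilon$-regularity theorems already quoted.
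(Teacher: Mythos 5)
Your treatment of the upper bound is essentially correct: the contradiction/compactness scheme, the persistence of $C^{1,\alpha}$ harmonic charts under $|\Ric|\to 0$ (Remark \ref{r:harmonic_rad_high_est}), and the fact that nontriviality of $\Gamma$ makes the tip of $C(S^3/\Gamma)$ a genuine non-manifold point together rule out $r_h(y)>(1+\epsilon)\,d(y,\cC)$ once $\delta$ and $\tau$ are small. (The paper supplies no written argument here -- it declares the lemma immediate from $(n3)$ and $(n4)$ -- so there is no proof to compare against on that side.)

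The lower bound, however, contains a genuine gap, located exactly where you pass from geometric control on $B_{cs}(y)$, $c=c(n)$ small, to the conclusion $r_h(y)\geq (1-\epsilon)s$. The $\epsilon$-regularity theorems you cite can only certify a harmonic chart on a ball whose model counterpart is almost Euclidean, and your supporting claim that ``a ball avoiding the tip in the model is almost-Euclidean in volume'' is false: writing $C(S^3/\Gamma)=\dR^4/\Gamma$, a ball $B_r(y)$ about a point at distance $s$ from the singular fiber wraps around that fiber as soon as $r$ exceeds half the minimal displacement $\min_{\gamma\neq e}|\tilde y-\gamma\tilde y|$, which for cyclic $\Gamma$ is of order $s\sin(\pi/|\Gamma|)$, and the volume ratio of $B_{(1-\epsilon)s}(y)$ then degenerates like $1/|\Gamma|$. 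Since any harmonic chart of radius $r$ forces $\Vol(B_r(y))\geq (1-C(n)10^{-n})\,\omega_n r^n$ (the chart pulls $B_r(0^n)$ back inside $B_r(y)$ with $\sqrt{\det g}$ close to $1$), and volumes converge under the GH approximation of $(n2)$, no chart of radius $(1-\epsilon)s$ can exist when $|\Gamma|$ is large -- and the noncollapsing hypothesis only bounds $|\Gamma|$ by $C(n)/\rv$. What your argument actually delivers is $r_h(y)\geq C(n,\rv)^{-1}d(y,\cC)$, i.e.\ the two-sided comparability $C^{-1}d(y,\cC)\leq r_h(y)\leq (1+\epsilon)d(y,\cC)$, which is the form invoked almost everywhere else in the paper (e.g.\ in the proof of Lemma \ref{l:wedge_integral}). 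The sharp constant $1-\epsilon$ is unreachable by your route, and the volume obstruction above indicates this is a defect of the statement for $|\Gamma|>2$ rather than of your method; if the sharp constant is genuinely needed (as in the appeal to $r_h(y)\geq 3r/2$ in the proof of Proposition \ref{p:local_L2_neck}), the surrounding argument should instead be run at a scale $c(n,\rv)\,d(y,\cC)$.
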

\vspace{.5cm}

\subsubsection{Wedge Regions in $(\delta,\tau)$-Necks}\label{sss:neck:wedge}

A region which will play a useful technical role for us is that of a wedge region.  Precisely:

\begin{definition}\label{d:wedge_region}
	Let $\cN = B_2(p)\setminus \overline B_{r_x}(\cC)$ be a $(\delta,\tau)$-neck region with $\tau,\delta<10^{-n}$.  Then for each center point $x\in \cC$ we define the wedge region $W_x$ associated to $x$ by
	\begin{align}
		W_x \equiv \Big\{y\in A_{10^{-2}r_x, 1}(x): d(y,x)< 2\cdot d(y,\cC)\Big\}\, .
	\end{align}
\end{definition}
\vspace{.5 cm}

Wedge regions are the correct scale invariant regions that many of our estimates will live on.  Let us point out that although the lipschitz condition $(n4)$ of the radius function is quite useful for many of the technical results, the only point where it plays a role of any consequential importance is in controlling the wedge regions.  Indeed, note that $W_x\subseteq \cN_{10^{-3}}$, however if a neck region consisted of an arbitrary covering which did not necessarily satisfy $(n4)$, then this may fail and the intuitive picture of a wedge region may not coincide with the actual picture.\\

\subsubsection{Cutoff Functions on Neck Regions}

In this subsection we build a natural cutoff function associated with a neck region.  We will record some of the basic properties and estimates associated to it, which will be useful throughout the paper.  Precisely:\\

\begin{lemma}\label{l:neck:cutoff}
	Let $(M^n,g,p)$ satisfy $\Vol(B_1(p))>\rv>0$ with $|{\Ric}|<\delta$ and $\cN\subseteq B_2(p)$ a $(\delta,\tau)$-neck region.  Then for $\tau<\tau(n)$ and $\delta<\delta(n,\rv,\tau)$ there exists a cutoff function $\phi=\phi_1\cdot \phi_2 \equiv \phi_\cN: B_2\to [0,1]$ such that
	\begin{enumerate}
	\item $\phi_1(y) = 1$ if $y\in B_{18/10}(p)$ with $\text{supp}(\phi_1) \subseteq B_{19/10}(p)$.
	\item $\phi_2(y)\equiv 1$ if $y\in \cN_{10^{-3}}$ with $\text{supp}(\phi_2)\cap B_2 \subseteq \cN_{10^{-4}}$.
	\item $|\nabla \phi_1|$, $|\Delta \phi_1|\leq C(n)$.
	\item $\text{supp}(|\nabla \phi_2|)\cap B_{19/10}\subseteq  A_{10^{-4}r_x,10^{-3}r_x}\big(\cC\big)$ with $r_x|\nabla \phi_2|$, $r^2_x|\nabla^2\phi_2|<C(n,\tau,\rv)$ in each $B_{r_x}(x)$.
	\end{enumerate}
\end{lemma}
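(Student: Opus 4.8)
Here is the plan. Two cutoff functions are built and multiplied, $\phi=\phi_1\phi_2$. For $\phi_1$ one invokes the standard good cutoff of Cheeger--Colding: since $\Ric\ge-(n-1)$, solving $\Delta w=$ const on the annulus $A_{18/10,19/10}(p)$ with suitable boundary data (or running the heat semigroup on a Lipschitz bump and using the Laplacian comparison) produces a smooth $\phi_1\colon M\to[0,1]$ with $\phi_1\equiv1$ on $B_{18/10}(p)$, $\text{supp}(\phi_1)\subseteq B_{19/10}(p)$, and $|\nabla\phi_1|+|\Delta\phi_1|\le C(n)$; this gives (1) and (3). Since $\cN\subseteq B_2(p)$ lies in a manifold, Remark \ref{r:neck_region:1} gives $\cC_0=\emptyset$ and $\cC=\{x_i\}$ finite with $\inf r_{x_i}>0$. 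Fix a profile $\Psi\in C^\infty([0,\infty),[0,1])$ with $\Psi\equiv0$ on $[0,2\cdot10^{-4}]$, $\Psi\equiv1$ on $[9\cdot10^{-4},\infty)$, $|\Psi'|+|\Psi''|\le C$, and a small dimensional constant $\epsilon=\epsilon(n)$. For each $x\in\cC$ let $\tilde d_x$ be the distance $d(\cdot,x)$ mollified at the fixed scale $\sigma_x\equiv\epsilon\cdot10^{-4}r_x$, so that $|\tilde d_x-d(\cdot,x)|\le\sigma_x$, $|\nabla\tilde d_x|\le1+C\epsilon$, and (this is the point below) $|\nabla^2\tilde d_x|\le C(n)\sigma_x^{-1}$ wherever the harmonic radius is $\ge\sigma_x$. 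Set $\psi_x\equiv\Psi(\tilde d_x/r_x)$ and $\phi_2\equiv\prod_{x\in\cC}\psi_x$, a finite product.

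The value and support assertions of (2), and the support assertion of (4), are then combinatorial. If $y\in\cN_{10^{-3}}$ then $d(y,x)>10^{-3}r_x$ for every $x$, hence $\tilde d_x(y)\ge9\cdot10^{-4}r_x$ and $\psi_x(y)=1$, so $\phi_2\equiv1$ on $\cN_{10^{-3}}$. Each $\psi_x$ vanishes on the open set $\{\tilde d_x<2\cdot10^{-4}r_x\}\supseteq\overline B_{10^{-4}r_x}(x)$, so $\text{supp}(\phi_2)\cap B_2\subseteq B_2\setminus\overline B_{10^{-4}r_x}(\cC)=\cN_{10^{-4}}$. And $\nabla\psi_x$ is supported where $\tilde d_x/r_x$ lies in the transition interval of $\Psi$, which for $\epsilon<1$ is contained in $A_{10^{-4}r_x,10^{-3}r_x}(x)$; by the Leibniz rule $\text{supp}(|\nabla\phi_2|)\subseteq A_{10^{-4}r_x,10^{-3}r_x}(\cC)$.

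For the pointwise bounds in a ball $B_{r_x}(x)$ there are two ingredients. First, if $y\in B_{r_x}(x)$ and $\phi_2$ is not identically $0$ near $y$, then $\tilde d_{x''}(y)\ge2\cdot10^{-4}r_{x''}$ for every center $x''$; taking $x''$ the nearest center to $y$, using $d(x,x'')\le2r_x$ together with (n4) forces $r_{x''}\in[\tfrac12r_x,2r_x]$ and $d(y,\cC)\ge c(n)r_x$, so $y\in\cN_{10^{-6}}$ and Lemma \ref{l:neck:distance_harm_rad} gives $r_h(y)\ge c(n)r_x\gg\sigma_{x'}$ for every center $x'$ with $r_{x'}\approx r_x$. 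Hence the mollification estimate applies at $y$ and gives $r_x|\nabla\psi_{x'}|+r_x^2|\nabla^2\psi_{x'}|\le C(n)$ for every such $x'$; on the open set where $\phi_2$ is locally $\equiv0$, all its derivatives vanish. Second, only centers $x'$ with $r_{x'}\in[\tfrac12r_x,2r_x]$ can have $\nabla\psi_{x'}\ne0$ on $B_{r_x}(x)$ (again by (n4)), their base balls $B_{\tau^2r_{x'}}(x')$ are pairwise disjoint by (n1), so a Bishop--Gromov packing argument bounds their number by $C(n,\tau)$. The Leibniz rule over these at most $C(n,\tau)$ nontrivial factors then gives $r_x|\nabla\phi_2|+r_x^2|\nabla^2\phi_2|\le C(n,\tau)$ on $B_{r_x}(x)$; since $\phi_1\equiv1$ on $B_{r_x}(x)$ for centers away from the $O(\delta)$-collar of $A_{18/10,19/10}(p)$, the same holds for $\phi=\phi_1\phi_2$. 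Fixing $\tau$ to its dimensional value gives (4).

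The step I expect to be the main obstacle is the Hessian bound $|\nabla^2\tilde d_x|\le C(n)\sigma_x^{-1}$ on $\{r_h\ge\sigma_x\}$: differentiating the mollification kernel $\rho_{\sigma_x}(d(\cdot,z))$ twice produces the Hessian of the distance-to-$z$ function at points within $\sigma_x$ of $z$, and one must bound that Hessian by $C(n)\sigma_x^{-1}$. This is exactly where the hypothesis is used --- the Riccati/Hessian comparison for distance functions gives $|\nabla^2 d(\cdot,z)|\le C(n)/d(\cdot,z)$ only on scales below the local geometric scale, which here is $r_h\gtrsim r_x\gg\sigma_x$ --- and one must also track the error terms coming from $|\Ric|<\delta$ rather than $\Ric\equiv0$, which are negligible since all balls in question have radius $\le\delta$ but do require some bookkeeping. (That $d(\cdot,x)$ itself is typically not smooth near $x$ is irrelevant, since $\psi_x$ is flat and identically $0$ on a genuine neighborhood of $\overline B_{10^{-4}r_x}(x)$ by the choice of $\Psi$.)
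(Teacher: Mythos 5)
Your overall architecture matches the paper's: a Cheeger--Colding cutoff $\phi_1$ for the outer annulus, a finite product of per-center annular cutoffs for $\phi_2$, the support combinatorics from the definition of $\cN_s$, and a counting argument via (n1) and (n4) showing that at most $C(n,\tau)$ factors are non-constant near any point. All of that is sound. The difference is the per-center building block: the paper takes $\phi_x=c_x\circ f_x$ where $\Delta f_x=2n$ and $f_x\approx d_x^2$, so the pointwise bound $|\nabla^2 f_x|\le C(n)r_x^{-2}$ on $A_{10^{-4}r_x,10^{-3}r_x}(x)\cap\cN_{10^{-4}}$ comes from Schauder estimates applied to an elliptic equation in the harmonic coordinates guaranteed by Lemma \ref{l:neck:distance_harm_rad}. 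You instead use a mollified distance $\tilde d_x$, and the step you yourself flag as the crux --- $|\nabla^2\tilde d_x|\le C(n)\sigma_x^{-1}$ on $\{r_h\ge\sigma_x\}$ --- is where the argument breaks.

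The justification you offer is Riccati/Hessian comparison for $d(\cdot,z)$ ``on scales below the local geometric scale $r_h$.'' But Hessian comparison for distance functions requires pointwise two-sided \emph{sectional} curvature bounds, and a lower bound on the harmonic radius together with $|\Ric|\le\delta$ yields only $C^{1,\alpha}\cap W^{2,p}$ control of the metric (Remark \ref{r:harmonic_rad_high_est}), i.e.\ $L^p$ control of $\Rm$ --- indeed the whole point of this paper is that pointwise curvature bounds are unavailable in the neck region. So $|\nabla^2 d(\cdot,z)|\le C/d(\cdot,z)$ is not known at the points where you need it, and the $\rho_{\sigma_x}'\cdot\nabla^2 d$ term in your kernel computation is uncontrolled. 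A repair within your scheme is to mollify in a fixed harmonic chart, where the Euclidean convolution estimate $|\partial^2(\eta_\sigma\ast d_x)|\le C\sigma^{-1}\mathrm{Lip}(d_x)$ does hold and the Christoffel corrections are $O(r_x^{-1})$; but since $r_h\approx d(\cdot,\cC)\approx 10^{-4}r_x$ on the transition annulus, no single chart covers it, and patching charts reintroduces exactly the derivative bookkeeping you are trying to avoid. The clean route is the paper's: replace $\tilde d_x$ by the globally defined $f_x$ with $\Delta f_x=2n$, for which the Hessian bound is a purely local Schauder estimate in each chart and no patching is needed.
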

\begin{proof}
Recall from \cite{ChC1} that for each annulus $A_{s,r}(x)$ we can build a cutoff function $\phi$ such that
\begin{align}\label{e:cutoff}
    &\phi\equiv 1 \text{ on }B_s(x)\, ,\;\;\phi\equiv 0 \text{ outside }B_r(x)\, ,\notag\\
	&|\nabla \phi|\leq \frac{C(n)}{|r-s|}\, ,\;\;|\Delta \phi|\leq \frac{C(n)}{|r-s|^2}\,\, .
\end{align}
It will be important to recall that $\phi$ is built as a composition $\phi=c\circ f$, where $c$ is a smooth cutoff on $\dR$ with $c=1$ on $[0,s^2]$ and $c=0$ outside $[0,r^2]$, and $f$ satisfies $\Delta f=2n$ on $B_{5r}(x)$ and is uniformly equivalent to the square distance $d^2_x$.

Thus we can let $\phi_1$ be the cutoff associated to the annulus $A_{\frac{18}{10},\frac{19}{10}}(p)$.  To build $\phi_2$ let us begin by defining for each $x\in \cC$ the cutoff $\phi_x=c_x\circ f_x$ as in \eqref{e:cutoff} associated to the annulus $A_{10^{-4}r_x,10^{-3}r_x}(x)$.  Using elliptic estimates we can get the pointwise estimate $|\nabla^2 f_x|<C(n) r_x^{-2}$ on $A_{10^{-4}r_x,10^{-3}r_x}(x)\cap \cN_{10^{-4}}$.  In particular we can get the estimate
\begin{align}\label{e:cutoff:1}
	r_x|\nabla \phi_x|,\, r_x^2|\nabla^2 \phi_x|<C(n) \text{ on }A_{10^{-4}r_x,10^{-3}r_x}(x)\cap \cN_{10^{-4}}\, .
\end{align}

Now with $\tau<10^{-5}$ let us define the cutoff function
\begin{align}
\phi_2 \equiv \prod \Big(1-\phi_x\Big)\, .	
\end{align}
We have that $\text{supp}\,\phi_2 \cap B_2 \subseteq \cN_{10^{-4}}$ and for each point $y\in B_2(p)$ we have by $(n4)$ that there are at most $C(n,\tau,\rv)$ of the cutoff functions $\phi_x$ which are nonvanishing at $y$.  In particular, the estimates \eqref{e:cutoff:1} then easily imply the required estimates on $\phi_2$.
\end{proof}

\vspace{.5 cm}

\subsection{Approximating Neck Regions on Limit Spaces}\label{ss:neck_approximation}

Most of our theorems on neck regions for limit spaces will be proved by first showing the corresponding results on smooth manifolds and then passing to a limit.  The reason for this is two fold.  First, it is simply more convenient to work on a manifold.  However, the more important reason is that there is a subtle point in the inductive proof of the Ahlfors regularity estimates which force one to first prove all results in the case $\cC_0=\emptyset$.  Essentially, this is because there is no base step for the induction argument otherwise.  The following approximation results will allow us to approximate the general case by such a scenario:\\

\begin{theorem}\label{t:neck_approximation}
	Let $(M^n_j,g_j,p_j)\stackrel{GH}{\to} (X,d,p)$ with $\Vol(B_1(p_j))>\rv>0$, $|\Ric_i|\le \delta$ and $\cN\equiv B_2(p)\setminus \overline B_{r_x}(\cC)$ a $(\delta,\tau)$-neck region.  Then there exists a sequence of $(\delta_i,\tau_i)$-neck regions $\cN_i\equiv B_2(p_i)\setminus \overline B_{r_{x,i}}(\cC_i)$ such that $\cN_i\to \cN$ in the following sense:
	\begin{enumerate}
	\item $\delta_i\to \delta$, $\tau_i\to \tau$.
	\item Let  $\psi_i:B_2(p_i)\to B_2(p)$ be the Gromov-Hausdorff maps then  $\psi_i(\cC_i)\to \cC$ in Hausdorff sense.
	\item $r_{x,i}\to r_x:\cC\to \dR^+$ uniformly \footnote{``uniform convergence'' means: $r_{x,r}\circ \phi_i\to r_x:\cC\to \mathbb{R}^+$ uniformly, where $\phi_i: \cC\to \cC_i$ is the Gromov-Hausdorff maps. } . 
	\item If $\mu_i$, $\mu$ are the packing measures of $\cN_i$ and $\cN$, respectively, then if we limit $\mu_i\to \mu_\infty$ we get the uniform comparison $\mu\leq C(n,\tau) \mu_\infty$.
	\item Conversely, we have the estimate $\mu_\infty \leq C(n,\tau)\,\mu$.	\end{enumerate}
\end{theorem}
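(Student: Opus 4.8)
The plan is to build the approximating necks $\cN_i$ in two stages. First, one handles the ``continuous'' part $\cC_+ = \{x_i\}$ of the center set, which is already a genuine subset of $X$ sitting at positive distance from the singular behavior encoded by the radius function; here, because the $\phi_j^{-1}$ pull points of $X$ back to points of $M^n_j$ with quantitative control (and since the balls $\overline B_{r_x}(x)$ are closed and $r_x \geq r_0 > 0$ on any fixed compact piece of $\cC_+$), one simply lifts each $x \in \cC_+$ to a nearby $x_j \in M^n_j$ and keeps $r_{x,j} = r_x$. The content of the construction is in the second stage: approximating $\cC_0$, the (a priori fractal) part where $r_x = 0$. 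The idea, following the Reifenberg-type philosophy of the paper and of \cite{NaVa_Rect_harmonicmap}, is to replace $\cC_0$ by a \emph{discrete} cover. Fix a small scale $s_j \to 0$; take a maximal $\tau^2 s_j$-separated subset of $\cC_0$ and assign each of these new centers the radius $s_j$. Patching these new ``$s_j$-centers'' with the already-lifted $\cC_+$-centers (deleting from $\cC_+$ anything with $r_x < s_j$, and deleting $s_j$-centers that collide with the kept $\cC_+$-balls), and transporting everything to $M^n_j$ via $\phi_j^{-1}$, one obtains candidate sets $\cC_j$ and radius functions $r_{x,j}$ with $\cC_{0,j} = \emptyset$.

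The next step is to verify that $(\cC_j, r_{x,j})$ actually defines a $(\delta_j,\tau_j)$-neck region on $B_2(p_j)$ with $\delta_j \to \delta$, $\tau_j \to \tau$. Properties (n1) and (n4) are essentially forced by the construction: disjointness of $\{B_{\tau^2 r_{x,j}}(x)\}$ follows from the $\tau^2 s_j$-separation of the new centers together with the separation already present among the $\cC_+$-balls, after shrinking $\tau_j$ slightly; the Lipschitz bound $|\Lip\, r_{x,j}| \leq \delta_j$ is immediate where the radius function is locally constant ($= s_j$ on new centers), and where new and old centers meet one uses that the old $r_x$ was $\delta$-Lipschitz and the transition happens at scale $\approx s_j$. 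The geometric conditions (n2) and (n3) are where one must work: for a new center $x \in \cC_{0,j}$ with $s_j \leq r \leq 1$, Gromov-Hausdorff closeness of $\phi_j$ pushes forward the fact that (n2) holds at the corresponding point of $\cC_0 \subseteq X$ for the original neck $\cN$; one needs $s_j \to 0$ slowly enough relative to the GH-distance $d_{GH}(M^n_j, X) \to 0$ so that at scales $\geq s_j$ the manifold $M^n_j$ is still $\delta_j r$-close to $\dR^{n-4}\times C(S^3/\Gamma)$, which is just a diagonal-subsequence argument. Similarly (n3), that $\cL_{x,r}$ lies in $B_{\tau r}(\cC)$, transfers from $X$ to $M^n_j$ because the new centers were chosen to be a \emph{maximal} separated net in $\cC_0$ at scale comparable to the relevant radii, so their $\tau_j r$-tube still covers the cone axis. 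One must be a little careful that the family of subgroups $\Gamma$ appearing in (n2) can be taken the same in the limit, but this follows from discreteness of the list of finite subgroups of $O(4)$ up to conjugacy together with the GH-stability of cross-sections.

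For the measure comparisons (4) and (5): with $\mu_j = \sum_{x\in\cC_j} r_{x,j}^{n-4}\delta_x$ and $\mu = \sum_{x\in\cC_+} r_x^{n-4}\delta_x + \lambda^{n-4}|_{\cC_0}$, one limits $\mu_j \to \mu_\infty$ weakly (after passing to a subsequence; finiteness on $B_2$ comes from the Ahlfors upper bound of Theorem \ref{t:neck_region}, or at the crude level from the packing bound $\sum r_{x,j}^{n-4-\eta}\leq C$ of \cite{CheegerNaber_Ricci}, which is all one needs before invoking the sharp result). On the $\cC_+$-part the two measures literally agree in the limit, so both comparisons reduce to the $\cC_0$-part. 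There, $\mu_\infty|_{\cC_0}$ is the weak limit of $s_j^{n-4}\times(\#\{s_j\text{-centers in a ball}\})\delta$; since the $s_j$-centers form a maximal $\tau^2 s_j$-separated net in $\cC_0$, a standard Vitali packing/covering comparison gives, for each ball $B_r(x)$ with $x\in\cC_0$, both $\mu_\infty(B_r(x)) \leq C(n,\tau)\,\lambda^{n-4}(\cC_0\cap B_{2r}(x)) \leq C(n,\tau)\,\mu(B_{2r}(x))$ (using that $\cC_0$ is $(n-4)$-Ahlfors-regular, which is part (2)–(1) of Theorem \ref{t:neck_region} — though to avoid circularity at this stage one can instead feed in the a priori $\delta$-weakened Ahlfors bound) and conversely $\mu(B_r(x)) = \lambda^{n-4}(\cC_0\cap B_r(x)) \leq C(n,\tau)\,\mu_\infty(B_{2r}(x))$ by covering $\cC_0$ by the $s_j$-balls around the net points. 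Differentiation of measures then upgrades these ball-wise inequalities to $\mu \leq C(n,\tau)\mu_\infty$ and $\mu_\infty \leq C(n,\tau)\mu$.

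The main obstacle I anticipate is genuinely the circularity alluded to in the paragraph before Theorem \ref{t:neck_approximation}: the clean statement of the measure comparison wants Ahlfors regularity of $\cC_0$, but Ahlfors regularity is exactly what one is trying to bootstrap, and its inductive proof needs the case $\cC_0 = \emptyset$ in hand. The resolution is to run this approximation with only the \emph{weak} (constant-$B$, or even $(n-4-\eta)$-content) packing bounds available unconditionally from \cite{CheegerNaber_Ricci}, prove Theorem \ref{t:neck_region} for the smooth necks $\cN_j$ (where $\cC_{0,j}=\emptyset$ gives the induction its base case), and only then push the sharp Ahlfors bound and rectifiability back across to $\cN$ via (4) and (5). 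So the delicate point is bookkeeping: ensuring that every quantitative constant used in constructing $\cN_j$ and in the measure comparison depends only on $n,\rv,\tau$ and not on any bound one has not yet proven.
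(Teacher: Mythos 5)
Your two--stage plan (first discretize the neck inside $X$ at a scale $s_j\to 0$, then lift the resulting finite center set to $M^n_j$ via the Gromov--Hausdorff maps and diagonalize) is exactly the route the paper takes, and your closing paragraph correctly identifies why this approximation must be run with only the weak packing bounds in hand. However, there is a genuine gap in how you discretize: you take the $s_j$-net only on $\cC_0$ and then \emph{delete} the centers of $\cC_+$ with $r_x<s_j$. The Lipschitz condition $(n4)$ works against you here: if $y\in\cC_0$ then $r_x=|r_x-r_y|\leq \delta\, d(x,y)$, so a center with $0<r_x<s_j$ sits at distance at least $r_x/\delta$ from $\cC_0$ --- and more generally every center within distance $t$ of $x$ has $r$-value at most $r_x+\delta t$, so \emph{all} centers within distance $\approx s_j/\delta$ of $x$ get deleted and none of your new $s_j$-centers land there. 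Condition $(\tilde n3)$/$(n3)$ then fails for the surviving centers at scales $r$ with $s_j\lesssim r\ll s_j/(\delta\tau)$: the effective singular sets $\cL_{z,r}$ still pass through the deleted region, but the nearest remaining center is at distance $\gg \tau r$. (In the extreme case $\cC_0=\emptyset$ with $\inf r_x$ just below $s_j$, your construction deletes everything.) The fix is what the paper does in Lemma \ref{l:neck_approximation}: do not delete anyone; set $\tilde r_x\equiv\max\{r_x,s_j\}$ on \emph{all} of $\cC$, observe $|\Lip\,\tilde r_x|\leq\delta$ is preserved, and then pass to a maximal subset of $\cC$ for which $\{B_{\tau^2\tilde r_x}(x)\}$ are disjoint. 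Maximality keeps a $\tau^2 s_j$-dense net along the whole of $\cC$, so $(n2)$ and $(n3)$ survive with essentially unchanged constants, and the lifting step is then trivial since the center set is finite.

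One further caution on the measure comparison: your argument for $(5)$, $\mu_\infty\leq C(n,\tau)\mu$, leans on a lower Ahlfors bound for $\lambda^{n-4}|_{\cC_0}$ alone (not for $\mu$, whose lower bound may be carried entirely by nearby $\cC_+$-points), and the ``a priori weakened'' packing bounds you propose to substitute do not supply this. The paper handles $(5)$ differently and more honestly: it identifies $\mu_\infty|_{\cC_0}$ with the $(n-4)$-Minkowski content of $\cC_0$, notes that $(4)$ then follows unconditionally since Hausdorff measure is dominated by Minkowski content, and proves $(5)$ only \emph{after} the rectifiability of $\cC_0$ from Theorem \ref{t:neck_region} is available (Minkowski content $\approx$ Hausdorff measure for rectifiable sets), explicitly remarking that the main results do not depend on $(5)$. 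You should structure your measure comparison the same way rather than trying to prove $(5)$ at this stage.
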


\begin{remark}
	We will prove $(1)$-$(4)$ in full generality now.  The estimate $(5)$ we will prove under the assumption that $\cC_0$ is rectifiable, which will be a consequence of Theorem \ref{t:neck_region}.  We will only sketch $(5)$ as the main results in the paper will not rely on it.
\end{remark}
\begin{remark}
More generally, we will see without prior knowledge of rectifiability that $\mu_\infty$ is uniformly equivalent to $\mu_+=\mu\cap \cC_+$ plus the upper minkowski $n-4$-content of $\cC_0$.  The point then is that when $\cC_0$ is $n-4$-rectifiable we have that the $n-4$-content is equivalent to the $n-4$-Hausdorff measure.
\end{remark}

\vspace{.5cm}

The proof of Theorem \ref{t:neck_approximation} will be done in two steps.  The first is to approximate $\cN\subseteq B_2(p)$ by a sequence of neck regions $\tilde \cN_a\subseteq B_2(p)$ which also live in the singular space, but which are discrete.  After this is accomplished we will approximate much more directly the discrete neck regions $\tilde \cN_a$ by neck regions living in the smooth approximating spaces.  Let us begin by constructing the discrete neck regions $\tilde \cN_a$:\\

\begin{lemma}\label{l:neck_approximation}
	Let $(M^n_j,g_j,p_j)\stackrel{GH}{\to} (X,d,p)$ with $\Vol(B_1(p_j))>\rv>0$, $|\Ric_i|\le \delta$ and $\cN\equiv B_2(p)\setminus \overline B_{r_x}(\cC)$ a $(\delta,\tau)$-neck region.  Then there exists a sequence of $(\delta_a,\tau_a)$-neck regions $\tilde \cN_a\equiv B_2(p)\setminus \overline B_{r_{x}}(\tilde \cC_a)$ with $r_{x,a}>r_a>0$ uniformly bounded from below and $\tilde \cN_a\to \cN$ in the following sense:
	\begin{enumerate}
	\item $\delta_a\to \delta$, $\tau_a\to \tau$.
	\item $\tilde \cC_a\to \cC$ in the Hausdorff sense.
	\item $r_{x,a}\to r_x:\cC\to \dR^+$ uniformly.
	\item If $\tilde \mu_a$, $\mu$ are the packing measures of $\tilde \cN_a$ and $\cN$, respectively, then if we limit $\tilde \mu_a\to \tilde \mu_\infty$ we get the uniform comparison $\mu\leq C(n,\tau) \tilde \mu_\infty$.
	\item We have the estimate $\tilde \mu_\infty \leq C(n,\tau)\,\mu$.	\end{enumerate}
\end{lemma}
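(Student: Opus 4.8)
The plan is to discretize $\cN$ one scale at a time, keeping the large balls of the covering untouched and replacing the portion of the center set on which the radius is small --- in particular all of $\cC_0$, where it vanishes --- by a uniformly discrete collection. Fix two sequences $\epsilon_a\downarrow 0$ and $s_a\downarrow 0$ with $s_a\ll\epsilon_a$, to be pinned down at the end. At stage $a$ I would set
\[
\cC^{\mathrm{big}}_a\equiv\{x\in\cC_+:r_x\ge s_a\}\,,\qquad \cC^{\mathrm{small}}_a\equiv\cC\setminus\cC^{\mathrm{big}}_a\supseteq\cC_0\,,
\]
so that $\{B_{\tau^2r_x}(x)\}_{x\in\cC^{\mathrm{big}}_a}$ is already a disjoint family with radii bounded below, and then choose $\cD_a\subseteq\cC^{\mathrm{small}}_a$ to be a maximal $C_0(n)\tau^2 s_a$-separated subset, with $C_0$ a large dimensional constant. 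The new center set is $\tilde\cC_a\equiv\cC^{\mathrm{big}}_a\cup\cD_a$, and the new radius function is $r_{x,a}\equiv r_x$ on $\cC^{\mathrm{big}}_a$ and $r_{x,a}\equiv s_a$ on $\cD_a$, so that $r_{x,a}\ge s_a=:r_a>0$ everywhere, as required. The Lipschitz bound $(n4)$ is inherited because $r_{x,a}$ is (essentially) the maximum of the old $\delta$-Lipschitz radius with the constant $s_a$, and one should use here that by $(n4)$ for $\cN$ a small ball can only abut a large ball when the two radii are comparable, which keeps the new $s_a$-balls from poking into the untouched large balls once $C_0$ is chosen large (up to shrinking the disjointness constant by a negligible amount). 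Condition $(n2)$ is inherited verbatim, since we never lower a radius: for $x\in\tilde\cC_a$ one has $[r_{x,a},1]\subseteq[r_x^{\mathrm{old}},1]$ and $(n2)$ is a statement about the intrinsic geometry of the balls $B_{\delta^{-1}r}(x)$, independent of the center set.

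The delicate point, and the main obstacle, is $(n3)$: for $w\in\tilde\cC_a$ and $r\ge r_{w,a}$ I must place a point of $\tilde\cC_a$ within $\tau_a r$ of every $q\in\cL_{w,r}$, starting from the fact that $(n3)$ for $\cN$ gives $z\in\cC$ with $d(q,z)\le\tau r$. If $z\in\cC^{\mathrm{big}}_a$ then $z\in\tilde\cC_a$ and nothing more is needed; if $z\in\cC^{\mathrm{small}}_a$ then $z$ lies within $C_0(n)\tau^2 s_a$ of $\cD_a\subseteq\tilde\cC_a$, so $d(q,\tilde\cC_a)\le\tau r+C_0(n)\tau^2 s_a$, which is $\le\tau_a r$ only when $r$ is not too small compared with the net scale. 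Thus the net scale $C_0(n)\tau^2 s_a$ must be kept small relative to the smallest new radius $s_a$, while simultaneously the $s_a$-balls must be separated enough for $(n1)$; balancing these --- together with the constant $C_0$ and the decay rates of $\epsilon_a$ and $s_a$ --- to obtain $\delta_a\to\delta$ and $\tau_a\to\tau$ is precisely the technical content of the lemma. (One can afford $\epsilon_a\gg s_a$ to make $\cC^{\mathrm{big}}_a$ exhaust $\cC_+$ without disturbing the fine net near $\cC_0$.)

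Finally I would read off $(1)$--$(5)$. As $a\to\infty$ we have $s_a,\epsilon_a\to0$, so $\cC^{\mathrm{big}}_a\uparrow\cC_+$, $r_{x,a}\to r_x$ uniformly, and $\cD_a$ becomes an ever finer net of $\cC_0$ (plus a vanishing piece of $\cC_+$); together with the convergence $\delta_a\to\delta$, $\tau_a\to\tau$ this is $(1)$--$(3)$. For the measures, $\tilde\mu_a=\sum_{\cC^{\mathrm{big}}_a}r_x^{n-4}\delta_x+s_a^{n-4}\sum_{\cD_a}\delta_x$; the first sum converges to $\mu|_{\cC_+}$, while by Ahlfors regularity of the model at the net points and the fact that $\cD_a$ is a $C_0(n)\tau^2 s_a$-net of $\cC_0$, the quantity $s_a^{n-4}\,\#(\cD_a\cap B_r)$ is comparable up to $C(n,\tau)$ to the $n-4$-content of $\cC_0\cap B_r$, which always dominates its Hausdorff measure. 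Hence $\mu=\mu|_{\cC_+}+\lambda^{n-4}|_{\cC_0}\le C(n,\tau)\,\tilde\mu_\infty$, which is $(4)$. For $(5)$ one needs the reverse estimate for $\cC_0$, i.e.\ that its upper Minkowski $n-4$-content is controlled by $\lambda^{n-4}|_{\cC_0}$; this is exactly $(n-4)$-rectifiability of $\cC_0$, which is a conclusion of Theorem~\ref{t:neck_region}, so $(5)$ is obtained a posteriori and I would only sketch it.
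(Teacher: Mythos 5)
Your proposal is correct and follows essentially the same route as the paper: the paper simply sets $\tilde r_x\equiv\max\{r_x,r\}$ on all of $\cC$ (which is your big/small split in one formula, and makes the Lipschitz condition $(n4)$ immediate) and then extracts a maximal subcollection with $\{B_{\tau^2\tilde r_x}\}$ disjoint, verifying $(n1)$–$(n3)$ with the same slight degradation of constants you describe; the measure comparisons $(4)$–$(5)$ are likewise obtained by identifying $\tilde\mu_\infty|_{\cC_0}$ with the $n-4$ Minkowski content and invoking rectifiability of $\cC_0$ only for the reverse inequality $(5)$, exactly as you do.
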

\begin{proof}
	For any $r>0$ let us define the function $\tilde r_x$ on $\cC$ given by $\tilde r_x \equiv \max\{r_x,r\}$.  Note that $|\Lip\, \tilde r_x|<\delta$, and that all the properties of a neck region are satisfied with $\cC$ and $\tilde r_x$ except potentially the Vitali condition.  Thus let us choose a maximal subset $\tilde \cC_r\equiv \{x^r_i\}\subseteq \cC$ such that the balls $\big\{B_{\tau^2 r_i}(x^r_i)\big\}$ are disjoint.  It is easy to check that $\tilde N_r \equiv B_2\setminus B_{\tilde r_x}(\tilde \cC_r)$ is then a $(\delta,\tau)$-neck region itself for each $r>0$.  We need to understand the convergence of these neck regions to $\cN$.  Conditions $(1)\to (3)$ are clear.  In order to see $(4)$ and $(5)$ we can focus on the set $\cC_0$, as on $\cC_+$ the limit of $\tilde \mu_r$ is exactly $\mu_+=\mu\cap \cC_+$.  Thus let us observe the following.  If $y\in \cC_0$ then for all $r<<s$ we have by the definition of $\tilde\mu_r$ the uniform minkowski estimates
\begin{align}
&r^{-4}\Vol\big(B_s(y)\cap B_r(\cC_0)\big)\leq C(n,\tau)\, \tilde\mu_r(B_{s+r}(y))\notag\\
&\tilde\mu_r(B_{s-r}(y)\cap B_r(\cC_0))\leq C(n,\tau)\,r^{-4}\Vol\big(B_{s}(y)\cap B_r(\cC_0)\big)\, .
\end{align}
Limiting $r\to 0$ proves that $\tilde\mu_\infty\cap \cC_0$ is equivalent to the minkowski $n-4$-content, which in particular proves $(4)$.  Given that $\cC_0$ is rectifiable, then a standard geometric measure theory argument tells us that the minkowski content is itself uniformly equivalent to the hausdorff measure, which thus proves $(5)$.
\end{proof}
\vspace{.5cm}

Let us now finish the proof of Theorem \ref{t:neck_approximation}:

\begin{proof}[Proof of Theorem \ref{t:neck_approximation}]
	
	Let us begin by remarking that because of Lemma \ref{l:neck_approximation} we need only consider the case when our neck region $\cN$ satisfies $\inf r_x>0$.  Indeed, in the general case we may approximate $\cN$ by a sequence $\tilde \cN_a$ of such neck regions with $\tilde \cN_a\to \cN$.  If we can therefore find for each of these neck regions smooth approximations $\cN_{ai}\subseteq B_2(p_i)$ with $\cN_{ai}\to \tilde \cN_a$, then by taking a diagonal subsequence we have approximated $\cN$ itself in the sense of Theorem \ref{t:neck_approximation}.\\
	
	Thus let us assume $\cN$ satisfies $\inf r_x>0$.  In particular notice that $\cC=\cC_+$ is a finite set in this case.  Let $\phi_i:B_{2}(p)\to B_{2}(p_i)$ be the $\epsilon_i$-Gromov Hausdorff maps.  For $i$ sufficiently large with $\epsilon_i<< \inf r_x$ note that we can consider the center points $\cC_i\equiv \{\phi_i(x)\}_{x\in \cC}$ with the radius function $r_{x,i} \equiv r_{\phi_i^{-1}(x)}$.  With $\cC$ finite it is then an easy exercise to check that $\cN_i\equiv B_2\setminus \overline B_{r_{x,i}}(\cC_i)$ are $(\delta_i,\tau_i)$-neck regions which satisfy the criteria of the Theorem, as claimed.
\end{proof}
\vspace{.5cm}

\subsection{Reifenberg and Lower Ahlfors Regularity}\label{ss:neck:reifenberg}

In this section we build a Reifenberg type map for the center points of our neck regions.  The constructions of this section generalize the usual Reifenberg construction of \cite{Reifenberg}, \cite{ChC2} in that the mapping is built with respect to a general covering.  The construction of this section is related to the constructions of \cite{NaVa_Rect_harmonicmap}.  Our main application of this will be in Section \ref{s:neck_region_proof} to prove the lower Ahlfors regularity bound of Theorem \ref{t:neck_region}.  Let us begin with the precise construction:

\begin{theorem}\label{t:neck_reifenberg}
	Let $(M^n_j,g_j,p_j)\stackrel{GH}{\to} (X,d,p)$ satisfy $\Vol(B_1(p_j))>\rv>0$ with $\Ric_j\geq -\delta$ and let $\cN\equiv B_2(p)\setminus \overline B_{r_x}(\cC)$ a $(\delta,\tau)$-neck region with $\max_{x\in \cC}r_x\le 10^{-3}$.  For each $\epsilon>0$ if $\tau<\tau(n)$ and $\delta\leq \delta(n,\rv,\epsilon)$, then there exists a map $\Phi: {\cC}\cap B_{19/10}(p)\to \dR^{n-4}$  with $\Phi(p)=0^{n-4}$ such that 
	\begin{enumerate}
		\item $(1-\epsilon)d(x,y)^{\epsilon}\leq \frac{|\Phi(x)-\Phi(y)|}{d(x,y)}\leq d(x,y)^{-\epsilon}$ 
		\item For each $x\in \cC\cap B_{9/5}(p)$ and $r_x\leq r\le 10^{-3}$ there exists $T_{x,r}=T\in\dR^{(n-4)\times(n-4)}$ such that the map $T\circ\Phi: B_{30r}(x)\cap \cC \to \dR^{n-4}$ is an $\epsilon r$-isometry and $T_{x,10^{-3}}=Id$.
		\item $|T_{x,r}\circ (T_{x,s})^{-1}|+|T_{x,s}\circ (T_{x,r})^{-1}|\le C(n,\epsilon)\left(\frac{r}{s}\right)^{\epsilon}$ for all $r_x\le s\le r\le 10^{-3}$ and $x\in\cC\cap B_{9/5}(p)$.
		\item For any $x,y\in \cC\cap B_{9/5}(p)$ and $10^{-3}\ge r\ge r_x$ with $ d(x,y)\le 10r$,  there exists $O_{xy,r}\in O(n-4)$ such that $|T_{x,r}\circ (O_{xy,r}T_{y,r})^{-1}-Id|\le \epsilon$. 
		\item If $T_x:= T_{x,r_x}$, then we have the covering  $B_{7/4}(0^{n-4})\subseteq \bigcup_{x\in \cC\cap B_{9/5}(p)} T_x^{-1}\bar B_{r_x}(\Phi(x))$, where $T_x^{-1}\bar B_{r_x}(\Phi(x)):=\{T_x^{-1}v+\Phi(x): v\in \bar B_{r_x}(0^{n-4})\}\subset \mathbb{R}^{n-4}$. 	\end{enumerate}
\end{theorem}

\begin{remark}
Actually, one can prove $T_x^{-1}\{B_{\tau^3 r_x}(\Phi(x))\}$ are disjoint in (5).  We omit the proof here since we don't need to use this. See also the claim in Subsection \ref{ss:neck_region_smooth:lower_volume} for a similar statement and proof.
\end{remark}

\begin{remark}
Condition $(1)$ tells us that the mapping $\Phi$ is uniformly bih\"older. 
\end{remark}
\begin{remark}
The only manner in which the neck region structure is used is to see that $\cC$ is a reifenberg set.  Indeed, it is not hard to check that the constructions of this theorem work for a general reifenberg type set.
\end{remark}
\begin{remark}
In the statement of Theorem \ref{t:neck_reifenberg} and its proof.  We call a map $f: B_r(x)\cap \cC\to \mathbb{R}^{n-4}$ is an $\epsilon r$-isometry if for any $y,z\in B_r(x)\cap \cC$ the following holds:
\begin{align}
\Big||f(z)-f(y)|-d(z,y)\Big|\le \epsilon r,
\end{align}
where the image $f(B_r(x)\cap \cC)$ may not be $\epsilon r$-dense in $B_r(f(x))\subset \mathbb{R}^{n-4}$.
\end{remark}
\begin{remark}
	 Let $(X,d,p)$ be a pointed compact metric space. If $f,g: B_r(p)\to \mathbb{R}^k$ are two $\epsilon r$-isometries with $f(p)=g(p)=0^k$ and both $f,g$ are $\tau r$-dense in $B_r(0^k)$ with $\tau<\tau(k)$,  then one can show that there exists $A\in O(k)$ such that $\sup_{x\in X}|f-Ag|\le C(k){\epsilon}$. 
	 We will use this fact several times in our proof.
	 Actually, in our proof the $\epsilon r $-isometry will be a restriction of some $\epsilon r$-GH map and the restricted map is $\tau r $-dense with $\tau$ small. 
	\end{remark}
\begin{proof}
Let us choose the constants $\eta\equiv 10^{-6n}\,\epsilon^2$ and $\delta\leq \eta^{10}$, and let us consider the scales $t_\beta \equiv \eta^{2\beta}$.  We will essentially prove the Theorem inductively on $\beta$.	More precisely, for each $x\in \cC$ let us define $r^\beta_x\equiv \max\{t_\beta,r_x\}$,  then our goal is to build maps $\Phi^\beta:\cC\cap B_{\frac{19}{10}+100t_{\beta-1}}(p)\to \dR^{n-4}$ such that the following hold: 
\begin{align}
     (\beta.0) &\;\Phi^0:\cC\to \dR^{n-4}\text{ is a }\eta^{10}\text{-isometry}\, ,\notag\\
	(\beta.1) &\forall \, x\in \cC\cap B_{{19}/{10}}(p)\; \exists\; \text{ linear } T^\beta_x \in \mathbb{R}^{(n-4)\times (n-4)}\text{ such that }\Phi_x^\beta\equiv T^\beta_x\circ \,\Phi^\beta:B_{50r_x^\beta}(x)\cap \cC\to \dR^{n-4} \text{ is a }\eta^6\, r_x^\beta\text{-isometry}\, ,\notag\\
	(\beta.2) &\sup_{y\in B_{40r_x^\beta}(x)\cap\cC}|T_x^\beta\circ (\Phi^\beta-\Phi^{\beta+1})|(y)\le \eta^5 r_x^\beta\,, \notag\\
		(\beta.3) &|T_x^\beta\circ (T_x^{\beta-1})^{-1}-Id|\le \eta^2 \text{  and } |T_x^{\beta-1}\circ (T_x^{\beta})^{-1}-Id|\le \eta^2\,. \notag\\
\end{align}

Before building the maps $\Phi^\beta$ let us check to see that we can finish the Theorem from their construction by choosing $\tau\le \tau(n)$.

By $(\beta.3)$ and noting that 
\begin{align}
T_x^\beta\circ (T_x^{\beta-2})^{-1}-Id=(T_x^\beta\circ (T_x^{\beta-1})^{-1}-Id)(T_x^{\beta-1}\circ (T_x^{\beta-2})^{-1}-Id)+(T_x^\beta\circ (T_x^{\beta-1})^{-1}-Id)+(T_x^{\beta-1}\circ (T_x^{\beta-2})^{-1}-Id)
\end{align}
we have $|T_x^\beta\circ (T_x^{\beta-2})^{-1}-Id|\le \eta^4+2\eta^2$. Inductively, we have for all $0\le \alpha<\beta$ that
\begin{align}\label{e:TxbetaTxalpha-Id}
|T_x^\beta\circ (T_x^{\alpha})^{-1}-Id|\le \sum_{k=1}^{\beta-\alpha-1}C_{\beta-\alpha}^k\eta^{2k}=(1+\eta^2)^{\beta-\alpha}-1,
\end{align}
where $C_{\beta-\alpha}^k=\frac{(\beta-\alpha)!}{k!(\beta-\alpha-k)!}$. This implies 
\begin{align}\label{e:TxbetaTxalpha1}
|T_x^\beta\circ (T_x^\alpha)^{-1}|\le (1+\eta^2)^{\beta-\alpha}.
\end{align}
Similarly, by using $|T_x^{\beta-1}\circ (T_x^{\beta})^{-1}-Id|\le \eta^2$ in $(\beta.3)$ we get 
\begin{align}\label{e:TxbetaTxalpha2}
|T_x^\alpha\circ (T_x^\beta)^{-1}|\le (1+\eta^2)^{\beta-\alpha}.
\end{align}
Noting that $T_x^0=Id$ we conclude 
\begin{align}\label{e:TxbetaTxbeta-1norm}
\max\{ |T_x^{\beta}|,|(T_x^{\beta})^{-1}|\}\le (1+\eta^2)^{\beta}.
\end{align}
Combining with $(\beta.2)$ we have 
\begin{align}
|\Phi^{\beta}(x)-\Phi^{\beta+1}(x)|\le \eta^5r_x^\beta(1+\eta^2)^{\beta}.
\end{align}
This implies for all $0\le \alpha\le \beta$ that
\begin{align}
|\Phi^{\beta+1}-\Phi^{\alpha}|(x)\le \sum_{k=\alpha}^{\beta} \eta^5r_x^k(1+\eta^2)^{k}\le \sum_{k=\alpha}^\beta \eta^5 \eta^{2k(1-\eta)}.
\end{align}
Thus $\{\Phi^\beta(x)\}$ is a Cauchy sequence. We can define $\Phi(x)=\lim_{\beta\to \infty}\Phi^\beta(x)$. In particular, we have 
\begin{align}\label{e:Phi-Phialpha}
|\Phi(x)-\Phi^\alpha(x)|\le \sum_{k=\alpha}^{\infty} \eta^5\eta^{2k(1-\eta)}\le \eta^4 \eta^{2\alpha(1-\eta)}.
\end{align}
Let us now check (1) in Theorem \ref{t:neck_reifenberg}. For any $x,y\in \cC\cap B_{{19}/{10}}(p)$, assume $\tau^2 r_x^{\alpha+1}< d(x,y)\le r_x^{\alpha} $. We have by $(\alpha.1)$ that $T_x^\alpha \Phi^\alpha: B_{50r_x^\alpha}(x)\cap \cC\to \mathbb{R}^{n-4}$ is a $\eta^6r_x^\alpha$\text{-isometry}. This implies 
\begin{align}
\Big||T_x^\alpha \Phi^\alpha(x)-T_x^\alpha\Phi^\alpha(y)|-d(x,y)\Big|\le \eta^6r_x^\alpha.
\end{align}
Using \eqref{e:TxbetaTxbeta-1norm} we conclude that 
\begin{align}\label{e:Phialpha_holder}
(1-\eta^3)(1+\eta^2)^{-\alpha}\le \frac{|\Phi^\alpha(x)-\Phi^\alpha(y)|}{d(x,y)}\le (1+\eta^3)(1+\eta^2)^\alpha.
\end{align}
The statement (1) follows now easily by the combination of \eqref{e:Phi-Phialpha} and \eqref{e:Phialpha_holder}. Thus we have proved (1). 
On the other hand, arguing similar as the proof of \eqref{e:Phi-Phialpha}, we can get 
\begin{align}\label{e:Phi-Phialpha2}
|T_x^{\alpha}\Phi(x)-T_x^\alpha\Phi^\alpha(x)|\le  \eta^4 r_x^\alpha.
\end{align}
By defining $T_{x,r}=T_{x}^\beta$ for $r_{x}^{\beta+1}<r\le r_{x}^\beta$ then (2), (3) follow directly from $(\beta.1)$, \eqref{e:TxbetaTxalpha1}, \eqref{e:TxbetaTxalpha2} and \eqref{e:Phi-Phialpha2}.

Let us now check (4). Actually, by $(\beta.1)$ and the choice of $T_{x,r}$ and $T_{y,r}$, we have that both $T_{x,r}\Phi: B_{50r}(x)\cap \cC\to \mathbb{R}^{n-4}$ and $T_{y,r}\Phi: B_{50r}(y)\cap \cC\cap B_{19/10}(p)\to \mathbb{R}^{n-4}$ are $\eta r$\text{-isometry}. Since $y\in B_{10r}(x)$, $x\in B_{9/5}(p)$ and $r\le 10^{-3}$,  we have $B_{10r}(x)\subset B_{50r}(x)\cap B_{50r}(y)\subset B_{19/10}(p)$. Thus  the maps $T_{x,r}\Phi, T_{y,r}\Phi: B_{10r}(x)\cap \cC\to \dR^{n-4}$ are $\eta r$\text{-isometry}. In particular, $T_{x,r}(\Phi-\Phi(x)), T_{y,r}(\Phi-\Phi(x)): B_{10r}(x)\cap \cC\to \dR^{n-4}$ are $\eta r$\text{-isometry}. On the other hand, by (n3) of neck region the maps $T_{x,r}(\Phi-\Phi(x)), T_{y,r}(\Phi-\Phi(x)): B_{10r}(x)\cap \cC\to \dR^{n-4}$ are $C(n)\tau r$\text{-dense}. Therefore, if $\tau\le \tau(n)$ then there exists $O_{xy,r}\in O(n-4)$ such that 
\begin{align}
C(n)\eta r&\ge \sup_{z\in B_{10r}(x)\cap \cC} |T_{x,r}(\Phi(z)-\Phi(x))-O_{xy,r}T_{y,r}(\Phi(z)-\Phi(x))|\\ \label{e:TxrTyrrotation}
&=\sup_{z\in B_{10r}(x)\cap \cC} |(T_{x,r}\circ (O_{xy,r}T_{y,r})^{-1}-Id)O_{xy,r}T_{y,r}(\Phi(z)-\Phi(x))|.
\end{align}
By (n3) of neck region and  noting that $O_{xy,r}T_{y,r}(\Phi-\Phi(x)): B_{r}(x)\cap \cC\to \dR^{n-4}$ is a $\eta r$\text{-isometry},  we have that the image $\{O_{xy,r}T_{y,r}(\Phi(z)-\Phi(x)): z\in \cC\cap B_{r}(x)\}$ is $C(n)\tau r$-dense in  $B_{r}(0^{n-4})\subset \dR^{n-4}$. Therefore, (4) of Theorem follows directly from \eqref{e:TxrTyrrotation} if $\tau\le \tau(n)$. Thus we have proved (4).

The main point now is to check that $\Phi$ satisfies the covering estimate of $(5)$.  As we shall see it is roughly a discrete version of the topological argument that a continuous degree one map must be onto.  Indeed, it follows from $(\beta.1)$ and $(n3)$ that for any $x\in \cC\cap B_{{19}/{10}}(p)$ and $10^{-3}\ge r\geq r_x$ with $B_{r}(x)\subset B_{19/10}(p)$ that 
\begin{align}\label{e:reifenberg:3}
\Big(A_{x,r}\circ\Phi\big(\cC\cap B_r(x)\big)\Big)\text{ is $C(n)(\tau+\eta) r$-dense in }B_{r}\big(\Phi(x)\big)\, ,	
\end{align}
where $A_{x,r}\circ\Phi(y)=T_{x,r}(\Phi(y)-\Phi(x))+\Phi(x)$. 
So now let us assume the covering statement of $(5)$ fails, and try to find a contradiction.  Thus let $\bar y\in B_{7/4}(0^{n-4})$ be a point such that $\bar y\not\in \bigcup_{x\in\cC\cap \bar B_{9/5}(p)} T_x^{-1}\bar B_{r_x}(\Phi(x))$, where we should notice that by \eqref{e:Phi-Phialpha} and $\Phi^0(p)=0^{n-4}$  we have $|\Phi(p)|\le \eta^4$.  Define $ s_{x}$ for each $x\in\cC\cap \bar B_{9/5}(p)$ to be the minimal $s$ such that
\begin{align}
\bar y\in T_{x,s}^{-1}\bar B_{s}(\Phi(x))
\end{align}
Thus we have that $s_x>r_x$ by our covering assumption.  Now let $y\in\cC\cap \bar B_{9/5}(p)$ be such that $s_y=\min_x s_x$, and hence $\bar y\in T_{y,2s_y}^{-1}\bar B_{2s_y}(\Phi(y))$.  To see this, by $(\beta.3)$ we have $|T_{y,s_y}T_{y,2s_y}^{-1}-Id|\le \eta$. Thus $\bar y\in T_{y,s_y}^{-1}\bar B_{s_y}(\Phi(y))\subset T_{y,2s_y}^{-1}\bar B_{2s_y}(\Phi(y))$.  Furthermore, if $\tau\le \tau(n)$ we must have $s_y\le 10^{-5}$ 
since by applying $\Phi:B_{9/5}(p)\cap \cC\to \dR^{n-4}$ to \eqref{e:reifenberg:3} we have that $\{\Phi(x):x\in B_{9/5}(p)\cap \cC\}$ is $C(n)\tau$-dense in $B_{7/4}(0^{n-4})$. 
Therefore, $B_{2s_y}(y)\subset B_{19/10}(p)$. Thus by \eqref{e:reifenberg:3} there exists $x\in \cC\cap B_{2s_y}(y)$ such that $|T_{y,2s_y}(\bar y-\Phi(x))|\le C(n)(\tau+\eta)s_y$. Noting that $|T_{y,2s_y}^{-1}|\le (2s_y)^{-\epsilon}$, we have $|\bar y-\Phi(x)|\le C(n)(\tau+\eta)s_y^{1-\epsilon}$. If $\tau\le \tau(n)$, we conclude by $(\beta.0)$ and \eqref{e:Phi-Phialpha} that $x\in \cC\cap B_{9/5}(p)$.
On the other hand, up to a rotation on $T_{y,2s_y}$ we notice that $|T_{y,2s_y}T_{x,2s_y}^{-1}-Id|<\epsilon$ by (4). Thus we get 
$|T_{x,2s_y}(\bar y-\Phi(x))|\le C(n)(\tau+\eta)s_y$. If $\tau<\tau(n)$, then by (3) this implies that $s_x<\frac{1}{2}s_y$, which is a contradiction. Hence we have finished the proof of (5). \\

Now we have shown that if the inductive condition $(\beta.1)-(\beta.3)$ holds for all $\beta\in \dN$ then our main Theorem follows.  Let us now focus on proving this inductive statement.  Before beginning let us begin by defining some basic useful structure which we will use:

\begin{enumerate}
\item[(s1)] Let $\cC^\beta =\{x^\beta_i\}\subseteq \cC\cap B_{\frac{19}{10}+t_{\beta-2}}(p)$ be nested maximal subsets such that $\{B_{\eta^{-2}r^\beta_x}(x)\}$ with $x\in \cC^\beta$ are disjoint, where $t_\beta=\eta^{2\beta}$.  In particular, we have that $\{B_{5\eta^{-2} r^\beta_x}(x)\}$ covers $\cC\cap B_{\frac{19}{10}+t_{\beta-2}}(p)$. 
\item[(s2)] Let $\psi^\beta_i:B_{10\eta^{-2} r^\beta_i}(x^\beta_i)\to \dR$ be an associated partition of unity.  Specifically, let $\psi:\dR\to [0,1]$ be a fixed smooth function with $\psi=1$ on $[0,5]$ and $\psi=0$ outside $[0,7]$, and let $\hat\psi^\beta_i(x)=\psi((r^\beta_i)^{-2}\eta^4 d(x,x^\beta_i)^2)$, then we define
\begin{align}\label{e:partition_unity_explicit}
	\psi^\beta_i(x) =\frac{\hat\psi^\beta_i(x)}{\sum_j \hat\psi^\beta_j(x)}.
\end{align}
Note by $(n4)$ that we have $r^\beta_i|\nabla \psi^\beta_i|\leq C(n)\eta^2$.
\item[(s3)] Let $\phi^\beta_i:B_{\eta^{-4}r^{\beta}_i}(x^\beta_i)\cap \cC\to \dR^{n-4}$ be a $\eta^{-4}\delta r_i^\beta$\text{-isometry} which maps $x_i^\beta$ to $ 0$. 
\end{enumerate}

Now we will build $\Phi^\beta$ inductively.  Let $\Phi^1=\Phi^0=\phi^{0}:\cC\to \dR^{n-4}$ be the chosen $\eta^{-4}\delta$\text{-isometry}.  We begin by writing a general ansatz for the construction of $\Phi^\beta:$ 
\begin{align}
\Phi^{\beta}(x) = \sum_{x^{\beta}_i}\psi^{\beta}_i(x)\Phi^{\beta}_i(x)\equiv \sum_{x^{\beta}_i}\psi^{\beta}_i(x)A_i^{\beta}\circ\phi^{\beta}_i(x) \equiv \sum_{x^{\beta}_i}\psi^{\beta}_i(x)(\alpha^{\beta}_i+J_i^{\beta}\circ\phi^{\beta}_i(x))\, ,	
\end{align}
where $\alpha^{\beta}_i\in \dR^{n-4}$ and $J^{\beta}_i\in \dR^{(n-4)\times(n-4)}$ are linear maps, so that $A^{\beta}_i = \alpha^{\beta}_i+J^{\beta}_i\circ$ is an affine map.  Thus we see that $\Phi^\beta$ is essentially a piecewise affine linear map, in some approximate sense. 

We will prove inductively the following claim, which will imply $(\beta.1)-(\beta.3)$. 

\noindent
\textbf{Claim:} If $\delta\le \delta(n,\eta,\rv)$ and $0<\tau\le \tau(n)$, there exists maps $\Phi^\beta:\cC\cap B_{\frac{19}{10}+100t_{\beta-1}}(p)\to \dR^{n-4}$ such that the following hold 
\begin{enumerate}
\item[$(\bar \beta.0)$] $\Phi^1=\Phi^0=\phi^0:\cC\to \dR^{n-4}$,
\item[($\bar\beta.1$)] For any $x\in \cC\cap B_{\frac{19}{10}+t_{\beta-1}}(p)$ there exists linear map $L_x^{\beta}\in \dR^{(n-4)\times(n-4)}$ such that $L_x^{\beta}\circ \Phi^{\beta}: B_{100r_x^{\beta}}(x)\cap \cC\to \mathbb{R}^{n-4}$ is an $\eta^{6}r_x^{\beta}$\text{-isometry},
\item[($\bar\beta.2$)] For any $x\in \cC\cap B_{\frac{19}{10}+t_{\beta-1}}(p)$, $\sup_{y\in B_{40r_x^{\beta-1}}(x)\cap\cC}|L_x^{\beta-1}\circ (\Phi^\beta-\Phi^{\beta-1})|(y)\le \eta^5 r_x^{\beta-1}$,
\item[($\bar\beta.3$)] For any $y,x\in \cC\cap B_{\frac{19}{10}+t_{\beta-1}}(p)$ with $d(x,y)\le 60r_x^{\beta}$, there exists $O_{xy}^\beta\in O(n-4)$ such that $|(L_x^\beta)^{-1}\circ L_y^{\beta}-O_{xy}^\beta|\le \eta^5$.  
\end{enumerate}
\vskip 3mm


\noindent
\textbf{Proof of Claim:} We will prove the result by induction on $\beta$.  Note that the claim holds trivially for $\beta=0,1$ by letting $L_x^{\beta}=Id$ for $\beta=0,1$.  Therefore let us assume for some $\beta$ that we have constructed $\Phi^\beta$ such that ($\bar\beta.1$), ($\bar\beta.2$) and ($\bar \beta.3$) hold, then we will construct $\Phi^{\beta+1}$ and prove that ($\overline{\beta+1}.1$), ($\overline{\beta+1}.2$) and  ($\overline{\beta+1}.3$) hold. 

  Note first that ($\overline{\beta+1}.3$) follows directly from ($\overline{\beta+1}.1)$, see also the above proof of (4) of Theorem, and hence it will suffice to prove ($\overline{\beta+1}.1$)  and  ($\overline{\beta+1}.2$).  In order to construct $\Phi^{\beta+1}$ it suffices to construct the affine maps $\Phi_i^{\beta+1}=A_i^{\beta+1}\circ \phi_i^{\beta+1}$. Since $\phi^{\beta+1}_i:B_{\eta^{-4}r^{\beta+1}_i}(x^{\beta+1}_i)\cap \cC\to \dR^{n-4}$ is a $\eta^{-4}\delta r_i^{\beta+1}$\text{-isometry}  and $L_i^\beta\circ \Phi^{\beta}: B_{100r^{\beta}_i}(x^{\beta+1}_i)\cap \cC\to \dR^{n-4}$ is a $\eta^6r_i^{\beta}$\text{-isometry} where $L_i^\beta=L_{x_i^{\beta+1}}^{\beta}$, there exist $\bar O_i^{\beta+1}\in O(n-4)$ and $\bar \alpha_i^{\beta+1}\in \mathbb{R}^{n-4}$ such that if $\delta\le \delta(n,\rv,\eta)$ we have
\begin{align}
\sup_{x\in B_{100r^{\beta}_i}(x^{\beta+1}_i)\cap \cC}\Big|L_i^{\beta}\circ \Phi^{\beta}(x)-(\bar \alpha_i^{\beta+1}+\bar O_i^{\beta+1} \phi^{\beta+1}_i(x))\Big|\le C(n)\eta^6 r_i^\beta,
\end{align}
where we also use the $C(n)\tau r_i^\beta$-dense of the imagine by (n3).
Denote $\Phi_i^{\beta+1}:=A_i^{\beta+1}\circ \phi_i^{\beta+1}:=(L_i^\beta)^{-1} (\bar \alpha_i^{\beta+1}+\bar O_i^{\beta+1} \phi^{\beta+1}_i)$.  In particular we have 
\begin{align}\label{e:LibetaPhibeta}
\sup_{x\in B_{100r^{\beta}_i}(x^{\beta+1}_i)\cap \cC}\Big|L_i^{\beta}\circ \Phi^{\beta}(x)-L_i^\beta \circ\Phi_i^{\beta+1}(x)\Big|\le C(n)\eta^6 r_i^\beta.
\end{align}
Let us define
\begin{align}\label{e:phibeta+1_define}
\Phi^{\beta+1}(x)=\sum_{x_i^{\beta+1}}\psi_i^{\beta+1}(x) \Phi_i^{\beta+1}(x).
\end{align}
By ($\bar\beta.3)$ and \eqref{e:LibetaPhibeta}, for any $x\in B_{60r_i^\beta}(x_i^{\beta+1})\cap \cC$, we have 
\begin{align}\label{e:Lxbeta_phibeta_phii}
\sup_{y\in B_{100r^{\beta}_i}(x^{\beta+1}_i)\cap \cC}\Big|L_x^{\beta}\circ \Phi^{\beta}(y)-L_x^\beta \circ\Phi_i^{\beta+1}(y)\Big|=\sup_{y\in B_{100r^{\beta}_i}(x^{\beta+1}_i)\cap \cC}\Big|L_x^{\beta}(L_i^{\beta})^{-1}L_i^\beta\circ (\Phi^{\beta}- \Phi_i^{\beta+1})(y)\Big|\le C(n)\eta^6 r_i^\beta.
\end{align}
Let us now check ($\overline{\beta+1}.2$). By the construction of $\Phi^{\beta+1}$ and \eqref{e:Lxbeta_phibeta_phii} and using the support of $\psi_i^{\beta+1}$, we have for any $x\in \cC\cap B_{\frac{19}{10}+10^4t_{\beta}}(p)$ and $y\in B_{40r_x^\beta}(x)\cap \cC$ that
\begin{align}\label{e:Lxbeta_Lxbeta+1}
|L_x^\beta\circ (\Phi^\beta-\Phi^{\beta+1})(y)|=|\sum_{x_i^{\beta+1}}\psi_i^{\beta+1}(y) (L_x^\beta \Phi_i^{\beta+1}(y)-L_x^\beta \Phi^\beta(y)|\le \sup_{\psi_i^{\beta+1}(y)\ne 0}|L_x^\beta (\Phi_i^{\beta+1}- \Phi^\beta)(y)|\le C(n)\eta^6r_x^\beta\le \eta^5r_x^\beta,
\end{align}
which proves $(\overline{\beta+1}.2)$.

Let us now check ($\overline{\beta+1}.1$), which is where the real work lives.  Let us first note by \eqref{e:Lxbeta_Lxbeta+1} and \eqref{e:Lxbeta_phibeta_phii} we have for each $x_i^{\beta+1}\in \cC^{\beta+1}\cap B_{\frac{19}{10}+10^4t_{\beta}}(p)$ and $x_j^{\beta+1}\in B_{60r_i^\beta}(x_i^{\beta+1})\cap \cC$ that
\begin{align}\label{e:Lxbetaphijphii}
\sup_{y\in B_{30r^{\beta}_i}(x_i^{\beta+1})\cap \cC}\Big|L_i^{\beta}\circ \Phi^{\beta+1}_j(y)-L_i^\beta \circ\Phi_i^{\beta+1}(y)\Big|\le C(n)\eta^6 r_i^\beta.\\ \label{e:Lxbetaphijphii1}
\sup_{y\in B_{40r_i^\beta}(x_i^{\beta+1})\cap\cC}|L_i^\beta (\Phi^\beta-\Phi^{\beta+1})(y)|\le C(n)\eta^6 r_i^\beta.
\end{align}
Combining with ($\bar\beta.1$) we get that $L_i^\beta \Phi^{\beta+1}: B_{40r_i^\beta}(x_i^{\beta+1})\to \mathbb{R}^{n-4}$ is an $C(n)\eta^6r_i^\beta$\text{-isometry}. Furthermore, let us remark that $L_x^{\beta}\Phi_i^{\beta+1}$ and $L_x^{\beta}\Phi^{\beta+1}$ is a $C(n)\eta^6r_i^{\beta}$\text{-isometric} equivalence whenever $x\in \cC$ is near $x_i^{\beta+1}$.  Clearly this is not enough to prove ($\overline{\beta+1}.1$) and we will need to find some form of improvement occuring at smaller scales.  We will argue by contradiction, however before doing this let us discuss the rough idea of where this improvement comes from.  Imagine a map $\bar\Phi:\mathbb{R}^{n-4}\to \mathbb{R}^{n-4}$ which is constructed by gluing affine maps $\bar\Phi_i: B_{R}(x_i)\subset \mathbb{R}^{n-4}\to \mathbb{R}^{n-4}$ which are all $\eta R$-close to isometries and each other. Thus $\bar \Phi$ is smooth and a direct computation shows that $|\nabla \bar\Phi-Id|\le C(n)\eta$ and $|\nabla^2\bar \Phi|\le C(n)\eta R^{-1}$, where we are using strongly that we are gluing together affine maps in the construction.  But now take some $x\in \dR^{n-4}$ and look at scales $r<\eta R$, we have by a Taylor series expansion that $\bar \Phi$ would be $\eta^2r$-close to the tangent mapping.  Said otherwise, if $L_x = d\bar\Phi_x^{-1}$ then $L_x\circ \bar\Phi_x$ is converging at a definite linear rate to an isometry, which is telling us our original $\eta R$\text{-isometry} condition is actually improving for small scales, modulo a linear transformation.  Our argument below is just making this a bit more precise. Let us begin the contradiction argument.

The statement ($\overline{\beta+1}.1$) is completely local and all the estimates and construction in the claim are scaling invariant. Let us prove this by contradiction.  Assume $0<\tau<\tau(n)$ there exists a sequence of $(\delta_K,\tau)$-neck regions $\cN_K\subset B_2(p_K)$, $\Phi^{\beta_K}_K:\cC_{K}\to \mathbb{R}^{n-4}$ satisfying $(\bar\beta_K.1)-(\bar\beta_K.3)$ for each $K$, and a sequence of  $\cC^{\beta_K+1}_K\subset \cC_K\cap B_{\frac{19}{10}+10t_{\beta_K}}(p_k)$, $\psi_{i,K}^{\beta_K+1},$ $\phi_{i,K}^{\beta_K+1}$ satisfying (s1)-(s3) with $\delta=\delta_K\to 0$,  but the constructing $\Phi^{\beta_K+1}_K$ defined in \eqref{e:phibeta+1_define} doesn't satisfy $(\overline{\beta_K+1}.1)$ for $x_{K}\in \cC_K\cap B_{\frac{19}{10}+5t_{\beta}}(p_K)$.  In order not to introduce too much notations, for a fixed $\beta$  where one can let $\beta=0$ if one wants, we can rescale the sequence so that  there exists a sequence of $(\delta_K,\tau)$-neck regions $\cN_K\subset B_2(p_K)$, $\Phi^{\beta}_K:\cC_{K}\to \mathbb{R}^{n-4}$ satisfying $(\bar\beta.1)-(\bar\beta.3)$ for each $K$, and a sequence of  $\cC^{\beta+1}_K\subset \cC_K\cap B_{\frac{19}{10}+10t_{\beta}}(p_k)$, $\psi_{i,K}^{\beta+1},$ $\phi_{i,K}^{\beta+1}$ satisfying (s1)-(s3) with $\delta=\delta_K\to 0$,  but the constructing $\Phi^{\beta+1}_K$ defined in \eqref{e:phibeta+1_define} doesn't satisfy $(\overline{\beta+1}.1)$ for $x_{K}\in \cC_K\cap B_{\frac{19}{10}+5t_{\beta}}(p_K)$. Without loss of generality, assume $\Phi_K^{\beta+1}(x_K)=0$. Since $\beta$ is fixed, taking the limit with $K\to \infty$, we get $\cN_K\to \cN_\infty=B_2(0^{n-4},y_c)\cap \mathbb{R}^{n-4}\times C(S^3/\Gamma)\setminus B_{r_x}(\cC_\infty)$ where $\cC_\infty$ is $\tau$-dense in $\mathbb{R}^{n-4}\times\{y_c\}\cap B_2(0^{n-4},y_c)$ by (n3) of neck region, $x_K\to x_\infty\in\cC_\infty\cap B_{\frac{19}{10}+5t_{\beta}}(0^{n-4},y_c)$ and $L_{x_K}^\beta\Phi^\beta_K\to \bar\Phi^\beta_{\infty},~~L_{x_K}^\beta\Phi^{\beta+1}_K\to \bar\Phi^{\beta+1}_{\infty}$, $\phi_{i,K}^{\beta+1}$ converges to isometric $\phi^{\beta+1}_{i,\infty}: B_{\eta^{-4}r_i^{\beta+1}}(x_{i,\infty}^{\beta+1})\cap \cC_\infty\to \dR^{n-4}$, and $L_{x_K}^\beta\Phi_{i,K}^{\beta+1}\to \bar \Phi_{i,\infty}^{\beta+1}$ which is affine and is a $C(n)\eta^6 r_i^\beta$\text{-isometry}.  Let us remark that here $r_x$ on the limit $\cC_\infty$ may be not zero since the rescaling sequence $t_\beta> r_x$ may be have comparable size with $r_x$. However, $r_x$ is equal to zero or not  does not affect our proof below. Furthermore, all the maps convergence are standard by noting that each map is uniformly Lipschitz. 
 
From \eqref{e:Lxbetaphijphii}, \eqref{e:Lxbetaphijphii1}  and \eqref{e:phibeta+1_define}, we have for all $x, x_{i,\infty}^{\beta+1}\in B_{1000r_{x_\infty}^\beta}(x_\infty)\cap \cC_\infty$  and $x_{j,\infty}^{\beta+1}\in B_{60r_i^\beta}(x_{i,\infty}^{\beta+1})\cap \cC_\infty$ that
\begin{align}
\bar\Phi^{\beta+1}_\infty(x)=\sum_{x_{i,\infty}^{\beta+1}}\psi_{i,\infty}^{\beta+1}(x) \bar \Phi_{i,\infty}^{\beta+1}(x).\\ \label{e:Lxbetaphijphiiinfty}
\sup_{y\in B_{30r^{\beta}_i}(x_{i,\infty}^{\beta+1})\cap \cC_\infty}\Big| \bar\Phi^{\beta+1}_{j,\infty}(y)-\bar\Phi_{i,\infty}^{\beta+1}(y)\Big|\le C(n)\eta^6 r_i^\beta.
\end{align}
We will show that there exists a matrix $A\in \mathbb{R}^{(n-4)\times (n-4)}$ such that $A\circ \bar \Phi^{\beta+1}_\infty: B_{200r_{x_\infty}^{\beta+1}}(x_\infty)\cap \cC_\infty \to \mathbb{R}^{n-4}$ is a $\eta^7 r_{x_\infty}^{\beta+1}$\text{-isometry}, which deduces a contradiction by letting $L_{x_K}^{\beta+1}=A\circ L_{x_K}^\beta$ in the contradiction sequence.

From the definition of $\psi_{j,K}^{\beta+1}$ and the convergence, we have $(r_j^{\beta+1})^2\eta^{-4}|\nabla^2\psi_{j,\infty}^{\beta+1}|+(r_j^{\beta+1})\eta^{-2}|\nabla \psi_{j,\infty}^{\beta+1}|\le C(n)$  on $\mathbb{R}^{n-4}\times\{y_c\}\cap B_2(0^{n-4},y_c)$. On the other hand, since $\bar \Phi_{i,\infty}^{\beta+1}$ is affine on $B_{\eta^{-4}r_i^{\beta+1}}(x_{i,\infty}^{\beta+1})\cap \cC_\infty$ and $\cC_\infty$ is $\tau \eta^{-4}r_i^{\beta+1}$-dense in $\mathbb{R}^{n-4}\times\{y_c\}\cap B_{\eta^{-4}r_i^{\beta+1}}(x_{i,\infty}^{\beta+1})$, the map $\bar \Phi_{i,\infty}^{\beta+1}$ naturally extends from $\cC_\infty$ to $\mathbb{R}^{n-4}\times\{y_c\}\cap B_{\eta^{-4}r_i^{\beta+1}}(x_{i,\infty}^{\beta+1})$. Thus $\bar\Phi_\infty^{\beta+1}$ is smooth on $B_{1000r_{x_\infty}^\beta}(x_\infty)\cap \mathbb{R}^{n-4}\times\{y_c\}\cap B_2(0^{n-4},y_c)$.

 Since $\bar \Phi^{\beta+1}_{i,\infty}: B_{100r_i^\beta}(x_{i,\infty}^{\beta+1})\cap \mathbb{R}^{n-4}\to \mathbb{R}^{n-4}$ is a $C(n)\eta^6r_x^\beta$\text{-isometry} and affine for $x_{i,\infty}^{\beta+1}\in B_{1000r_{x_\infty}^\beta}(x_\infty)$, without loss of generality, assume  $\sup_{B_{100r_{i_0}^\beta}(x_{i_0,\infty}^{\beta+1})\cap \mathbb{R}^{n-4} }|\bar\Phi^{\beta+1}_{i_0,\infty}-O|\le C(n)\eta^6r_{i_0}^\beta$ for a fixed $x_{i_0,\infty}^{\beta+1}\in B_{1000r_{x_\infty}^\beta}(x_\infty)$ and $O\in O(n-4)$. Since $\bar\Phi^{\beta+1}_{i_0,\infty}$ is affine, we thus have for all $y\in B_{1000r_{x_\infty}^\beta}(x_\infty)\cap \mathbb{R}^{n-4}$ that
\begin{align}
|\nabla_y \bar\Phi^{\beta+1}_{i_0,\infty}(y)-O|\le C(n)\eta^6,  \text{  and  } |\nabla^2\bar\Phi^{\beta+1}_{i_0,\infty}(y)|=0.
\end{align}
Similarly,  by \eqref{e:Lxbetaphijphiiinfty}, we have for all $x_{j,\infty}^{\beta+1}\in B_{1000r_{x_\infty}^\beta}(x_\infty)\cap \cC_\infty$  and $y\in B_{1000r_{x_\infty}^\beta}(x_\infty)\cap \cC_\infty$ that
\begin{align}\label{e:nablaybarphijinfty}
|\nabla_y \bar\Phi^{\beta+1}_{j,\infty}(y)-O|\le C(n)\eta^6,  \text{  and  } |\nabla^2\bar\Phi^{\beta+1}_{j,\infty}(y)|=0.
\end{align} 
Using the fact that $\bar\Phi^{\beta+1}_{j,\infty}$ is affine, $(r_j^{\beta+1})^2\eta^{-4}|\nabla^2\psi_{j,\infty}^{\beta+1}|+(r_j^{\beta+1})\eta^{-2}|\nabla \psi_{j,\infty}^{\beta+1}|\le C(n)$ and \eqref{e:Lxbetaphijphiiinfty}, \eqref{e:nablaybarphijinfty} we can compute that 
\begin{align}
\sup_{B_{100r_{x_\infty}^\beta}(x_\infty)\cap \mathbb{R}^{n-4}}\left(|\nabla \bar \Phi^{\beta+1}_\infty-O|+r_{x_\infty}^{\beta+1}\eta^{-2}|\nabla^2\bar\Phi^{\beta+1}_\infty|\right)\le C(n)\eta^6.
\end{align}

Now consider the affine transformation 
\begin{align}
A\circ \bar\Phi^{\beta+1}_\infty(y):=d\bar \Phi^{\beta+1}_\infty(x_\infty)^{-1}\circ\bar\Phi^{\beta+1}_\infty(y)
\end{align}
Note that this is chosen precisely so that $A \circ\bar\Phi^{\beta+1}_\infty(x_\infty)=0$ and $d(A\circ \bar\Phi^{\beta+1}_\infty)(x_\infty)= Id$.  In particular, it follows from a Taylor series, using the first and second derivative estimates on $\bar\Phi^{\beta+1}_\infty$ that for all $0<r\le r_{x_\infty}^{\beta}=\eta^{-2} r_{x_\infty}^{\beta+1}$
\begin{align}
\sup_{B_r(x_\infty)\cap \cC_\infty}|A\circ \bar\Phi^{\beta+1}_\infty-Id|\le C(n)\eta^8 (r_{x_\infty}^{\beta+1})^{-1}r^2\, . 	
\end{align}
In particular, let $r=200r_{x_\infty}^{\beta+1}$ we have 
\begin{align}
\sup_{B_{200r_{x_\infty}^{\beta+1}}(x_\infty)\cap \cC_\infty}|A\circ \bar\Phi^{\beta+1}_\infty-Id|\le C(n)\eta^8r_{x_\infty}^{\beta+1}\, , 
\end{align}
which implies $A\circ \bar\Phi^{\beta+1}_\infty: B_{200r_{x_\infty}^{\beta+1}}(x_\infty)\cap \cC_\infty\to \mathbb{R}^{n-4}$ is a $\eta^7r_{x_\infty}^{\beta+1}$\text{-isometry}. Hence we deduce a contradiction by letting $L_{x_K}^{\beta+1}=A\circ L_{x_K}^\beta$ in the contradiction sequence. Thus we have proved ($\overline{\beta+1}.1$) and the Claim. $\square$

Let us now check that the claim implies $(\beta.1)-(\beta.3)$. Let us define $T_{x}^\beta$ inductively for $x\in \cC\cap B_{\frac{19}{10}}(p)$.
For any $x\in \cC\cap B_{\frac{19}{10}}(p)$ define $T_{x}^0=Id$.  Assume $T_x^\beta$ is defined satisfying $(\beta.1), (\beta.1)$ and $(\beta.3) $. Let us now define $T_x^{\beta+1}$. 
Actually, we will define $T_x^{\beta+1}=O_x^{\beta+1}L_x^{\beta+1}$ for some $O_x^{\beta+1}\in O(n-4)$. In particular, $({\beta+1}.1)$ and $({\beta+1}.2)$ follow directly from $(\overline{\beta+1}.1)$ and $(\overline{\beta+2}.2)$. Let us now fix $O_x^{\beta+1}$ so that $(\beta+1.3)$ holds. We will only check $|T_x^{\beta+1}\circ (T_x^{\beta})^{-1}-Id|\le \eta^2$ since the argument for $|T_x^{\beta}\circ (T_x^{\beta+1})^{-1}-Id|\le \eta^2$ is the same.  By $( \beta.2)$, $( \beta.1)$ we have $T_x^\beta \Phi^{\beta+1}: B_{40r_x^\beta}(x)\cap \cC\to \mathbb{R}^{n-4}$ is a $C(n)\eta^5r_x^\beta$\text{-isometry} which in particular is a $C(n)\eta^3r_x^{\beta+1}$\text{-isometry}. By $(\overline{\beta+1}.1)$ we have that $L_x^{\beta+1}\Phi^{\beta+1}:B_{50r_x^{\beta+1}}(x)\cap \cC\to \mathbb{R}^{n-4}$ is a $\eta^6r_x^{\beta+1}$\text{-isometry}. Therefore, as the above proof of (4), there exists $O_x^{\beta+1}\in O(n-4)$ such that $T_x^{\beta+1}:=O_x^{\beta+1}L_x^{\beta+1}$ satisfies 
\begin{align}\notag
\sup_{y\in B_{50r_x^{\beta+1}}(x)\cap \cC}|(T_x^{\beta+1}\circ (T_x^\beta)^{-1}-Id)T_x^\beta (\Phi^{\beta+1}(y)-\Phi^{\beta+1}(x))|&=\sup_{y\in B_{50r_x^{\beta+1}}(x)\cap \cC}|(T_x^\beta -T_x^{\beta+1})(\Phi^{\beta+1}(y)-\Phi^{\beta+1}(x))|\\ 
&\le C(n)\eta^3r_x^{\beta+1}.
\end{align}
Now the same argument as the proof of (4) shows $(\beta+1.3)$ if $\tau\le \tau(n)$. Hence we finish the proof of the Theorem.
\end{proof}

\vspace{.5cm}

\section{Proving the $L^2$-curvature bound on Neck Regions}\label{s:L2_bound}

Recall that the proof of Theorem \ref{t:neck_region} will be done by a rather involved induction scheme, so that in the end each of the conclusions of Theorem \ref{t:neck_region} will be proved simultaneously.  This section is therefore dedicated to proving the $L^2$ curvature bound on neck regions under the assumption that we have already proved some Ahlfors regularity bound.  Precisely, our main result in this section is the following:\\

\begin{theorem}\label{t:L2_neck}
	Let $(M^n,g,p)$ satisfy $\Vol(B_1(p_j))>\rv>0$ with $\delta',B>0$ fixed.  Then if $\tau<\tau(n)$ and $\delta<\delta(n,\rv,\tau,\delta',B)$ are such that $\cN = B_2(p)\setminus \bigcup_{x\in \cC} \overline B_{r_x}(x)$ is a $(\delta,\tau)$-neck region for which
	\begin{enumerate}
	\item $|{\Ric}|<\delta$ ,
	\item For each $x\in \cC$ and $r_x<r$ with $B_{2r}(x)\subseteq B_2$ we have $B^{-1}r^{n-4} < \mu\big(B_r(x)\big)<Br^{n-4}$ ,
	\end{enumerate}
     then for each $B_{2r}(x)\subseteq B_2$ we have the curvature estimate $r^{4-n}\int_{\cN_{10^{-5}}\cap B_r(x)} |\Rm|^2(z)\,dz \leq \delta'$. In particular, we have that 	$\int_{\cN_{10^{-5}}\cap B_1(p)} |\Rm|^2(z)\,dz \leq \delta'$.
\end{theorem}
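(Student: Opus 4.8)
The strategy is to reduce the global $L^2$-bound to the local drop-estimate \eqref{e:intro:neck_L2_entropy} and then to run the Vitali-covering argument sketched in the introduction. Concretely, the plan is to first establish the pointwise/local statement: for $x\in \cN_{10^{-5}}$ with $2s = d(x,\cC)$ we want
\begin{align}\label{e:plan:local}
\int_{B_s(x)} |\Rm|^2 \leq C(n)\int_{B_{10s}(x)} \big|\cH_{10s^2}-\cH_{10^{-1}s^2}\big|(y)\,d\mu(y)\, .
\end{align}
Granting \eqref{e:plan:local}, I would cover $\cN_{10^{-5}}\cap B_r(x)$ by balls $B_{s_{\alpha,i}}(x_{\alpha,i})$ with $d(x_{\alpha,i},\cC)=2s_{\alpha,i}$, $s_{\alpha,i}\in(2^{-\alpha-1},2^{-\alpha}]$, and $\{B_{10^{-1}s_{\alpha,i}}(x_{\alpha,i})\}$ disjoint, then sum: the bounded-overlap of the enlarged balls $B_{10s_{\alpha,i}}(x_{\alpha,i})$ at each fixed scale $\alpha$ is controlled using the Ahlfors bound $(2)$, and the telescoping sum over $\alpha$ collapses by the monotonicity of $\cH$ (which is established in this section — see the remark preceding Theorem~\ref{t:L2_neck}) to leave $C(n)\int_{B_{3r/2}(x)}\big|\cH_{Cr^2}-\cH_{C^{-1}r_y^2}\big|(y)\,d\mu(y)$. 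Finally, since $\cN$ is a $(\delta,\tau)$-neck, for each $y\in\cC$ both $B_{\delta^{-1}}(y)$ and $B_{\delta^{-1}r_y}(y)$ are $\delta$-close to $\dR^{n-4}\times C(S^3/\Gamma)$, so the $\cH$-volume at the two endpoint scales differs by an amount tending to $0$ with $\delta$; combined with $\mu(B_{3r/2}(x))\leq C(B)r^{n-4}$ from $(2)$, choosing $\delta$ small gives $r^{4-n}\int_{\cN_{10^{-5}}\cap B_r(x)}|\Rm|^2\leq\delta'$.

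**The local estimate.** For \eqref{e:plan:local} the starting point is that on $\cN_{10^{-6}}$ the harmonic radius is comparable to $d(\cdot,\cC)$ by Lemma~\ref{l:neck:distance_harm_rad}, so $B_s(x)$ sits inside a region with $r_h\gtrsim s$ and hence, by the elliptic estimates of Remark~\ref{r:harmonic_rad_high_est}, $\sup_{B_s(x)}|\Rm|\leq C(n)s^{-2}$ — but that only gives $s^{n-4}$, which is exactly the wrong direction. The improvement comes from relating $|\Rm|^2$ to the second time-derivative (or the drop) of the $\cH$-volume. The idea is: on a region with definite harmonic-radius lower bound one has the (Ricci-flat, or nearly so) Bochner-type identity tying $\int|\Rm|^2$ on a ball to the defect in the heat-kernel / $\cH$-volume monotonicity — this is the mechanism behind the monotonicity formula for $\cH_t$ whose derivative involves a curvature term. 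More precisely I would show $\int_{B_s(x)}|\Rm|^2 \lesssim s^{-n}\Vol(B_s(x))\,\big(\cH_{10^{-1}s^2}(x) - \cH_{10s^2}(x)\big)$ by scaling and the smooth $\epsilon$-regularity structure on $B_{10s}(x)$, and then convert the pointwise drop $\cH_{10^{-1}s^2}(x)-\cH_{10s^2}(x)$ at the single point $x$ into the average $\fint_{B_{10s}(x)}(\cH_{10^{-1}s^2}-\cH_{10s^2})\,d\mu$ against the Ahlfors-regular measure $\mu$, using that $d(x,\cC)=2s$ means $\mu(B_{10s}(x))\approx s^{n-4}$ and that the $\cH$-drop is roughly constant (up to the scale-invariant gradient control in Theorem~\ref{t:heat_kernel}) over $B_{10s}(x)$. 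Absorbing $s^{-n}\Vol(B_s(x))\approx$ const and $\mu(B_{10s}(x))\approx s^{n-4}$ gives precisely \eqref{e:plan:local}.

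**Main obstacle.** The hardest step is the local estimate \eqref{e:plan:local}, and within it the passage from a derivative of the $\cH$-volume to a genuine $\int|\Rm|^2$ lower/upper bound with the correct power of $s$. This requires the monotonicity theory for $\cH_t$ developed in this section — one needs that the monotonicity defect is, up to controllable errors from $|\Ric|<\delta$, exactly (a positive multiple of) a weighted integral of $|\Rm|^2$ over a parabolic region, together with the fact that on $\cN$ the smoothness (harmonic radius $\gtrsim d(\cdot,\cC)$) lets one upgrade this to a clean scale-invariant curvature bound on the fixed-time slice $B_s(x)$. The error terms arising from $\Ric\neq 0$ (which the introduction explicitly warns are involved) must be shown to be absorbable into $\delta'$, which is why the statement fixes $B$ and $\delta'$ first and only then asks for $\delta$ small. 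The combinatorial Vitali/telescoping part, by contrast, is routine once the Ahlfors bound $(2)$ and the monotonicity of $\cH$ are in hand.
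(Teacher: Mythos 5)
Your reduction of the theorem to a local estimate, and the Vitali covering with dyadic scales $s_{\alpha,i}\in(2^{-\alpha-1},2^{-\alpha}]$ followed by the telescoping sum using the monotonicity of $\cH_t$ and the Ahlfors bound, is exactly the paper's argument for deducing Theorem \ref{t:L2_neck} from Proposition \ref{p:local_L2_neck}; that half is correct.

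The gap is in your proposed proof of the local estimate \eqref{e:plan:local}. You assert that the monotonicity defect of $\cH_t$ is ``up to controllable errors, exactly a positive multiple of a weighted integral of $|\Rm|^2$,'' so that $\int_{B_s(x)}|\Rm|^2\lesssim s^{-n}\Vol(B_s(x))\big(\cH_{10^{-1}s^2}(x)-\cH_{10s^2}(x)\big)$. This is false. The derivative of $\cH_t$ is $\frac{1}{4t}\int_M(\Delta d^2-L_\delta)\rho$, i.e.\ the Laplacian-comparison defect: it vanishes identically on any metric cone and measures only the failure of $0$-symmetry at the base point. It carries no information about $|\Rm|^2$; for instance, at the vertex of a cone $C(Z^{n-1})$ over an arbitrary smooth cross-section, the $\cH$-volume is constant in $t$ while the curvature is unbounded. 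There is no Bochner-type identity converting this defect into full curvature at a single point. Relatedly, your plan to ``convert the pointwise drop at $x$ into the $\mu$-average over $B_{10s}(x)$'' goes in the wrong logical direction: $\mu$ is supported on $\cC$, and the $\mu$-integral on the right of \eqref{e:plan:local} is not a cosmetic averaging of a one-point quantity --- it is needed because the argument requires $\cH$-pinching at $n-3$ \emph{independent} points of $\cC$, which are extracted from the smallness of the $\mu$-average by a pigeonhole argument using the Ahlfors regularity (Claim 1 in the paper's proof of Proposition \ref{p:local_L2_neck}).

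The actual mechanism, which your proposal is missing, is: (a) pinching of $\cH$ at $n-3$ well-spread center points yields, via the heat-flow regularization of the squared distance functions and the parabolic estimates of Lemma \ref{l:cHfunctional}, a map $(h,u_1,\dots,u_{n-4})$ with $|\nabla^2 h-2g|$, $|\nabla^2 u_i|$ and the third derivatives controlled by $\delta r^2+\eta$; (b) a compactness argument (Claim 2) shows the associated Gram matrix is nondegenerate on $B_r(y)$; and (c) the pointwise curvature estimate of Lemma \ref{l:curvature_level_sets} then bounds $|\Rm|$ on the $3$-dimensional level sets via Gauss--Codazzi, using crucially that in dimension $3$ the full curvature tensor is determined by the Ricci tensor. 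One independent cone structure --- which is all a single point's $\cH$-drop provides --- is nowhere near enough to run this; you need the full $\dR^{n-4}\times C(S^3/\Gamma)$ structure with quantitative splitting functions. So the ``main obstacle'' you correctly identify is not resolved by the route you sketch, and the local estimate as you propose to prove it would fail.
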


The above is the key estimate required for the proof of the $L^2$ curvature bound of Theorem \ref{t:neck_region}.3. The key result toward the proof of Theorem \ref{t:L2_neck} is the following local $L^2$ curvature estimate:

\begin{proposition}\label{p:local_L2_neck}
Let $(M^n,g,p)$ satisfy $\Vol(B_1(p))>\rv>0$ with $B>0$ fixed.  Then if $\tau<\tau(n)$ and $\delta<\delta(n,\rv,\tau,B)$ are such that $\cN = B_2(p)\setminus \bigcup_{x\in \cC} \overline B_{r_x}(x)$ is a $(\delta,\tau)$-neck region for which
	\begin{enumerate}
	\item $|{\Ric}|<\delta$ ,
	\item For each $x\in \cC$ and $r_x<r$ with $B_{2r}(x)\subseteq B_2(p)$ we have $B^{-1}r^{n-4} < \mu\big(B_r(x)\big)<Br^{n-4}$ ,
	\end{enumerate}
	then for some $K(n,B)>1$ and any $y\in \cN_{K(n,B)}$ with $d(y,\cC)=d(y,z)=r$ and $B_{r/10}(z)\subset B_2(p)$ for $z\in \cC$, we have
	\begin{align}
		\int_{B_{r/2}(y) } |\Rm|^2(z)\,dz \leq C(n,\rv,B,\tau)\delta r^{n-2}+ C(n,\rv,B,\tau)\int_{B_{r/20}(z)}\big|{\cH}_{10r^2}(x)-{\cH}_{r^2/10}(x)\big|d\mu(x),
	\end{align}
	where the precise definition of $\cH$-volume is \eqref{e:Hvolume_def}.
\end{proposition}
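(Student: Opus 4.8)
The inequality is scale invariant (both $\cH_{10r^2}(x)-\cH_{r^2/10}(x)$ and the $n-4$ packing measure transform in the expected way under $g\mapsto r^{-2}g$), and it is trivial when $r$ is bounded below by a fixed fraction of $1$: there Lemma \ref{l:neck:distance_harm_rad} (applicable since $y\in\cN_{10^{-5}}\subseteq\cN_{10^{-6}}$) and Remark \ref{r:harmonic_rad_high_est} already give $\int_{B_r(y)}|\Rm|^2\le C(n)r^{n-4}$, which is dominated by the packing term because $\mu(B_{10r}(y))$ is comparable to $r^{n-4}$ and the $\cH$-difference is bounded below at such scales (the model $\dR^{n-4}\times C(S^3/\Gamma)$ has $\Gamma$ nontrivial, so its vertex $\cH$-value is $|\Gamma|^{-1}<1$). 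So assume $r$ small. By Lemma \ref{l:neck:distance_harm_rad}, $r_h(y)$ is comparable to $d(y,\cC)=2r$, giving harmonic coordinates on $B_{3r}(y)$ with scale-invariant $C^{1,\alpha}$/$W^{2,p}$ bounds on the metric. Using the covering property (n3) — the singular locus $\dR^{n-4}\times\{y_c\}$ of the model passes within $\approx 2r$ of $y$ — fix a center $x'\in\cC$ with $d(x',y)$ comparable to $5r$, so that $B_{3r}(y)$ lies in an annulus $A_{cr,Cr}(x')$ not meeting $x'$, and set $\rho\equiv\tfrac12 d_{x'}^2$; on this annulus $\rho$ is smooth with scale-invariant $C^k$ bounds.

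\emph{Step 1: the $\cH$-difference controls the Bishop--Gromov defect, hence the Hessian defect of $\rho$.} Differentiating the defining integral of $\cH_t(x')$ and using $\Delta e^{-d_{x'}^2/4t}=e^{-d_{x'}^2/4t}\bigl(\tfrac{d_{x'}^2}{4t^2}-\tfrac{\Delta d_{x'}^2}{4t}\bigr)$ together with $|\nabla d_{x'}^2|^2=4d_{x'}^2$, one obtains the exact identity
\begin{equation}
\cH_{t_1}(x')-\cH_{t_2}(x')=\int_{t_1}^{t_2}\frac{1}{2t}\int_M (4\pi t)^{-n/2}e^{-d_{x'}^2/4t}\,\psi\,dv\,dt,\qquad \psi:=n-\Delta\bigl(\tfrac12 d_{x'}^2\bigr),
\end{equation}
and the Laplacian comparison gives $\psi\ge -C\delta\,d_{x'}^2$, so $\psi$ is nonnegative up to a term of size $\delta r^2$ on $A_{cr,Cr}(x')$. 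Restricting the $t$-integral to $[r^2/10,10r^2]$ and using that the Gaussian weight is $\ge c(n)r^{-n}$ on $A_{cr,Cr}(x')$ for such $t$, one gets $\int_{A_{cr,Cr}(x')}\psi\le Cr^{n}\bigl|\cH_{10r^2}(x')-\cH_{r^2/10}(x')\bigr|+C\delta r^{n+2}$. On the other hand the Bochner formula for $\rho=\tfrac12 d_{x'}^2$, using $|\nabla\rho|^2=2\rho$, yields the pointwise identity $|\nabla^2\rho-g|^2=\psi+\langle\nabla\rho,\nabla\psi\rangle-\Ric(\nabla\rho,\nabla\rho)$; integrating against a cutoff supported in $A_{cr,Cr}(x')$, equal to $1$ on $B_{3r}(y)$, moving the derivative off $\psi$ in the middle term (producing $\psi\,\Delta\rho$ and a cutoff-gradient term), and absorbing the quadratic term in $\psi$, one obtains
\begin{equation}
\int_{B_{3r}(y)}\bigl|\nabla^2\rho-g\bigr|^2\le C r^{n}\bigl|\cH_{10r^2}(x')-\cH_{r^2/10}(x')\bigr|+C\delta r^{n+2}.
\end{equation}

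\emph{Step 2: from the Hessian defects to the curvature — the main obstacle.} The task is to show, for $\rho$ as above together with finitely many further distance-squared functions $\rho_j\equiv\tfrac12 d_{x_j}^2$ attached to centers $x_j\in\cC$ with $d(x_j,x')\approx r$, the \emph{linear} bound
\begin{equation}
\int_{B_{3r}(y)}|\Rm|^2\le \frac{C}{r^4}\int_{B_{5r}(y)}\Bigl(\bigl|\nabla^2\rho-g\bigr|^2+\sum_j\bigl|\nabla^2\rho_j-g\bigr|^2\Bigr)+C\delta\, r^{n-2}.
\end{equation}
The mechanism: the differences $\rho-\rho_j$ are almost affine, so their gradients, together with $\nabla\rho$, give $n-3$ almost-parallel/almost-radial vector fields spanning the $\dR^{n-4}$-directions and the cone-radial direction (which on $B_{3r}(y)$ has size $\approx 2r$, since $y$ is $\approx 2r$ from the axis), each with $L^2$-controlled Hessian defect. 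The Ricci identity expresses $\Rm$ contracted with any of these fields through one derivative of the corresponding Hessian defect, and interior elliptic estimates in harmonic coordinates — using that each $\rho_j$ is a distance-squared function satisfying $\Delta\rho_j=n-\psi_j$ with $\psi_j$ controlled — bound that derivative back by the undifferentiated defect, the recurring curvature terms carrying small coefficients and being absorbed. The remaining at most $3$ "spherical" components of $\Rm$, those of the $C(S^3/\Gamma)$ cross section, are recovered from the almost-cone structure $\nabla^2\rho\approx g$ and the hypothesis $|\Ric|\le\delta$ through the Gauss--Codazzi equations of the level sets $\{d_{x'}=\mathrm{const}\}$, which in the critical dimension $3$ determine the intrinsic curvature from the Ricci tensor, hence from the ambient Ricci. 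Linearity here is essential — it is what allows the telescoping of the $\cH$-differences in the global $L^2$ bound — so this step must be a direct estimate, not a compactness argument; I expect it to be the principal difficulty.

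\emph{Step 3: conclusion by averaging against $\mu$.} Combining Steps 1 and 2 (Step 1 being applied to $x'$ and to each $x_j$, all of which are $\approx 5r$ from $y$ with $B_{5r}(y)$ in a good annulus about them) gives, for every center $x'\in\cC$ with $d(x',y)$ comparable to $5r$ and an admissible choice of the $x_j\in\cC\cap B_{10r}(y)$,
\begin{equation}
\int_{B_r(y)}|\Rm|^2\le \int_{B_{3r}(y)}|\Rm|^2\le C r^{n-4}\Bigl(\bigl|\cH_{10r^2}(x')-\cH_{r^2/10}(x')\bigr|+\sum_j\bigl|\cH_{10r^2}(x_j)-\cH_{r^2/10}(x_j)\bigr|\Bigr)+C\delta\, r^{n-2}.
\end{equation}
By (n3) the set $\cC'$ of admissible $x'$ contains a ball $B_r(x_1)$ with $x_1\in\cC$, so the Ahlfors lower bound of hypothesis (2) gives $\mu(\cC')\ge c(n,B)\,r^{n-4}$, while the Ahlfors upper bound gives $\mu(B_{10r}(y))\le C(n,B)\,r^{n-4}$. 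Averaging the displayed inequality over $x'\in\cC'$ against $d\mu(x')$, and using a Fubini argument over the (positive-measure, by (n3) and Ahlfors) space of admissible configurations to collapse the average of $\sum_j|\cH_{10r^2}(x_j)-\cH_{r^2/10}(x_j)|$ into a single $\mu$-integral, one obtains
\begin{align}
\int_{B_r(y)}|\Rm|^2
&\le \frac{C\, r^{n-4}}{\mu(\cC')}\int_{B_{10r}(y)}\bigl|\cH_{10r^2}(x)-\cH_{r^2/10}(x)\bigr|\,d\mu(x)+C\delta\, r^{n-2}\notag\\
&\le C(n,\rv,B,\tau)\,\delta\, r^{n-2}+C(n,\rv,B,\tau)\int_{B_{10r}(y)}\bigl|\cH_{10r^2}(x)-\cH_{r^2/10}(x)\bigr|\,d\mu(x),
\end{align}
which is the assertion.
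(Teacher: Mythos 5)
Your architecture matches the paper's: (i) convert the $\cH$-drop at a center into an $L^2$ bound on the Hessian defect of a distance-squared-type function (the paper's Lemma \ref{l:cHfunctional}); (ii) convert Hessian defects of $n-3$ such functions into a curvature bound via the Ricci identity for the components of $\Rm$ involving the gradients, and Gauss--Codazzi on the $3$-dimensional level sets for the rest (the paper's Lemma \ref{l:curvature_level_sets}); and (iii) pass from a single center to the $\mu$-average (the paper does this by restricting to the subset $\tilde\cC$ of centers whose pointwise pinching is at most $L$ times the average, which is equivalent to your averaging over admissible $x'$). Your Step 1 identity is correct modulo cut-locus issues, and Step 3 is fine given Step 2.

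The gap is Step 2, which you flag but do not close, and it hides three separate difficulties that the paper's proof spends most of its length on. First, the raw functions $\rho_j=\tfrac12 d_{x_j}^2$ are not smooth on the cut loci of the $x_j$, which can meet $B_{3r}(y)$; the Bochner identity and especially the third-derivative estimates are then unavailable. The paper replaces $d_{x_j}^2$ by the solution $f_{j,t}$ of the heat flow $\partial_t f=\Delta f-2n$ started at a comparison-modified $d_{x_j}^2$ and evaluated at $t\approx r^2$; all estimates are carried out for these smooth mollifications. Second, your elliptic estimate ``bounding the derivative back by the undifferentiated defect'' is not a one-line interior estimate: the commutator $\Delta\nabla^2 f=\nabla^2\Delta f+\Rm\ast\nabla^2 f+\nabla\Ric\ast\nabla f+\cdots$ contains $\Rm\ast\nabla^2 f$ with an $O(1)$ coefficient; one must split $\nabla^2 f=g+H$ so that the large part contracts to $\Ric$ (small by hypothesis) and the remainder is $\Rm\ast H$, and then one needs a \emph{pointwise} bound on $H$ before that term can be absorbed --- which itself requires a Moser iteration using the $L^q$ curvature bounds coming from $r_h(y)\gtrsim r$ (this is exactly the content of \eqref{e:hessian_parabolic_pointwise}). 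Third, the nondegeneracy of your system of gradients on $B_{3r}(y)$ --- needed both to span the $\dR^{n-4}$ and radial directions and to make the level sets honest $3$-manifolds for Gauss--Codazzi --- is asserted, not proved. The paper establishes it by a compactness argument (Claim 2 of its proof), and, contrary to your worry, this does not spoil linearity in the $\cH$-drop: the compactness only fixes a universal matrix $D$ and the lower bound $|\det A|\ge 1/2$, while the quantitative estimate remains linear in the pinching $\eta$. Without these three ingredients, Step 2 is a plan rather than a proof.
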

\vspace{.5cm}

 We will give a proof of Proposition \ref{p:local_L2_neck} and Theorem \ref{t:L2_neck} in the end of this section. The proof of Proposition \ref{p:local_L2_neck} relies on a few new and sharp estimates involving  about the ${\cH}$-volume and a pointwise curvature estimate in Lemma \ref{l:curvature_level_sets}.  

\subsection{Pointwise Riemann curvature estimates}
In this subsection, we get a pointwise bound on the curvature based on bounds on its Ricci curvature and control over some splitting functions. This estimate will provide us a manner in which to prove the local $L^2$ curvature estimate if we can find enough linear independently functions with good estimates.  Our main result of this subsection is the following:\\

\begin{lemma}[Curvature estimate of Level sets]\label{l:curvature_level_sets}
Let $(M^n,g,p)$ be a Riemannian manifold with  a map $\Phi=(h,f_1,\cdots, f_{n-4}): B_{10r}(p)\to \mathbb{R}_{+}\times \mathbb{R}^{n-4}$. Assume $f_0^2=h-\sum_{i=1}^{n-4}f_i^2\ge c_0r^2>0$ and $|f_i|\le c_0^{-1}r$ on $B_{2r}(y)\subset B_{10r}(p)$. Let $A=(a_{ij})$ be a $(n-3)\times (n-3)$ symmetric matrix with $a_{ij}(x)=\langle \nabla f_i(x),\nabla f_j(x)\rangle$.  Assume further $|\det A|(x)\ge c_0>0$ and $|\nabla f_i|\le c_1$ on $B_{2r}(y)$.  Then for any $x\in B_r(y)$, we have the following scale invariant estimate
$$r^4|\Rm|^2(x)\le C(n,c_0,c_1)\left(r^4|{\Ric}|^2+r^{2}|\nabla^3 h|^2+\sum_{i=1}^{n-4}r^4|\nabla^3 f_i|^2+\mathcal{F}+\mathcal{F}^2\right),$$
where $\mathcal{F}=|\nabla^2h-2g|^2+\sum_{i=1}^{n-4}r^2|\nabla^2f_i|^2$.
\end{lemma}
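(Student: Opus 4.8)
The plan is to estimate every component of $\Rm$ at $x$ in a suitably adapted orthonormal frame, by combining three ingredients: (i) the Ricci commutation identity, applied to $f_1,\dots,f_{n-4}$ and to $h$, which controls $\Rm$ contracted with the gradients $\nabla f_0,\dots,\nabla f_{n-4}$; (ii) the non-degeneracy $|\det A|\ge c_0$, which forces these $n-3$ gradients to span an $(n-3)$-dimensional subspace of $T_xM$; and (iii) the elementary fact that the curvature of the remaining $3$-dimensional subspace is algebraically determined by its Ricci curvature, since the Weyl tensor vanishes in dimension three. Thus the role of $h$ is precisely to supply one extra controlled direction, cutting the ``uncontrolled'' complement down from dimension four to dimension three.

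For the algebraic setup, note that $A=(a_{\alpha\beta})_{0\le\alpha,\beta\le n-4}$ is a symmetric Gram matrix with $|a_{\alpha\beta}|\le c_1^2$ and $\det A=|\det A|\ge c_0>0$, so it is positive definite with all eigenvalues in $[\lambda_-,\lambda_+]$, where $\lambda_+\le (n-3)c_1^2$ and $\lambda_-\ge c_0\lambda_+^{-(n-4)}\ge c(n,c_0,c_1)>0$. Hence $\nabla f_0,\dots,\nabla f_{n-4}$ are linearly independent, spanning a subspace $V=V_x\subseteq T_xM$ of dimension exactly $n-3$, and any $W\in V$ with $|W|\le 1$ satisfies $W=\sum_\alpha c_\alpha\nabla f_\alpha$ with $|c_\alpha|\le C(n,c_0,c_1)$ (solve $Ac=b$, $b_\beta=\langle W,\nabla f_\beta\rangle$). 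Fix an orthonormal frame $u_1,\dots,u_{n-3},n_1,n_2,n_3$ at $x$ with $u_j\in V$ and $n_s\in V^\perp$, where $\dim V^\perp=3$.

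The main step controls $\Rm$ contracted with the frame $\{\nabla f_\alpha\}$. For $\alpha=1,\dots,n-4$ the Ricci identity for the one-form $df_\alpha$ gives $(\nabla_i\nabla_j-\nabla_j\nabla_i)\nabla_kf_\alpha=-R_{ijkl}\nabla^lf_\alpha$, so $|R(X,Y,Z,\nabla f_\alpha)|\le C(n)|\nabla^3f_\alpha|$ for unit $X,Y,Z$, and likewise $|R(X,Y,Z,\nabla h)|\le C(n)|\nabla^3h|$. Since $h=\sum_{\alpha=0}^{n-4}f_\alpha^2$, one has $\nabla h=2\sum_\alpha f_\alpha\nabla f_\alpha$, hence
\[
2f_0\,R(X,Y,Z,\nabla f_0)=R(X,Y,Z,\nabla h)-2\sum_{i=1}^{n-4}f_i\,R(X,Y,Z,\nabla f_i),
\]
and using $f_0\ge\sqrt{c_0}\,r$ and $|f_i|\le c_0^{-1}r$ this gives $|R(X,Y,Z,\nabla f_0)|\le C(n,c_0)\Lambda$ with $\Lambda:=\tfrac1r|\nabla^3h|+\sum_{i=1}^{n-4}|\nabla^3f_i|$. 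Combined with the bounds for the $\nabla f_i$ and the dual-frame estimate, $|R(X,Y,Z,W)|\le C(n,c_0,c_1)\Lambda$ for all unit $X,Y,Z$ and all $W\in V$ with $|W|\le 1$. The subtraction identity above is what lets one handle the square-root-defined function $f_0$ without the large $|\nabla^2f_0|$-type contributions that a direct estimate of $\nabla^3f_0$ would incur; keeping those instead is precisely the source of the $\mathcal{F}+\mathcal{F}^2$ terms in the statement.

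It remains to assemble the components. By $R_{ijkl}=-R_{jikl}=-R_{ijlk}=R_{klij}$ one may always move an index lying in $V$ into the last slot, so $|R(e_a,e_b,e_c,e_d)|\le C(n,c_0,c_1)\Lambda$ whenever at least one of $e_a,e_b,e_c,e_d$ is one of the $u_j$. The only remaining components are the internal ones $R(n_a,n_b,n_c,n_d)$ on $V^\perp$. Tracing the ambient curvature over the adapted frame writes the restriction of $Rc$ to $V^\perp$ as $\sum_{j=1}^{n-3}R(u_j,n_b,u_j,n_d)+\widehat{Rc}_{bd}$, where $\widehat{Rc}_{bd}=\sum_{s=1}^3R(n_s,n_b,n_s,n_d)$ is the internal Ricci of $V^\perp$; the first sum is $O(\Lambda)$, so $|\widehat{Rc}|\le|Rc|+C(n,c_0,c_1)\Lambda$. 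Since $(R(n_a,n_b,n_c,n_d))$ is an algebraic curvature tensor on the $3$-dimensional space $V^\perp$, it is determined by $\widehat{Rc}$ (Weyl vanishes in dimension $3$), giving $|R(n_a,n_b,n_c,n_d)|\le C|\widehat{Rc}|\le C(n,c_0,c_1)(|Rc|+\Lambda)$. Collecting all components yields $|\Rm|(x)\le C(n,c_0,c_1)(|Rc|(x)+\Lambda(x))$; squaring, multiplying by $r^4$, and using $r^4\Lambda^2\le C(n)(r^2|\nabla^3h|^2+\sum_i r^4|\nabla^3f_i|^2)$ gives the claimed bound a fortiori. The only real difficulty is bookkeeping: keeping all constants uniform, which is exactly what $|\det A|\ge c_0$ and $|\nabla f_\alpha|\le c_1$ buy, and disposing of $f_0$ via the commutator-subtraction identity; conceptually the proof is just the Ricci identity together with the three-dimensional fact that Ricci determines the full curvature.
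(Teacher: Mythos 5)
Your proof is correct, and it takes a genuinely different route from the paper's. The paper realizes $V^\perp$ geometrically as the tangent space of the level set $N=\Psi^{-1}(\Psi(x))$, computes its second fundamental form $\Pi=a_0f_0^{-1}g^N+\mathcal{E}$, and passes through Gauss--Codazzi twice: once to get from the ambient curvature to $Rc^N$, and once more to come back after invoking the three-dimensional Weyl-vanishing identity for the \emph{intrinsic} curvature $\Rm^N$. The second fundamental form is what drags the Hessian quantities $r^{-1}|\nabla^2h-2g|$ and $|\nabla^2f_i|$ into the estimate, and is thus the source of the $\mathcal{F}+\mathcal{F}^2$ terms in the statement. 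You bypass the submanifold entirely: the restriction of the ambient curvature tensor to the three-dimensional subspace $V^\perp\subseteq T_xM$ is already an algebraic curvature tensor (the symmetries and the first Bianchi identity restrict), so the dimension-three isomorphism between algebraic curvature tensors and symmetric $2$-tensors applies to that restriction directly, with its partial Ricci controlled by the ambient Ricci plus components carrying a $V$-slot. Likewise your handling of $f_0$ --- subtracting at the curvature level via $2f_0R(\cdot,\cdot,\cdot,\nabla f_0)=R(\cdot,\cdot,\cdot,\nabla h)-2\sum f_iR(\cdot,\cdot,\cdot,\nabla f_i)$ rather than differentiating $f_0^2$ three times as the paper does --- avoids the $\nabla f_i\otimes\nabla^2f_i$ cross terms. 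The net effect is a purely pointwise-algebraic argument that in fact proves the stronger bound $|\Rm|\le C(n,c_0,c_1)\big(|Rc|+r^{-1}|\nabla^3h|+\sum_i|\nabla^3f_i|\big)$ with no $\mathcal{F}$ terms at all, which gives the stated estimate a fortiori. (Both arguments rest on the same implicit reading of the hypotheses, namely that the Gram matrix $A$ and the bound $|\nabla f_i|\le c_1$ include the index $i=0$, so that $A^{-1}$ is uniformly controlled; your eigenvalue bound $\lambda_-\ge c_0\lambda_+^{-(n-4)}$ makes this quantitative.)
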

\begin{remark}
The picture here is that $B_{10r}(p)\approx\dR^{n-4}\times C(Z)$ with $B_{2r}(y)$ away from the cone point, where $f_1,...,f_{n-4}$ represent linear splitting functions and $f_0$ is the distance function coming from the cone factor.  Therefore $h$ represents the square distance coming from the cone point itself.
\end{remark}

\begin{proof}
Let $\Psi=(f_0,f_1,\cdots, f_{n-4}): B_{10r}(p)\to  \mathbb{R}^{n-3}$. For any $x\in B_{2r}(y)$, since $|\det A|\ge c_0>0$, by constant rank theorem, we have $N=\Psi^{-1}\Big(\Psi(x)\Big)\cap B_{2r}(y)$ is a smooth $3$ dimensional submanifold of $M$.
  Let $\{e_1,e_2,e_3\}$ be horizontal vector fields  on $M$ which form a (local) orthonormal basis of the level set $N$.  Then $\{\nabla f_0,\nabla f_1,\cdots, \nabla f_{n-4},e_1,e_2,e_3\}$ form a basis of $M$ at $x$. Note that to control the curvature $|\Rm|(x)$ we only need to control the curvature tensor with the $\{e_j\}$ factors in the basis.  Indeed, since 
  \begin{align}\label{e:Rmnabla3f}
 \Rm(X,Y,\nabla f,Z)=\nabla^3f(X,Y,Z)-\nabla^3f(Y,X,Z)\, ,
  \end{align}
  then if the curvature $\Rm$ involves one normal direction $\nabla f_i$ for some $1\le i\le n-4$ we have $|\Rm(\cdot,\cdot, \nabla f_i,\cdot)|\le 3|\nabla^3f_i|$. Similarly for $i=0$, we have that $|\Rm(\cdot,\cdot, \nabla f_0,\cdot)|=(2f_0)^{-1}|\Rm(\cdot,\cdot, \nabla f_0^2,\cdot)|\le 3f_0^{-1}|\nabla^3f_0^2|$.  Hence, to prove the lemma, it suffices to estimate $\Rm(e_i,e_j,e_k,e_l)$ with $i,j,k,l=1,2,3$.  We will use Gauss-Codazzi equation to estimate these terms.
  Let us consider the second fundamental form $\Pi$. Denote by $g^N$ the restricted metric on $N$. By definition, we know at $x$ that
\begin{align}
\Pi=v_0\nabla^2f_0+\sum_{i=1}^{n-4}v_i\nabla^2 f_i\, ,
\end{align}
where $v_i=-\sum _{j=0}^{n-4}a^{ij}\nabla f_j$ and $(a^{ij})=A^{-1}$. Since $\nabla^2f_0=(2f_0)^{-1}\big(\nabla^2h-2\sum_{i=0}^{n-4} \nabla f_i\otimes\nabla f_i-2\sum_{i=1}^{n-4} f_i\nabla^2f_i\big)$ and $\Pi$ only takes values in the horizontal vector fields, the term $2\sum_{i=0}^{n-4} \nabla f_i\otimes\nabla f_i$ gives no contribution to $\Pi$.  Therefore 
\begin{align}
\Pi=v_0f_0^{-1}g^N+\mathcal{E}
\end{align}
with $|\mathcal{E}|\le C(n,c_0,c_1)\Big(r^{-1}|\nabla^2h-2g|+ \sum_{i=1}^{n-4}|\nabla^2f_i|\Big)$.
By Gauss-Codazzi equation $\Rm(X,Y,Z,W)=\Rm^N(X,Y,Z,W)+\langle\Pi(Y,W),\Pi(X,Z)\rangle-\langle\Pi(X,W),\Pi(Y,Z)\rangle$, we have
\begin{align}
{\Ric}^N={\Ric}+2|v_0|^2f_0^{-2}g^N+\mathcal{E}_1\, ,
\end{align}
with $|\mathcal{E}_1|\le C(n,c_0,c_1)\left(r^{-1}|\nabla^3h|+\sum_{i=1}^{n-4}|\nabla^3f_i|+r^{-1}|\mathcal{E}|+|\mathcal{E}|^2\right)$, where we have used \eqref{e:Rmnabla3f}. 
Thus we have the scalar curvature estimate $R^N =6 |v_0|^2f_0^{-2}+\mathcal{E}_2$ with $|\mathcal{E}_2|\le C(n,c_0,c_1)\left(|{\Ric}|+|\mathcal{E}_1|\right)$.
On the other hand, since $\dim N=3$, we have
\begin{align}
\Rm^N=-\frac{R^N}{12}g^N\circ g^N-\left({\Ric}^N-\frac{R^N}{3}g^N\right)\circ g^N,
\end{align}
where $g^N\circ g^N$ is the Kulkarni-Nomizu product, see \cite{Petersen_RiemannianGeometry}. Then
\begin{align}
\Rm^N=-\frac{|v_0|^2}{2f_0^2}g^N\circ g^N+\mathcal{E}_3\, ,
\end{align}
with $|\mathcal{E}_3|\le C(n,c_0,c_1)|\mathcal{E}_2|$. Using the Gauss-Codazzi equation again, we finally arrive at
\begin{align}
|\Rm(e_i,e_j,e_k,e_l)|\le C(n,c_0,c_1)\left(|\mathcal{E}_3|+|\mathcal{E}|+|\mathcal{E}|^2\right).
\end{align}

Therefore, combining with the estimates on normal direction \eqref{e:Rmnabla3f}, we have
\begin{align}
|\Rm|(x)&\le C(n,c_0,c_1)\left(r^{-1}|\nabla^3h|+\sum_{i=1}^{n-4}|\nabla^3f_i|+|\mathcal{E}_3|+r^{-1}|\mathcal{E}|+|\mathcal{E}|^2\right)(x)\\
&\le C(n,c_0,c_1)\left(|{\Ric}|+r^{-1}|\nabla^3h|+\sum_{i=1}^{n-4}|\nabla^3f_i|+r^{-1}|\mathcal{E}|+|\mathcal{E}|^2\right)(x).
\end{align}
\end{proof}
By noting the curvature estimate above, to prove Proposition \ref{p:local_L2_neck}, we only need to find $n-4$ functions which satisfy the condition in Lemma \ref{l:curvature_level_sets}. The main purpose of the following subsections is to find and control such functions.\\

\subsection{$\cH$-volume on manifold with $\Ric\ge -\delta(n-1)$}
Let $(M,g,p)$ be pointed manifold with ${\Ric}\ge -(n-1)\delta$ and $\Vol(B_1(p))\ge \rv$. We define the $\cH$-volume as
\begin{align}\label{e:Hvolume_def}
\cH^\delta_t(p)=\int_M(4\pi t)^{-n/2}e^{-\frac{d^2(x,p)}{4t}}dx-\int_0^t\frac{1}{4s}\int_M( L_\delta(x)-2n)(4\pi s)^{-n/2}e^{-\frac{d^2(x,p)}{4s}}dx ds,
\end{align}
where $L_\delta(x)=2+2(n-1)d(p,x)\sqrt{\delta}\coth \big(\sqrt{\delta}d(p,x)\big)$.  Note that $L_0\equiv 2n$ for spaces with nonnegative Ricci curvature, so that this second term is purely a correction term.  On a first reading of this section we recommend the reader sets $\delta=0$, most of the formulas simplify quite a bit in this case.  By direct computation, we have
\begin{align}
\partial_t\mathcal{H}^\delta_t(p)
&=\int_M\big( \frac{d^2(p,x)}{4t^2}-\frac{L_\delta(x)}{4t}\big)(4\pi t)^{-n/2}e^{-\frac{d^2(x,p)}{4t}}dx\, .
\end{align}
Noting that
$
\Delta e^{-\frac{d^2(x,p)}{4t}}=\left(-\frac{1}{4t}\Delta d^2(p,x)+\frac{d^2(p,x)}{4t^2}\right)e^{-\frac{d^2(x,p)}{4t}}\, ,
$
we have
\begin{align}\label{e:derivative_cHt}
\partial_t\mathcal{H}^\delta_t(p)
&=\frac{1}{4t}\int_M\big( \Delta d^2(p,x)-{L_\delta(x)}\big)(4\pi t)^{-n/2}e^{-\frac{d^2(x,p)}{4t}}dx\le 0\, ,
\end{align}
where we use the Laplacian comparison in the last inequality.  
\begin{remark}\label{r:scaling_cHvolume}
By rescaling, one can check that $\cH^{\delta}_t(p)=\tilde{\cH}^{\delta t}_1(p)$ where $\tilde{\cH}$ is the $\cH$-volume of $(M,\tilde{g},p)=(M,t^{-1}g,p)$. 
\end{remark}

 For simplicity of notation, we will drop the $\delta$ of $\cH_t^\delta$ when there is no confusion, but one should keep in mind the dependence of $\cH_t$ on the lower Ricci curvature bound $-(n-1)\delta$.

Let us consider the following heat flow
\begin{align}\label{e:heat_flow}
&\partial_t f_t=\Delta f_t-2n\\
&f_0(x)=2nU(d(p,x))\, ,
\end{align}
where $U(r)=\int_0^r\sinh^{-(n-1)}(\sqrt{\delta} t)\int_0^t\sinh^{n-1}(\sqrt{\delta} s)ds dt.$

\begin{remark}\label{r:deltaf0_delta_d2}
By direct computations, we have 
\begin{align}
0\le 2n-\Delta f_0(x)=\frac{nU'(d(p,x))}{d(p,x)}\Big(L_\delta(x)-\Delta d^2(p,x) \Big)\le n\Big(L_\delta(x)-\Delta d^2(p,x)\Big).
\end{align}
\end{remark}

We begin by recording some basic points about $f_t$ which will be useful:

\begin{lemma}\label{l:heat_flow_estimate}
	 If $(M,g)$ satisfies ${\Ric}\ge -(n-1)\delta$ and $f_t$ is as in \eqref{e:heat_flow} then we have the following:
\begin{enumerate}
\item $\Delta f_t\le 2n$ for all $t\geq 0$.
\item $-2nt\leq f_t\leq f_0 = 2nU(d(p,x))$.
\item If $Q_t\equiv e^{-2(n-1)\delta t}(|\nabla f|^2-4f-8nt)-8(n-1)\delta (tf+2nt^2)$, then $Q_t\leq 0$ for all $t\geq 0$.
\item If $P_t(x)=e^{-2(n-1)\delta t}(|\nabla f|^2-4f+8t(\Delta f-2n))-8(n-1)\delta (tf+2nt^2)+8t^2\delta(n-1)(\Delta f-2n)$ then we have $P_t\leq 0$ for all $t\geq 0$.
\end{enumerate}
\end{lemma}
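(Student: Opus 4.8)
The plan is to prove the four items in order, using the fact that each is a maximum-principle statement for a suitable differential inequality satisfied by a heat-type quantity. Throughout I would use that $f_t$ is bounded and smooth (the initial data $f_0 = 2nU(d(p,\cdot))$ is Lipschitz and the flow instantly smooths it), so that the parabolic maximum principle applies on $M\times[0,\infty)$ provided one has suitable growth control at spatial infinity; alternatively one localizes with a cutoff and lets the cutoff radius tend to infinity, absorbing the error via the heat-kernel decay estimates of Theorem \ref{t:heat_kernel}. I will suppress these standard completeness/localization issues and concentrate on the pointwise computations.

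For item (1), note $\Delta f_0 = \Delta\big(2nU(d(p,\cdot))\big)$, and a direct computation gives $U''(r) + (n-1)\sqrt{\delta}\coth(\sqrt\delta r) U'(r) = 1$, so by the Laplacian comparison theorem $\Delta f_0 \le 2n$ in the barrier sense. Setting $w \equiv \Delta f_t - 2n$, one has $\partial_t w = \Delta w$ (differentiating the flow equation and commuting $\Delta$ with $\partial_t$), with $w_0 \le 0$; the maximum principle then yields $w\le 0$ for all $t$. For item (2), $\Delta f_t - 2n \le 0$ from (1) gives $\partial_t f_t = \Delta f_t - 2n \le 0$, hence $f_t\le f_0$; and $\partial_t(f_t + 2nt) = \Delta f_t \ge$ (using a lower bound, or directly that $g_t \equiv f_t + 2nt$ solves $\partial_t g_t = \Delta g_t - 2n + 2n = \Delta g_t$... ) — more carefully, $f_t + 2nt$ satisfies the honest heat equation with nonnegative initial data $f_0 \ge 0$, so $f_t + 2nt \ge 0$, i.e. $f_t \ge -2nt$.

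For items (3) and (4), the strategy is the same but the computation is the substantive part: one derives a differential inequality of the form $(\partial_t - \Delta)Q_t \le (\text{nonpositive terms}) + (\text{terms involving } Q_t \text{ or lower-order controlled quantities})$, using the Bochner formula $\Delta|\nabla f|^2 = 2|\nabla^2 f|^2 + 2\langle\nabla f,\nabla\Delta f\rangle + 2\Ric(\nabla f,\nabla f)$, the Ricci lower bound $\Ric \ge -(n-1)\delta$, the Cauchy–Schwarz bound $|\nabla^2 f|^2 \ge \tfrac1n(\Delta f)^2$ together with $\Delta f \le 2n$ from (1), and the flow equation $\partial_t f_t = \Delta f_t - 2n$. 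The exponential factor $e^{-2(n-1)\delta t}$ and the explicit correction terms $-8(n-1)\delta(tf + 2nt^2)$ (and in (4) the further term $8t^2\delta(n-1)(\Delta f - 2n)$) are chosen precisely so that all the $\delta$-error terms produced by the Bochner formula and by differentiating in $t$ cancel or acquire the right sign. Since $Q_0 = |\nabla f_0|^2 - 4f_0 \le 0$ — this follows because $|\nabla f_0|^2 = 4n^2 U'(d)^2$ and $4 f_0 = 8n U(d)$, and $n U'(r)^2 \le 2 U(r)$ can be checked directly from the ODE for $U$ (equality at $r=0$, and differentiating) — the maximum principle gives $Q_t \le 0$; likewise $P_0 = Q_0 + 8t(\Delta f - 2n)|_{t=0}\cdot(\cdots)$, which at $t=0$ reduces to $Q_0 \le 0$, and one runs the maximum principle for $P_t$.

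The main obstacle I anticipate is item (4): verifying that the fairly elaborate combination $P_t$ actually satisfies a clean differential inequality requires carefully tracking the time derivative of the term $8t(\Delta f - 2n)$, which brings in $\partial_t \Delta f = \Delta(\Delta f)$ and hence third-order information, and then re-expressing everything so that the bad terms are dominated. The role of the extra term $8t^2\delta(n-1)(\Delta f - 2n)$ is to soak up exactly the $\delta$-contribution coming from commuting $\Delta$ past the exponential weight and from the $\Ric$-error in the Bochner step; getting the coefficients to match is the delicate bookkeeping. A secondary (but routine) obstacle is justifying the maximum principle on the noncompact $M$: I would handle this by the standard localization with the Cheeger–Colding cutoff function of \eqref{e:cutoff} on a ball of radius $R$, using the heat-kernel bounds of Theorem \ref{t:heat_kernel} to show the boundary and cutoff-error contributions vanish as $R\to\infty$.
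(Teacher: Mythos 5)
Your proposal is correct and follows essentially the same route as the paper: Laplacian comparison plus the caloricity of $\Delta f_t-2n$ for (1), the sign of $\partial_t f_t$ and the heat equation for $f_t+2nt$ for (2), and for (3)–(4) the Bochner formula with the Ricci lower bound, the maximum principle, and the verification $Q_0=P_0=|\nabla f_0|^2-4f_0\le 0$ via $nU'(r)^2\le 2U(r)$. The bookkeeping you defer does close up exactly as you predict — in particular, for (3) the residual term is $8(n-1)\delta\,(f+2nt)\,(e^{-2(n-1)\delta t}-1)\le 0$ by item (2), and for (4) the would-be third-order terms vanish because $\Delta f-2n$ is itself caloric, leaving $(\partial_t-\Delta)P_t\le -2e^{-2(n-1)\delta t}|\nabla^2 f-2g|^2$ as in the paper.
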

\begin{remark}
The last two expressions will be used to give sharp estimates on the gradient and hessian of our smooth approximation.
\end{remark}
\begin{proof}

Since $(\partial_t-\Delta)(f+2nt)=0$ we have 
\begin{align}
f_t(x)+2nt=\int_M f_0(y)\rho_t(x,dy).
\end{align}
This implies that 
\begin{align}
\Delta f_t=\partial_t f_t+2n=\int_Mf_0(y)\partial_t\rho_t(x,dy)=\int_M \Delta f_0(y)\rho_t(x,dy).
\end{align}
Therefore we arrive at 
\begin{align}\label{e:deltaft-2n}
(2n-\Delta f_t)(x)=\int_M (2n-\Delta f_0)(y)\rho_t(x,dy)\, .
\end{align}
By Laplacian comparison, we have $\Delta U(d(p,x))\le 1$. Thus $(2n-\Delta f_0)\ge 0$. Hence \eqref{e:deltaft-2n} implies (1).  From this we immediately get $\partial_tf\le 0$ which obtains for us the upper bound in $(2)$.  For the lower bound, we use $(\partial_t-\Delta)(f+2nt)=0$ to get $f_t(x)\ge -2nt$. \\

For the gradient estimate, we have by direct computation that
\begin{align}
(\partial_t -\Delta)(|\nabla f|^2-4f)&=-2|\nabla^2f|^2-2\Ric(\nabla f,\nabla f)+8n\le 2(n-1)\delta |\nabla f|^2+8n.
\end{align}
By noting our lower bound $f_t+2nt\geq 0$ we can conclude from this
\begin{align}
	(\partial_t-\Delta)Q_t(x)\le 0\, .
\end{align}
Combined with $Q_0\le 0$ we obtain $(3)$.  Finally, to prove $(4)$ we compute that
\begin{align}\label{e:evolution_Pt}
(\partial_t-\Delta)P_t\le -2e^{-2(n-1)\delta t}|\nabla^2 f-2g|^2\, .
\end{align}
Combined with $P_0\leq 0$ this proves the desired result.
\end{proof}
\vspace{.25cm}

Let us observe that a consequence of $(3)$ above is the gradient estimate
\begin{align}\label{e:gradient_f_t}
|\nabla f|^2&\le \big(4+8(n-1)\delta\,t\, e^{2(n-1)\delta t}\big)\big(f+2nt\big)\notag\\
&\le \big(4+8(n-1)\delta\,t\, e^{2(n-1)\delta t}\big)\big(f_0+2nt\big)\, .
\end{align}

Let us now prove a couple more refined estimates on $f_t$ that depend on the pinching of our $\cH$-volume.  Precisely, we have the following:

\begin{lemma}\label{l:cHfunctional}
Let $(M,g,p)$ satisfy $\Vol(B_1(p))\ge \rv>0$ and $\Ric\ge -(n-1)\delta$. Denote by $\eta=|\cH_{r^2/2}(p)-\cH_{2r^2}(p)|$ the $\cH$-volume pinching at scale $r$, then we have the estimates:
\begin{enumerate}
\item $\fint_{0}^{r^2} \fint_{B_{4r}(p)}|\nabla^2 f_t-2g|^2dydt\le C(n,\rv)(\delta r^2+\eta).$
\item For any $t\le r^2$, we have $\sup_{y\in B_{4r}(p)}|f_t-f_0|(y)\le C(n,\rv)\epsilon(\eta)\, r^2,$ where $\epsilon(\eta)\to 0$ if $\eta\to 0$.
\item For any $t\le r^2$, we have $\fint_{B_{4r}(p)}|\Delta f_t-2n|(z)\,dz+r^{-2}\fint_{B_{4r}(p)}|\nabla f_0-\nabla f_t|^2(z)\,dz\le C(n,\rv)\eta.$
\end{enumerate}
Moreover, if we assume further the Ricci curvature upper bound $|\Ric|\le (n-1)\delta$ and harmonic radius $r_h(x)\ge r$ for some $x\in B_{3r}(p)$, then for any $r^2/2\le t\le r^2$, we have
\begin{align}\label{e:hessian_parabolic_pointwise}
\sup_{B_{r/2}(x)}|\nabla^2 f_t-2g|^2+r^2\fint_{B_{r/2}(x)}|\nabla^3 f_t|^2(z)\,dz\le C(n,\rv)(\delta r^2+\eta).
\end{align}
\end{lemma}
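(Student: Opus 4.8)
Throughout, the plan is to reduce everything to the scale $r=1$ by the rescaling $g\mapsto r^{-2}g$, under which the lower Ricci bound becomes $\Ric\ge-(n-1)\delta r^2$ (this is where the $\delta r^2$ in the statement comes from), $B_{4r}(p)$ becomes $B_4(p)$, and $f_t$, $\cH_t$ and $|\nabla^2 f_t-2g|^2$ all transform scale-invariantly; I also assume $\delta\le 1$. Everything rests on one quantitative consequence of $\cH$-monotonicity, call it Step A. From the computed derivative $\partial_t\cH_t(p)=\tfrac1{4t}\int_M(\Delta d^2-L_\delta)(4\pi t)^{-n/2}e^{-d^2/4t}\,dx$ and the Laplacian comparison $\Delta d^2\le L_\delta$ one gets $\eta=\cH_{1/2}(p)-\cH_2(p)=\int_{1/2}^2\tfrac1{4t}\int_M(L_\delta-\Delta d^2)(4\pi t)^{-n/2}e^{-d^2/4t}\,dx\,dt\ge0$. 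Since $\Vol(B_1(p))\ge\rv$ forces $(4\pi t)^{-n/2}e^{-d^2/4t}\ge c(n,C)>0$ on $B_C(p)$ for $t\in[\tfrac12,2]$, and $L_\delta-\Delta d^2\ge0$, this yields $\fint_{B_C(p)}(L_\delta-\Delta d^2)\le C(n,\rv,C)\eta$ for every fixed $C$. Differentiating $f_0=2nU(d)$ and using the defining ODE $U''+(n-1)\sqrt\delta\coth(\sqrt\delta\,\cdot)U'=1$ gives the pointwise identity $0\le 2n-\Delta f_0=\tfrac{nU'(d)}{d}(L_\delta-\Delta d^2)$, with $U'(d)/d\le C(n,C)$ on $B_C(p)$ when $\delta\le1$; hence $\fint_{B_C(p)}(2n-\Delta f_0)\le C(n,\rv,C)\eta$. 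As $2n-\Delta f_t\ge0$ solves the heat equation, the reproduction formula and the Gaussian kernel bounds of Theorem \ref{t:heat_kernel} propagate this to $\fint_{B_C(p)}(2n-\Delta f_t)\le C(n,\rv,C)\eta$ for all $t\le1$, which is the first half of (3).

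The gradient bound in (3) follows by integration by parts: with $\phi$ a cutoff, $\phi\equiv1$ on $B_4$, $\text{supp}\,\phi\subseteq B_5$, and $g_t:=f_0-f_t=\int_0^t(2n-\Delta f_s)\,ds\ge0$ (so $0\le g_t\le C(n,\rv)$ and $\int_{B_5}g_t\le C\eta$ by Step A), one has $\int\phi^2|\nabla g_t|^2=-\int g_t\phi^2\Delta g_t-2\int g_t\phi\langle\nabla\phi,\nabla g_t\rangle$; bounding $|\Delta g_t|\le(2n-\Delta f_t)+(2n-\Delta f_0)$, absorbing the last term, and using Step A gives $\fint_{B_4}|\nabla(f_0-f_t)|^2\le C(n,\rv)\eta$. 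Part (2) is the one statement whose modulus $\epsilon(\eta)$ cannot be taken linear, since $2n-\Delta f_0$ is only controlled in $L^1$ near $p$, where the curvature is uncontrolled; it is proved by contradiction. A sequence with $\eta_i\to0$, $\delta_i\to0$ and $\sup_{B_4}|f_{t,i}-f_{0,i}|\ge c_0>0$ would, after passing to a pointed limit, produce a space on which the vanishing of the $\cH$-pinching forces (via the rigidity discussed in this section and \cite{ChC1}) a metric cone structure, so that $f_0=d^2$ and the limiting flow is stationary, $f_t\equiv f_0$, a contradiction.

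The heart of the lemma is (1), which I would obtain by integrating the differential inequality $(\partial_t-\Delta)P_t\le-2e^{-2(n-1)\delta t}|\nabla^2 f_t-2g|^2$ of Lemma \ref{l:heat_flow_estimate}(4) against a heat kernel. Fix $T\sim1$ and let $w_t:=\rho_{T-t}(p,\cdot)$ be the conjugate heat kernel, so that $\tfrac{d}{dt}\int_M P_t w_t=\int_M[(\partial_t-\Delta)P_t]w_t\le0$; integrating in $t$ from $0$ to $T/2$ gives
\[
\int_0^{T/2}\!\!\int_M 2e^{-2(n-1)\delta t}|\nabla^2 f_t-2g|^2\,\rho_{T-t}(p,y)\,dy\,dt\ \le\ \int_M P_0\,\rho_T(p,\cdot)\,dx\ +\ \int_M(-P_{T/2})\,\rho_{T/2}(p,\cdot)\,dy.
\]
The first term is $O(\delta)$: $P_0=|\nabla f_0|^2-4f_0$ vanishes identically when $\delta=0$ (then $f_0=d^2$, $|\nabla d|=1$), and in general $|P_0|\le C(n)\delta\,d^4$ wherever $\sqrt\delta\,d\le1$, so against the Gaussian weight $\int_M P_0\rho_T\le C(n,\rv)\delta$. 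For the second term one expands $-P_{T/2}\ge0$ in terms of $2n-\Delta f_{T/2}\ge0$, of $f_0-f_{T/2}\ge0$, and of $\nabla(f_0-f_{T/2})$ --- crucially integrating by parts against $\rho_{T/2}(p,\cdot)$ rather than using Cauchy-Schwarz, so the estimate stays \emph{linear} in the $L^1$-quantities from Step A --- the point being that on an exact metric cone $f_0=d^2$ and $P_t\equiv0$, so only genuinely $O(\delta+\eta)$ terms survive; this gives $\int_M(-P_{T/2})\rho_{T/2}\le C(n,\rv)(\delta+\eta)$. Since $e^{-2(n-1)\delta t}\ge c(n)$, and by the heat kernel lower bounds $\rho_{T-t}(p,\cdot)\ge c(n,\rv)$ on $B_4(p)$ for $t\le T/2$, while $\Vol(B_4(p))\ge\rv$, we conclude $\fint_0^1\fint_{B_4}|\nabla^2 f_t-2g|^2\le C(n,\rv)(\delta+\eta)$, which is (1).

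Part (4) is a local regularity statement: with $r_h(x)\ge r$ there are harmonic coordinates on $B_r(x)$ in which $\Delta=g^{ij}\partial_i\partial_j$, and by $|\Ric|\le(n-1)\delta$ and Remark \ref{r:harmonic_rad_high_est} the coefficients satisfy $\|g^{ij}-\delta^{ij}\|_{C^{1,\beta}}\le C(n)$ for some $\beta>0$; since $|f_t|,|\nabla f_t|\le C(n,\rv)$ on $B_r(x)\times[0,r^2]$ (Lemma \ref{l:heat_flow_estimate}(2) and \eqref{e:gradient_f_t}), interior parabolic Schauder estimates bound $f_t$ in $C^{3,\beta}(B_{r/2}(x)\times[r^2/2,r^2])$, and interpolating this a priori bound against the spacetime $L^2$-smallness of $\nabla^2 f_t-2g$ from (1) --- using parabolic smoothing to pass from time-averaged to pointwise-in-time control, the commutator errors $[\partial_t-\Delta,\nabla^2]f_t$ being controlled in $L^p$ by the $W^{2,p}$-bound on $g$ and hence absorbed into the $\delta$-term --- yields the claimed bound. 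I expect the crux to be the boundary term $\int_M(-P_{T/2})\rho_{T/2}(p,\cdot)$ in (1): one must verify that every $O(1)$ term in the expansion of $P_{T/2}$ cancels (the whole construction is built so as to vanish on a metric cone), keep all integrations by parts linear in $\eta$, and carefully track the many $\delta$-corrections arising from $f_0=2nU(d)$, from $L_\delta$, and from the $\delta$-dependent pieces of $P_t$ and $Q_t$ --- the kind of bookkeeping the introduction describes as ``of little consequence but often times quite involved''.
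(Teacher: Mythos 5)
Your Step A, the integration by parts for the gradient part of (3), and the use of the sign and evolution inequality for $P_t$ are exactly the right ingredients, and for (1) your conjugate-heat-kernel pairing is a legitimate reorganization of what the paper does with a cutoff function: it trades the paper's space--time $L^1$ control of $P_t$ (obtained there via the auxiliary evolution of $f_0^2-f_t^2$) for a bound on the single terminal term $\int(-P_{T/2})\rho_{T/2}$. But that terminal bound is precisely where the work lives --- one must show $\int_M(4f_{T/2}-|\nabla f_{T/2}|^2)\rho_{T/2}\le C(\delta+\eta)$ by writing $4f-|\nabla f|^2=(4f_0-|\nabla f_0|^2)-4(f_0-f)+\langle\nabla(f_0-f),\nabla(f_0+f)\rangle$ and integrating the last term by parts against $\rho_{T/2}$ --- and you leave it as ``the crux.'' Also, your compactness proof of (2) assumes $\delta_i\to 0$, which is not granted (the lemma holds for fixed $\delta$); this is unnecessary anyway, since your own Step A gives $\fint_{B}|f_0-f_t|\le C\eta t$, which combined with the uniform gradient bound \eqref{e:gradient_f_t} yields $\sup|f_0-f_t|\le C\epsilon(\eta)r^2$ directly --- this is the paper's route.

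The genuine gap is part (4). First, with $|\Ric|\le(n-1)\delta$ the metric in harmonic coordinates is only controlled in $C^{1,\alpha}\cap W^{2,p}$ (Remark \ref{r:harmonic_rad_high_est}), not $C^{2,\beta}$, so interior Schauder does not give you $f_t\in C^{3,\beta}$; at best you get $W^{3,p}\cap C^{2,\alpha}$ bounds. More importantly, interpolating a uniform $C^{0,\alpha}$ (or even $C^1$) a priori bound on $H_f=\nabla^2f_t-2g$ against the space--time $L^2$-smallness from (1) only yields $\sup|H_f|^2\le C(\delta r^2+\eta)^{\theta}$ for some $\theta<1$, and this sublinear dependence is fatal downstream: in Theorem \ref{t:L2_neck} the local estimates are summed over a Vitali cover and the $\eta$'s are controlled only because they telescope \emph{linearly} via the monotonicity of $\cH$. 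Finally, the commutator $(\partial_t-\Delta)H_f=[\nabla^2,\Delta]f_t$ contains the full-curvature term $\Rm\ast\nabla^2f_t$, which cannot be ``absorbed into the $\delta$-term'': it is a potential term of size $|\Rm|\,|H_f|$ multiplying the very quantity you are estimating. The correct mechanism --- and the paper's --- is parabolic Moser iteration applied to the subsolution inequality \eqref{e:evolution_Hf} for $|H_f|^2$, treating $|\Rm|\in L^q$ (from $r_h(x)\ge r$) as the potential; this is what produces the linear bound $\sup_{B_{r/2}}|H_f|^2\le C\fint\!\fint|H_f|^2+C\delta r^2$, with the $\nabla\Ric$ terms handled by integration by parts. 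The $L^2$ bound on $\nabla^3f_t$ then comes from one more energy estimate on \eqref{e:evolution_Hf}, using the pointwise bound on $|H_f|$ just obtained.
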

\begin{proof}
By \eqref{e:deltaft-2n} we have
\begin{align}
(2n-\Delta f_t)(x)=\int_M (2n-\Delta f_0)(y)\rho_t(x,dy)\, .
\end{align}

Thus
\begin{align}
\int_M (2n-\Delta f_t)(x)\rho_{r^2/2}(p,dx)=\int_M\int_M(2n-\Delta f_0)(y)\rho_t(x,dy)\rho_{r^2/2}(p,dx)=\int_M (2n-\Delta f_0)(y)\rho_{t+r^2/2}(p,dy)\, .
\end{align}
Therefore,  by the heat kernel estimates of Theorem \ref{t:heat_kernel} and Remark \ref{r:deltaf0_delta_d2} and noting that $\Delta f_t\le 2n$, for any $t\le r^2$, one can get
\begin{align}
\fint_{B_{10r}(p)}|2n-\Delta f_t|(x)dx&\le C(n,\rv)\int_M (2n-\Delta f_t)(x)\rho_{r^2/2}(p,dx)\\
&\le C(n,\rv)\int_M (2n-\Delta f_0)(y)\rho_{t+r^2/2}(p,dy)\\
&\le C(n,\rv)\int_M \big( {L_\delta(y)}-\Delta d^2(p,y)\big)\rho_{t+r^2/2}(p,dy)\\
&\le C(n,\rv)\int_M\big( {L_\delta(y)}-\Delta d^2(p,y)\big) \Big(t+r^2/2\Big)^{-n/2}e^{-\frac{d(p,y)^2}{(4+2^{-1})(t+r^2/2)}}dy\\
&\le  C(n,\rv)\int_M\big( {L_\delta(y)}-\Delta d^2(p,y)\big) r^{-n}e^{-\frac{4d(p,y)^2}{27r^2}}dy\\ \label{e:meanvalue_cH7}
&\le C(n,\rv) |\cH_{7r^2/4}(p)-\cH_{2r^2}(p)|\\
&\le C(n,\rv)\eta\, ,
\end{align}
where the constants $C(n,\rv)$ change line by line, and we have used \eqref{e:derivative_cHt} and mean value equality to deduce \eqref{e:meanvalue_cH7}. 

Hence, we have
\begin{align}\label{e:L1f0_ft}
\fint_{B_{10r}(p)}|f_0-f_t|(x)dx\le \int_0^t\fint_{B_{10r}(p)}(2n-\Delta f_s)(x)dxds\le C(n,\rv)\eta\, t\, .
\end{align}
By the gradient estimate of $f_t$ in (\ref{e:gradient_f_t}), we therefore have $\sup_{B_{10r}(p)}|f_0-f_t|\le C(n,\rv)\epsilon(\eta)\, r^2$. The gradient $L^2$ in (3) follows from integrating by part and the $L^1$ estimate of $(2n-\Delta f)$. In fact, let $\phi$ be a cutoff function as in \cite{ChC1} with support in $B_{10r}(p)$ and $\phi\equiv 1$ on $B_{8r}(p))$ and $r|\nabla \phi|+r^2|\Delta \phi|\le C(n)$. By integrating by parts, we have
\begin{align}
\int_M\phi^2 |\nabla f_0-\nabla f_t|^2(x)\,dx &\le 2\int_M|\nabla\phi| \cdot |f_0-f_t|\cdot \phi|\nabla f_0-\nabla f_t|(x)\,dx+\int_M \phi^2 |\Delta f_0-\Delta f_t|\cdot |f_0-f_t|(x)\,dx\\ \nonumber
&\le \frac{1}{2}\int_M\phi^2 |\nabla f_0-\nabla f_t|^2(x)\,dx+C(n)r^{-2}\int_{B_{10r}(p)}|f_0-f_t|^2+ \sup_{B_{10r}(p)}|f_0-f_t|\int_{B_{10r}(p)}|\Delta f_0-\Delta f_t|(x)\,dx.
\end{align}
Combining with \eqref{e:L1f0_ft} and $\sup_{B_{10r}(p)}|f_0-f_t|\le C(n,\rv)\epsilon(\eta)\, r^2$ and the $L^1$ estimates of $|\Delta f_t-2n|$, we have $\int_M\phi^2 |\nabla f_0-\nabla f_t|^2(z)\,dz\le C(n,\rv)\eta r^{2+n}$. Hence, we prove (2) and (3).

To prove (1), recall that $P_t(x)=e^{-2(n-1)\delta t}(|\nabla f|^2-4f+8t(\Delta f-2n))-8(n-1)\delta (tf+2nt^2)+8t^2\delta(n-1)(\Delta f-2n)$ satisfies $P_t\leq 0$ from Lemma \ref{l:heat_flow_estimate}.  Let $\varphi$ be a cutoff function as in \cite{ChC1} with support in $B_{6r}(p)$ and $\varphi\equiv 1$ on $B_{5r}(p))$ and $r|\nabla \varphi|+r^2|\Delta \varphi|\le C(n)$. Multiplying such $\varphi$ to (\ref{e:evolution_Pt}) and integrating by parts, we have
\begin{align}
\int_0^{r^2}\fint_{B_{5r}(p)} |\nabla^2 f-2g|^2(z)\,dzdt\le C(n,\rv)\fint_{B_{6r}(p)} |P_{r^2}|(z)\,dz+C(n,\rv)\fint_0^{r^2}\fint_{B_{6r}(p)}|P_t|(z)\,dzdt.
\end{align}
To estimate $\fint_{B_{6r}(p)}|P_t|(z)\,dz $, one only needs to estimate $\fint_{B_{6r}(p)} \Big||\nabla f|^2-4f\Big|(z)\,dz$. This can be controlled by considering the evolution of $f_t^2-f_0^2$. In fact, we have
\begin{align}
(\partial_t-\Delta)(f_0^2-f^2)
&=(4n+8)(f-f_0)-2f_0(\Delta f_0-2n)-2(|\nabla f_0|^2-4f_0)+2(|\nabla f|^2-4f).
\end{align}
Let $\psi$ be a cutoff function as in \cite{ChC1} with support in $B_{10r}(p)$ and $\psi\equiv 1$ on $B_{6r}(p))$ and $r|\nabla \psi|+r^2|\Delta \psi|\le C(n)$. By noting $0\le (4f_0-|\nabla f_0|^2)(y)\le C(n)\delta d(p,y)^4$ for $d(p,y)\le 10$, we can show
\begin{align}
\int_0^{3r^2/2}\int_{M}e^{-2(n-1)\delta t}(|\nabla f|^2-4f)\psi (z)\,dzdt \ge -C(n,\rv)(\delta r^2+\eta )r^{4+n} .
\end{align}
Using the above, the $L^1$ estimate on the laplacian of $f_t$ and $|f_t|\le C(n)r^2$ on $B_{10r}(p)$ for $t\le 3r^2/2$, we have
\begin{align}\nonumber
\int_0^{3r^2/2}\int_{M}P_t\psi(z)\,dzdt\ge &-C(n,\rv)(\delta r^2+\eta )r^{4+n}\, .
\end{align}
Noting that $P_t\le 0$, we have
\begin{align}
\fint_0^{3r^2/2}\fint_{B_{6r}(p)}|P_t|(z)\,dzdt\le C(n,\rv)(\delta r^2+\eta )r^2\, ,
\end{align}
which finishes the proof of (1).

Now we wish to prove the estimates of \eqref{e:hessian_parabolic_pointwise}.  Indeed, under the assumption $r_h(x)\geq r$ we have that
\begin{align}\label{e:moser_iteration:1}
	r^{2q}\fint_{B_{3r/4}(x)}|\Rm|^q(z)\,dz<C(n,q)\, ,
\end{align}
for all $q<\infty$.  Using this, the estimates of \eqref{e:hessian_parabolic_pointwise} are fairly standard, and follow much the same path as the proof of Theorem \ref{t:prelim:harmonic_estimates}, so we will only sketch the argument. Denote $H_f=\nabla^2f-2g$,  then we can compute
\begin{align}\label{e:evolution_Hf}
(\partial_t-\Delta)|H_f|^2=-2|\nabla^3f|^2+Rm\ast H_f\ast H_f+\Ric\ast H_f+\nabla(\Ric(\nabla f,\cdot))\ast H_f\, ,
\end{align}
where $\ast$ means tensorial linear combinations and the exact expression can be computed as in Lemma \ref{l:harmonic_computations}.  Then we can apply a standard parabolic moser iteration using \eqref{e:moser_iteration:1} and (1) in order to conclude the pointwise estimate in (\ref{e:hessian_parabolic_pointwise}); see also \cite{ColdingMinicozzi_tangentcone} for an elliptic version estimate.  In order to conclude the $L^2$ estimate on $\nabla^3 f$, let us simply multiply \eqref{e:evolution_Hf} by a cutoff function and integrate using \eqref{e:moser_iteration:1} and the pointwise estimate on $|H_f|$, which finishes the sketch.
\end{proof}

\vspace{.5cm}

\subsection{$\cH$-volume and Local $L^2$ curvature estimates}
The main purpose of this subsection is to prove the local $L^2$ curvature estimate in Proposition \ref{p:local_L2_neck}. The key ingredient is the parabolic estimate of $\cH$-volume in Lemma \ref{l:cHfunctional} and pointwise curvature estimate in Lemma \ref{l:curvature_level_sets}.  First, let us introduce the concept of independent points.

\begin{definition}[$(k,\rho,r)$-independent points]\label{d:indep_point}
In a metric space $(X,d)$ a set of points $U=\{x_0,\cdots,x_k\}\subset B_{r}(x)$ is $(k,\rho,r)$-independent if for any subset $U'=\{x_0',\cdots,x_k'\}\subset \mathbb{R}^{k-1}$ we have 
\begin{align}
d_{GH}(U,U')\ge \rho \cdot r.
\end{align}
\end{definition}

\begin{remark}\label{r:remark_independentpoints}
Let $X\subset \mathbb{R}^n$, if there exists no $(k,\rho,r)$-independent set in $B_r(x)\cap X$, then $B_r(x)\cap X\subset B_{4\rho r}(\mathbb{R}^{k-1})$ for some $(k-1)$-plane $\mathbb{R}^{k-1}\subset \mathbb{R}^n$. To see this, if $B_r(x)\cap X$ is not a subset of $B_{3\rho r}(\mathbb{R}^{k-1})$ for any $(k-1)$-plane, then one can find $(k,\rho,r)$-independent set in $B_r(x)\cap X$ by induction on $k$.
\end{remark}


Now we are ready to prove Proposition \ref{p:local_L2_neck}.
\begin{proof}[Proof of Proposition \ref{p:local_L2_neck}]
The main idea for the proof is to use the pointwise curvature estimate in Lemma \ref{l:curvature_level_sets}. The key ingredient is to find $n-3$ functions $(h,u_1,\ldots,u_{n-4})$ which satisfy the conditions of this Lemma.  Intuitively, such $h$ should be a square distance function and $(u_1,\ldots,u_{n-4})$ splitting functions which form a cone map to $\mathbb{R}^{n-4}\times C(S^3/\Gamma)$. Based on this observation,  we will construct such functions in detail in the following paragraphs. First we will show the following claim:

\noindent \textbf{Claim 1:} For $\tau\le \tau(n)$ and $\delta\le \delta(n,B,\tau)$ there exist $\rho(n,B), L(n,B), K(n,B)>0$ such that for any $y\in \cN_{K}$ with $d(y,\cC)=d(y,z)=r$ satisfying $B_{r/10}(z)\subset B_2(p)$,  we have $(n-4,\rho,r/20)$-independent points $\{x_0,\cdots, x_{n-4}\}\subset \tilde{\cC}\subset B_{r/20}(z)$, where  $\tilde{\cC}=\{x\in \cC\cap B_{r/20}(z): \big|{\cH}_{10r^2}(x)-{\cH}_{r^2/10}(x)\big|\le L(n,B) \eta~\}$ and $\eta\equiv \fint_{B_{r/20}(z)}\big|{\cH}_{10r^2}(x)-{\cH}_{r^2/10}(x)\big|d\mu(x)$.


\textbf{Proof of Claim 1:} We will chose $K(n,B)=10^{10}\rho(n,B)^{-1}>10^{10}$ where $\rho(n,B)$ will be fixed later. 
By the maximal function argument, for any $\epsilon>0$, there exists $L= L(n,\epsilon)>0$ such that the set 
\begin{align}
\tilde{\cC}=\{x\in \cC\cap B_{r/20}(z): \big|{\cH}_{10r^2}(x)-{\cH}_{r^2/10}(x)\big|\le L(n,\epsilon) \eta~\}
\end{align}
satisfies 
\begin{align}\label{e:lowerbouncC}
\mu(\tilde{\cC})\ge (1-\epsilon)\mu(B_{r/20}(z))\ge (1-\epsilon) B^{-1}r^{n-4}20^{-(n-4)}\ge C(n)B^{-1}r^{n-4}.
\end{align}
From the definition of $(n-4,\rho,r)$-independent points and Remark \ref{r:remark_independentpoints} , it suffices to prove the following: 
\vskip 2mm
If $\rho=\rho(n,B):=\hat{C}(n)^{-1}B^{-2}$ then for all $k$ planes $P_k\subset \mathbb{R}^{n-4}\times \{y_c\}\subset \mathbb{R}^{n-4}\times C(S^3/\Gamma)$  with $k\le n-5$,  the following holds 
\begin{align}\label{e:PkcoveringtildecC}
B_{5\rho r}(\iota(P_k))\setminus \tilde{\cC}\ne\emptyset,
\end{align}
where $\iota$ is $\delta r$ GH map from $\mathbb{R}^{n-4}\times C(S^3/\Gamma)$.  
\vskip 2mm
To see \eqref{e:PkcoveringtildecC}, note that $\iota(P_k)\cap B_{r/20}(z)$ can be covered by $C(n)\rho^{-k}$ many $B_{\rho r}$-balls, which implies that 
\begin{align}\label{e:PkcoveringtildecC2}
\mu(B_{5\rho r}(\iota(P_k))\cap B_{r/20}(z))\le C(n)\rho^{-k} \cdot C(n)B (\rho r)^{n-4}\le C(n)B\rho^{n-4-k}r^{n-4}\le C(n)B\rho r^{n-4},
\end{align}
where we have used the fact that for each $B_{\rho r}(w)$ with $w\in B_{r/20}(z)$ and $\rho<1/40$ that 
\begin{align}\label{e:upperpackingrhow}
\mu(B_{6\rho r}(w))\le C(n)B (\rho r)^{n-4}.
\end{align}  
Actually, to see \eqref{e:upperpackingrhow} we first note that $r\ge  K(n,B)r_z=10^{10}\rho(n,B)^{-1}r_z$ by the chosen of $z$.  Since $|\text{Lip}  r_x|\le \delta$, for any $x\in \cC\cap B_{r}(z)$ we have $r_x\le 2r$. Thus for any $w\in B_{r/20}(z)$ if $B_{6\rho r}(w)\cap \cC=\emptyset$ then \eqref{e:upperpackingrhow} holds trivially. If $B_{6\rho r}(w)\cap \cC\ne \emptyset$ then there exists $x\in B_{6\rho r}(w)\cap \cC\subset B_r(z)\cap \cC$, hence $\mu(B_{6\rho r}(w))\le \mu(B_{12\rho r}(x))\le  B (12 \rho r)^{n-4}$ which proves \eqref{e:upperpackingrhow}.

Now the estimate \eqref{e:lowerbouncC} and \eqref{e:PkcoveringtildecC2} imply \eqref{e:PkcoveringtildecC} if $\rho(n,B)= \hat{C}(n)^{-1}B^{-2}$ for a large $\hat{C}(n)$. Therefore, we have proved Claim 1. $\square$

Now recall that $\eta=\fint_{B_{r/20}(z)}\big|{\cH}_{10r^2}(x)-{\cH}_{r^2/10}(x)\big|d\mu(x)$.  Applying Lemma \ref{l:cHfunctional} to each $x_i$, we have $n-3$ functions $f_{i,t}$ such that  $\fint_{0}^{r^2} \fint_{B_{4r}(x_i)}|\nabla^2 f_{i,t}-2g|^2dxdt\le C(n,\rv,B,\tau)(\delta r^2+\eta).$  For $1\le i\le n-4$, let $w_{i,t}= \left(f_{i,t}-f_{0,t}-d(x_0,x_i)^2\right)/2d(x_0,x_i)$.  

\noindent \textbf{Claim 2:} There exists an $(n-4,n-4)$-matrix $D$ with $|D|\le C(n,B,\tau)$ such that if $(v_{i,r^2})\equiv (w_{i,r^2})D$ then $\bar f_{0,r^2}\equiv f_{0,1}-\sum_{i=1}^{n-4} v_{i,r^2}^2$ satisfies $\bar f_{0,r^2}\ge C(n,B,\tau)r^2>0$ on $B_{5r/6}(y)$. Further, if we denote $v_{0,r^2}=\sqrt{\bar f_{0,r^2}}$ on $B_{5r/6}(y)$, then we have $\min_{x\in B_{r/2}(y)}|\det A|(x)\ge 1/2$, where $A(x)=\langle \nabla v_{i,r^2},\nabla v_{j,r^2}\rangle(x)$ for $i,j=0,\cdots, n-4$.

 \textbf{Proof of Claim 2:}  We prove this claim by contradiction, therefore let us assume this is not true. Then for $\delta_a\to 0$ there exists $(\delta_a,\tau)$-neck region $\cN_a\subset B_2(p_a)\subset M_a$ with $y_a\in \cN_{a,K(n,B)}$, $d(y_a,\cC_a)=r_a$ and $(n-4,\rho, r_a/20)$-independent points $\{x_{0,a},\cdots,x_{n-4,a}\}\subset {\cC}_a$, but there is no matrix $D_a$ with $|D_a|\le C(n,B,\tau)$ satisfying the claim for $\cN_a$, where $C(n,B,\tau)$ will be determined later.  Let us rescale each metric $g_a$ to $\tilde{g}_a$ such that $d(y_a,\cC_a)=1$. Taking limit we have $M_a\to \mathbb{R}^{n-4}\times C(S^3/\Gamma)$ and $\cC_a\to \cC_\infty\subset \mathbb{R}^{n-4}\times \{y_c\}$ with $y_a\to y_\infty$ and $x_{i,a}\to x_{i,\infty}$, where $\{x_{i,\infty}\}$ is a $(n-4,\rho,1/20)$-independent set.   By the $C^0$ estimate of Lemma \ref{l:cHfunctional} we have that $|\tilde{f}_{i,1,a}-d_{x_{i,a}}^2|\to 0$, and hence $\tilde{f}_{i,1,a}\to \tilde{f}_{i,1,\infty}\equiv d^2_{x_{i,\infty}}$.
 On the one hand, it is then clear that $\tilde{w}_{i,1,\infty}\equiv\left(\tilde{f}_{i,1,\infty}-\tilde{f}_{0,1,\infty}-d(x_{0,\infty},x_{i,\infty})^2\right)/2d(x_{0,\infty},x_{i,\infty})$ is a linear function. As $\{x_{i,\infty}\}$ is a $(n-4,\rho,1/20)$-independent set, one can choose a matrix $D_\infty$ with $|D_\infty|\le C(n,\rho)\equiv C(n,B,\tau)$ such that $(\tilde{v}_{i,1,\infty})\equiv(\tilde{w}_{i,1,\infty})D_\infty$ represents the standard coordinate functions of $\mathbb{R}^{n-4}\times \{y_c\}\subset \mathbb{R}^{n-4}\times C(S^3/\Gamma)$ with $(0^{n-4},y_c)=x_{0,\infty}$. Then $\bar f_{0,1,\infty}\equiv\tilde{f}_{0,1,\infty}-\sum_{i=1}^{n-4}\tilde{v}_{i,1,\infty}^2$ is the distance square function $d_{\mathbb{R}^{n-4}\times \{y_c\}}^2$. Thus the $(n-3,n-3)$ matrix $A_\infty$ defined in $B_1(y_\infty)\subset \mathbb{R}^{n-4}\times C(S^3/\Gamma)$ satisfies $A_\infty=(\delta_{ij})$. However, for the rescaled metric $\tilde{g}_a$ we have by the harmonic radius lower bound of $y_a$ in Lemma \ref{l:neck:distance_harm_rad} and the hessian estimate in Lemma \ref{l:cHfunctional} that
 $\tilde{f}_{i,1,a}\to \tilde{f}_{i,1,\infty}=d^2_{x_{i,\infty}}$ in $C^2$ sense on $B_{2/3}(y_\infty)$. By choosing $D_a=D_\infty$ for $a$ sufficiently large, this derives our contradiction and proves the result. $\square$

Now we plan to use such functions to prove our expected curvature estimates by using Lemma \ref{l:curvature_level_sets}. By Claim 2,
let us consider the map $\Phi=(h,u_1,\cdots, u_{n-4})\equiv (f_{0,r^2},v_{1,r^2}, \cdots, v_{n-4,r^2})$ on $B_{10r}(y)$. Using the harmonic radius lower bound $r_h(y)\ge 4r/5$ and the hessian estimate of $f_{i,r^2}$ in Lemma \ref{l:cHfunctional}, and noting that $u_i$ are linear combinations of the $f_{i,r^2}$ with uniformly bounded constants, we have the following scale invariant estimates
\begin{align}
\sup_{B_{3r/4}(y)}|\nabla^2h-2g|^2+r^2\fint_{B_{3r/4}(y)}|\nabla^3h|^2(z)\,dz+\sum_{i=1}^{n-4}\left(\sup_{B_{3r/4}(y)}r^2|\nabla^2u_i|^2+r^4\fint_{B_{3r/4}(y)}|\nabla^3u_i|^2(z)\,dz\right)\le C(n,\rv,B,\tau)(\eta+\delta r^2)\,.
\end{align}
Moreover, by the pointwise nondegeneration of $A(x)$ in Claim 2, we can now use Lemma \ref{l:curvature_level_sets} to deduce the curvature estimates.
In fact, for any $z\in B_{r/2}(y)$, we have scale invariant estimates
\begin{align}
r^4|\Rm|^2(z)\le C(n,\rv,B)\left(|{\Ric}|^2r^4+r^2|\nabla^3h|^2+\sum_{i=1}^{n-4}r^4|\nabla^3 u_i|^2+\mathcal{F}+\mathcal{F}^2\right)(z)\, ,
\end{align}
where $\mathcal{F}=|\nabla^2h-2g|^2+\sum_{i=1}^{n-4}r^2|\nabla^2u_i|^2$. By the pointwise hessian estimate for $u_i$, we have
\begin{align}
r^4|\Rm|^2(z)\le C(n,\rv,B,\tau)\left(|{\Ric}|^2r^4+r^2|\nabla^3h|^2+\sum_{i=1}^{n-4}r^4|\nabla^3 u_i|^2+|\nabla^2h-2g|^2+\sum_{i=1}^{n-4}r^2|\nabla^2u_i|^2\right)(z)\, .
\end{align}
Integrating over $B_{r/2}(y)$, we get
\begin{align}
r^4\fint_{B_{r/2}(y)}|\Rm|^2(z)\,dz\le C(n,\rv,B,\tau)(\eta +\delta r^2)= C(n,\rv,B,\tau)\Big(\delta r^2+\fint_{B_{r/20}(z)}\big|{\cH}_{10r^2}(x)-{\cH}_{r^2/10}(x)\big|d\mu(x)\Big)\, .
\end{align}
This completes the proof.
\end{proof}
\vspace{.5cm}

\subsection{Proof of the $L^2$ curvature estimate on neck region}
In this subsection, based on the local $L^2$ curvature estimate in Proposition \ref{p:local_L2_neck}, we prove Theorem \ref{t:L2_neck}.
\begin{proof}[Proof of Theorem \ref{t:L2_neck}]
Since the estimates are scale invariant, without loss of generality we will assume $r=1$. By the local $L^2$ curvature estimate of Proposition \ref{p:local_L2_neck}, for any $y\in\cN_{K(n,B)}$ with $s=d(y,\cC)=d(y,z)$ and $B_{s/10}(z)\subset B_2(p)$ which holds in particular for all $y\in B_1(p)\cap \cN_{K(n,B)}$,  we have the estimate
\begin{align}\label{e:neck_L2_entropy}
\int_{B_{s/2}(y)} |\Rm|^2(z)\,dz \leq C(n,\rv,B,\tau)\delta s^{n-2}+C(n,\rv,B,\tau)\int_{B_{s/20}(z)}|{\cH}_{10s^2}-{\cH}_{10^{-1}s^2}|(x)\,d\mu(x)\, .	
\end{align}

In order to use such an estimate, we first construct a Vitali covering. For any $x\in \cN_{K(n,B)}$ with $d(x,\cC)=s_x$, consider the covering $\{B_{s_x/5}(x),x\in \cN_{K(n,B)}\}$ of $\cN_{K(n,B)}$. We can choose a subcovering $\{B_{s_a/5}(x_a)\}$ such that $\{B_{s_a/40}(x_a)\}$ are disjoint and
\begin{align}
\cN_{K(n,B)}\cap B_1(p)\subseteq \bigcup_a  B_{s_{a}/2}(x_{a})\, ,
\end{align}
where $s_a=s_{x_a}$.
Rearranging $x_a$ such that $d(x_{\alpha,i},\cC)=s_{\alpha,i}=d(x_{\alpha,i},z_{\alpha,i})$ with $s_{\alpha,i}\in (2^{-\alpha-1}, 2^{-\alpha}]$ and $z_{\alpha,i}\in\cC$, then we have
\begin{align}
\cN_{K(n,B)}\cap B_1(p)\subseteq \bigcup_\alpha \bigcup_{i=1}^{N_\alpha} B_{s_{\alpha,i}/2}(x_{\alpha,i})\, ,	
\end{align}
 and $\{B_{40^{-1}s_{\alpha,i}}(x_{\alpha,i})\}$ are disjoint.  Moreover, by Ahlfors assumption, for any fixed $\alpha$ we have that $\sharp_i\{B_{s_{\alpha,i}/2}(x_{\alpha,i})\}\le C(n,B,\rv)s_\alpha^{4-n}$ with $s_\alpha=2^{-\alpha}$.
Then we have
\begin{align}
	\int_{\cN_{K(n,B)}\cap B_1(p)} |\Rm|^2(z)\,dz &\leq \sum_\alpha \sum_i \int_{B_{s_{\alpha,i}/2}(x_{\alpha,i})} |\Rm|^2(z)\,dz \\
&\leq C(n,\rv,B,\tau)\sum_\alpha \sum_i\left(\delta s_{\alpha,i}^{n-2} +\int_{B_{s_{\alpha,i}/20}(z_{\alpha,i})}
|{\cH}_{10s_{\alpha,i}^2}-{\cH}_{10^{-1}s_{\alpha,i}^2}|(x)\,d\mu(x)\right)\notag\\
	&\leq C(n,\rv,B,\tau)\sum_\alpha \delta s_\alpha^{2}+C(n,\rv,B,\tau)\sum_\alpha\int_{ B_{3/2}(p)}\big|{\cH}_{40s^2_\alpha}-{\cH}_{40^{-1}s^2_\alpha}\big|(x)\,d\mu(x)\notag
\end{align}
By the monotonicity of $\cH$-volume, we have
\begin{align}
	\int_{\cN_{K(n,B)}\cap B_1(p)} |\Rm|^2(z)\,dz&\leq C(n,\rv,B,\tau)\delta+C(n,\rv,B,\tau)\int_{ B_{3/2}(p)}\big|\sum_\alpha({\cH}_{40s^2_\alpha}-{\cH}_{40^{-1}s^2_\alpha})\big|(x)\,d\mu(x)\notag\\
	&\leq C(n,\rv,B,\tau)\delta+C(n,\rv,B,\tau)\int_{ B_{3/2}(p)}\big|{\cH}_{40}-{\cH}_{40^{-5}r^2_x}\big|(x)\,d\mu(x)\, .
\end{align}
On the other hand, noting that $B_2(p)$ is $\delta$-close to $\dR^{n-4}\times C(S^3/\Gamma)$, by $\epsilon$-regularity Lemma \ref{l:neck:distance_harm_rad} on neck region, for any $\delta'>0$ if $\delta\le \delta(n,\delta')$ we have
\begin{align}
\int_{(\cN_{10^{-5}}\setminus \cN_{K(n,B)})\cap B_1(p)} |\Rm|^2(z)\,dz\le \sum_{x\in B_1(p)\cap \cC}\int_{B_{K(n,B)r_x}(x)\setminus \cup_{y\in \cC}B_{10^{-5}r_y}(y)}|\Rm|^2(z)\,dz\le \sum_{x\in B_1(p)\cap \cC} C(n,B) \delta' r_x^{n-4}\le C(n,B)\delta'.
\end{align}
Thus we arrive at 
\begin{align}
\int_{\cN_{10^{-5}}\cap B_1(p)} |\Rm|^2(z)\,dz\le C(n,B)\delta'+C(n,\rv,B,\tau)\delta+C(n,\rv,B,\tau)\int_{ B_{3/2}(p)}\big|{\cH}_{40}-{\cH}_{40^{-5}r^2_x}\big|(x)\,d\mu(x)\, .
\end{align}
 However, since $\cN$ is a neck region we have that both $B_{\delta^{-1}}(x)$ and $B_{\delta^{-1}r_x}(x)$ are Gromov-Hausdorff close to $\dR^{n-4}\times C(S^3/\Gamma)$, and therefore by volume convergence we have that $|\cH_{40}-\cH_{40^{-5}r_x^2}|(x)\to 0$ as $\delta\to 0$.  To see this, it suffices to prove the following claim.
 
\noindent \textbf{Claim:} For any $\epsilon>0$ and $t\le 10$, if $\delta\le \delta(n,\rv,\epsilon)$ and 
\begin{align}
d_{GH}( B_{\delta^{-1}\sqrt{t}}(x), B_{\delta^{-1}\sqrt{t}}(0^{n-4},y_c))\le \sqrt{t} \delta, ~~\text{ where $(0^{n-4},y_c)$ is a cone vertex of $\dR^{n-4}\times C(S^3/\Gamma)$}
\end{align}
then $|\cH_t(x)-\bar \cH_{\Gamma}|\le \epsilon$, where the constant $\bar \cH_{\Gamma}$ depending on on $|\Gamma|$ and $n$ is the $\cH$-volume of cone $\dR^{n-4}\times C(S^3/\Gamma)$ .

\textbf{Proof of Claim:} By the scaling property of $\cH$-volume in Remark \ref{r:scaling_cHvolume}, it suffices to prove the case $t=1$. For simplicity, let us denote $\cH_1(p)=\int_M L(x,\delta)dx$ and $\bar \cH_{\Gamma}=\int_X L(x)dx$ with $X=\mathbb{R}^{n-4}\times C(S^3/\Gamma)$, which satisfies $L(x,\delta)\to L(x)$ uniformly in compact set if $\delta\to 0$. Due to the exponential decay of $L(x,\delta)$, $L(x)$ and growth control of volume by the volume comparison, the integral of $L(x,\delta)$ over $M\setminus B_{R}(x)$ and the integral of $L(x)$ over $\dR^{n-4}\times C(S^3/\Gamma)\setminus B_{R}(0^{n-4},y_c))$ both must be small than $\epsilon/10$ if $R\ge R(n,\rv,\epsilon)$. For the integral over $B_{R}$, by volume convergence we have if $\delta\le \delta(n,\rv,\epsilon)$ that
\begin{align}
|\int_{B_R(p)}L(x,\delta)dx-\int_{B_R(0^{n-4},y_c))}L(x)dx|\le \epsilon/10.
\end{align}
Combining the estimates on $B_R$ and outside $B_R$ we have proved the Claim. $\square$

  Hence, for any $\delta\leq \delta(n,\rv,\delta',\tau)$, we have proved Theorem \ref{t:L2_neck}.
\end{proof}
\vspace{.5cm}

\section{Splitting Functions on Neck Regions}\label{s:neck_splitting}

The main goal of this section is to do some analysis on neck regions, and more specifically to study the behavior of splitting functions on neck regions.  We will show that splitting functions are in fact better behaved than the standard basic estimates would lead one to believe.  This will be the key technical ingredient in the proof of Theorem \ref{t:neck_region}.\\

More precisely, with $\delta,\delta',\tau,B>0$ fixed background parameters, we will be interested throughout this section in the following assumptions:
\begin{align}\label{e:assumptions_splitting}
\text{(S1)}& \text{ $|{\Ric}|<\delta$}\, .\notag\\
\text{(S2)}& \text{ $\cN = B_2(p)\setminus \overline B_{r_x}(\cC)$ is a $(\delta,\tau)$-neck region}\, .\notag\\
\text{(S3)}& \text{ For each $x\in \cC$ and $r_x<r$ with $B_{2r}(x)\subseteq B_2$ we have $ B^{-1}r^{n-4}<\mu(B_r(x)) <B r^{n-4}$}\, .\notag\\
\text{(S4)}& \text{ $u:B_4(p)\to \dR^{n-4}$ is a $\delta'$-splitting function}\, .
\end{align}

Note that it will eventually be a consequence of Theorem \ref{t:neck_region} that we can take $B=A(n,\tau)$, and therefore $(S3)$ will be a redundant assumption.  However, at this stage it is important to not make this assumption, as the results of this section will factor heavily in the proof of this point.  The main goal of this section is to prove the following: \\

\begin{theorem}\label{t:splitting_neck_bilipschitz}
Let $(M^n_j,g_j,p_j)\to (X,d,p)$ be a limit space with $\Vol(B_1(p_j))>\rv>0$.  Then for every $\epsilon,B>0$ and $\tau<\tau(n)$ if $\delta,\delta'\leq \delta(n,\epsilon,\tau,B,\rv)$ is such that assumptions (S1)-(S4) of \eqref{e:assumptions_splitting} hold, then there exists a subset $\cC_\epsilon\subseteq \cC\cap B_1$ such that
\begin{enumerate}
\item[(b1)] $\mu(\big(\cC\cap B_1\big)\setminus \cC_\epsilon)<\epsilon$.
\item[(b2)] For each $x,y\in \cC_\epsilon$ we have that $1-\epsilon<\frac{|u(x)-u(y)|}{d(x,y)}<1+\epsilon$.
\item[(b3)] For each $x\in \cC_\epsilon$ and $r_x\leq r\leq 1$ we have that $u:B_r(x)\to \dR^{n-4}$ is an $\epsilon$-splitting.
\end{enumerate}
\end{theorem}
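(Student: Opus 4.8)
The plan is to deduce the bilipschitz conclusion (b2) and the scale-invariant splitting conclusion (b3) from a single summability estimate on the $L^1$-Hessian of $u$, namely the estimate outlined in \eqref{e:intro:summable_hessian}, together with the Ahlfor's hypothesis (S3) and a maximal-function/telescoping argument. Concretely, I would first establish the \emph{superconvexity estimate} for the quantity $H(r)=r F(r)$ with $F(r)=r^{-3}\int_{\{b=r\}}|\nabla^2 u|\,|\nabla b|\,\phi$, where $b$ is the Green's distance function for the measure $\mu$ built from $-\Delta G_\mu=\mu$ via $G_\mu=b^{-2}$, and $\phi=\phi_\cN$ is the cutoff from Lemma \ref{l:neck:cutoff}. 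Using Lemma \ref{l:green_function_distant_function}-type estimates (that $b\sim r_h\sim d(\cdot,\cC)$, $|\nabla b|\sim 1$, and $\mathrm{Vol}(\{b=r\}\cap B_{3/2})\lesssim r^3$ on $\cN$), a Bochner computation for the harmonic function $u$ (which on $\{Rc\equiv 0\}$ gives $\Delta|\nabla^2 u|\ge -|\mathrm{Rm}||\nabla^2 u|$ weakly, with the curvature controlled by the neck's $L^2$-curvature bound from Theorem \ref{t:L2_neck}), and integration along the level sets of $b$, one obtains $r^2\ddot H + r\dot H - (1-\epsilon)H \ge -\epsilon\sum r_x^{n-4}\delta_{[C^{-1}r_x,Cr_x]} - \epsilon r\delta_{[0,C]}$. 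The maximum principle for this ODE inequality, fed the Ahlfor's bound $\mu(B_r(x))\le B r^{n-4}$ to control the right-hand side masses, yields the Dini estimate $\int_0^\infty \tfrac1r H(r)\,dr\le \epsilon$, and then the coarea formula converts this into
\begin{align}
\int_{\cN\cap B_1} r_h^{-3}|\nabla^2 u| \le C\int_0^\infty \tfrac1r H(r)\,dr < \epsilon\, .
\end{align}

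Second, I would rewrite this global bound as an averaged statement over the center set. Decomposing $\cN\cap B_1$ into wedge-annuli $A_{r_{a+1},r_a}(x)$ around the centers $x\in\cC$ (using that $r_h\sim d(\cdot,\cC)$ and property (n3)), and using (S3) to compare the ambient volume of these annuli with $\mu$, one gets
\begin{align}
\int_{B_1}\Big(\sum_{r_a=2^{-a}} r_a\fint_{\cN\cap A_{r_{a+1},r_a}(x)}|\nabla^2 u|\Big)\,d\mu[x] < C(n,\tau,B)\,\epsilon^2\, .
\end{align}
By a Chebyshev/maximal-function argument there is a subset $\cC_\epsilon\subseteq \cC\cap B_1$ with $\mu((\cC\cap B_1)\setminus\cC_\epsilon)<\epsilon$ on which, for \emph{every} scale $r_a$, the dyadic Hessian average $r_a\fint_{A_{r_{a+1},r_a}(x)}|\nabla^2 u|$ is small — in fact summable over $a$ with total $<\epsilon^{10}$, say. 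This gives (b3) directly: on $B_r(x)$ for $x\in\cC_\epsilon$, the smallness of $\fint_{B_r(x)}|\nabla^2 u|$ together with $\sup|\nabla u|\le 1$ and the integral bound on $|\langle\nabla u^a,\nabla u^b\rangle-\delta^{ab}|$ inherited from (S4) (which one propagates down scales using exactly this Hessian control) shows $u\colon B_r(x)\to\dR^{n-4}$ is an $\epsilon$-splitting.

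Third, for (b2), given $x,y\in\cC_\epsilon$ with $d(x,y)=\rho$, I would integrate $\nabla u$ along a near-geodesic from $x$ to $y$ staying within $\cN$ (possible since the centers sit on the $(n-4,\delta)$-symmetric axis, so a minimizing geodesic between them is uniformly far from the deeper parts of the neck at the appropriate scales). Telescoping the Hessian control from scale $\rho$ down to the scales $r_x,r_y$ and using the base estimate $|\langle\nabla u,\nabla u\rangle-\mathrm{Id}|$ at the top scale, one controls how much $|u(x)-u(y)|$ deviates from the length of the geodesic, which in turn is within $\epsilon\rho$ of $d(x,y)$ by GH-closeness to $\dR^{n-4}\times C(S^3/\Gamma)$; this yields $||u(x)-u(y)|-d(x,y)|\le\epsilon\, d(x,y)$. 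The main obstacle, and the technically heaviest part, is the superconvexity estimate itself: carefully choosing the cutoff $\phi_\cN$ so that its Hessian errors are themselves summable against $\mu$, handling the non-vanishing Ricci error terms (which I have suppressed here by the $Rc\equiv 0$ assumption but which require the sharp gradient/Hessian estimates for the heat-flow approximations of Lemma \ref{l:heat_flow_estimate} and Lemma \ref{l:cHfunctional}), and controlling the curvature term in the Bochner formula — which is exactly where one must already have the $L^2$-curvature bound on the neck (Theorem \ref{t:L2_neck}) available, so the logical ordering within the induction of Theorem \ref{t:neck_region} must be respected.
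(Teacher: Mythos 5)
Your overall architecture is the paper's: superconvexity for $H(r)=rF(r)$ along level sets of the $\mu$-Green's distance, a maximum principle giving the Dini estimate $\int_0^\infty r^{-1}H(r)\,dr\lesssim\epsilon$, coarea to get $\int_{\cN}r_h^{-3}|\nabla^2 u|<\epsilon$, a Fubini/wedge decomposition against $\mu$, and a Chebyshev selection of $\cC_\epsilon$. However, there are two genuine gaps.

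First, your argument for (b2) by integrating $\nabla u$ along a near-geodesic from $x$ to $y$ fails for adjacent center points. The Vitali condition (n1) only forces $d(x,y)\geq \tau^2(r_x+r_y)$, so $r_x$ can be as large as $\tau^{-2}d(x,y)$; any path from $x$ to $y$ must traverse the removed balls $\overline B_{r_x}(x)$ and $\overline B_{r_y}(y)$, where there is no Hessian control whatsoever (these balls are not in $\cN$), and the only available bound $\sup|\nabla u|\leq 1$ contributes an error $O(r_x+r_y)=O(\tau^{-2}d(x,y))$ to $|u(x)-u(y)|$ --- not $O(\epsilon\, d(x,y))$. The paper avoids this entirely: once the maximal function $m(x)<\epsilon'$, Theorem \ref{t:splitting_function} upgrades $u$ to the $\dR^{n-4}$-factor of a Gromov--Hausdorff map $(u,\pi_u)$ on $B_{2r}(x)$ with $r=d(x,y)$; since both $x$ and $y$ lie within $\delta r$ of the effective singular set $\cL_{x,2r}$, on a neighborhood of which $u$ is an $\epsilon r$-GH map onto $\dR^{n-4}$, the estimate $\big||u(x)-u(y)|-d(x,y)\big|\leq\epsilon\,d(x,y)$ follows at the single scale $r$ with no integration through the bad region.

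Second, you never establish the scale-invariant $L^2$ Hessian estimate $r^2\fint_{B_r(x)}|\nabla^2 u|^2<\epsilon$ for all $x\in\cC$ and $r_x<r<1$ (the paper's Theorem \ref{t:splitting_neck_pointwise_hessian}, proved by a blow-up/compactness argument using the $H^2$-convergence of Theorem \ref{t:weak_sobolev_convergence} and Liouville on the cone). This is not a cosmetic omission: (b3) requires precisely this $L^2$ quantity by Definition \ref{d:splitting_function}(3), and your $L^1$ dyadic averages do not yield it without an additional higher-integrability input. More seriously, the Dini estimate itself consumes the pointwise consequence $|\nabla^2 u|\leq\epsilon\, b^{-1}$ on $\mathrm{supp}\,\phi_\cN$ when absorbing the cutoff error terms ($h_1$ and $h_3$ in Theorem \ref{t:superconvexity_estimate}), so this estimate must be proved \emph{before} the superconvexity machinery, not extracted from it afterwards. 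You should also note that the theorem is stated on limit spaces; the paper reduces to the manifold case via the neck approximation of Theorem \ref{t:neck_approximation}, a step your proposal omits.
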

\begin{remark}
Recall that we say a limit $M^n_j\to X$ satisfies $|\Ric|<\delta$ if $|\Ric_j|<\delta_j$ where $\delta_j\to \delta$.
\end{remark}
\begin{remark}
Essentially the entire section will focus on the case when $X$ is a manifold, since by applying the neck approximation of Theorem \ref{t:neck_approximation} we will immediately conclude the general statement.	
\end{remark}

\vspace{.5cm}

Before continuing let us make some observations about the above result.  The difficulty in proving the result is due to the smallness of the set $\cC$.  If a $\delta$-splitting function were to have pointwise estimates on the hessian, as it does for instance in the bounded curvature case, then the result would be trivial.  However, with only $L^2$ bounds on the hessian apriori, one can only use the hessian information to prove $(b3)$ away from a set of codimension $2+\epsilon$, which is far away from the requirements of $(b1)$.  In fact, even if we assume the the main theorem of this paper holds, which gives us an $L^2$ bound on the curvature, then the most one can prove is $L^4$ estimates on the hessian, see \cite{Cheeger}, which are still not strong enough to prove $(b1)-(b3)$.  Therefore, we see there is a crucial gap in the known estimates for splitting functions and what is needed for the above Theorem.  Closing this gap will require some new estimates which will be presented throughout this section, and which will depend fundamentally on the two sided Ricci bound.

\vspace{.5cm}

\subsection{Estimates on Standard Green's Functions on Neck Regions}\label{ss:splitting:Greens_standard}

In this section we discuss standard Green's functions on neck regions.  We will use these estimates in the next section to discuss Green's functions with respect to the packing measure $\mu$.

\begin{definition}\label{d:Green_neck}
	Given $(M^n,g,p)$ with $\cN\subseteq B_2(p)$ a $(\delta,\tau)$-neck region with packing measure $\mu_\cN$, let us consider $\bar \mu=\mu_{\cN}|_{B_{39/20}(p)}$ and we define the following:
	\begin{enumerate}
		\item We denote by $G_x(y)$ a Green's function at $x$.  That is, $-\Delta_y G_x(y) = \delta_x$, where $\delta_x$ is the dirac delta at $x$.
		\item We denote by $G_{\bar \mu}(y)$ the function $G_{\bar \mu}(y) = \int_{B_{39/20}(p)} G_x(y)\, d\mu(x)$ a Green's function which solves $-\Delta G_{\bar\mu} = \bar \mu$.
		\item We denote by $b(y)\equiv G^{-1/2}_{\bar \mu}$ the $\bar \mu$-Green's distance function to the center points $\cC$.
	\end{enumerate}
\end{definition}
\vspace{.5cm}

The intuition is that $b$ should behave in a manner which is comparable to the Green's function from the singular set in $\dR^{n-4}\times C(S^3/\Gamma)$, which itself is a multiple of $d(\cS^{n-4},\cdot)^{-2}\approx r_h^{-2}$. 
The main result of this subsection will be to prove just that.  Precisely:\\

\begin{lemma}\label{l:green_function_distant_function}
	Let $(M^n,g,p)$ satisfy $\Vol(B_1(p))>\rv>0$.  Then for given $B>0$ there exists  $\delta(n,B,\rv)$, $\tau(n)$, $C(n,B,\rv)>0$ and a Green function $G_{\bar \mu}$ such that if the assumptions (S1)-(S3) of \eqref{e:assumptions_splitting} hold, then
	\begin{enumerate}
	\item For $x\in \cN_{10^{-6}}\cap B_{39/20}(p)$, we have $C^{-1}\,d(x,\cC)\leq b(x)\leq C\,d(x,\cC)$.
	\item For $x\in \cN_{10^{-6}}\cap B_{39/20}(p)$, we have $C^{-1}\leq |\nabla b|\leq C$.
	\end{enumerate}
\end{lemma}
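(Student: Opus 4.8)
The plan is to establish the estimates by a contradiction/compactness argument, exploiting the fact that on a $(\delta,\tau)$-neck region the geometry is well-approximated on all scales between $r_x$ and $1$ by $\dR^{n-4}\times C(S^3/\Gamma)$, where the sharp model computation can be performed explicitly. First I would record the model case: on $\dR^{n-4}\times C(S^3/\Gamma)$, with $\mu_0$ the $(n-4)$-Hausdorff measure on the singular line $\dR^{n-4}\times\{y_c\}$, a direct computation (using that $C(S^3/\Gamma)$ has a $4$-dimensional conical metric and that the Green's function of $\dR^{n-4}$ against a point is $c_n|x|^{-(n-6)}$ when $n\geq 7$, with the obvious modifications for small $n$) shows that $G_{\mu_0}(y)$ is a dimensional constant times $\dist(y,\dR^{n-4}\times\{y_c\})^{-2}$, so that $b_0 \equiv G_{\mu_0}^{-1/2}$ is exactly a constant multiple of the distance to the singular set, with $|\nabla b_0|$ a positive constant. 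This gives the target comparison constants in the model, and by Lemma \ref{l:neck:distance_harm_rad} the distance to $\cC$ is comparable to $r_h$ on $\cN_{10^{-6}}$, so proving (1)--(2) for $b$ amounts to showing $G_\mu$ stays uniformly close (multiplicatively, and in $C^1$ after rescaling) to $G_{\mu_0}$.

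The main construction step is to get a two-sided bound on $G_\mu$ directly from the Ahlfors assumption (S3), \emph{without} compactness, which both yields part (1) and provides the uniform bounds needed to run the blow-up for part (2). For the upper bound $G_\mu(x)\leq C\,d(x,\cC)^{-2}$ at a point $x$ with $d(x,\cC)=2s$: write $G_\mu=\int G_z\,d\mu(z)$, decompose $\cC$ into dyadic annuli $A_{2^k s, 2^{k+1}s}(x)$, use the standard Green's function bound $G_z(x)\leq C(n)\,d(x,z)^{2-n}$ on a manifold with $\Vol(B_1)\geq\rv$ and $|Rc|\leq\delta\leq 1$ together with the Li--Yau/heat-kernel estimates of Theorem \ref{t:heat_kernel} (integrating $\rho_t(x,z)$ in $t$), and sum $\sum_k (2^k s)^{2-n}\mu(A_{2^ks,2^{k+1}s}(x)) \leq \sum_k (2^ks)^{2-n} B (2^{k+1}s)^{n-4} = C(n,B) s^{-2}\sum_k 2^{-2k}$, which converges. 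For the lower bound I would instead use a barrier/maximum-principle argument on the annulus between $\{d_\cC \approx s\}$ and $\{d_\cC\approx 1\}$: $G_\mu$ is superharmonic, vanishes at infinity scale $2$, and the ball-center portion of $\mu$ within distance $s$ of $x$ has mass at least $B^{-1}s^{n-4}$ by the lower Ahlfors bound, forcing $G_\mu(x)\geq B^{-1}s^{n-4}\cdot\inf_{z}G_z(x)\geq c(n,B)s^{-2}$. This establishes $G_\mu(x)^{-1/2}\approx d(x,\cC)$, i.e. part (1), and in particular $b$ is Lipschitz-controlled and nondegenerate in a rough sense.

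For part (2), the sharp gradient estimate $C^{-1}\leq|\nabla b|\leq C$, I would argue by contradiction and compactness. Suppose there are neck regions $\cN_j\subseteq B_2(p_j)$ in manifolds $M_j$ with $\delta_j,|Rc_j|\to 0$ (fixed $B$, $\tau$) and points $x_j\in (\cN_j)_{10^{-6}}$ with $d(x_j,\cC_j)=2s_j$ where $|\nabla b|(x_j)$ either exceeds $j$ or is below $1/j$. Rescale $M_j$ by $s_j^{-1}$ so $d(x_j,\cC_j)=2$; by the neck definition $(n2)$ and the already-proven bound (1) the rescaled balls converge to $\dR^{n-4}\times C(S^3/\Gamma)$, the rescaled packing measures $\mu_j$ converge (using (S3) and Theorem \ref{t:neck_approximation}) to a measure $\mu_\infty$ comparable to the $(n-4)$-Hausdorff measure on the singular line, and the rescaled functions $G_{\mu_j}$ (normalized appropriately) converge in $C^1_{loc}$ on the regular part — here one uses the uniform bounds from (1), the $W^{2,p}/C^{1,\alpha}$ harmonic-coordinate estimates of Remark \ref{r:harmonic_rad_high_est} valid because $r_h\approx d_\cC$ by Lemma \ref{l:neck:distance_harm_rad}, and elliptic estimates for $-\Delta G_{\mu_j}=\mu_j$ away from the support of $\mu_j$. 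The limit is a Green's function for $\mu_\infty$ on the model, which by the explicit model computation equals $G_{\mu_0}$ up to the comparability of $\mu_\infty$ with $\mu_0$; the subtlety is that $\mu_\infty$ need only be \emph{comparable} to $\mu_0$, not equal, so the limit $b_\infty$ is only comparable to $\dist(\cdot,\dR^{n-4}\times\{y_c\})$ rather than a constant multiple — but a separate maximum-principle argument (comparing $G_{\mu_\infty}$ against $c^{\pm}\dist^{-2}$ as sub/super-solutions, using $-\Delta(\dist^{-2})$ supported on the singular set with the right sign) pins $|\nabla b_\infty|$ between two positive constants depending only on $n$ and $B$. This contradicts $|\nabla b|(x_j)\to 0$ or $\infty$, completing the proof.

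I expect the main obstacle to be exactly this last point: extracting a \emph{quantitative, $B$-dependent but compactness-free} two-sided gradient bound out of the fact that in the limit $\mu_\infty$ is only Ahlfors-comparable to the model Hausdorff measure. One must be careful that $C^1$ convergence of $G_{\mu_j}$ really holds up to (but not including) the singular set uniformly in a neighborhood of $x_j$ — this needs the harmonic-radius lower bound of Lemma \ref{l:neck:distance_harm_rad} to transfer elliptic regularity to the $M_j$, and needs the error terms coming from $|Rc_j|\to 0$ and the $\delta_j$-GH approximation to be genuinely negligible at the fixed scale $d_\cC=2$. The barrier comparison giving the lower bound on $|\nabla b|$ (equivalently, that $G_\mu$ cannot be too flat near $\cC$) is the place where the lower Ahlfors bound in (S3) is essential and where I'd expect the argument to require the most care with constants.
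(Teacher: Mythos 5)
Your treatment of part (1) and of the gradient \emph{upper} bound is essentially the paper's argument: write $G_\mu=\int G_z\,d\mu(z)$, use the pointwise two-sided bounds $C^{-1}d_z^{2-n}\le G_z\le Cd_z^{2-n}$ and $|\nabla G_z|\le Cd_z^{1-n}$ (obtained from the heat-kernel construction $G_{z,0}=\int_0^1\rho_t\,dt$ plus a correction term), and then sum over dyadic annuli using the upper Ahlfors bound for the upper estimates and integrate over $B_{10r}(y)$ using the lower Ahlfors bound for the lower estimate on $G_\mu$. That part is fine.

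The gap is in the gradient \emph{lower} bound, which is the genuinely hard half of part (2). You correctly identify the obstruction -- in the blow-up the limit measure $\mu_\infty$ is only Ahlfors-comparable to the Hausdorff measure on the singular line, so $G_{\mu_\infty}$ is not a multiple of $\dist^{-2}$ -- but your proposed fix does not close it. Trapping a superharmonic function between $c^-\dist^{-2}$ and $c^+\dist^{-2}$ via a maximum principle gives no pointwise lower bound on its gradient: such a function can be locally flat at an interior point without violating either barrier, so "pinning $|\nabla b_\infty|$ between two positive constants" does not follow from the comparison you describe. What is actually needed, and what the paper proves (Lemma \ref{l:green_function_cone_point}(3)--(4)), is a \emph{directional} statement for the individual Green's functions: for $y$ in the neck with $r=d(y,\cC)$ there is a single unit vector $v_y$ (approximating the radial direction of the cone factor) such that $\langle\nabla_yG_z(y),v_y\rangle\ge c(R,n)\,r^{1-n}$ for \emph{every} $z\in B_{Rr}(y)\cap\cC$; this is where the blow-up to $\dR^{n-4}\times C(S^3/\Gamma)$ and the rigidity $G_z\to D\,d_z^{2-n}$ are used. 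Because all the near-field contributions to $\nabla G_\mu(y)$ then point coherently in the direction $v_y$, integrating against $\mu$ over $B_{Rr}(y)$ gives $\langle\nabla G_\mu(y),v_y\rangle\ge C_0(n,B)r^{-3}$, while the far-field contribution is bounded by $C_1(n,B)R^{-3}r^{-3}$ using the gradient upper bound and Ahlfors regularity; choosing $R=R(n,\rv,B)$ large yields $|\nabla G_\mu|\ge c\,r^{-3}$ and hence $|\nabla b|\ge C^{-1}$. Without this coherent-direction input, no amount of two-sided function bounds on the limit will produce the pointwise gradient lower bound.
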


Before giving the proof of Lemma \ref{l:green_function_distant_function}, let us point out that the advantage of $\mu$ after restricting to $B_{39/20}(p)$. We have the following uniform Ahlfors regularity.
\begin{lemma}
Let $(M^n,g,p)$ satisfy $\Vol(B_1(p))>\rv>0$ and the assumptions (S1)-(S3) of \eqref{e:assumptions_splitting}, then for all $x\in \cC\cap B_{39/20}(p)$, $r_x\le r\le 1$ we have 
\begin{align}
C(n)^{-1}B^{-1}r^{n-4}\le \bar\mu(B_r(x))\le C(n)Br^{n-4}.
\end{align}
\end{lemma}
\begin{proof}
For the upper bound, if $r\le 1/40$  it follows directly by (S3) since $B_{2r}(x)\subset B_2(p)$. If $r\ge 1/40$, it follows from $\bar \mu(B_r(x))\le \mu(B_{39/20}(p))\le C(n)B$ where the last inequality is based on (S3) and a Vitali covering $\{B_{1/40}(x_\alpha),x_\alpha\in \cC\cap B_{39/20}(p)\}$ of $B_{39/20}(p)$. 
On the other hand,  the lower bound follows directly by (S3) and the fact that there exists $B_{r/3}(\tilde{x})\subset B_r(x)\cap B_2(p)$ with $\tilde{x}\in \cC$.
\end{proof}

This subsection is dedicated to proving the above Lemma \ref{l:green_function_distant_function}, though we will need to work through several preliminaries first.  Let us begin by collecting together a list of useful computations:\\

\begin{lemma}\label{l:green_formula}
	Given $(M^n,g,p)$ with $\cN\subseteq B_2(p)$ a $(\delta,\tau)$-neck region and $b(y)$ an associated Green's distance function, then for a smooth compactly supported function $f:B_4(p)\rightarrow \dR$ the following hold:
\begin{enumerate}
 \item ${\bar \mu}[f] = -\int_{b\leq r} G_{{\bar \mu}}\Delta f(z)\,dz +2 r^{-3}\int_{b=r}f|\nabla b|(x)\, dx + r^{-2}\int_{b\leq r}\Delta f(z)\,dz$.
 \item ${\bar \mu}[f] = -\int_{b\leq r} G_{{\bar \mu}}\Delta f(z)\,dz +2 r^{-3}\int_{b=r}f|\nabla b|(x)\, dx + r\frac{d}{dr}\left(r^{-3}\int_{b=r}f|\nabla b|(x)\,dx\right)$,
\end{enumerate}
\end{lemma}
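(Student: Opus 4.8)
The plan is to derive both identities from the single fact that $-\Delta G_\mu = \mu$ together with Green's second identity applied on the sublevel set $\Omega_r \equiv \{b \le r\}$. Since $b = G_\mu^{-2}$, the set $\Omega_r$ is precisely $\{G_\mu \ge r^{-2}\}$, and the defining function for the boundary is $G_\mu - r^{-2}$, whose gradient on $\{b = r\}$ is $\nabla G_\mu = -2 b^{-3}\nabla b = -2 r^{-3}\nabla b$. The outward unit normal $\nu$ on $\partial\Omega_r$ points in the direction of decreasing $G_\mu$, i.e. $\nu = \nabla G_\mu / |\nabla G_\mu|$, so that $\partial_\nu G_\mu = -|\nabla G_\mu| = -2 r^{-3}|\nabla b|$ (using $|\nabla G_\mu| = 2 r^{-3}|\nabla b|$ on this level set). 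One must be slightly careful that the center points $\cC$ lie inside $\Omega_r$ and are where $G_\mu$ blows up, but this is handled exactly as in the classical derivation of the mean value property for the Green's function: one excises small balls around $\cC$, uses that $G_x$ has the standard singularity, and sends their radii to zero. I would state this as a preliminary reduction.

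For part (1), I would apply Green's identity $\int_{\Omega_r}(f\,\Delta G_\mu - G_\mu\,\Delta f) = \int_{\partial\Omega_r}(f\,\partial_\nu G_\mu - G_\mu\,\partial_\nu f)$. On $\Omega_r$ we have $\Delta G_\mu = -\mu$ in the distributional sense, so the left side is $-\mu[f\,\chi_{\Omega_r}] - \int_{\Omega_r} G_\mu\,\Delta f$; here one should note $B_{2r}(x)\subseteq B_2$ ensures $\Omega_r$ captures the relevant part of $\cC$, so $\mu[f\chi_{\Omega_r}]$ is just $\mu[f]$ on the region in question. On the boundary, $G_\mu \equiv r^{-2}$ is constant, so $\int_{\partial\Omega_r} G_\mu\,\partial_\nu f = r^{-2}\int_{\partial\Omega_r}\partial_\nu f = r^{-2}\int_{\Omega_r}\Delta f$ by the divergence theorem; and $\int_{\partial\Omega_r} f\,\partial_\nu G_\mu = -2 r^{-3}\int_{b=r} f|\nabla b|$. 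Rearranging gives
\begin{align}
\mu[f] = -\int_{b\le r} G_\mu\,\Delta f + 2 r^{-3}\int_{b=r} f|\nabla b| + r^{-2}\int_{b\le r}\Delta f\, ,\notag
\end{align}
which is (1).

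For part (2), I would show the last two terms of (1) can be rewritten using the coarea formula. Write $I(r) \equiv r^{-3}\int_{b=r} f|\nabla b|$. By the coarea formula, $\int_{b\le r}\Delta f = \int_0^r \left(\int_{b=s}\frac{\Delta f}{|\nabla b|}\right)ds$; but I would instead start from the cleaner route of differentiating $\int_{b\le r}\Delta f$ directly: using the divergence theorem, $\int_{b\le r}\Delta f = \int_{b=r}\partial_\nu f = \int_{b=r}\frac{\langle\nabla f,\nabla b\rangle}{|\nabla b|}$, and separately $\int_{b\le r}\Delta f = \int_{b\le r}\divergence(\nabla f)$, so its derivative in $r$ is $\frac{d}{dr}\int_{b\le r}\Delta f = \int_{b=r}\frac{\Delta f}{|\nabla b|}$ by coarea. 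The identity I actually need is $r^{-2}\int_{b\le r}\Delta f = r\frac{d}{dr} I(r)$, i.e. $r^{-2}\int_{b\le r}\Delta f = r\frac{d}{dr}\big(r^{-3}\int_{b=r} f|\nabla b|\big)$. To see this, note $\int_{b=r} f|\nabla b| = \frac{d}{dr}\int_{b\le r} f|\nabla b|^2$ by coarea, and combine with the identity $\divergence(f\nabla b) = \langle\nabla f,\nabla b\rangle + f\Delta b$; integrating over $\{b\le r\}$ and using $\Delta b = \Delta(G_\mu^{-2}) = 6 G_\mu^{-4}|\nabla G_\mu|^2 - 2 G_\mu^{-3}\Delta G_\mu$, which on $\cN$ (where $\mu = 0$) reduces to a clean expression in $b$ and $|\nabla b|$, one gets a first-order ODE relating $I(r)$, $I'(r)$ and $r^{-2}\int_{b\le r}\Delta f$. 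Substituting the resulting expression for $r^{-2}\int_{b\le r}\Delta f$ into (1) yields (2).

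The main obstacle I anticipate is the bookkeeping around the singular set $\cC$ when $\cC_0 \ne \emptyset$: the Green's function $G_\mu$ is only defined as an integral of $G_x$ against $\mu$, and the level sets $\{b = r\}$ may behave badly near $\cC_0$. For the purposes where this lemma is applied (e.g. Lemma \ref{l:green_function_distant_function} and the superconvexity estimates) one works on $\cN_{10^{-6}}$ where $b$ is comparable to $r_h$ and hence $\{b = r\}$ is a genuine smooth hypersurface away from $\cC$; I would either restrict the statement to such $r$ or, following the approximation scheme of Theorem \ref{t:neck_approximation}, first prove everything on smooth manifolds where $\cC = \cC_+$ is finite and $G_\mu$ is an honest finite sum of Green's functions, then pass to the limit. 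The coarea manipulations in part (2) are routine once one has the regularity of $b$ in hand.
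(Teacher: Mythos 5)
Your proposal is correct and follows essentially the same route as the paper: part (1) is obtained by integrating by parts over $\{b\le r\}$ against $G_\mu-r^{-2}=b^{-2}-r^{-2}$, which vanishes on the boundary (your Green's-second-identity formulation is the identical computation, modulo a harmless sign typo in your formula for the outward normal $\nu$), and part (2) reduces to the derivative identity $\frac{d}{dr}\big(r^{-3}\int_{b=r}f|\nabla b|\big)=r^{-3}\int_{b\le r}\Delta f$, which the paper proves by the first variation of the level-set integral using $\Delta b=3b^{-1}|\nabla b|^2$ — the same ingredients you invoke. The only simplification worth noting is that this derivative identity also follows immediately by differentiating your formula (1) in $r$ via the coarea formula, since $\mu[f]$ is independent of $r$.
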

\begin{proof}
Let us observe that $\big(b^{-2}-r^{-2}\big)$ vanishes on $b=r$, is smooth in a neighborhood of this set, and $\text{supp}\Big\{\Delta\big(b^{-2}-r^{-2}\big)\Big\}\subseteq \{b<r\}$.  Thus we can use standard properties of the distributional laplacian to compute
\begin{align}
\int_{b\leq r} \Delta f\left(b^{-2}-r^{-2}\right)(z)\,dz &= -{\bar \mu}\big[f\big]-\int_{b=r}f\langle \frac{\nabla b}{|\nabla b|},\nabla b^{-2}\rangle(z)\,dz \notag\\
	&= -{\bar \mu}\big[f\big] + 2r^{-3}\int_{b=r} f|\nabla b|(z)\,dz\, ,
\end{align}
which proves the first formula.  To compute the second formula let us first compute
\begin{align}
\Delta b = 3b^{-1}|\nabla b|^2\, ,	
\end{align}
and recall that the mean curvature of the $b=r$ level set is given by $\text{div}\big(\frac{\nabla b}{|\nabla b|}\big)$.  Therefore we can compute

\begin{align}\label{e:greens_formula:1}
\frac{d}{dr}\left(r^{-3}\int_{b=r}f|\nabla b|(z)\,dz\right) = &-3r^{-4}\int_{b=r}f|\nabla b| (z)\,dz+ r^{-3}\int_{b=r}\langle \nabla f, \frac{\nabla b}{|\nabla b|}\rangle(z)\,dz \notag\\
~~\text{ }&+ r^{-3}\int_{b=r} f \langle \nabla|\nabla b|, \frac{\nabla b}{|\nabla b|^2}\rangle(z)\,dz + r^{-3}\int_{b=r} f \text{div}\big(\frac{\nabla b}{|\nabla b|}\big)(z)\,dz\notag\\
=& r^{-3}\int_{b\le r}\Delta f(z)\,dz -3r^{-4}\int_{b=r}f|\nabla b|(z)\,dz+ r^{-3}\int_{b=r} f \langle \nabla|\nabla b|, \frac{\nabla b}{|\nabla b|^2}\rangle(z)\,dz \notag\\
~~\text{ }&+ r^{-3}\int_{b=r} f \Big(\frac{\Delta b}{|\nabla b|}-\langle \nabla b, \frac{\nabla|\nabla b|}{|\nabla b|^2}\rangle\Big)(z)\,dz\notag\\
=&r^{-3}\int_{b\le r}\Delta f(z)\,dz\, .
\end{align}

\end{proof}

With the help of the above we can now compute the following, which will be the key use of the Green's distance function:\\

\begin{lemma}\label{l:ode_Green_function}
	Given $(M^n,g,p)$ with $\cN\subseteq B_2(p)$ a $(\delta,\tau)$-neck and $b(y)=b_\cN(y)$ the associated Green's distance function, then the following hold:
	\begin{enumerate}
	\item For every compactly supported $f:B_4(p)\to \dR$ we have
	\begin{align}\nonumber
		\frac{d}{dr}\Big(r^{-3}\int_{b=r} f\, |\nabla b|(z)\,dz\Big) = \, r^{-3}\int_{b\leq r}\Delta f(z)\,dz\,.
	\end{align}
	\item \begin{align}\nonumber
		r\frac{d^2}{dr^2}\Big(r^{-3}\int_{b=r} f\, |\nabla b|(z)\,dz\Big)+3\frac{d}{dr}\Big(r^{-3}\int_{b=r} f\, |\nabla b|(z)\,dz\Big) = \, r^{-2}\int_{b=r} \Delta f \,|\nabla b|^{-1}(z)\,dz.
	\end{align}
	\end{enumerate}
\end{lemma}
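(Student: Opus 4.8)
The plan is to derive both identities from the representation formulas already established in Lemma \ref{l:green_formula}, together with the basic pointwise facts about the Green's distance function $b = G_\mu^{-2}$ that were used there, namely $\Delta b = 3b^{-1}|\nabla b|^2$ and the fact that $G_\mu$ is $(-\Delta)$-harmonic away from $\cC$. For part $(1)$, I would take a smooth compactly supported $f$ and simply compare the two expressions for $\mu[f]$: actually, it is cleaner to rerun the computation \eqref{e:greens_formula:1} from the proof of Lemma \ref{l:green_formula} but keeping $f$ general (not assuming $\Delta f = 0$). That computation, which expands $\frac{d}{dr}\bigl(r^{-3}\int_{b=r} f|\nabla b|\bigr)$ via the coarea/first-variation formula, the identity $\Delta b = 3b^{-1}|\nabla b|^2$, and the divergence of the unit normal $\frac{\nabla b}{|\nabla b|}$, already shows that all terms cancel except $r^{-3}\int_{b=r}\Delta f$ — but wait, in \eqref{e:greens_formula:1} the final line reads $r^{-3}\int_{b=r}\Delta f$, which is a \emph{surface} integral, whereas the claimed identity in part $(1)$ has $r^{-3}\int_{b\le r}\Delta f$, a \emph{solid} integral. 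So the honest route is: from Lemma \ref{l:green_formula}$(1)$ and $(2)$, subtracting gives
\begin{align}\nonumber
r\frac{d}{dr}\Bigl(r^{-3}\int_{b=r}f|\nabla b|\Bigr) = r^{-2}\int_{b\le r}\Delta f + 2r^{-3}\int_{b=r}f|\nabla b| - 2r^{-3}\int_{b=r}f|\nabla b| = r^{-2}\int_{b\le r}\Delta f\, ,
\end{align}
wait — that is not right either; let me instead directly subtract the two displayed formulas for $\mu[f]$ in Lemma \ref{l:green_formula}. Setting the right-hand sides of $(1)$ and $(2)$ equal yields $r^{-2}\int_{b\le r}\Delta f = r\frac{d}{dr}\bigl(r^{-3}\int_{b=r}f|\nabla b|\bigr)$, which is exactly the claimed identity $(1)$ after dividing by $r$. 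So part $(1)$ is immediate from Lemma \ref{l:green_formula}.

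For part $(2)$, I would start from identity $(1)$, namely $\frac{d}{dr}\bigl(r^{-3}\int_{b=r}f|\nabla b|\bigr) = r^{-3}\int_{b\le r}\Delta f$, and differentiate once more in $r$. The right-hand side is $r^{-3}\,\Psi(r)$ where $\Psi(r) = \int_{b\le r}\Delta f$; by the coarea formula $\Psi'(r) = \int_{b=r}\Delta f\,|\nabla b|^{-1}$. Hence
\begin{align}\nonumber
\frac{d^2}{dr^2}\Bigl(r^{-3}\int_{b=r}f|\nabla b|\Bigr) = -3r^{-4}\int_{b\le r}\Delta f + r^{-3}\int_{b=r}\Delta f\,|\nabla b|^{-1}\, .
\end{align}
Multiplying by $r$ and using $(1)$ to rewrite $-3r^{-3}\int_{b\le r}\Delta f = -3\frac{d}{dr}\bigl(r^{-3}\int_{b=r}f|\nabla b|\bigr)$ gives
\begin{align}\nonumber
r\frac{d^2}{dr^2}\Bigl(r^{-3}\int_{b=r}f|\nabla b|\Bigr) + 3\frac{d}{dr}\Bigl(r^{-3}\int_{b=r}f|\nabla b|\Bigr) = r^{-2}\int_{b=r}\Delta f\,|\nabla b|^{-1}\, ,
\end{align}
which is precisely $(2)$.

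The computations here are routine once Lemma \ref{l:green_formula} is in hand; the only genuine subtlety — and the step I would be most careful about — is the regularity/measure-theoretic justification of differentiating under the integral sign and applying the coarea formula on the level sets $\{b=r\}$. This requires knowing that for a.e.\ $r$ the level set $\{b=r\}$ is well-behaved and that $|\nabla b|$ does not vanish there, which is exactly what the estimates of Lemma \ref{l:green_function_distant_function} (the two-sided bound $C^{-1}\le |\nabla b|\le C$ on the neck region) provide; near $\cC$ and near $\partial B_2$ the cutoff structure of the relevant $f$ (compact support, or supported in the neck) keeps everything finite. I would therefore state at the outset that all identities are understood for a.e.\ $r$ in the relevant range and that $f$ is taken smooth with support where $|\nabla b|$ is controlled, then carry out the two short differentiations above.
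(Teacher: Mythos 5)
Your proposal is correct and takes essentially the same route as the paper: part $(1)$ is exactly the content of comparing the two formulas of Lemma \ref{l:green_formula} (equivalently, the computation \eqref{e:greens_formula:1}, whose final line indeed should read $r^{-3}\int_{b\le r}\Delta f$ rather than a surface integral --- you correctly spotted that slip and resolved it), and part $(2)$ follows by one more $r$-differentiation together with the coarea identity $\frac{d}{dr}\int_{b\le r}g=\int_{b=r}g\,|\nabla b|^{-1}$, just as in the paper (which differentiates Lemma \ref{l:green_formula}$(2)$ directly, a logically equivalent step). Your closing remarks on a.e.\ regularity of the level sets and the two-sided bound on $|\nabla b|$ are the right caveats.
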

\begin{proof}
The proof of (1) is just \eqref{e:greens_formula:1} in the previous Lemma.  Taking derivative of (2) in Lemma \ref{l:green_formula} with respect to $r$, we have

\begin{align}
r\frac{d^2}{dr^2}\Big(r^{-3}\int_{b=r} f\, |\nabla b|(z)\,dz\Big)+3\frac{d}{dr}\Big(r^{-3}\int_{b=r} f\, |\nabla b|(z)\,dz\Big) &= \,r^{-2}\frac{d}{dr} \left(\int_{b\le r} \Delta f (z)\,dz\right)\\ \nonumber
&=\,r^{-2}\frac{d}{dr} \left(\int_0^rds\int_{b=s}\Delta f\,|\nabla b|^{-1} (z)\,dz\right)\\ \nonumber
&=r^{-2}\int_{b=r}\Delta f\,|\nabla b|^{-1}(z)\,dz.
\end{align}

\end{proof}
\vspace{.5cm}

\subsubsection{Green function estimates on manifolds with Ricci curvature lower bound}
In order to prove Lemma \ref{l:green_function_distant_function}, we first consider the Green function $G_x$ for any center point $x\in\cC$ in a neck region.  Besides the basic expected estimates, we need to see that at every point $y\in\cN$ in the neck region itself there is a fixed direction $v_y\in T_yM$ for which $G_x$ has positive gradient in the direction of $v_y$.  This will be used heavily when we integrate to construct $G_{\cN}$ in order to see that the gradient of $G_{{\bar \mu}}$ has a definite lower bound in the neck region.  Precisely, we have the following:\\

\begin{lemma}\label{l:green_function_cone_point}
Let $(M^n,g,p)$ satisfy $\Vol(B_1(p))>\rv>0$ and $|{\Ric}|<\delta$ with $\cN=B_2(p)\setminus \bigcup_{x\in \cC} \overline B_{r_x}(x)$ a $(\delta,\tau)$-neck region.  For each $R>0$ if $\delta\leq \delta(n,R,\rv)$ and $0<\tau\le \tau(n)$ then there exists $C(n,\rv)>0$ such that $\forall$ $x\in \cC$ there exists a Green's function $G_x$ such that:
\begin{enumerate}
\item For all $y\in B_{\delta^{-1/2}}(p)$ we have $C^{-1}d_x^{2-n}(y)\le G_x(y)\le Cd_x^{2-n}(y)$.
\item For all $y\in B_{\delta^{-1/2}}(p)$ we have $|\nabla_yG_x|(y)\le Cd_x^{1-n}(y)$
\item For all $y\in \cN_{10^{-5}}$ if $r=d(y,\cC)$ and $x\in B_{Rr}(y)\cap \cC$ then there exists a unit vector $v_y\in T_yM$ independent of $x$  such that
\begin{align}\label{e:green_nabla_lowerbound_I}
\langle\nabla_yG_x(y),v_y\rangle\,>\, c(R,n) r^{1-n}>0,
\end{align}
\item In particular, if $y\in \cN_{10^{-5}}$ with $r=d(y,\cC)$ and $x\in B_{10r}(y)\cap \cC$ then
\begin{align}\label{e:green_nabla_lowerbound_II}
\langle\nabla_yG_x(y),v_y\rangle\,>\, c(n) r^{1-n}.
\end{align}
\end{enumerate}
\end{lemma}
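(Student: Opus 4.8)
The plan is to construct $G_x$ via a standard exhaustion/limiting procedure and then extract quantitative information from the neck structure. First I would fix $x \in \cC$ and work at the scale $r_x$ (if $x \in \cC_+$) or simply in a neighborhood of $x$; after rescaling so that the relevant ball has size $1$, I would take $G_x$ to be a Green's function normalized by its behavior near $x$, i.e.\ $G_x(y) \sim d_x(y)^{2-n}$ as $y \to x$. The existence of such a function with the correct normalization follows from the classical theory on manifolds with lower Ricci bounds and noncollapsing (one solves $-\Delta G = \delta_x$ on an exhaustion and passes to a limit, using volume doubling and the Poincaré inequality to get uniform estimates). For the upper and lower bounds in $(1)$, and the gradient bound in $(2)$, these are the standard Li--Yau / Cheeger--Yau type estimates for the Green's function on spaces with $\Ric \geq -(n-1)\delta$ and $\Vol(B_1)\geq \rv$: one has $G_x(y) \approx \int_{d_x^2(y)}^\infty \frac{t\,dt}{\Vol(B_{\sqrt t}(x))}$, and combined with the near-Euclidean volume growth on the relevant scales (which holds because $B_{\delta^{-1}}(x)$ is close to $\dR^{n-4}\times C(S^3/\Gamma)$, a metric cone, hence has Ahlfors-regular volume) one gets $G_x(y) \approx d_x(y)^{2-n}$ on $B_{\delta^{-1/2}}(p)$. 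The gradient estimate $(2)$ follows from the Cheng--Yau gradient estimate applied to the positive harmonic function $G_x$ away from $x$, which gives $|\nabla \log G_x| \leq C/d_x$ on the region where $G_x$ is harmonic, combined with $(1)$.

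The heart of the matter is the directional lower bound $(3)$. Here the point is a compactness/contradiction argument. Suppose no such $v_y$ with the stated lower bound exists; then one finds a sequence $(M_j, g_j, p_j)$ of neck regions with $\delta_j \to 0$, points $y_j \in \cN_{j,10^{-5}}$ with $d(y_j,\cC_j) = r_j$, and centers $x_j \in B_{Rr_j}(y_j)\cap \cC_j$, such that the rescaled Green's functions $\hat G_{x_j} \equiv r_j^{n-2} G_{x_j}$ satisfy $|\nabla \hat G_{x_j}|(y_j) \to 0$ (or more precisely the directional derivative in every direction is $o(r_j^{1-n})$, which after rescaling means $\nabla \hat G_{x_j}(y_j) \to 0$). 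After rescaling the metrics by $r_j^{-1}$ and passing to a Gromov--Hausdorff limit, by $(n2)$ of the neck definition and the splitting/cone-splitting theory the pointed limit of $(r_j^{-1}M_j, y_j)$ is $\dR^{n-4}\times C(S^3/\Gamma)$ (or a limit thereof) with $y_j \to y_\infty$ a regular point at distance $1$ from the singular set $\dR^{n-4}\times\{y_c\}$, and $x_j \to x_\infty$ a point in that singular set with $d(x_\infty, y_\infty) \leq R$. The rescaled Green's functions $\hat G_{x_j}$, being uniformly bounded and with uniformly bounded gradient on compact subsets away from $x_j$ (by $(1)$ and $(2)$), converge in $C^{1,\alpha}_{loc}$ away from $x_\infty$ to the Green's function $\hat G_{x_\infty}$ on $\dR^{n-4}\times C(S^3/\Gamma)$ with pole at $x_\infty$ — here I would invoke that harmonic functions pass to limits under noncollapsed GH convergence with two-sided Ricci bounds, and the limit Green's function is the genuine one because of the normalization near the pole. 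On the smooth cone $\dR^{n-4}\times C(S^3/\Gamma)$ the Green's function with pole at a point of $\dR^{n-4}\times\{y_c\}$ is explicit enough to see that its gradient is nonzero at every point of the regular set: indeed $\hat G_{x_\infty}$ is a nonconstant harmonic function, and on the cone its level sets are compact hypersurfaces separating $x_\infty$ from infinity, so $\nabla \hat G_{x_\infty}$ cannot vanish at $y_\infty$ (a zero of the gradient of a harmonic function is contained in the critical set, which has codimension $\geq 2$, but more simply one can use that $\hat G_{x_\infty}$ is strictly monotone along geodesics emanating from a neighborhood of $x_\infty$, or directly use the explicit cone formula). This contradicts $\nabla \hat G_{x_j}(y_j) \to 0$, and taking $v_y$ to be (a smooth-manifold lift of) the unit vector in the direction of $\nabla \hat G_{x_\infty}(y_\infty)$, pulled back to $M_j$, gives the claim for $j$ large, with $c(R,n) > 0$ the infimum of $|\nabla \hat G_{x_\infty}(y_\infty)|$ over the (compact, after fixing the ratio) possible limit configurations.

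Finally, $(4)$ is the special case $R = 10$ of $(3)$, giving the constant $c(n) = c(10,n)$. The main obstacle is making the compactness argument in $(3)$ fully rigorous: one needs (i) the $C^{1,\alpha}$ convergence of the rescaled Green's functions, which requires knowing that the normalization at the pole is preserved in the limit and that elliptic estimates are available uniformly (this uses the noncollapsing and two-sided Ricci bound, together with the fact that the region $A_{r_j,\, \delta_j^{-1/2}}(x_j)$ is becoming metrically a cone annulus), and (ii) the nonvanishing of $\nabla \hat G_{x_\infty}$ at $y_\infty$ on the model cone, which I would handle by the explicit structure of the Green's function on $\dR^{n-4}\times C(S^3/\Gamma)$ — writing it as a superposition, in the $\dR^{n-4}$ variable, of Green's functions on $C(S^3/\Gamma)$, and observing radial monotonicity in the cone factor. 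A subtlety is that the limit might not be exactly $\dR^{n-4}\times C(S^3/\Gamma)$ but a further GH-limit of such; however, since we are at a regular point $y_\infty$ and $x_\infty$ is at bounded distance, the relevant local geometry is still that of a smooth cone annulus on which the Green's function estimates and monotonicity persist, so the argument goes through with $c(R,n)$ depending only on $R$ and $n$ (the group $\Gamma \leq O(4)$ drops out because we only use the $\dR^{n-4}$-splitting and the noncollapsing of the $C(S^3/\Gamma)$ factor).
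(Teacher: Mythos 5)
Your treatment of (1) and (2) matches the paper's in substance: the paper builds $G_x$ as $\int_0^1\rho_t(x,\cdot)\,dt$ minus a bounded harmonic correction solving $\Delta G_{x,1}=\rho_1(x,\cdot)$, and reads off the two-sided bound and the gradient bound from the heat kernel estimates of Theorem \ref{t:heat_kernel} together with a maximum principle and Cheng--Yau; your Li--Yau representation plus Cheng--Yau route is an acceptable variant. For (3) your overall strategy (rescale by $r_j$, pass to the limit cone $\dR^{n-4}\times C(S^3/\Gamma)$, identify the limit Green's function, and deduce a gradient lower bound) is also the paper's strategy. However, there is a genuine gap in how you set up the contradiction. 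The statement (3), as it is actually used in the proof of Lemma \ref{l:green_function_distant_function}, requires a \emph{single} unit vector $v_y$ that works simultaneously for \emph{every} center $x\in B_{Rr}(y)\cap\cC$ (one integrates $\langle\nabla G_x(y),v_y\rangle\,d\mu(x)$ over all such $x$ with $v_y$ fixed). The correct negation is therefore $\sup_{|v|=1}\inf_{x\in B_{Rr_j}(y_j)\cap\cC_j}\langle\nabla G_x(y_j),v\rangle< j^{-1}r_j^{1-n}$, which is what the paper contradicts. Your negation, that $\nabla\hat G_{x_j}(y_j)\to 0$ for a single sequence of centers $x_j$, only disproves the weaker per-center statement and would yield a direction $v_{y,x}$ depending on $x$, which is not enough downstream. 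The fix is exactly the extra observation the paper makes: since every limit $\tilde G_{x_\infty}$ with $x_\infty$ on the singular stratum equals $D\,d_{x_\infty}^{2-n}$ with $C^{-1}\le D\le C$, the gradients $\nabla_y(d_{x_\infty}^{2-n})$ for all such $x_\infty$ within distance $R$ of $y_\infty$ have a uniformly positive inner product with the single inward radial direction of the $C(S^3/\Gamma)$ factor, so that one $v_y$ (approximating that radial direction, available by the $C^1$ convergence on the regular part) serves all centers at once.

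Two smaller points. First, your argument that ``the level sets are compact hypersurfaces separating $x_\infty$ from infinity, so $\nabla\hat G_{x_\infty}$ cannot vanish at $y_\infty$'' is not valid as stated (compact separating level sets do not preclude critical points of a harmonic function); what saves you is the exact identification of the limit, which the paper obtains by lifting $G_{x_\infty}$ to $\dR^n$ through the orbifold cover (the pole lies on the fixed locus of $\Gamma$, so it lifts to a single point), removing the capacity-zero singular set, and invoking the classification of harmonic functions on $\dR^n\setminus\{\tilde x\}$ with $d^{2-n}$ growth. Second, the worry that the blow-up limit is only ``a further GH-limit'' of model cones is unnecessary: condition (n2) with $\delta_j\to 0$ forces the rescaled limit to be exactly $\dR^{n-4}\times C(S^3/\Gamma)$, with $|\Gamma|$ bounded by the noncollapsing.
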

\begin{proof}
For each $x\in \cC$, since the estimates in (1) and (2) are scale invariant, in order to estimate the Green's function on the ball $B_{\delta^{-1/2}}(p)$ with $|{\Ric}|<\delta$, it suffices to construct and estimate the Green function on the ball $B_1(p)$ with $|{\Ric}|<1$.  In this case we construct the Green function $G_x$ in the following manner.
Let $${G}_{x,0}(y)=\int_0^1\rho_t(x,y)dt\, .$$ Then by heat kernel estimate of Theorem \ref{t:heat_kernel}, we can compute
$C^{-1}d_x^{2-n}(y)\le G_{x,0}(y)\le Cd_x^{2-n}(y)$. Additionally, we can compute that $\Delta {G}_{x,0}(y)=\int_0^1\partial_t \rho_t(x,y)dt=\rho_1(x,y)-\delta_x(y).$ Therefore let us solve $G_{x,1}$ by
\begin{align}\label{e:green_rho1}
\Delta G_{x,1}(y)=\rho_1(x,y),
\end{align}
on $B_{2}(x)$ with $G_{x,1}(y)= G_{x,0}(y)$ on $\partial B_{2}(x)$. We define our Green's function
$$G_x=G_{x,0}-G_{x,1}\, .$$
We claim that $G_x$ satisfies the Lemma. Indeed, by noting the uniform bound on the heat kernel $\rho_1(x,y)$ we may use a standard maximal principle and Cheng-Yau gradient estimates on (\ref{e:green_rho1}) as in \cite{Cheeger01},   in order to show that $|G_{x,1}|+|\nabla G_{x,1}|<C(n)$ in $B_{4/3}(p)$ is uniformly bounded. Coupling with the estimates of $G_{x,0}$, we proven the estimates on $G_x$. Thus we have proved (1) and (2).

Now we only give a proof of (\ref{e:green_nabla_lowerbound_I}), the argument is the same for (\ref{e:green_nabla_lowerbound_II}). We argue by contradiction. Therefore assume for some $R>0$ that there exists a $(\delta_i,\tau)$-neck regions $\cN_i$ with $\delta_i\to 0$, $y_i\in \cN_{i,10^{-5}}$ and $r_i=d(y_i,\cC_i)$ such that
\begin{align}
\sup_{v\in T_{y_i}M,~|v|=1}\inf _{x_{i}\in B_{Rr_i}(y_i)\cap \cC_i}\langle\nabla_yG_{x_i}(y_i),v\rangle\,<\, i^{-1}r_i^{1-n}.
\end{align}
 Scaling $B_{Rr_i}(y_i)$ to ball $\tilde{B}_{R}(y_i)$ and denoting the corresponding Green function to be $\tilde{G}_{x_i}$, then
\begin{align}
\sup_{v\in T_{y_i}M, ~|v|=1}\inf _{x_{i}\in \tilde{B}_{R}(y_i)\cap \cC_i}\langle\nabla_y\tilde{G}_{x_i}(y_i),v\rangle\,<\, i^{-1}.
\end{align}
To deduce a contradiction, we will show that the Green function $\tilde{G}_{x_i}$  converges to a function $D\,d^{2-n}_{x}$ on  $\mathbb{R}^{n-4}\times C(S^3/\Gamma)$ with constant $C^{-1}(n,\rv)<D<C(n,\rv)$.  Since the convergence is $C^1$ on the neck region due to the harmonic radius control, we can take $v_y$ to be any vector which approximates the radial direction on the $C(S^3/\Gamma)$ factor in order to conclude the result.

Thus, we only need to show the Green function $\tilde{G}_{x_i}\to D\,d^{2-n}_{x}$. On one hand, we notice that $\tilde{B}_{\delta_i^{-1/2}}(x_i)$ converges to the same limit $\mathbb{R}^{n-4}\times C(S^3/\Gamma)$ for any sequence $x_{i}\in \tilde{B}_{R}(y_i)\cap \cC_i$.  On the other hand, by $(1)$ and $(2)$ of the Lemma we have on $\tilde{B}_{\delta_i^{-1/2}}(x_i)$, we have
$C^{-1}d_{x_i}^{2-n}\le \tilde{G}_{x_i}\le Cd_{x_i}^{2-n}$ and $|\nabla \tilde{G}_{x_i}|\le Cd_{x_i}^{1-n}$.  By standard Arzel\'a-Ascoli we have that $\tilde{G}_{x_i}$ converges to a function $G_x$ on the limit space $\mathbb{R}^{n-4}\times C(S^3/\Gamma)$ which satisfies the estimates
\begin{align}\label{e:green_cone_upper_lower}
C^{-1}d_{x}^{2-n}\le {G}_{x}\le Cd_{x}^{2-n},~~~\, \mbox{ and }\,~~~\, |\nabla G_x|\le Cd_x^{1-n}\, .
\end{align}
In fact, if we use some RCD theory it is not hard to see that $G_x$ is itself the Green's function at $x$, however we will prove something slightly weaker in order to avoid such techniques. Indeed, since $\tilde G_{x_i}$ converges smoothly on the regular part of $\mathbb{R}^{n-4}\times C(S^3/\Gamma)$ we at least have that $G_x$ is harmonic away from the singular set.  If we lift $G_x$ to a function $G_{\tilde x}$ on $\dR^n$, we get away from $\tilde x$ that $G_{\tilde x}$ is locally lipschitz and harmonic away from a set of zero capacity.  Hence, $G_x$ is harmonic away from $\tilde x$ with the bounds \eqref{e:green_cone_upper_lower}.  Now the only harmonic functions on $\dR^n\setminus \tilde x$ with estimates \eqref{e:green_cone_upper_lower} are multiples of the Green's function.  Hence we have $G_x=D\, d_x^{2-n}$ for some constant $C^{-1}(n,\rv)<D<C(n,\rv)$ as claimed, which finishes the proof.
\end{proof}
\vspace{.5cm}

\subsubsection{Proof of Lemma \ref{l:green_function_distant_function}}
Noting that $G_{\bar \mu}(y) = \int_{B_{39/20}(p)} G_x(y)\, d{\bar \mu}(x)$ we will use the pointwise Green function estimates in Lemma \ref{l:green_function_cone_point} and the Ahlfors regularity assumption in order to conclude the proof of Lemma \ref{l:green_function_distant_function}.  Indeed, for any $y\in \cN_{10^{-5}}\cap B_{39/20}(p)$ let $r\equiv d(y,\cC)$, and let us estimate the upper bound of $G_{\bar \mu}(y)$ as follows:
\begin{align}
G_{\bar \mu}(y)&\le C\int_{B_{39/20}(p)} d_x^{2-n}(y)d{\bar \mu}(x)=C\int_{B_{10r}(y)\cap B_{39/20}(p)}d_x^{2-n}(y)d{\bar \mu}(x)+C\sum_{i=1}^\infty\int_{A_{10^ir,10^{i+1}r}(y)\cap B_{39/20}(p)}d_x^{2-n}(y)d{\bar \mu}(x) \\
&\le Cr^{2-n}\int_{B_{10r}(y)\cap B_{39/20}(p)}d{\bar \mu}(x)+Cr^{2-n}\sum_{i=1}^\infty 10^{-i(n-2)}\int_{A_{10^ir,10^{i+1}r}(y)\cap B_{39/20}(p)}d{\bar \mu}(x)\, ,\notag\\
&\leq C\cdot B r^{-2}\Big(1+\sum 10^{-2i}\Big)\equiv C(n,\rv,B) r^{-2}\, ,
\end{align}
as claimed.  To prove the lower bound of $G_{\bar \mu}$ we can similarly compute
\begin{align}\label{e:lowerboundGmu}
G_{\bar \mu}(y)&\ge C^{-1}\int_{B_{39/20}(p)} d_x^{2-n}(y)d{\bar \mu}(x)\ge C^{-1}\int_{B_{10r}(y)\cap B_{39/20}(p)}d_x^{2-n}(y)d{\bar \mu}(x)\notag\\
&\ge C^{-1} 10^{2-n}r^{2-n}{\bar \mu}\Big(B_{8r}(x_0)\cap B_{39/20}(p)\Big)\ge C^{-1}(n,\rv,B)r^{-2},
\end{align}
where $d(x_0,y)\le 2r=2d(y,\cC)$ and $x_0\in\cC$. 
Since $b^{-2}(y)=G_{\bar \mu}(y)$, we have $C^{-1}r\le b(y)\le Cr$, or that $C^{-1}\,d(y,\cC)\leq b(y)\leq C\,d(y,\cC)$. Hence we have proven (1) of Lemma \ref{l:green_function_distant_function}. For the gradient estimate, using the same computational strategy as above we have
\begin{align}
|\nabla G_{\bar \mu}(y)|=\int_{B_{39/20}(p)} |\nabla G_x(y)|d{\bar \mu}(x)\le C\int_{B_{39/20}(p)} d_x^{1-n}(y)d{\bar \mu}(x)\le Cr^{-3}.
\end{align}
By noting that  $b^{-2}=G_{\bar \mu}$, then $2 b^{-3}|\nabla b|=|\nabla b^{-2}|=|\nabla G_{\bar \mu}|\le Cr^{-3}$. By the upper bound estimate of $b$, we have $|\nabla b|\le C$.  For the gradient lower bound,  for any fixed unit vector $v\in T_yM$, we have
\begin{align}
\langle\nabla G_{\bar \mu}(y),v\rangle&=\int_{B_{39/20}(p)} \langle\nabla G_x(y),v\rangle d{\bar \mu}(x)=\int_{B_{rR}(y)}\langle\nabla G_x(y),v\rangle d{\bar \mu}(x)+\int_{M\setminus B_{rR}(y)}\langle \nabla G_x(y),v\rangle d{\bar \mu}(x).
\end{align}
By the gradient upper bound estimate $|\nabla G_x(y)|\le Cd_x^{1-n}(y)$, we have
\begin{align}
|\int_{M\setminus B_{rR}(y)}\langle\nabla G_x(y),v\rangle d{\bar \mu}(x)|\le \int_{M\setminus B_{rR}(y)}|\nabla G_x(y)|d{\bar \mu}(x)\le CR^{-3}r^{-3} .
\end{align}
On the other hand, for fixed $R$ we have by the Green function estimates of Lemma \ref{l:green_function_cone_point} that if $\delta\leq \delta(R,n,\rv)$, then there is a unit vector $v_y\in T_yM$ such that
\begin{align}
\int_{B_{rR}(y)}\langle \nabla G_x(y),v_y\rangle d{\bar \mu}(x)\ge \int_{B_{10r}(y)}r^{1-n}c(n)d{\bar \mu}(x)\ge C(n,B)r^{-3}.
\end{align}
Therefore, combining the estimates above, we have
\begin{align}
\langle\nabla G_{\bar \mu}(y),v_y\rangle\,\ge \int_{B_{rR}(y)}\langle\nabla G_x(y),v_y\rangle d{\bar \mu}(x)-\Big|\int_{M\setminus B_{rR}(y)}\langle \nabla G_x(y),v_y\rangle d{\bar \mu}(x)\Big|\,\ge \, C_0(B,n)r^{-3}-C_1(B,n)R^{-3}r^{-3}.
\end{align}
Choosing $R=R(\rv,n,B)$ large enough we conclude $
\langle\nabla G_{\bar \mu}(y),v_y\rangle\,\ge\, \frac{C(B,n)}{2}r^{-3}$.  In particular, this gives us the estimate $|\nabla G_{\bar \mu}|(y)\ge C r^{-3}$ and hence the desired estimate $|\nabla b|(y)\ge C$ for $y\in \cN_{10^{-5}}\cap B_{39/20}(p)$.  This finishes the proof of the Lemma.\qed

\subsection{Harmonic Function Estimates}

In this short subsection we record several estimates about harmonic functions.  We begin with some basic computations, mainly for the convenience of the reader.  Our list is the following:\\

\begin{lemma}\label{l:harmonic_computations}
	Let $\Delta u=0$ be a harmonic function on an open set.  Then the following hold:
	\begin{enumerate}
		\item $\Delta \nabla_i u = R_{ia}\nabla^a u$.
		\item $\Delta |\nabla u|^2 = 2|\nabla^2 u|^2 +2{\Ric}(\nabla u,\nabla u)$.
		\item $\Delta \nabla_i\nabla_j u = \big(\nabla_jR_{ia}-\nabla_a R_{ij}+\nabla_i R_{ja}\big)\nabla^a u-2R^{\,a\;\;b}_{\;\;i\;\;j}\nabla_a\nabla_b u + R_{ia}\nabla^a\nabla_j u + R_{aj}\nabla_i\nabla^a u$.
		\item $\Delta |\nabla^2 u|^2 = 2|\nabla^3 u|^2 + 2\big\langle\big(\nabla_jR_{ia}-\nabla_a R_{ij}+\nabla_i R_{ja}\big)\nabla^a u, \nabla^i\nabla^j u\big\rangle -4\text{Rm}(\nabla^2 u,\nabla^2 u) + 4{\Ric}(\nabla^a\nabla u,\nabla_a\nabla u)$.
	\end{enumerate}
\end{lemma}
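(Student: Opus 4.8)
The plan is to obtain (1)--(4) purely by commuting covariant derivatives via the Ricci identities, feeding in the hypothesis $\Delta u=0$, and invoking the twice-contracted second Bianchi identity $\nabla^a R_{aijc}=\nabla_j R_{ic}-\nabla_c R_{ij}$ to handle the divergence-of-curvature terms. No analytic input beyond smoothness of $u$ is needed; everything reduces to tensor algebra.

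First I would prove (1). Since the Hessian of a function is symmetric, $\nabla_a\nabla_i u=\nabla_i\nabla_a u$, so $\Delta\nabla_i u=\nabla^a\nabla_a\nabla_i u=\nabla^a\nabla_i\nabla_a u$; commuting $\nabla^a$ past $\nabla_i$ on the one-form $\nabla u$ and contracting the resulting Riemann term in $a$ gives the Bochner identity $\Delta\nabla_i u=\nabla_i\Delta u+R_{ia}\nabla^a u$, and $\Delta u=0$ kills the first term. Then (2) is immediate: writing $|\nabla u|^2=\nabla_i u\,\nabla^i u$ and applying $\Delta(fg)=(\Delta f)g+f\Delta g+2\langle\nabla f,\nabla g\rangle$ componentwise yields $\Delta|\nabla u|^2=2\langle\Delta\nabla u,\nabla u\rangle+2|\nabla^2u|^2$, and substituting (1) gives $2\Ric(\nabla u,\nabla u)+2|\nabla^2u|^2$ as claimed (this is just Bochner's formula for $u$ harmonic).

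The substantive step is (3). I would compute $\Delta\nabla_i\nabla_j u=\nabla^a\nabla_a\nabla_i\nabla_j u$ by sliding the two outer derivatives inward one at a time: first commute $\nabla_a$ past $\nabla_i$ (Ricci identity for the one-form $\nabla u$ in its $j$-slot), then commute $\nabla^a$ past $\nabla_i$ (Ricci identity for the $2$-tensor $\nabla_a\nabla u$), after which the leading term is $\nabla_i\nabla^a\nabla_a\nabla_j u=\nabla_i\Delta\nabla_j u=\nabla_i\!\big(R_{ja}\nabla^a u\big)$ by part (1), contributing $\nabla_i R_{ja}\,\nabla^a u+R_{ja}\,\nabla_i\nabla^a u$. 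Among the commutator remainders there is exactly one term of the form $\nabla^a R_{aij}{}^{c}\,\nabla_c u$, which by the twice-contracted second Bianchi identity becomes $(\nabla_j R_{ic}-\nabla_c R_{ij})\nabla^c u$; together with the $\nabla_i R_{ja}$ piece from the leading term this assembles the antisymmetrized triple $\nabla_j R_{ia}-\nabla_a R_{ij}+\nabla_i R_{ja}$. The remaining commutators are of Riemann$\ast$Hessian and Ricci$\ast$Hessian type and supply the $-2R^{\,a\;\;b}_{\;\;i\;\;j}\nabla_a\nabla_b u$ term and the two terms $R_{ia}\nabla^a\nabla_j u$, $R_{aj}\nabla_i\nabla^a u$. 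The only genuine care required is keeping the sign conventions for $\Rm$, the Ricci identity, and the Bianchi identity mutually consistent so the coefficients come out exactly as stated.

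Finally (4) follows from (3) in the same way (2) followed from (1): from $|\nabla^2u|^2=\nabla_i\nabla_j u\,\nabla^i\nabla^j u$ and the Leibniz rule for $\Delta$ one gets $\Delta|\nabla^2u|^2=2\langle\Delta\nabla^2u,\nabla^2u\rangle+2|\nabla^3u|^2$, and substituting (3) gives the claim once one notes, using the symmetry of $\nabla^2u$, that the two terms $R_{ia}\nabla^a\nabla_j u$ and $R_{aj}\nabla_i\nabla^a u$ contract with $\nabla^i\nabla^j u$ to the same quantity, combining into $4\Ric(\nabla^a\nabla u,\nabla_a\nabla u)$, while the Riemann term doubles to $-4\Rm(\nabla^2u,\nabla^2u)$. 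The main obstacle in the whole argument is thus bookkeeping: tracking index placements and curvature sign conventions through the two-derivative commutation in (3); there is no conceptual difficulty.
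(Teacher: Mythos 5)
The paper states these identities without proof (they are offered ``mainly for the convenience of the reader''), and your derivation is exactly the standard computation one would write down to fill that gap: Bochner commutation for (1)--(2), the double Ricci-identity commutation plus the contracted second Bianchi identity for (3), and the Leibniz rule together with the symmetry of $\nabla^2 u$ for (4). Everything checks out, including the combination of the two Ricci--Hessian contractions into $4\Ric(\nabla^a\nabla u,\nabla_a\nabla u)$, so the proposal is correct and matches the (omitted) intended argument.
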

\vspace{.5cm}

Let us now record the following hessian estimates which will play a role in subsequent subsections:\\

\begin{lemma}\label{l:harmonic_estimates}
    Let $(M^n,g,p)$ satisfy $\Vol(B_1(p))>\rv>0$ with $\Ric\geq -\delta$.  Then if $u:B_4(p)\to \dR^{n-4}$ is a $\delta'$-splitting function, then the following hessian estimate holds for any $x\in B_2(p)$:
    \begin{align}
	\int_{B_2(x)} d_x^{2-n}|\nabla^2 u|^2(z)\,dz \leq C(n,\rv)\, ,
    \end{align}
    where $d_x(y)=d(x,y)$ is the distance function.
\end{lemma}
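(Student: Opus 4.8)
The plan is to prove the weighted hessian estimate $\int_{B_2(x)} d_x^{2-n}|\nabla^2 u|^2 \leq C(n,\rv)$ by integrating the Bochner-type identity from Lemma \ref{l:harmonic_computations}(2) against a suitable weight. The natural weight is $G_x$, the Green's function at $x$ on $B_4$, since $-\Delta G_x = \delta_x$ and $G_x$ is uniformly comparable to $d_x^{2-n}$ by the heat-kernel-based construction of Lemma \ref{l:green_function_cone_point}. Thus it suffices to bound $\int_{B_2(x)} G_x |\nabla^2 u|^2$, and then replace $G_x$ by $d_x^{2-n}$ at the cost of a dimensional constant. To localize to $B_2$ one multiplies by a cutoff $\varphi$ with $\varphi \equiv 1$ on $B_2$, $\operatorname{supp}\varphi \subseteq B_3$, and $|\nabla\varphi| + |\Delta\varphi| \leq C(n)$, which is available by the cutoff construction of \cite{ChC1} recalled in the proof of Lemma \ref{l:neck:cutoff}.

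First I would write out the pointwise identity for each component $u^a$: $\Delta|\nabla u^a|^2 = 2|\nabla^2 u^a|^2 + 2\,\Ric(\nabla u^a, \nabla u^a)$, so summing over $a$,
\begin{align}
2\sum_a |\nabla^2 u^a|^2 = \Delta\Big(\sum_a |\nabla u^a|^2\Big) - 2\sum_a \Ric(\nabla u^a, \nabla u^a)\, .
\end{align}
Multiply by $\varphi^2 G_x$ and integrate over $B_4$. Since $\Ric \geq -\delta$ and $\sup|\nabla u| \leq 1$ from the splitting condition (4) of Definition \ref{d:splitting_function}, the Ricci term contributes at most $C(n)\delta \int_{B_3} \varphi^2 G_x \leq C(n,\rv)\delta$, using $G_x \leq C d_x^{2-n}$ and the fact that $d_x^{2-n}$ is uniformly integrable on bounded balls under a noncollapsing lower volume bound. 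For the main term, integrate by parts twice, moving the Laplacian onto $\varphi^2 G_x$:
\begin{align}
\int_{B_4} \varphi^2 G_x\, \Delta\Big(\sum_a|\nabla u^a|^2\Big) = \int_{B_4} \Delta(\varphi^2 G_x)\, \sum_a |\nabla u^a|^2\, .
\end{align}
Now $\Delta(\varphi^2 G_x) = \varphi^2 \Delta G_x + 2\langle\nabla\varphi^2, \nabla G_x\rangle + G_x \Delta\varphi^2$. The first term is $-\varphi^2(x)\delta_x = -\delta_x$ (a distributional term supported at $x$), which pairs against $\sum_a|\nabla u^a|^2(x) \leq n$; handled more carefully, since $\sum_a|\nabla u^a|^2$ is bounded, the singular contribution is finite and bounded by $C(n)$. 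The cross term and the $G_x\Delta\varphi^2$ term are supported in $A_{2,3}(x)$ where $G_x$ and $\nabla G_x$ are uniformly bounded (being comparable to $d_x^{2-n}$ and $d_x^{1-n}$ there, away from $x$), so they contribute at most $C(n)\int_{A_{2,3}} \sum_a|\nabla u^a|^2 \leq C(n,\rv)$ by the gradient bound.

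Putting these together gives $\int_{B_2(x)} \varphi^2 G_x \sum_a |\nabla^2 u^a|^2 \leq C(n,\rv)(1 + \delta) \leq C(n,\rv)$, and then $G_x \geq C(n)^{-1} d_x^{2-n}$ on $B_2$ yields the claim. The main technical obstacle is the careful treatment of the distributional Laplacian of $G_x$ near the pole $x$: one must justify the integration by parts on $B_4 \setminus B_\epsilon(x)$ and pass to the limit $\epsilon \to 0$, controlling the boundary integral on $\partial B_\epsilon(x)$, which requires the gradient estimate $|\nabla G_x| \leq C d_x^{1-n}$ from Lemma \ref{l:green_function_cone_point}(2) together with $|\nabla u| \leq 1$ — the boundary term is $O(\epsilon^{n-1} \cdot \epsilon^{1-n}) = O(1)$ and in fact converges to the finite limit $C(n)\sum_a|\nabla u^a|^2(x)$. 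A minor secondary point is ensuring the weighted quantities are genuinely integrable; this is where noncollapsing enters, via the volume comparison bound $\Vol(B_r(x)) \geq c(n,\rv) r^n$ which makes $\int_{B_3(x)} d_x^{2-n} < \infty$ with a bound depending only on $n$ and $\rv$.
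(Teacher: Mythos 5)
Your proposal is correct and follows essentially the same route as the paper: multiply the Bochner identity $\Delta|\nabla u|^2 = 2|\nabla^2 u|^2 + 2\Ric(\nabla u,\nabla u)$ by a cutoff times the Green's function $G_x \asymp d_x^{2-n}$, integrate by parts so that $-\Delta G_x = \delta_x$ produces a bounded point evaluation of $|\nabla u|^2$, and absorb the Ricci and cutoff error terms using $|\nabla u|\leq 1$ and the fact that $G_x$, $\nabla G_x$ are uniformly bounded on the annulus where $\nabla\varphi$ is supported. The only cosmetic difference is that the paper pairs the error terms against $|\nabla u|^2-1$ rather than $|\nabla u|^2$, which is immaterial here since the constant is only required to be bounded, not small.
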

\begin{remark}
Let us make the important observation that the constant in the above estimate cannot be taken to be small, simply bounded.	
\end{remark}
\begin{proof}
	Using Lemma \ref{l:harmonic_computations} we have the formula
	\begin{align}\label{e:harmonic_est:1}
		\frac{1}{2}\Delta|\nabla u|^2 + \delta|\nabla u|^2 \geq |\nabla^2 u|^2\, .
	\end{align}
	Now for $x\in B_1$ let $G_x(y)$ be the Green's function.  Recall from Section \ref{ss:splitting:Greens_standard} that for $y\in B_4$ we have the estimate
	\begin{align}
		C(n,\rv)^{-1}d_x^{2-n}(y)\leq G_x(y)\leq C(n,\rv)d_x^{2-n}\, .
	\end{align}
	Now recall as in \cite{ChC1} that we may build a cutoff function $\phi$ such that $\phi\equiv 1$ on $B_2(x)$, $\phi\equiv 0$ outside $B_{5/2}(x)$ and such that $|\nabla\phi|,|\Delta\phi|\leq C(n)$.  Therefore let us multiply both sides of \eqref{e:harmonic_est:1} by $\phi G_x$ and integrate in order to compute
\begin{align}
	\int_{B_2(x)} d_x^{2-n}|\nabla^2 u|^2(z)\,dz&\leq C(n,\rv)\int_{B_3(x)} \phi G_x|\nabla^2 u|^2(z)\,dz\notag\\
	&\leq C(n,\rv)\int_{B_3(x)} \phi G_x \Delta(|\nabla u|^2-1)(z)\,dz + C(n,\rv)\delta\int_{B_3(x)} \phi G_x |\nabla u|^2(z)\,dz\, ,\notag\\
	&\leq C(n,\rv)\left( 1-|\nabla u|^2(x) + \delta+\int_{B_3(x)} \Big(\big(\Delta \phi G_x+2\langle\nabla\phi,\nabla G_x\rangle\big)\big(|\nabla u|^2-1\big)\Big)\right)(z)\,dz\, ,\notag\\
	&\leq C(n,\rv)\, ,
\end{align}
as claimed.
\end{proof}
\vspace{.5cm}

By using the Green's function estimates of Lemma \ref{l:green_function_distant_function} we can argue as above to prove the following estimate:

\begin{lemma}\label{l:hessian_L2_green}
Let $(M^n,g,p)$ satisfy $\Vol(B_1(p))>\rv>0$. If  the assumptions (S1)-(S4) of \eqref{e:assumptions_splitting} hold with $\delta,\delta'<\delta(n,B,\rv)$ and $0<\tau\le \tau(n)$, then
\begin{align}
\int_{ B_{19/10}(p)} b^{-2}|\nabla^2u|^2(z)\,dz\le C(n,B,\rv).
\end{align}

\end{lemma}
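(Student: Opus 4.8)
The plan is to mimic the proof of Lemma \ref{l:harmonic_estimates}, replacing the ordinary Green's function $G_x$ by the $\mu$-Green's function $G_\mu = b^{-2}$ and replacing the pointwise cutoff by the neck cutoff $\phi = \phi_\cN$ from Lemma \ref{l:neck:cutoff}. The starting point is the same Bochner-type inequality: since $u$ is a $\delta'$-splitting function and $\Ric \geq -\delta$, Lemma \ref{l:harmonic_computations}(2) gives
\begin{align}
\tfrac12 \Delta |\nabla u|^2 + \delta |\nabla u|^2 \geq |\nabla^2 u|^2 \, .
\end{align}
Multiply both sides by $\phi\, G_\mu$, where $\phi$ is the neck cutoff satisfying $\phi \equiv 1$ on $\cN_{10^{-3}}$ and supported in $\cN_{10^{-4}} \cap B_{19/10}$, and integrate over $M$. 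The left side produces $\int \phi G_\mu \Delta(|\nabla u|^2 - 1)$ plus the small error $\delta \int \phi G_\mu |\nabla u|^2$; integrating by parts twice moves the Laplacian onto $\phi G_\mu$, and since $-\Delta G_\mu = \mu$ is a nonnegative measure one picks up a \emph{good-signed} term $-\int \phi (|\nabla u|^2 - 1)\, d\mu$, which because $u$ is $\delta'$-split (so $|\nabla u| \leq 1$ and $\fint |\,|\nabla u|^2 - 1| < \delta'$) is controlled in absolute value, together with the cross terms $\int \big(\Delta\phi \cdot G_\mu + 2\langle \nabla\phi, \nabla G_\mu\rangle\big)(|\nabla u|^2 - 1)$.

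The key steps, in order, are: (i) record the Bochner inequality and fix the cutoff $\phi = \phi_\cN$ from Lemma \ref{l:neck:cutoff}; (ii) carry out the integration by parts, being careful that $G_\mu$ is only a weak solution of $-\Delta G_\mu = \mu$, so the identity $\int \psi\, d\mu = -\int G_\mu \Delta \psi$ must be invoked for $\psi = \phi(|\nabla u|^2 - 1)$, which is licit since $\phi$ is supported away from $\cC$ and hence $\psi$ is smooth and compactly supported in $B_2 \setminus \cC$ where $G_\mu$ is smooth; (iii) estimate the $\mu$-term using $|\nabla u| \leq 1$, $\fint_{B_2}|\,|\nabla u|^2 - 1| < \delta'$ and the Ahlfors upper bound (S3) giving $\mu(B_1) \leq C(n,B)$; (iv) estimate the cross terms using the support and derivative bounds on $\phi$ from Lemma \ref{l:neck:cutoff}, together with the bounds $b \leq C(n,B) d(\cdot,\cC)$, $|\nabla b| \leq C(n,B)$ from Lemma \ref{l:green_function_distant_function} — note $\nabla\phi$ is supported either in the outer annulus $A_{18/10,19/10}(p)$, where $G_\mu$ and $\nabla G_\mu$ are bounded by (S3) and Lemma \ref{l:green_function_distant_function}, or in the annuli $A_{10^{-4}r_x, 10^{-3}r_x}(\cC)$, where $r_x |\nabla\phi|, r_x^2|\nabla^2\phi| \leq C(n)$ and $G_\mu \approx r_x^{-2}$, $|\nabla G_\mu| \approx r_x^{-3}$, so each annular piece contributes $\lesssim r_x^{-2}\cdot \Vol(A_{10^{-4}r_x,10^{-3}r_x}(x)) \lesssim r_x^{n-2}$, and summing over $x \in \cC$ against $\sum r_x^{n-4} = \mu(\cC) \leq C(n,B)$ (after noting $r_x < \delta < 1$) gives a bounded total; (v) conclude $\int_{\cN_{10^{-3}} \cap B_{18/10}} b^{-2}|\nabla^2 u|^2 \leq C(n,B)$, and then observe that on $\{b \leq 2\} \setminus \cN_{10^{-3}}$ — i.e. inside the balls $\overline B_{10^{-3} r_x}(x)$ — one can use the harmonic radius control $r_h(y) \approx d(y,\cC)$ (Lemma \ref{l:neck:distance_harm_rad}) together with the interior elliptic estimate of Lemma \ref{l:harmonic_estimates} applied at scale $r_x$ to absorb that region, or alternatively enlarge the cutoff appropriately, to get the full $\{b \leq 2\}$ statement.

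The main obstacle I anticipate is \textbf{step (iv)}, the control of the cross terms near $\cC$: one must verify that the "boundary" contributions coming from $\nabla\phi$ concentrated on the scale-$r_x$ annuli around each center point are genuinely summable with a constant depending only on $n$ and $B$. This is exactly where the Ahlfors \emph{upper} bound (S3) enters, and where the scale-invariant derivative bounds $r_x|\nabla\phi|, r_x^2|\nabla^2\phi| \leq C(n)$ from Lemma \ref{l:neck:cutoff}, plus the two-sided comparison $b \approx d(\cdot,\cC)$ and $|\nabla b| \leq C$ from Lemma \ref{l:green_function_distant_function}, are indispensable — without them each local term is fine but the sum over $\cC$ could blow up. A secondary subtlety is making sure the weak formulation $-\int G_\mu \Delta\psi = \int \psi\, d\mu$ is applied only to test functions supported away from $\cC_0$ (where $G_\mu$ may be genuinely singular in the limit-space case); since $\phi_\cN$ vanishes on a neighborhood of $\overline B_{r_x}(\cC) \supseteq \cC_0$, this is automatic. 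I expect everything else to be routine once the cutoff bookkeeping is set up, following the template already used for Lemma \ref{l:harmonic_estimates} and Lemma \ref{l:green_function_distant_function}.
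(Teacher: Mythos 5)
Your proposal is correct and follows essentially the same route as the paper, which proves this lemma precisely by rerunning the argument of Lemma \ref{l:harmonic_estimates} with the $\mu$-Green's function $G_\mu=b^{-2}$ in place of $G_x$ and invoking the estimates of Lemma \ref{l:green_function_distant_function}; your bookkeeping of the cutoff cross terms via the Ahlfors upper bound (S3) is the right mechanism. The only small imprecision is the claim that $G_\mu$ and $\nabla G_\mu$ are pointwise bounded on the outer annulus $A_{18/10,19/10}(p)$ --- they need not be, but the same dyadic summation you use near $\cC$ (giving $\int_{\mathrm{supp}\,\phi_\cN} d_\cC^{-3}\le C(n,B)$ from the tube-volume bound $\Vol(B_s(\cC))\le C(n,B)s^4$) controls those integrals there as well.
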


\begin{proof}
 By Bochner formula, we have 
\begin{align}
\frac{1}{2}\Delta|\nabla u|^2 + \delta|\nabla u|^2 \geq |\nabla^2 u|^2\, .
\end{align}
Choose a cut-off function $\phi$ such that $\phi\equiv 1$ on $B_{19/10}(p)$ and $\phi\equiv 0$ outside $B_{39/20}(p)$ such that $|\nabla \phi|+|\Delta \phi|\le C(n)$. Then we can compute 
\begin{align}
\phi^2 |\nabla^2u|^2&\le \frac{1}{2}\phi^2\Delta|\nabla u|^2 + \delta \phi^2|\nabla u|^2\\
&=\frac{1}{2}\Delta (\phi^2|\nabla u|^2)-2\langle \nabla \phi^2,\nabla |\nabla u|^2\rangle -\Delta \phi^2 |\nabla u|^2+\delta \phi^2|\nabla u|^2\\
&\le \frac{1}{2}\Delta (\phi^2|\nabla u|^2)+\frac{1}{2}\phi^2 |\nabla^2u|^2+32 |\nabla\phi|^2|\nabla u|^2-\Delta \phi^2 |\nabla u|^2+\delta \phi^2|\nabla u|^2\\
&\le \frac{1}{2}\Delta (\phi^2|\nabla u|^2)+\frac{1}{2}\phi^2 |\nabla^2u|^2+C(n)(|\nabla \phi|^2+|\Delta\phi|+\phi^2)|\nabla u|^2\,.
\end{align}
This implies that 
\begin{align}
\phi^2 |\nabla^2u|^2&\le\Delta (\phi^2|\nabla u|^2)+C(n)(|\nabla \phi|^2+|\Delta\phi|+\phi^2)|\nabla u|^2\,.
\end{align}

Noting that $b^{-2}=G_{\bar \mu}$ and $\sup |\nabla u|\le 1$ we have 
\begin{align}\label{e:b2nabla21}
\int_{ B_{19/10}(p)} b^{-2}|\nabla^2u|^2(z)\,dz&\le \int_{M} G_{\bar \mu} |\nabla ^2u|^2\phi^2(z)\,dz\\
&\le \int_{M} G_{\bar \mu} \Big(\Delta (\phi^2|\nabla u|^2)+C(n)(|\nabla \phi|^2+|\Delta\phi|+\phi^2)|\nabla u|^2\Big)(z)\,dz\\
&\le \int_{M} G_{\bar \mu} \Delta (\phi^2|\nabla u|^2)(z)\,dz+ C(n) \int_{B_{39/20}(p)} G_{\bar \mu}(y)\,dy .
\end{align}
By Lemma \ref{l:green_function_distant_function} we have $B_{39/20}(p)\subset \{b\le C(n,B,\rv)\}$. 
Therefore by Lemma \ref{l:green_formula} we get

\begin{align}\label{e:b2nabla22}
\int_{M} G_{\bar \mu} \Delta (\phi^2|\nabla u|^2)(z)\,dz&\le {\bar \mu} [(\phi^2|\nabla u|^2]+C(n,B,\rv)\int_{B_{2}(p)}|\Delta (\phi^2|\nabla u|^2)|(z)\,dz\\
&\le  C(n,B,\rv)+{\bar \mu} [(\phi^2|\nabla u|^2]\\
&\le C(n,B,\rv)+C(n){\bar \mu}[1]\\
&\le C(n,B,\rv)+C(n){\bar \mu}(B_{39/20}(p))\le C(n,B,\rv).
\end{align}
On the other hand, noting that $G_{\bar \mu}(y)=\int_{B_{39/20}(p)} G_x(y)d{\bar \mu}(x)$ and the estimate of $G_x(y)\le C(n,\rv)d(x,y)^{2-n}$ in Lemma \ref{l:green_function_cone_point}, we get 
\begin{align}\label{e:b2nabla23}
\int_{B_{39/20}(p)} G_{\bar \mu}(y)dy&=\int_{B_{39/20}(p)}\int_{B_{39/20}(p)} G_x(y)d{\bar \mu}(x)dy\le C(n,\rv)\int_{B_{39/20}(p)}\int_{B_{39/20}(p)} d(x,y)^{2-n}dyd{\bar \mu}(x)\\
&\le C(n,\rv) \int_{B_{39/20}(p)}d{\bar \mu}(x)\le C(n,\rv,B).
\end{align}
Combining \eqref{e:b2nabla21}, \eqref{e:b2nabla22} and \eqref{e:b2nabla23} we get the desired estimates. 
\end{proof}

\vspace{.5cm}

\subsection{Scale Invariant Hessian Estimates}

In this subsection we prove that a splitting function on a neck region continues to have scale invariantly small hessian on all scales within the neck region.  This is not generally true for a harmonic function which is not living on a neck region, and the estimate will play an important role in our analysis in subsequent sections (particularly on making certain errors small, instead of just bounded, which will be crucial).  In the next subsection we will prove a much stronger estimate which basically says this error is not just small but summably small, at least when averaged over $\cC$, which is what is required to prove the gradient estimate in \eqref{e:splitting_annular}.

The main result of this subsection is the following:

\begin{theorem}[Pointwise Scale Invariant Hessian Estimate]\label{t:splitting_neck_pointwise_hessian}
Let $(M^n,g,p)$ satisfy $\Vol(B_1(p))>\rv>0$.  Then for every $\epsilon>0$ and $\tau<\tau(n)$ if $\delta,\delta'\leq \delta(n,\epsilon,\tau,\rv)$ is such that assumptions (S1)-(S4) of \eqref{e:assumptions_splitting} hold, then we have for every $x\in\cC$ and $r_x<r<1$ that
\begin{align}
r^2\fint_{B_r(x)} |\nabla^2 u|^2(z)\,dz < \epsilon.
\end{align}
\end{theorem}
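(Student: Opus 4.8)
The plan is to argue by contradiction through a blow-up at the ball center, reducing the assertion to a Liouville-type rigidity on the model cone combined with the weighted Hessian bound of Lemma~\ref{l:harmonic_estimates}. Suppose the conclusion fails along a sequence: manifolds $M_j$ with $|\Ric_j|<\delta_j$, $\Vol(B_1(p_j))>\rv$, $(\delta_j,\tau)$-neck regions $\cN_j=B_2(p_j)\setminus\overline{B}_{r_x}(\cC_j)$ with packing measures obeying (S3), $\delta_j'$-splittings $u_j:B_4(p_j)\to\dR^{n-4}$, $\delta_j,\delta_j'\to0$, ball centers $x_j\in\cC_j$, and scales $r_j\in(r_{x_j},1)$ with $r_j^2\fint_{B_{r_j}(x_j)}|\nabla^2u_j|^2\ge\epsilon_0$. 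First I would dispose of the case $\limsup_jr_j>0$: along such a subsequence $B_{r_j}(x_j)\subseteq B_4(p_j)$ has $\Vol(B_{r_j}(x_j))\ge c(n,\rv,r_j)>0$, so Bishop--Gromov gives $\fint_{B_{r_j}(x_j)}|\nabla^2u_j|^2\le C(n,\rv)\fint_{B_4(p_j)}|\nabla^2u_j|^2\le C(n,\rv)(\delta_j')^2\to0$, contradicting the hypothesis since $r_j\le1$. So we may assume $r_j\to0$; note also that the only properties of $u_j$ that will be used are $\Delta u_j=0$ and $\sup_{B_4(p_j)}|\nabla u_j|\le1$.

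Next I would rescale: set $\tilde g_j\equiv r_j^{-2}g_j$ and $\tilde u_j\equiv r_j^{-1}(u_j-u_j(x_j))$, so $|\Ric_{\tilde g_j}|\le r_j^2\delta_j\to0$, $\tilde u_j$ is harmonic with $|\nabla\tilde u_j|\le1$ and $\tilde u_j(x_j)=0$ on $\tilde B_{2/r_j}(x_j)$ (radius $\to\infty$), the contradiction hypothesis becomes the scale-invariant statement $\fint_{\tilde B_1(x_j)}|\nabla^2\tilde u_j|^2\ge\epsilon_0$, and $\Vol_{\tilde g_j}(\tilde B_1(x_j))\ge c(n,\rv)$ (Bishop--Gromov plus noncollapsing) makes this $\int_{\tilde B_1(x_j)}|\nabla^2\tilde u_j|^2\ge c(n,\rv)\epsilon_0>0$. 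Condition (n2) at the ball center $x_j$ and scale $r=r_j$ says $\tilde B_{\delta_j^{-1}}(x_j)$ is $\delta_j$-Gromov--Hausdorff close to $B_{\delta_j^{-1}}(0^{n-4},y_c)\subseteq\dR^{n-4}\times C(S^3/\Gamma_j)$; since $\Vol_{\tilde g_j}(\tilde B_1(x_j))\ge c(n,\rv)$ forces, by volume continuity, $\#\Gamma_j\le C(n,\rv)$, after a subsequence $\Gamma_j\equiv\Gamma\le O(4)$ is a fixed nontrivial group acting freely away from $0$ and $(M_j,\tilde g_j,x_j)\to X_\infty\equiv\dR^{n-4}\times C(S^3/\Gamma)$ in the pointed Gromov--Hausdorff sense. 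By the convergence theory for harmonic functions of Cheeger--Colding together with Anderson's $\epsilon$-regularity on the regular set, a further subsequence yields $\tilde u_j\to u_\infty:X_\infty\to\dR^{n-4}$, uniformly on compacta and in $C^{2,\alpha}_{\mathrm{loc}}$ on the smooth Ricci-flat regular set $\mathcal{R}(X_\infty)=\dR^{n-4}\times(C(S^3/\Gamma)\setminus\{y_c\})$, with $u_\infty$ harmonic and $|\nabla u_\infty|\le1$ globally.

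The heart of the argument is to show $\nabla^2u_\infty\equiv0$ on $\mathcal{R}(X_\infty)$. Lifting $u_\infty$ through the branched covering $\dR^n=\dR^{n-4}\times\dR^4\to\dR^{n-4}\times(\dR^4/\Gamma)=X_\infty$ gives a $1$-Lipschitz function $\hat u_\infty$ on $\dR^n$, harmonic off the $(n-4)$-plane $\dR^{n-4}\times\{0\}$, which has zero $2$-capacity; hence $\hat u_\infty$ is harmonic on all of $\dR^n$, so by Liouville $\hat u_\infty(y,z)=a+L(y)+M(z)$ with $L,M$ linear. Invariance under the deck action forces $M\circ\gamma=M$ for all $\gamma\in\Gamma$, and if $M\neq0$ this exhibits a nonzero vector of $\dR^4$ fixed by all of $\Gamma$, contradicting freeness; thus $M=0$, $u_\infty$ is affine along the Euclidean factor, and $\nabla^2u_\infty=0$ on the product regular set.

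To finish I would combine this with an anti-concentration bound near the singular set, supplied by the \emph{proof} of Lemma~\ref{l:harmonic_estimates} (which uses only harmonicity, $\sup|\nabla\tilde u_j|\le1$, the lower Ricci bound and noncollapsing), giving $\int_{\tilde B_{1/2}(y)}d_y^{2-n}|\nabla^2\tilde u_j|^2\le C(n,\rv)$ for every $y\in\tilde B_{3/2}(x_j)$. Since $\cS(X_\infty)=\dR^{n-4}\times\{y_c\}$ is a flat $(n-4)$-plane, its portion in $\overline{B}_{3/2}(x_\infty)$ has a $\rho$-net of cardinality $\le C(n)\rho^{4-n}$; transported to $M_j$, this gives $\le C(n)\rho^{4-n}$ points $x_j^{(l)}\in\tilde B_{3/2}(x_j)$ with $\tilde B_1(x_j)\subseteq\bigcup_l\tilde B_{3\rho}(x_j^{(l)})\cup V_j$, $V_j\equiv\tilde B_1(x_j)\setminus\bigcup_l\tilde B_{3\rho}(x_j^{(l)})$. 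For small $\rho$, on each $\tilde B_{3\rho}(x_j^{(l)})$ one has $\int|\nabla^2\tilde u_j|^2\le(3\rho)^{n-2}\int d_{x_j^{(l)}}^{2-n}|\nabla^2\tilde u_j|^2\le C(n,\rv)\rho^{n-2}$, so the total contribution of these balls is $\le C(n,\rv)\rho^2$; meanwhile $V_j$ sits, for $j$ large, at distance $\ge\rho/2$ from $\cS(X_\infty)$, hence transports into a compact subset of $\mathcal{R}(X_\infty)$, where $C^{2,\alpha}_{\mathrm{loc}}$ convergence and $\nabla^2u_\infty=0$ force $\sup_{V_j}|\nabla^2\tilde u_j|\to0$, so $\int_{V_j}|\nabla^2\tilde u_j|^2\to0$. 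Adding, $\limsup_j\int_{\tilde B_1(x_j)}|\nabla^2\tilde u_j|^2\le C(n,\rv)\rho^2$ for all $\rho>0$, forcing $\int_{\tilde B_1(x_j)}|\nabla^2\tilde u_j|^2\to0$ and contradicting the lower bound $c(n,\rv)\epsilon_0>0$. The step I expect to be the main obstacle is this last one together with the rigidity: a direct estimate fails because $|\nabla^2u|^2$ concentrates along the singular set, and one cannot cover the relevant tube with $\cC$ (only $\tau$-dense, $\tau$ fixed); the blow-up is what exposes the clean cone $\dR^{n-4}\times C(S^3/\Gamma)$, whose singular set is exactly $\dR^{n-4}\times\{y_c\}$, on which Lemma~\ref{l:harmonic_estimates} runs at arbitrary points, and on which — precisely because $\Gamma$ is nontrivial — the Liouville rigidity kills the limiting Hessian.
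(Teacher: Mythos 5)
Your proposal is correct, and its skeleton (contradiction, forcing $r_j\to 0$, rescaling, identifying the blow-up limit as $\dR^{n-4}\times C(S^3/\Gamma)$, and killing the limit Hessian by lifting to $\dR^n$ and applying Liouville) is exactly the paper's argument in Section \ref{sss:blowups_proof_pointwise_hessian}. The one place where you genuinely diverge is the final, and only analytically delicate, step: upgrading the convergence $\tilde u_j\to u_\infty$ to strong $L^2$ convergence of Hessians on $\tilde B_1$ so that $\nabla^2 u_\infty\equiv 0$ actually contradicts $\int_{\tilde B_1}|\nabla^2\tilde u_j|^2\geq c\,\epsilon_0$. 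The paper packages this into Theorem \ref{t:weak_sobolev_convergence}, whose proof rests on the a priori $L^{p}$ Hessian bounds for $p<4$ from Theorem 7.20 of \cite{CheegerNaber_Codimensionfour} together with the Minkowski estimates $\Vol(\cS_r\cap B_R)\leq C r^{4-\epsilon}$ on the quantitative singular set. You instead run an anti-concentration argument from the weighted bound $\int d_x^{2-n}|\nabla^2 u|^2\leq C(n,\rv)$ of Lemma \ref{l:harmonic_estimates}, covering the $\rho$-tube around the limit singular set by $\lesssim\rho^{4-n}$ balls of radius $\sim\rho$ and getting $C\rho^2$ total mass there, with the complement handled by $C^{2,\alpha}$ convergence on the regular set. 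This buys you a more elementary and self-contained proof of this particular theorem (no appeal to the $L^{p<4}$ Hessian theory), but it exploits the fact that the blow-up limit's singular set is exactly a flat $(n-4)$-plane, so unlike Theorem \ref{t:weak_sobolev_convergence} it does not generalize to arbitrary limits; the paper needs the general statement elsewhere (e.g.\ in the proof of Proposition \ref{p:local_L2_neck}), so it proves it once and reuses it. Two cosmetic remarks: the deduction that the linear component $M$ along the $\dR^4$ factor vanishes is unnecessary for your purposes, since a linear lift already has identically zero Hessian and the covering is a local isometry off the singular set; and your application of Lemma \ref{l:harmonic_estimates} at the net points $x_j^{(l)}$ is legitimate for exactly the reason you state, namely that its proof uses only harmonicity, $|\nabla\tilde u_j|\leq 1$, the lower Ricci bound and noncollapsing, all of which survive the rescaling.
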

\begin{remark}
Let us observe a corollary:  If $y\in \cN_{10^{-5}}$ with $2r=d(y,\cC)\approx r_h\approx b$, then for $\delta,\delta'\leq \delta(n,\epsilon,\tau,\rv)$ we can use elliptic estimates to obtain $r|\nabla^2 u|<\epsilon$ on $B_r(y)$.	We can rephrase this as the pointwise hessian estimate $|\nabla^2 u|(y)<\epsilon r_h^{-1}(y)$ on $\cN_{10^{-5}}$.
\end{remark}
\vspace{.5cm}

The strategy for the proof of the above will be by contradiction.  Assuming the result is false, we will take a sequence of $(\delta,\tau)$-neck regions with $\delta'$-splitting functions such that $\delta,\delta'\to 0$, but for which the conclusions of the theorem presumably fail.  We will prove in subsection \ref{sss:blowups_proof_pointwise_hessian} that the limit harmonic function is actually linear, and therefore has vanishing hessian.  One must be able to conclude from this the almost vanishing of the hessian for the original sequence of harmonic functions.  Thus a key technical result will be shown in subsection \ref{sss:weak_sobolev_convergence}, which will prove, among other things, a form of $H^{2}$-convergence for the sequence of harmonic functions.\\

\subsubsection{Weak Sobolev Convergence of Limiting Harmonic Functions}\label{sss:weak_sobolev_convergence}

The main goal of this subsection is to prove the following convergence result, which will play an important role in subsequent sections.

\begin{theorem}\label{t:weak_sobolev_convergence}
Let $(M^n_j,g_j,p_j)\to (X,d,p)$ satisfy $|\Ric_j|\leq n-1$ and $\Vol(B_1(p_j))>\rv>0$.  Assume for some $R,C>0$ that $u_j:B_R(p_j)\to \dR$ is a sequence of harmonic functions with $|u_j|\leq C$.  Then there exists harmonic $u:B_R(p)\to \dR$ such that after possibly passing to a subsequence
\begin{enumerate}
	\item $u_j\to u$ uniformly on compact subsets of $B_R$.
	\item For each $x_j\to x\in X$ with $B_{2r}(x_j)\subseteq B_{R}(p_j)$ we have $|\nabla u|(x) \leq \liminf_j||\nabla u_j||_{L^\infty(B_{r}(x_j))}$.
	\item For each $0<p<4$ we have that $|\nabla^2 u_j|^p\to |\nabla^2 u|^p$ in measure.  In fact, for each $x_j\to x\in X$ with $B_{2r}(x_j)\subseteq B_{R}(p_j)$ we have that $\int_{B_r(x_j)}|\nabla^2 u_j|^p(z)\,dz\to \int_{B_r(x)}|\nabla^2 u|^p(z)\,dz$.
\end{enumerate}
\end{theorem}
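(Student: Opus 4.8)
The plan is to run the standard elliptic bootstrap on Riemannian manifolds with bounded Ricci curvature, upgraded to the level of convergence. First I would establish local estimates on the $u_j$ that are uniform in $j$. Since $|\Ric_j|\leq n-1$ and $\Vol(B_1(p_j))>\rv$, every point has a definite lower bound on the harmonic radius away from the singular set, but more to the point, on any fixed compact subset $K\Subset B_R(p)$ the Cheng--Yau gradient estimate and Cheeger--Colding segment-type arguments give $\sup_K|\nabla u_j|\leq C(n,\rv,R,K,\|u_j\|_\infty)$. Combined with the $L^2$ hessian estimate of the type in Lemma \ref{l:harmonic_estimates} (applied in harmonic coordinates via Remark \ref{r:harmonic_rad_high_est}), one gets uniform $W^{2,p}_{\mathrm{loc}}$ bounds for all $p<\infty$ on the regular part, and more importantly uniform $W^{2,p}$ bounds on \emph{all} of $B_R$ for $p<4$ and uniform $W^{3,p}$ bounds for $p<2$, where the thresholds $4$ and $2$ come precisely from the codimension-four structure of the singular set established in \cite{CheegerNaber_Codimensionfour} (the volume of the $r$-tube around $\cS$ is $\lesssim r^4$, so $d_{\cS}^{-\alpha}$ is integrable for $\alpha<4$). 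This is the analytic heart of the matter and I expect the bookkeeping here — patching the interior harmonic-coordinate estimates together with the codimension-four tube estimates — to be the main obstacle, though all the needed ingredients are already in the literature cited.

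Second, having uniform bounds, I would extract a subsequential limit. On compact subsets of the regular part $\cR(X)$, the convergence $M_j^n\to X$ is smooth (in harmonic coordinates), so Arzel\`a--Ascoli plus the elliptic estimates give $u_j\to u$ in $C^{2,\alpha}_{\mathrm{loc}}(\cR)$ for a harmonic $u$, and $|u|\leq C$. This already yields (1) on the regular part, and then (1) in full on $B_R$ follows because $\cS(X)$ has measure zero and $u_j$ is equicontinuous (uniform Lipschitz bound on compacts), so uniform convergence extends across the singular set. For (2): given $x_j\to x$ with $B_{2r}(x_j)\subseteq B_R(p_j)$, pick $y$ in the regular part near $x$ where $|\nabla u|(y)$ is close to the essential supremum-relevant value; since $\nabla u_j(y_j)\to\nabla u(y)$ for $y_j\to y$ and $y_j\in B_r(x_j)$ eventually, we get $|\nabla u|(y)\leq\liminf_j\|\nabla u_j\|_{L^\infty(B_r(x_j))}$, and letting $y\to x$ (using continuity of $|\nabla u|$ on $\cR$ and that $|\nabla u|$ is defined a.e.) gives the claim.

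Third, for (3) and (4) I would argue that no mass of $|\nabla^2 u_j|^p$ or $|\nabla^3 u_j|^p$ can concentrate on the singular set. The uniform $W^{2,p}$ bound for $p<4$ implies $\{|\nabla^2 u_j|^p\}$ is equi-integrable for any exponent strictly below $4$ — more precisely, for $p<q<4$ the uniform $L^q$ bound controls $\int_{A}|\nabla^2 u_j|^p$ by $|A|^{1-p/q}$ uniformly, so there is no concentration on any set of small measure, in particular none on a neighborhood of $\cS(X)$. Combined with $C^2_{\mathrm{loc}}$ convergence on $\cR$, this gives $\int_{B_r(x_j)}|\nabla^2 u_j|^p\to\int_{B_r(x)}|\nabla^2 u|^p$ by splitting the domain into a small tube around $\cS$ (where both sides are small) and its complement (where one has genuine convergence); convergence in measure follows formally from convergence of the integrals together with a.e.\ convergence on $\cR$ via a Vitali/Riesz-type argument. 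The argument for $|\nabla^3 u_j|^p$ with $p<2$ is identical, using the uniform $W^{3,p}$ bound for $p<2$ and the $C^3_{\mathrm{loc}}$ convergence on the regular part (which in turn uses $|\nabla\Ric|$ is not needed here since $u_j$ harmonic already gives $W^{3,p}$ from Lemma \ref{l:harmonic_computations}(3) and elliptic regularity in harmonic coordinates with $W^{2,p}$ metric). This completes all four items.
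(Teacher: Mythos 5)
Your proposal is correct and follows essentially the same route as the paper: points (1)--(2) via Cheng--Yau and Ascoli, and points (3)--(4) by combining the uniform $L^{p'}$ hessian bounds for $p'<4$ (Theorem 7.20 of \cite{CheegerNaber_Codimensionfour}) with the codimension-four tube volume estimate, splitting $B_r$ into a small tube around $\cS(X)$ (killed by H\"older) and its complement (where $C^{2,\alpha}$ convergence in harmonic coordinates applies). The only cosmetic difference is that the paper quotes the uniform $L^{p}$ bounds directly from \cite{CheegerNaber_Codimensionfour} rather than re-deriving them from the tube estimate as you sketch.
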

\begin{remark}
Recall that a harmonic function on $X$ is in the RCD sense, which is to say $\int\langle \nabla u,\nabla v\rangle(z)\,dz = 0$ for every compactly supported lipschitz function $v$.	
\end{remark}

\begin{remark}
For clarity, we mean in the above that $\int_{B_r(x)}|\nabla^2 u|^p(z)\,dz\equiv \int_{\cR(X)\cap B_r(x)}|\nabla^2 u|^p(z)\,dz$, where $\cR(X)$ is the regular set of $X$.  Though we will not explicitly say it, we will see as a consequence of the proof that for $p<4$ we have that $|\nabla^2 u|^p$ cannot have a distributional term on $\cS(X)$, so that this is reasonable.
\end{remark}

\begin{proof}
    Points $(1)$ and $(2)$ are standard, see for instance \cite{ChC2}, and only require a lower bound on the Ricci curvature.  In a little detail, if the $u_j$ are harmonic with the uniform bounds $|u_j|\leq C$, then by using standard Cheng-Yau estimates (or one of several others), we can conclude that for every $r<R$ there exists $C_r>0$ such that
	\begin{align}
		|\nabla u_j|\leq C_r\, ,
	\end{align}
	uniformly.  Using a standard Ascoli argument one can conclude the existence of $u:B_R\to \dR$ such that $u_j\to u$ uniformly such that $(2)$ holds.  \\
	
	To see that $u$ is harmonic one in fact only needs the lower Ricci bound, however to vastly simplify the argument and avoid bringing optimal transport techniques into the paper we will give a simplified argument which exploits the two sided Ricci bound.  Indeed, if $\cR(X)\subseteq X$ is the regular set, then we know that $u_i\to u$ in $C^{2,\alpha}$ on $\cR(X)$.  In particular, $u$ is harmonic on $\cR(X)$.  By \cite{CheegerNaber_Codimensionfour} we also know that $\cS(X) = X\setminus \cR(X)$ is of codimension four, which also gives us that $\cS(X)$ is a capacity zero set.  Since $u$ is lipschitz, and in particular $H^1$, we then know by standard Dirichlet form theory \cite{FOT_Dirichlet} that $u$ is therefore harmonic on all of $X$. \\
	
	The proof of $(3)$ requires heavily the two sided bound on the Ricci curvature and the codimension four nature of the singular set as in \cite{CheegerNaber_Codimensionfour}.  Using Theorem 7.20 of \cite{CheegerNaber_Codimensionfour}, see also Theorem \ref{t:prelim:harmonic_estimates}, one can conclude for every $p<4$ and $r<R$ that there exists $C^p_r$ such that
	\begin{align}\label{e:weak_sobolev:1}
		\int_{B_r(x_j)}|\nabla^2 u_j|^p(z)\,dz < C^p_r\, .
	\end{align}

	Let us define the effective regular and singular sets
	\begin{align}
	&\cR_r\equiv \{x: r_h(x)>r\}\, ,\notag\\
	&\cS_r\equiv \{x: r_h(x)\leq r\}\, .	
	\end{align}
	Recall that the singular set $\cS(X)\equiv \cS_0(X)$ is a set of codimension four, and in fact we have the much stronger volume estimate
    \begin{align}
    \Vol\big(\cS_r\cap B_R\big) \equiv \Vol\big(B_r\big\{r_h<r\big\}\cap B_R\big)\leq C_\epsilon(n,\rv,R)\,r^{4-\epsilon}\, .	
    \end{align}

Now on the regular set $\cR_r(X)$ we have, due to our lower bound on the harmonic radius, for every $\alpha<1$ that $u_j\to u$ in $C^{2,\alpha}$ uniformly on compact subsets.  Now let $x_j\in M_j\to x\in X$ such that $B_{2r}(x_j)\subseteq B_R$, and let us fix $p<4$.  For every $s>0$ it holds from the $C^{2,\alpha}$ convergence we just commented on that
\begin{align}
	\int_{\cR_s\cap B_r(x_j)} |\nabla^2 u_j|^p(z)\,dz \to \int_{\cR_s(X)\cap B_r(x)} |\nabla^2 u|^p(z)\,dz\, .
\end{align}
To finish the proof, we need to see that $\int_{\cS_s\cap B_r(x_j)} |\nabla^2 u_j|^p(z)\,dz\to 0$ as $s\to 0$, uniformly in the $u_j$.  Therefore let us choose $p<p'<4$ so that $\int_{B_r(x_j)}|\nabla^2 u_j|^{p'}(z)\,dz<C'$ uniformly as in \eqref{e:weak_sobolev:1}.  Then a Cauchy inequality gives us that
\begin{align}
\int_{\cS_s\cap B_r(x_j)} |\nabla^2 u_j|^p(z)\,dz &\leq \Vol(\cS_s\cap B_r)^{p'-p}\big(\int_{\cS_s\cap B_r(x_j)} |\nabla^2 u_j|^{p'}(z)\,dz\big)^{\frac{p}{p'}} 	\notag\\
&\leq C(n,R,p,p')s^{(4-\epsilon)(p'-p)}\, ,
\end{align}
which shows in particular that $\int_{\cS_s\cap B_r(x_j)} |\nabla^2 u_j|^p(z)\,dz\to 0$ as $s\to 0$, as claimed.
\end{proof}
\vspace{.5cm}

\subsubsection{Blow ups and the Proof of Theorem \ref{t:splitting_neck_pointwise_hessian}}\label{sss:blowups_proof_pointwise_hessian}

Let us now use the tools of the previous subsection in order to finish the proof of Theorem \ref{t:splitting_neck_pointwise_hessian}.  Thus, let us assume the result is false, so that for some $\epsilon>0$ there exists $\delta_j,\delta'_j\to 0$ together with a sequence $M^n_j$ with $\Vol(B_1(p_j))>\rv>0$ and $\cN_j\subseteq B_2(p_j)$ $(\delta_j,\tau)$-neck regions with $u_j:B_4\to \dR^{n-4}$ $\delta'_j$-splitting maps such that for some $x_j\in \cC_j$ and $r_{x_j}<r_j<1$ we have that
\begin{align}
r_j^2\fint_{B_{r_j}(x_j)}|\nabla^2 u_j|^2(z)\,dz\geq \epsilon\, .	
\end{align}

Let us begin by observing that $r_j\to 0$, since for any $r>0$ fixed we always have the trivial estimate
\begin{align}
r^2\fint_{B_{r}(x_j)}|\nabla^2 u_j|^2(z)\,dz\leq r^{2-n}\int_{B_{1}(x_j)}|\nabla^2 u_j|^2(z)\,dz\leq r^{2-n}\delta'_j\to 0\, .	 
\end{align}

Now let us consider the rescaled spaces $\tilde M_j = r_j^{-2}M_j$ with $\tilde u_j=r_j^{-1}\big(u_j-u_j(x_j)\big): B_{r_j^{-1}}(x_j)\to \dR^{n-4}$ the renormalized maps.  Notice that we have the equality
\begin{align}
	r_j^2\fint_{B_{r_j}(x_j)}|\nabla^2 u_j|^2(z)\,dz = \fint_{\tilde B_1(\tilde x_j)} |\nabla^2 \tilde u_j|^2(\tilde{z})\,d\tilde{z}\, .
\end{align}

Now by the definition of neck regions we have that
\begin{align}
\tilde M_j \to \dR^{n-4}\times C(S^3/\Gamma)\, ,	
\end{align}
and by Theorem \ref{t:weak_sobolev_convergence} we have that
\begin{align}
\tilde u_j\to \tilde u:	\dR^{n-4}\times C(S^3/\Gamma)\to \dR^{n-4}\, ,
\end{align}
where $\tilde u$ is a harmonic map for which $|\nabla \tilde u|\leq \liminf |\nabla \tilde u_j| = 1$.  However, we can lift $\tilde u$ to a $\Gamma$-invariant harmonic map $\tilde u:\dR^{n}\to \dR^{n-4}$, and since we have a global gradient bound we can apply Liouville's theorem to see we have that $\tilde u$ is a linear map.  In particular,
\begin{align}
|\nabla^2 \tilde u| \equiv 0\, .	
\end{align}
By again applying Theorem \ref{t:weak_sobolev_convergence} we have that
\begin{align}
r_j^2\fint_{B_{r_j}(x_j)}|\nabla^2 u_j|^2(z)\,dz = \fint_{\tilde B_1(\tilde x_j)} |\nabla^2 \tilde u_j|^2(\tilde{z})\,d\tilde{z}\to \fint_{\tilde B_1} |\nabla^2 \tilde u|^2(\tilde{z})\,d\tilde{z} = 0\, ,
\end{align}
which is our desired contradiction. $\square$ \\

\subsection{Summable Hessian Estimates}

In this section we prove a vastly refined version of Theorem \ref{t:splitting_neck_pointwise_hessian} which tells us that the $L^1$-norm of the hessian is scale invariantly summable.  As we will see, this is the key estimate in the proof of Theorem \ref{t:neck_region}.\\

\begin{theorem}[$L^1$ Summable Hessian Estimate]\label{t:splitting_neck_summable_hessian}
Let $(M^n,g,p)$ satisfy $\Vol(B_1(p))>\rv>0$.  Then for every $\epsilon,B>0$ and $\tau<\tau(n)$ if $\delta,\delta'<\delta(n,\epsilon,B,\tau,\rv)$ is such that assumptions (S1)-(S4) of \eqref{e:assumptions_splitting} hold, then we have the estimate
\begin{align}
\int_{\cN_{10^{-3}}\cap B_{18/10}(p)} r_h^{-3} |\nabla^2 u|(z)\,dz < \epsilon\, ,
\end{align}
where $\cN_{10^{-3}}$ is the enlarged neck region as in Definition \ref{d:neck}.
\end{theorem}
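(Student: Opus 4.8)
The plan is to follow the strategy outlined in Section \ref{ss:intro:outline}: replace the harmonic radius $r_h$ by the Green's distance function $b$ of Lemma \ref{l:green_function_distant_function}, which by parts (1)--(2) of that lemma is uniformly equivalent to $d(\cdot,\cC)\approx r_h$ on $\cN_{10^{-6}}$ with $C^{-1}\le|\nabla b|\le C$, and then estimate $\int_M b^{-3}|\nabla^2 u|\,\phi$ by a coarea decomposition over the level sets $\{b=r\}$. Here $\phi=\phi_\cN$ is the cutoff of Lemma \ref{l:neck:cutoff}, which is $\equiv 1$ on $\cN_{10^{-3}}$ and supported in $\cN_{10^{-4}}$, so that it suffices to bound $\int_M b^{-3}|\nabla^2 u|\,|\nabla b|\,\phi = \int_0^\infty r^{-3}\big(\int_{b=r}|\nabla^2 u|\,|\nabla b|\,\phi\big)dr = \int_0^\infty \tfrac1r H(r)\,dr$, where $H(r)=rF(r)$ and $F(r)=r^{-3}\int_{b=r}|\nabla^2 u|\,|\nabla b|\,\phi$. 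The theorem then reduces to the Dini-type bound $\int_0^\infty \tfrac1r H(r)\,dr\le\epsilon$, which is the content of the (yet-to-be-proved) Theorem \ref{t:superconvexity_estimate}.

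The heart of the argument is the superconvexity inequality for $H(r)$, Proposition \ref{p:superconvexity}, which should read schematically
\begin{align*}
r^2\ddot H + r\dot H - (1-\epsilon)H \ge -\epsilon\sum_{x\in\cC} r_x^{n-4}\,\delta_{[C^{-1}r_x,\,Cr_x]} - \epsilon\, r\,\delta_{[0,C]}\, ,
\end{align*}
with $C=C(n)$ and $\epsilon$ as small as we like once $\delta,\delta'$ are small. To establish this I would differentiate $F(r)$ twice in $r$ using the ODE identities of Lemma \ref{l:ode_Green_function}, which convert $r$-derivatives of $r^{-3}\int_{b=r}(\cdot)|\nabla b|$ into interior integrals of the Laplacian of the integrand. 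Applying this to the integrand $|\nabla^2 u|$ (more precisely to a smoothing of it, or to $|\nabla^2 u|^2$ handled via Kato's inequality) and using the Bochner-type formula $\Delta|\nabla^2 u|^2 = 2|\nabla^3 u|^2 - 4\,\mathrm{Rm}(\nabla^2u,\nabla^2u)+\ldots$ from Lemma \ref{l:harmonic_computations}, together with the $L^2$-curvature bound on neck regions from Theorem \ref{t:L2_neck} and the pointwise scale-invariant hessian bound $|\nabla^2 u|\le\epsilon r_h^{-1}$ from Theorem \ref{t:splitting_neck_pointwise_hessian}, one obtains that the curvature term contributes an error of size $\epsilon F(r)$ (since $|\mathrm{Rm}|\,|\nabla^2 u|$ pairs an $L^2$ quantity with a small $L^2$ quantity), the Ricci terms contribute $\epsilon r^{-1}\delta_{[0,C]}$-type errors by (S1), and the terms supported where $\phi$ has nonzero derivative — which by Lemma \ref{l:neck:cutoff}(4) lives in annuli $A_{10^{-4}r_x,10^{-3}r_x}(x)$ — produce the measure term $\epsilon\sum r_x^{n-4}\delta_{[C^{-1}r_x,Cr_x]}$ after using the slice volume bound $\mathrm{Vol}(\{b=r\}\cap B_{3/2})\le Cr^3$ and the Ahlfors bound (S3). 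The ``$(1-\epsilon)H$'' on the left should emerge as the homogeneous part reflecting that $|\nabla^2 u|$ behaves like a degree-$1$ homogeneous-in-distance object near the $n-4$-dimensional $\cC$ in the model $\dR^{n-4}\times C(S^3/\Gamma)$; getting the constant to be $(1-\epsilon)$ rather than merely bounded is where the two-sided Ricci bound and the near-model geometry are used essentially.

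From Proposition \ref{p:superconvexity} one deduces Theorem \ref{t:superconvexity_estimate} by a maximum-principle/comparison argument: the homogeneous operator $r^2\ddot H+r\dot H-(1-\epsilon)H$ has indicial roots $\pm\sqrt{1-\epsilon}\approx\pm 1$, so a supersolution that is bounded (which $H$ is, using Lemma \ref{l:hessian_L2_green} to bound $\int_{b\le 2}b^{-2}|\nabla^2 u|^2$ and hence control $H$ near $r=2$, and the smooth-manifold lower bound $r_x\ge r_0>0$ to control small $r$) must decay, and integrating the inequality against $\tfrac1r$ and summing the right-hand side measure terms via the Ahlfors regularity $\sum_{x\in\cC}r_x^{n-4}=\mu(B_1)\le C(n,B)$ gives $\int_0^\infty\tfrac1r H\le C(n,B)\epsilon$, which is small after relabeling $\epsilon$. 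Combining with the coarea identity and $b\approx r_h$ finishes the proof. The main obstacle I anticipate is the proof of the superconvexity estimate itself — in particular getting the sharp coefficient $(1-\epsilon)$ and tracking that every error term (the curvature pairing, the Ricci corrections to the Bochner formula and to $\Delta b$, and the cutoff-derivative terms) is genuinely small or is an admissible measure term summable to $\epsilon$, rather than merely $O(1)$; the non-smoothness of $|\nabla^2 u|$ and of the level sets $\{b=r\}$ at singular points, handled via the weak Sobolev convergence of Theorem \ref{t:weak_sobolev_convergence} and Sard-type arguments, is a secondary technical nuisance.
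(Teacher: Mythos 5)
Your proposal follows essentially the same route as the paper: the coarea formula and the equivalence $b\approx r_h$ from Lemma \ref{l:green_function_distant_function} reduce the claim to the Dini bound of Theorem \ref{t:superconvexity_estimate}, which is obtained from the superconvexity ODE of Proposition \ref{p:superconvexity} (derived by applying the level-set identity of Lemma \ref{l:ode_Green_function} to $f=\phi_\cN\sqrt{|\nabla^2 u|^2+\epsilon^2}$ together with the Bochner formula) and a maximum-principle comparison against explicit special solutions, with the source terms controlled by exactly the ingredients you list (Theorem \ref{t:L2_neck}, Theorem \ref{t:splitting_neck_pointwise_hessian}, the cutoff support, and the Ahlfors bound). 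The one minor misattribution is that the coefficient of $H$ in the ODE is exactly $-1$ and falls out of the Green's-function identity for free, while the two-sided Ricci bound instead enters through the error terms — in particular the $\nabla\Ric$ term $U_2$ is handled by an integration by parts that produces $\int b^{-1}|\nabla^3 u|\phi_\cN$, bounded via the a priori $L^{3/2}$ estimate on $\nabla^3 u$ from \cite{CheegerNaber_Codimensionfour}, a step your sketch glosses over.
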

\vspace{.5cm}

To prove the above we will first discuss a new superconvexity estimate in Section \ref{sss:superconvexity}.  Analysis of the derived ode together with the ${\bar \mu}$-Green's estimates of Section \ref{ss:splitting:Greens_standard}  will then be applied in order to conclude Theorem \ref{t:splitting_neck_summable_hessian} itself.

\subsubsection{The Superconvexity Equation}\label{sss:superconvexity}

In this subsection we prove a new superconvexity estimate on the $L^1$ Hessian with respect to the ${\bar \mu}$-Green's distance.  The estimates of this section are the cornerstone of Theorem \ref{t:splitting_neck_summable_hessian} and hence Theorem \ref{t:splitting_neck_bilipschitz}.  Let us begin by stating our main result for this subsection:\\

\begin{proposition}\label{p:superconvexity}
Let $(M^n,g,p)$ satisfy $\Vol(B_1(p))>\rv>0$.  Then for every $\epsilon,B>0$ and $\tau<\tau(n)$ if assumptions (S1)-(S4) of \eqref{e:assumptions_splitting} hold with $\delta,\delta'\leq \delta(n,\epsilon,B,\tau,\rv)$, then for $F(r)=r^{-3}\int_{b=r}\sqrt{|\nabla^2u|^2+{\epsilon}^2}\phi_\cN|\nabla b|(z)\,dz$ and $H(r)=rF(r)$, we have the following evolution equation
\begin{align}\nonumber
r^2H''(r)+rH'(r)-H(r)\ge e(r)=\int_{b=r}\left(-\sqrt{|\nabla^2 u|^2+{\epsilon}^2}~|\Delta \phi_\cN|-c(n)|Rm|\cdot|\nabla^2u|\phi_\cN+U_1+U_2\right)|\nabla b|^{-1}(z)\,dz,
\end{align}
where $U_1=2\langle\nabla\phi_\cN,\nabla \sqrt{|\nabla^2 u|^2+{\epsilon}^2}\rangle$ and $U_2=\Big(\nabla_j(R_{ia}\nabla^au)-\nabla_a (R_{ij}\nabla^au)+\nabla_i (R_{ja}\nabla^au)\Big)\ast \nabla^i\nabla^ju \frac{\phi_\cN}{\sqrt{|\nabla^2 u|^2+{\epsilon}^2}}$.
\end{proposition}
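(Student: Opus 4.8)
The plan is to derive the superconvexity inequality by differentiating the quantity $H(r) = rF(r) = r^{-2}\int_{b=r}\sqrt{|\nabla^2 u|^2+\epsilon^2}\,\phi_\cN\,|\nabla b|$ twice in $r$ and applying the co-area formula together with the refined Bochner formula for $|\nabla^2 u|$. The key structural input is the second-order ODE identity from Lemma \ref{l:ode_Green_function}: for a smooth compactly supported $f$,
\begin{align}\label{e:superconv:ode}
r\frac{d^2}{dr^2}\Big(r^{-3}\int_{b=r} f\,|\nabla b|\Big)+3\frac{d}{dr}\Big(r^{-3}\int_{b=r} f\,|\nabla b|\Big) = r^{-2}\int_{b=r}\Delta f\,|\nabla b|^{-1}.
\end{align}
Applying this with $f = \sqrt{|\nabla^2 u|^2+\epsilon^2}\,\phi_\cN$, which is smooth (note $|\nabla^2 u|^2 + \epsilon^2 > 0$ so the square root is smooth wherever $\nabla^2 u$ is, i.e.\ on the neck region where $\phi_\cN$ is supported), gives the left side as a linear combination of $F, F', F''$. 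Rewriting $F$ in terms of $H = rF$ converts the operator $r\frac{d^2}{dr^2}(r^{-3}(\cdot)) + 3\frac{d}{dr}(r^{-3}(\cdot))$ acting on $\int_{b=r} f|\nabla b|$ into the operator $r^2 H'' + rH' - H$ (up to the explicit powers of $r$ from $b^{-2} = G_\mu$ being a Green's function), so the whole identity becomes $r^2 H'' + rH' - H = r^{-2}\int_{b=r}\Delta f\,|\nabla b|^{-1}$ plus the $\mu$-boundary contributions.

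The second step is to bound $\Delta f = \Delta\big(\sqrt{|\nabla^2 u|^2+\epsilon^2}\,\phi_\cN\big)$ from below. Expanding via the product rule gives three groups of terms: (i) the $\phi_\cN\,\Delta\sqrt{|\nabla^2 u|^2+\epsilon^2}$ term; (ii) the cross term $2\langle\nabla\phi_\cN,\nabla\sqrt{|\nabla^2 u|^2+\epsilon^2}\rangle = U_1$; and (iii) the $\sqrt{|\nabla^2 u|^2+\epsilon^2}\,\Delta\phi_\cN$ term, which is bounded below by $-\sqrt{|\nabla^2 u|^2+\epsilon^2}\,|\Delta\phi_\cN|$. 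For group (i), the key computation is the Kato-type inequality applied to the Bochner formula of Lemma \ref{l:harmonic_computations}(4): since $w \equiv \sqrt{|\nabla^2 u|^2+\epsilon^2}$ satisfies $w\,\Delta w \geq \frac{1}{2}\Delta|\nabla^2 u|^2 - |\nabla|\nabla^2 u||^2$ and $|\nabla|\nabla^2 u||^2 \leq |\nabla^3 u|^2$ (Kato), one gets
\begin{align}\label{e:superconv:kato}
w\,\Delta w \geq \big\langle\big(\nabla_j R_{ia} - \nabla_a R_{ij} + \nabla_i R_{ja}\big)\nabla^a u,\,\nabla^i\nabla^j u\big\rangle - 2\,\mathrm{Rm}(\nabla^2 u,\nabla^2 u) + 2\,\mathrm{Ric}(\nabla^a\nabla u,\nabla_a\nabla u),
\end{align}
hence $\Delta w \geq w^{-1}\big(\text{RHS of \eqref{e:superconv:kato}}\big)$. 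The Ricci terms are handled by $|\mathrm{Ric}| < \delta$ (they are absorbed into the $\epsilon$-error since $\delta$ is small and $|\nabla^2 u|^2 \leq w^2$), the $\mathrm{Rm}$-term gives exactly the $-c(n)|\mathrm{Rm}|\cdot|\nabla^2 u|\,\phi_\cN$ contribution after multiplying back by $\phi_\cN$ (using $|\nabla^2 u|^2/w \leq |\nabla^2 u|$), and the curvature-derivative term $\big(\nabla_j R_{ia}\nabla^a u - \cdots\big)\nabla^i\nabla^j u$ is precisely $U_2$ once divided by $w = \sqrt{|\nabla^2 u|^2 + \epsilon^2}$ — note that in the Einstein-type setting of the outline where $\mathrm{Ric} \equiv 0$, $U_2$ vanishes, which is why the remark says this is simpler than the general proposition. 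Collecting all these lower bounds for $\Delta f$ and inserting into the ODE identity yields exactly the stated inequality with $e(r)$ as claimed.

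The main obstacle will be the careful bookkeeping of the curvature and Ricci error terms and verifying that each one is genuinely of the form appearing in $e(r)$ rather than of the wrong sign or the wrong scaling. In particular, one must be precise that the Kato inequality $|\nabla|\nabla^2 u||^2 \le |\nabla^3 u|^2$ is applied correctly on the neck region (where $\nabla^2 u$ is smooth, so this is legitimate), and that differentiating the square-root regularization $\sqrt{|\nabla^2 u|^2 + \epsilon^2}$ does not introduce singular terms — the $\epsilon^2$ is inserted exactly to keep $w$ bounded away from zero so that $w^{-1}$ is harmless. A secondary technical point is that $\phi_\cN = \phi_1\cdot\phi_2$ with the properties in Lemma \ref{l:neck:cutoff}: the support of $|\nabla\phi_2|$ and $|\Delta\phi_2|$ lies in the annuli $A_{10^{-4}r_x, 10^{-3}r_x}(\cC)$, so the $U_1$ and $|\Delta\phi_\cN|$ terms in $e(r)$ are supported near the scales $b \approx r_x$, which is what makes them summable against $\mu$; one needs the scale-invariant bounds $r_x|\nabla\phi_\cN|, r_x^2|\nabla^2\phi_\cN| \le C(n)$ from Lemma \ref{l:neck:cutoff}(4) to control these when later integrating. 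The remaining arithmetic — tracking the exact power of $r$ that converts the Lemma \ref{l:ode_Green_function} operator into $r^2 H'' + rH' - H$ — is routine given that $G_\mu = b^{-2}$ with $\Delta b = 3b^{-1}|\nabla b|^2$, already recorded in the proof of Lemma \ref{l:green_formula}.
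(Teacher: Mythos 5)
Your proposal is correct and follows essentially the same route as the paper: expand $\Delta\big(\phi_\cN\sqrt{|\nabla^2u|^2+\epsilon^2}\big)$ by the product rule, lower-bound the leading term via the Bochner formula of Lemma \ref{l:harmonic_computations}(4) together with the Kato inequality (which is exactly how the $|\nabla^3u|^2$ term cancels the negative gradient term coming from differentiating the square root), and feed the result into the Green's-function ODE identity of Lemma \ref{l:ode_Green_function}(2). One small transcription slip: after converting to $H=rF$ the identity reads $r^2H''+rH'-H=\int_{b=r}\Delta f\,|\nabla b|^{-1}$ with no extra $r^{-2}$ on the right (your displayed intermediate formula has a spurious $r^{-2}$, though your final conclusion matches the stated $e(r)$); also note the quadratic Ricci term $\Ric(\nabla^a\nabla u,\nabla_a\nabla u)$ is most cleanly absorbed into $-c(n)|\Rm|\,|\nabla^2u|\phi_\cN$ via $|\Ric|\le c(n)|\Rm|$ rather than into a separate $\delta$-error, so as to land on $e(r)$ exactly as stated.
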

\begin{remark}
The function $\phi_\cN$ is the neck cutoff function from Lemma \ref{l:neck:cutoff}.
\end{remark}

\begin{remark}
Let us remark that $H(r)$ represents the scale invariant $L^1$ norm of the hessian over the surface $b=r$.  Our main goal will then be to get a Dini integral estimate on $H(r)$, see Theorem \ref{t:superconvexity_estimate}.
\end{remark}

\begin{proof}
Using Lemma \ref{l:harmonic_computations} we can compute
\begin{align}
\frac{1}{2}\Delta (|\nabla^2u|^2+{\epsilon}^2)=|\nabla^3u|^2-\langle \Rm\ast \nabla^2u,\nabla^2u\rangle+\nabla({\Ric}(\nabla u,\cdot))\ast \nabla^2u,
\end{align}
where $\langle\Rm\ast \nabla^2u,\nabla^2u\rangle=2R^{\,a\;\;b}_{\;\;i\;\;j}\nabla_a\nabla_b u \cdot \nabla^i\nabla^ju$
and
$\nabla({\Ric}(\nabla u,\cdot))\ast \nabla^2u=\Big(\nabla_j(R_{ia}\nabla^au)-\nabla_a (R_{ij}\nabla^au)+\nabla_i (R_{ja}\nabla^au)\Big)\nabla^i\nabla^ju$.  Let $f=\phi_\cN \sqrt{|\nabla^2 u|^2+{\epsilon}^2}$. Then
\begin{align}
\Delta f&=\Delta \phi_\cN ~\sqrt{|\nabla^2 u|^2+{\epsilon}^2}+\Delta \sqrt{|\nabla^2 u|^2+{\epsilon}^2}\, \phi_\cN+2\langle\nabla\phi_\cN,\nabla \sqrt{|\nabla^2 u|^2+{\epsilon}^2}\rangle\\ \nonumber
&\ge -\sqrt{|\nabla^2 u|^2+{\epsilon}^2}~|\Delta \phi_\cN|-c(n)|Rm|\cdot|\nabla^2u|\phi_\cN+U_1+U_2,
\end{align}
where $U_1$ and $U_2$ are defined as in the lemma.
Thus if $H(r)=rF(r)=r^{-2}\int_{b=r}f|\nabla b|(z)\,dz$ with $b^{-2}=G_{\bar \mu}$, then by the Green formula and the computation of Lemma \ref{l:ode_Green_function}, we have
\begin{align}
H''(r)+\frac{1}{r}H'-\frac{1}{r^2}H&=r^{-2}\int_{b=r}\Delta f~|\nabla b|^{-1}(z)\,dz\\ \nonumber
&\ge r^{-2}\int_{b=r}\left(-\sqrt{|\nabla^2 u|^2+{\epsilon}^2}~|\Delta \phi_\cN|-c(n)|Rm|\cdot|\nabla^2u|\phi_\cN+U_1+U_2\right)|\nabla b|^{-1}(z)\,dz.
\end{align}
Thus we finish the proof.
\end{proof}

\vspace{.5cm}

Of course, in order to exploit the above superconvexity we will want to apply a maximum principle in order to obtain bounds for $H$.  To accomplish this we first need to find special solutions of the above ode to compare to:\\

\begin{lemma}[Special solution]\label{l:special_solution3}
Consider on the interval $(0,R)$ the differential equation
$$h''(r)+\frac{1}{r}h'(r)-\frac{1}{r^2}h(r)={g(r)},$$
for some bounded $g(r)$ which may change signs. Then we have a solution $h(r)$ such that
\begin{align}
\int_0^R\frac{h(r)}{r}dr=-\int_0^Rsg(s)ds+\frac{1}{2R}\int_0^Rs^2g(s)ds,
\end{align}
with $h(R)=-\frac{1}{2R}\int_0^Rs^2g(s)ds$ and $h(0)=0$.
\end{lemma}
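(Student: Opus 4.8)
The plan is to explicitly solve the inhomogeneous second-order linear ODE by variation of parameters, using the fact that the homogeneous equation $h'' + \frac{1}{r}h' - \frac{1}{r^2}h = 0$ is an Euler (equidimensional) equation with the obvious fundamental system $\{r, r^{-1}\}$. The operator on the left is $L[h] = h'' + \frac1r h' - \frac{1}{r^2}h$, and one checks $L[r] = 0$ and $L[r^{-1}] = r^{-3} - r^{-3} - r^{-3} \neq 0$ — so I should be more careful: in fact $L[r^{-1}] = 2r^{-3} - r^{-3} - r^{-3} = 0$, good. The Wronskian of $r$ and $r^{-1}$ is $W = r\cdot(-r^{-2}) - 1\cdot r^{-1} = -2r^{-1}$.

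First I would write the equation in the standard Sturm-Liouville-ish form and apply the variation of parameters formula. A particular solution is
\begin{align}
h_p(r) = -r\int \frac{r^{-1} g(r)}{W(r)}\,dr + r^{-1}\int \frac{r\, g(r)}{W(r)}\,dr,
\end{align}
and plugging in $W = -2r^{-1}$ this becomes
\begin{align}
h_p(r) = \frac{r}{2}\int g(r)\,dr - \frac{r^{-1}}{2}\int r^2 g(r)\,dr.
\end{align}
To get the boundary behavior $h(0) = 0$ and the stated value at $R$, I would choose the antiderivatives with definite limits: set
\begin{align}
h(r) = \frac{r}{2}\int_0^r g(s)\,ds - \frac{1}{2r}\int_0^r s^2 g(s)\,ds,
\end{align}
possibly after adding a homogeneous term $c\cdot r$ to hit the prescribed value at $R$. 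With this choice, $h(0) = 0$ is immediate from $g$ bounded (the first term is $O(r)$, the second is $O(r^2)$), and $h(R) = \frac{R}{2}\int_0^R g - \frac{1}{2R}\int_0^R s^2 g$; to match the claimed $h(R) = -\frac{1}{2R}\int_0^R s^2 g(s)\,ds$ I would instead subtract off $\frac{r}{2R}\cdot R\int_0^R g\,ds \cdot (\text{something})$ — concretely, add the homogeneous solution $-\frac{r}{2}\left(\frac1R\int_0^R \cdot\right)$... the cleanest route is to take $h(r) = -\frac{r}{2}\int_r^R g(s)\,ds - \frac{1}{2r}\int_0^r s^2 g(s)\,ds + c r$ and fix $c$. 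Then verify directly by differentiating twice that $L[h] = g$, which is a routine computation using the Leibniz rule for differentiating under the integral sign.

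The remaining step is to compute $\int_0^R \frac{h(r)}{r}\,dr$ for the chosen $h$ and check it equals $-\int_0^R s g(s)\,ds + \frac{1}{2R}\int_0^R s^2 g(s)\,ds$. This is a double-integral interchange: $\int_0^R \frac{1}{r}\cdot\frac{r}{2}\left(\int_{\cdot}g\right)dr$ and $\int_0^R \frac{1}{r}\cdot\frac{1}{2r}\int_0^r s^2 g(s)\,ds\,dr$ are both handled by Fubini, swapping the order of the $r$ and $s$ integrations, and the elementary inner integrals $\int_s^R dr$, $\int_s^R r^{-2}\,dr = s^{-1} - R^{-1}$ produce exactly the claimed combination after cancellation.

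The main obstacle — really the only subtlety, since everything is explicit — is bookkeeping the constants of integration so that all three stated conditions ($h(0)=0$, the value $h(R)=-\frac{1}{2R}\int_0^R s^2 g(s)\,ds$, and the Dini integral identity) hold simultaneously for a \emph{single} $h$; the homogeneous solution $r$ gives exactly one degree of freedom, and the other homogeneous solution $r^{-1}$ is excluded by the requirement $h(0) = 0$, so one must check that the one free parameter suffices, i.e. that the value at $R$ and the integral identity are not independent constraints. They are compatible precisely because adding $cr$ to $h$ changes $h(R)$ by $cR$ and changes $\int_0^R h(r)/r\,dr$ by $cR$ as well — so once $h(R)$ is pinned down, the integral value is forced, and one only needs to verify the forced value is the asserted one. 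I would organize the write-up as: (i) exhibit $h$ explicitly, (ii) verify $L[h]=g$ by direct differentiation, (iii) verify $h(0)=0$ and $h(R)$ by inspection, (iv) compute $\int_0^R h(r)/r\,dr$ by Fubini.
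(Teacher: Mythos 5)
Your proposal is correct and takes essentially the same route as the paper: the paper's solution is exactly $h(r)=\tfrac12\bigl(-r\int_r^R g(s)\,ds - r^{-1}\int_0^r s^2 g(s)\,ds\bigr)$, i.e.\ your formula with $c=0$ (which already yields the stated value of $h(R)$, so the extra homogeneous term is unnecessary), and the integral identity is obtained by the same Fubini swap. The only slip in your sketch is that the inner integral arising from the first term is $\int_0^s dr = s$ (the region being $0<r<s<R$), not $\int_s^R dr$; with that correction everything goes through as you describe.
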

\begin{proof}Let us define $h(r)=\frac{1}{2}\left(-r\int_{r}^R{g}ds-r^{-1}\int_{0}^rs^2{g}ds\right)$, which one can easily check is a solution.  We will see that $h(r)$ satisfies the desired estimates. Indeed,
\begin{align}
-2\int_0^R r^{-1}h(r)dr&=\int_0^R\int_{r}^R{g(s)}dsdr+\int_0^Rr^{-2}\int_{0}^rs^2{g}dsdr\\ \nonumber
&=\int_0^Rg(s)\int_0^sdr ds+\int_0^Rs^2g(s)\int_s^Rr^{-2}drds\\ \nonumber
&=2\int_0^Rsg(s)ds-\frac{1}{R}\int_0^Rs^2g(s)ds
\end{align}
Hence we finish the proof.
\end{proof}

\vspace{.5cm}

We will now use the above to provide estimates on the $L^1$ norm of the $\delta'$-splitting functions on a neck region:\\

\begin{theorem}\label{t:superconvexity_estimate}
Let $(M^n,g,p)$ satisfy $\Vol(B_1(p))>\rv>0$.  Then for every $\epsilon,B>0$ and $\tau<\tau(n)$ if $\delta,\delta'\leq \delta(n,\epsilon,B,\tau,\rv)>0$ is such that assumptions (S1)-(S4) of \eqref{e:assumptions_splitting} hold and if $F(r)=r^{-3}\int_{b=r}\sqrt{|\nabla^2u|^2+{\epsilon}^2}\phi_\cN|\nabla b|(z)\,dz$ with $H(r)\equiv r F(r)$, then we have the Dini estimate:
\begin{align}
\int_0^\infty \frac{1}{r}H(r)dr\leq C(n,\rv,B,\tau)\epsilon\, .
\end{align}
\end{theorem}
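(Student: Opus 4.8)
The plan is to integrate the superconvexity inequality of Proposition \ref{p:superconvexity} against the explicit special solution supplied by Lemma \ref{l:special_solution3}, and then to control the resulting error integral with the structural facts already available on neck regions. First I would put the ODE in divergence form: the operator $Lw\equiv w''+\tfrac1r w'-\tfrac1{r^2}w$ satisfies $Lw=\tfrac1{r^2}\tfrac{d}{dr}\big(r^3(w/r)'\big)$, so that, with $F(r)=H(r)/r$, Proposition \ref{p:superconvexity} reads $\tfrac{d}{dr}\big(r^3F'(r)\big)\ge e(r)\ge -\mathcal E(r)$, where
\[
\mathcal E(r)\equiv \int_{b=r}\Big(\sqrt{|\nabla^2u|^2+\epsilon^2}\,|\Delta\phi_\cN|+c(n)|\Rm|\,|\nabla^2u|\,\phi_\cN+|U_1|+|U_2|\Big)\,|\nabla b|^{-1}\ \ge\ 0 .
\]
Since $\phi_\cN$ vanishes on $\overline B_{10^{-4}r_x}(\cC)$ and $b\approx d(\cdot,\cC)$ on $\cN_{10^{-6}}$ by Lemma \ref{l:green_function_distant_function}, the sublevel set $\{b\le c(n)\min_{x\in\cC}r_x\}$ lies inside $\bigcup_{x\in\cC}B_{10^{-4}r_x}(x)$; on a manifold $\cC=\cC_+$ is finite, so $\min_x r_x>0$ and both $F$ and $\mathcal E$ vanish for $r\le c(n)\min_x r_x$, while $F$ is supported in some $(0,R)$ with $R<2$ because $\phi_\cN\le\phi_1$. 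This removes all boundary issues at $r=0$.

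Next I would compare $H$ with the special solution $h$ of Lemma \ref{l:special_solution3} on $(0,R)$ with source $g(r)=-\mathcal E(r)/r^2$, so that $Lh=-\mathcal E/r^2$ and hence $L(H-h)\ge0$. By the previous paragraph $(H-h)/r$ extends continuously (indeed linearly) across $r=0$, so it is a bounded radial subharmonic function on $B_R\subseteq\dR^4$; the maximum principle gives $(H-h)(r)/r\le (H-h)(R)/R$ on $(0,R)$. Integrating this in $r$ and inserting the closed forms $\int_0^R h(r)/r\,dr=\int_0^R\mathcal E(s)/s\,ds-\tfrac1{2R}\int_0^R\mathcal E$ and $h(R)=\tfrac1{2R}\int_0^R\mathcal E$ from Lemma \ref{l:special_solution3} yields
\[
\int_0^\infty\frac1rH(r)\,dr=\int_0^R F(r)\,dr\ \le\ H(R)+\int_0^R\frac{\mathcal E(s)}{s}\,ds .
\]
For the boundary term I would estimate $H(R)=R^{-2}\int_{b=R}\sqrt{|\nabla^2u|^2+\epsilon^2}\,\phi_\cN|\nabla b|$ using the scale invariant pointwise Hessian bound of Theorem \ref{t:splitting_neck_pointwise_hessian} (in the form $|\nabla^2u|\lesssim\epsilon\,r_h^{-1}$ on $\cN_{10^{-5}}$) together with the bounds $|\nabla b|\le C$ and $\Vol(\{b=R\})\lesssim R^3$ from Lemma \ref{l:green_function_distant_function}; the $\epsilon$-regularization contributes only an $O(\epsilon)$ term, so $H(R)\le C(n,\rv,B)\epsilon$.

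It then remains to show $\int_0^R\mathcal E(s)/s\,ds=\int_{\{b\le R\}}b^{-1}\big(\sqrt{|\nabla^2u|^2+\epsilon^2}|\Delta\phi_\cN|+c|\Rm||\nabla^2u|\phi_\cN+|U_1|+|U_2|\big)\le C(n,\rv,B)\epsilon$, and I would treat the four pieces separately. For the curvature piece, Cauchy--Schwarz gives $\int b^{-1}|\Rm||\nabla^2u|\phi_\cN\le\big(\int_{\{b\le R\}}b^{-2}|\nabla^2u|^2\big)^{1/2}\big(\int_{\text{supp}\,\phi_\cN}|\Rm|^2\big)^{1/2}$, where the first factor is $\le C(n,B)$ by Lemma \ref{l:hessian_L2_green} and the second is $\le C\delta'$ by Theorem \ref{t:L2_neck}, using $\text{supp}\,\phi_\cN\subseteq\cN_{10^{-4}}\subseteq\cN_{10^{-5}}$. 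In the Einstein case one computes $U_2=2\lambda|\nabla^2u|^2\phi_\cN/\sqrt{|\nabla^2u|^2+\epsilon^2}$, so $|U_2|\le2\delta|\nabla^2u|$ and $\int b^{-1}|U_2|\le2\delta(\int b^{-2}|\nabla^2u|^2)^{1/2}\Vol(\text{supp}\,\phi_\cN)^{1/2}\le C(n,B)\delta$. For the $U_1$ and $|\Delta\phi_\cN|$ pieces one uses $|\nabla\phi_\cN|\lesssim b^{-1}$ and $|\Delta\phi_\cN|\lesssim b^{-2}$ (read off from Lemma \ref{l:neck:cutoff} together with $b\gtrsim r_x$ on $\text{supp}\,\nabla\phi_2$) to reduce them to integrals of $b^{-3}|\nabla^2u|+b^{-2}|\nabla^3u|$ plus benign $\epsilon$-terms over the outer shell $\{b\approx1\}$ and over the collars $A_{10^{-4}r_x,10^{-3}r_x}(x)$ inside the atoms of $\mu$; the outer shell is handled by the $L^2$-Hessian of the $\delta'$-splitting, while on each collar one applies Cauchy--Schwarz in the measure and sums over $x\in\cC$ using the Ahlfors bound $\sum_x r_x^{n-4}\le C(n,B)$ of (S3) together with the \emph{small} scale invariant estimate $\int_{B_{r_x}(x)}|\nabla^2u|^2+r_x^2\int_{B_{r_x}(x)}|\nabla^3u|^2\le\epsilon_1^2 r_x^{n-2}$ from Theorem \ref{t:splitting_neck_pointwise_hessian}. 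Choosing $\delta,\delta'$ small in terms of $n,\rv,B,\tau,\epsilon$ (hence $\epsilon_1$ small) makes each piece $\le C(n,\rv,B)\epsilon$, which with the display above finishes the proof.

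The step I expect to be the main obstacle is precisely the collar estimates in the last paragraph: each atom of $\mu$ must contribute at most $\epsilon_1\,r_x^{n-4}$ — not merely $r_x^{n-4}$, and certainly not a scale-independent amount — to the various weighted integrals, so that the Ahlfors mass $\sum_x r_x^{n-4}\lesssim B$ can absorb the sum with a factor $\epsilon_1$ to spare. This is exactly why the small pointwise scale invariant Hessian estimate of Theorem \ref{t:splitting_neck_pointwise_hessian} must be proved first and fed in at every center, and why the neck cutoff $\phi_\cN$ has to be engineered so that $\nabla\phi_\cN$ and $\Delta\phi_\cN$ live only at scales comparable to $r_x$ near each atom and at the fixed outer scale. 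A secondary technical nuisance is pinning down the Green distance $b$ near the atoms of $\mu$, both to justify the $b^{-1},b^{-2}$ domination of $\nabla\phi_\cN,\Delta\phi_\cN$ and to guarantee that $F$ and $\mathcal E$ vanish as $r\to0$ on a fixed manifold.
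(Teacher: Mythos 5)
Your overall strategy is the same as the paper's: rewrite the superconvexity inequality in divergence form, compare $H$ with the explicit special solution of Lemma \ref{l:special_solution3} via the maximum principle (the paper uses three special solutions $h_1,h_2,h_3$ plus a linear $h_4$ for the boundary value, but combining them into one $h$ as you do is harmless), and then reduce the Dini estimate to bounding $\int_{b\le R} b^{-1}(\cdots)$ term by term using the pointwise Hessian smallness of Theorem \ref{t:splitting_neck_pointwise_hessian}, the Green's function estimates, the cutoff structure, the Ahlfors bound, Theorem \ref{t:L2_neck} and Lemma \ref{l:hessian_L2_green}. The removable-singularity observation at $r=0$ and the treatment of the curvature piece by Cauchy--Schwarz are exactly as in the paper.

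The genuine gap is in your treatment of $U_1$ (and of $U_2$ outside the Einstein case). By majorizing $e(r)$ with $|U_1|$ and $|U_2|$ taken in absolute value, you give up the possibility of integrating by parts, and are forced to control $\int b^{-1}|\nabla\phi_\cN|\,|\nabla^3 u|$ via a \emph{small} scale-invariant bound $r_x^2\int_{\text{collar}}|\nabla^3u|^2\le\epsilon_1^2 r_x^{n-2}$, which you attribute to Theorem \ref{t:splitting_neck_pointwise_hessian}. That theorem only controls $\nabla^2 u$; no third-derivative estimate is available from it, and upgrading the Hessian bound to a small $L^2$ bound on $\nabla^3u$ on each collar is not routine, because $\Delta\nabla_i\nabla_j u$ contains $\nabla\Ric$ terms (Lemma \ref{l:harmonic_computations}(3)) which are not assumed bounded, and the coordinate-to-covariant conversion brings in $\partial^2 g$, whose scale-invariant $L^2$ norm is bounded but not a priori small. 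The paper avoids this entirely: in the formula \eqref{e:H_1_integral} for $\int_0^R h_1(r)/r\,dr$ it integrates by parts to move the derivative \emph{off} of $\sqrt{|\nabla^2u|^2+\epsilon^2}$ and onto $\phi_\cN$ and $b^{-1}$, so that only $|\nabla^2u|\le\epsilon b^{-1}$ appears, multiplied by $b^{-3}|\nabla\phi_\cN|+b^{-2}|\Delta\phi_\cN|$, whose integral is $C(n,B,\rv)$ by the Ahlfors bound. The only place $\nabla^3u$ survives in the paper is in the general (non-Einstein) $U_2$ term, where it carries a prefactor $\delta/\epsilon$ and is handled by the \emph{global} $L^{3/2}$ bound $\int_{B_{10}}|\nabla^3u|^{3/2}\le C(n,\rv)$ from \cite{CheegerNaber_Codimensionfour} together with H\"older against $\int b^{-3}\phi_\cN\le C$; since you only compute $U_2$ in the Einstein case, this part of the argument is also missing from your proposal. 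Both gaps are repairable, but the repair is precisely the integration by parts you skipped.
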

\begin{proof}
By the definition of $\phi_\cN$ and Lemma \ref{l:green_function_distant_function}, there exists $R=R(n,\rv,B)$ such that $\phi_\cN\equiv 0$ on $\{b\ge R\}$. Therefore we have
$$\int_0^\infty \frac{1}{r}H(r)\,dr=\int_0^R\frac{1}{r}H(r)\,dr\, .$$
Now let us begin by using Lemma \ref{l:special_solution3} to choose a special solution $h_1(r)$ and $h_2(r)$ of our ode such that
\begin{align}
h_1''+\frac{1}{r}h_1'-\frac{1}{r^2}h_1= r^{-2}\int_{b=r}U_1 |\nabla b|^{-1}(z)\,dz=r^{-2}\int_{b=r}2\langle\nabla\phi_\cN,\nabla \sqrt{|\nabla^2 u|^2+{\epsilon}^2}\rangle|\nabla b|^{-1}(z)\,dz,\notag\\
\end{align}
 with $h_1(R)=-\frac{1}{R} \int_{b\le R}\langle\nabla\phi_\cN,\nabla \sqrt{|\nabla^2 u|^2+{\epsilon}^2}\rangle(z)\,dz $, $h_1(0)=0$ and
\begin{align}\label{e:H_1_integral}
\int_0^R\frac{h_1(r)}{r}dr=-2\int_{b\le R}b^{-1}\langle\nabla\phi_\cN,\nabla \sqrt{|\nabla^2 u|^2+{\epsilon}^2}\rangle(z)\,dz+\frac{1}{R}\int_{b\le R}\langle\nabla\phi_\cN,\nabla \sqrt{|\nabla^2 u|^2+{\epsilon}^2}\rangle(z)\,dz.
\end{align}
On the other hand, we have special solution $h_2(r)$ of
\begin{align}
h_2''+\frac{1}{r}h_2'-\frac{1}{r^2}h_2= r^{-2}\int_{b=r}U_2 |\nabla b|^{-1}(z)\,dz,
\end{align}
such that $h_2(R)=-\frac{1}{2R} \int_{b\le R}U_2(z)\,dz $, $h_2(0)=0$ and
\begin{align}\label{e:H_2_integral}
\int_0^R\frac{h_2(r)}{r}dr=-\int_{b\le R}b^{-1}U_2(z)\,dz+\frac{1}{2R}\int_{b\le R}U_2(z)\,dz,
\end{align}
where $U_2=\Big(\nabla_j(R_{ia}\nabla^au)-\nabla_a (R_{ij}\nabla^au)+\nabla_i (R_{ja}\nabla^au)\Big)\nabla^i\nabla^ju\Big) \frac{\phi_\cN}{\sqrt{|\nabla^2 u|^2+{\epsilon}^2}}$ as in Proposition \ref{p:superconvexity}.

Now let us use Lemma \ref{l:special_solution3} one last time to produce a special solution $h_3(r)$ of
\begin{align}
h_3''+\frac{1}{r}h_3'-\frac{1}{r^2}h_3= r^{-2}\int_{b=r}\left(-\sqrt{|\nabla^2 u|^2+{\epsilon}^2}~|\Delta \phi_\cN|-c(n)|Rm|\cdot|\nabla^2u|\phi_\cN\right) |\nabla b|^{-1}(z)\,dz,
\end{align}
such that $h_3(R)=-\frac{1}{2R} \int_{b\le 2}\left(-\sqrt{|\nabla^2 u|^2+{\epsilon}^2}~|\Delta \phi_\cN|-c(n)|Rm|\cdot |\nabla^2u|\phi_\cN\right)(z)\,dz $, $h_3(0)=0$ and
\begin{align}\label{e:H_3_integral}
\int_0^R\frac{h_3(r)}{r}dr=\int_{b\le R}\left(-b^{-1}+\frac{1}{2R}\right)\left(-\sqrt{|\nabla^2 u|^2+{\epsilon}^2}~|\Delta \phi_\cN|-c(n)|Rm||\nabla^2u|\phi_\cN\right)(z)\,dz\, .
\end{align}
Finally let us choose $h_4(r)$ such that
\begin{align}
h_4''+\frac{1}{r}h_4'-\frac{1}{r^2}h_4=0
\end{align}
with $h_4(0)=0$ and $h_4(R)=|h_1(R)|+|h_2(R)|+|h_3(R)|+|H(R)|=|h_1(R)|+|h_2(R)|+|h_3(R)|\equiv A$.  Note that we can explicitly solve $h_4(r)=Ar/R$. On the other hand, we now have that
\begin{align}\label{e:H1H2H3H4}
\left(H-\sum_{i=1}^4 h_i\right)''(r)+\frac{1}{r}\left(H-\sum_{i=1}^4 h_i\right)'(r)-\frac{1}{r^2}\left(H-\sum_{i=1}^4 h_i\right)(r)\ge 0,
\end{align}
with $\left(H-\sum_{i=1}^4 h_i\right)(0)=0$ and $\left(H-\sum_{i=1}^4 h_i\right)(R)\le 0$.  Therefore, 
we have $H- \sum_{i=1}^4 h_i\le 0$. Actually, assume $(H- \sum_{i=1}^4 h_i)(r_0)=\max_{0\le r\le R} (H- \sum_{i=1}^4 h_i)(r)>0$, we can compute
\begin{align}
\left(H-\sum_{i=1}^4 h_i\right)''(r_0)+\frac{1}{r_0}\left(H-\sum_{i=1}^4 h_i\right)'(r_0)-\frac{1}{r^2_0}\left(H-\sum_{i=1}^4 h_i\right)(r_0)\le -\frac{1}{r^2_0}\left(H-\sum_{i=1}^4 h_i\right)(r_0)<0,
\end{align}
which contradicts to \eqref{e:H1H2H3H4}. Thus 
we have $H- \sum_{i=1}^4 h_i\le 0$.  Observe also that $H\ge 0$ while $h_i$ may change signs.  We may now estimate
\begin{align}\label{e:H_bound}
\int_0^R \frac{H(r)}{r}dr\le \sum_{i=1}^4\int_0^R \frac{h_i(r)}{r}dr.
\end{align}
Therefore, to estimate the Dini integral for $H(r)$, we only need to control the Dini integral of each $h_i$.  Beginning with $h_1$, we have by (\ref{e:H_1_integral}) and integrating by parts that
\begin{align}
\int_0^R\frac{h_1(r)}{r}dr&=-2\int_{b\le R}b^{-1}\langle\nabla\phi_\cN,\nabla \sqrt{|\nabla^2 u|^2+{\epsilon}^2}\rangle(z)\,dz +\frac{1}{R}\int_{b\le R}\langle\nabla\phi_\cN,\nabla \sqrt{|\nabla^2 u|^2+{\epsilon}^2}\rangle(z)\,dz\\ \nonumber
&\le C(n,\rv,B)\left(\int_{b\le R}\Big(b^{-2}|\nabla \phi_\cN|+b^{-1}|\Delta\phi_\cN|\Big) \sqrt{|\nabla^2 u|^2+{\epsilon}^2}\right)(z)\,dz\, ,
\end{align}
where we have used that $\phi_\cN\equiv 0$ on $\{b=R\}$ and $|\nabla b|\le C(n,\rv,B)$ on $\text{ supp}\, \phi_\cN$.  Now using Theorem \ref{t:splitting_neck_pointwise_hessian} and Lemma \ref{l:green_function_distant_function} with $\delta,\delta'\leq \delta(\epsilon,n,\rv)\le \epsilon^5$ we can obtain the apriori estimate $|\nabla^2u|\le \epsilon b^{-1}$ on $\text{supp }\phi_\cN$.  Plugging this in gives us
\begin{align}
\int_0^R\frac{h_1(r)}{r}dr
&\le C(n)\epsilon\left(\int_{b\le R}\big(b^{-3}|\nabla \phi_\cN|+b^{-2}|\Delta\phi_\cN|\big)\right)(z)\,dz\, .
\end{align}
To estimate $h_2$ let us see by (\ref{e:H_2_integral}), integration by parts, and our Ricci curvature bound that we have
\begin{align}
\int_0^R\frac{h_2(r)}{r}dr&=-\int_{b\le R}b^{-1}U_2(z)\,dz+\frac{1}{2R}\int_{b\le R}U_2(z)\,dz\\ \nonumber
&\le C(n)\delta\left( \int_{b\le R}b^{-1} |\nabla \frac{\nabla^2u}{\sqrt{|\nabla^2 u|^2+{\epsilon}^2}}|\phi_\cN(z)\,dz+\int_{b\le R} b^{-2}|\nabla b| \phi_\cN(z)\,dz+\int_{b\le R} b^{-1}|\nabla \phi_\cN|(z)\,dz\right)\\ \nonumber
&\le  C(n)\left( \frac{\delta}{\epsilon}\int_{b\le R}b^{-1} |\nabla^3u|\phi_\cN(z)\,dz+\delta\int_{b\le R} b^{-2}|\nabla b| \phi_\cN(z)\,dz+\delta\int_{b\le R} b^{-1}|\nabla \phi_\cN|(z)\,dz\right)\, ,
\end{align}
where $\delta$ comes from the Ricci bound $|\Ric|\le \delta$.
Finally for $h_3$, by (\ref{e:H_3_integral}) and H\"older inequality we have
\begin{align}
\int_0^R\frac{h_3(r)}{r}dr&=\int_{b\le R}\left(-b^{-1}+\frac{1}{4}\right)\left(-\sqrt{|\nabla^2 u|^2+{\epsilon}^2}~|\Delta \phi_\cN|-c(n)|\Rm|\cdot |\nabla^2u|\phi_\cN\right)(z)\,dz\\ \nonumber
&\le C(n)\epsilon\int_{b\le R} b^{-2}|\Delta\phi_\cN|(z)\,dz+C\left(\int_{b\le R}|\Rm|^2\phi_\cN(z)\,dz\right)^{1/2}\left(\int_{b\le R}b^{-2}|\nabla^2u|^2\phi_\cN(z)\,dz\right)^{1/2}.
\end{align}
By applying the $L^2$ curvature estimate on neck region in Theorem \ref{t:L2_neck} and the hessian $L^2$ estimate in Lemma \ref{l:hessian_L2_green} with $\epsilon$ we therefore get
\begin{align}
\int_0^R\frac{h_3(r)}{r}dr
&\le C(n)\epsilon\int_{b\le R} b^{-2}|\Delta\phi_\cN|(z)\,dz+C(n,B)\epsilon\, .
\end{align}
To estimate $h_4$, it suffices to estimate $A=|h_1(R)|+|h_2(R)|+|h_3(R)|$. Actually, from the formulas of $h_1(R), h_2(R), h_3(R)$ and the formulas \eqref{e:H_1_integral}, \eqref{e:H_2_integral}, \eqref{e:H_3_integral} and the above estimates, we have already proved 
\begin{align}
|A|\le  C(n,B,\rv)\epsilon\left(1+\int_{b\le R}b^{-2}|\Delta \phi_\cN|(z)\,dz+\int_{b\le R}b^{-3}|\nabla\phi_\cN|(z)\,dz+ \int_{b\le R}b^{-1}|\nabla^3u|\phi_\cN(z)\,dz\right)\, .
\end{align}

Combining all these with \eqref{e:H_bound} we get
\begin{align}\label{e:L1_summable}
\int_0^R \frac{H(r)}{r}dr\le C(n,B,\rv)\epsilon\left(1+\int_{b\le R}b^{-2}|\Delta \phi_\cN|(z)\,dz+\int_{b\le R}b^{-3}|\nabla\phi_\cN|(z)\,dz+ \int_{b\le R}b^{-1}|\nabla^3u|\phi_\cN(z)\,dz\right)\, .
\end{align}

Hence, to get the Dini estimate it suffices to estimate each term of (\ref{e:L1_summable}).  In fact, from the definition of $\phi_\cN$ in Lemma \ref{l:neck:cutoff}, the definition of neck region, and by the comparison of $b$ and $d_\cC$ in Lemma \ref{l:green_function_distant_function}, one can easily get
\begin{align}\nonumber
\int_{b\le R}b^{-2}|\Delta \phi_\cN|(z)\,dz&+\int_{b\le R}b^{-3}|\nabla\phi_\cN|(z)\,dz\\ \nonumber
&\le C(n,B,\rv,\tau)\left(\int_{d_\cC\le 3}d_\cC^{-2}|\Delta \phi_\cN|(z)\,dz+\int_{d_\cC\le 3}d_\cC^{-3}|\nabla\phi_\cN|(z)\,dz\right)
\\ \nonumber
&\le C(n,B,\rv,\tau)\left(\sum_{x\in\cC\cap B_{19/10}(p)}\int_{B_{r_x}(x)}\left(d_\cC^{-2}|\Delta \phi_\cN|+d_\cC^{-3}|\nabla \phi_\cN|\right)(z)\,dz+\int_{\text{ supp }\phi_\cN}d_\cC^{-3}(z)\,dz\right)\\ \nonumber
&\le  C(n,B,\rv,\tau)\left(\sum_{x\in \cap B_{19/10}(p)}r_x^{n-4}+\int_{\{d_\cC\le 3\}\cap \text{ supp }\phi_\cN}d_\cC^{-3}(z)\,dz\right)\\ \nonumber
&\le C(n,B,\rv,\tau)\mu_\cN(B_{19/10}(p))+C(n,B,\rv)\int_{\{d_\cC\le 3\}\cap \text{ supp }\phi_\cN}d_\cC^{-3}(z)\,dz\\ \label{e:dcC-30}
&\le C(n,B,\rv,\tau)+C(n,B,\rv,\tau)\int_{\{d_\cC\le 3\}\cap \text{ supp }\phi_\cN}d_\cC^{-3}(z)\,dz\, ,
\end{align}
where the term $\int_{\text{ supp }\phi_\cN}d_\cC^{-3}(z)\,dz$ comes from the derivative of $\phi_1$ in Lemma \ref{l:neck:cutoff}.  We can compute that 
\begin{align}\label{e:dcC-31}
\int_{\text{ supp }\phi_\cN}d_\cC^{-3}(z)\,dz&\le \sum_{i=0}^\infty\int_{\text{ supp }\phi_\cN \cap \{2^{-i}\le d(x,\cC)\le 2^{-i+1}\}}d_\cC^{-3}(z)\,dz\\
&\le \sum_{i=0}^\infty 2^{3i}\Vol(\text{ supp }\phi_\cN \cap \{2^{-i}\le d(x,\cC)\le 2^{-i+1}\}).
\end{align}
Choose a Vitali covering $\{B_{2^{-i+3}}(x_\alpha),x_\alpha\in \cC,\alpha=1,\cdots, N\}$ of $\text{ supp }\phi_\cN \cap \{2^{-i}\le d(x,\cC)\le 2^{-i+1}\}$ such that $\{B_{2^{-i}}(x_\alpha),x_\alpha\in \cC,\alpha=1,\cdots,N\}$ are pairwise disjoint. We get 
\begin{align}\label{e:dcC-321}
 C(n,B)\ge \mu(B_{19/10}(p))\ge \mu(\text{ supp }\phi_\cN)\ge\sum_{\alpha=1}^N\mu(B_{2^{-i}}(x_\alpha))\ge N B^{-1}2^{-(n-4)i}6^{-(n-4)}.
\end{align}
We should remark that the last inequality in \eqref{e:dcC-321} always holds even $2^{-i}\le r_{x_\alpha}$ or $2^{-i}\ge d\big(x_\alpha,\partial B_2(p)\big)$. Actually, for the first case ($2^{-i}\le r_{x_\alpha}$) the inequality is obvious from the definition of $\mu$, for the second case ($2^{-i}\ge d\big(x_\alpha,\partial B_2(p)\big)$  ) we can find $B_{2^{-i}/3}(\tilde{x}_\alpha)\subset B_2(p)\cap B_{2^{-i}}(x_\alpha)$ with $\tilde{x}_\alpha\in \cC$.
Therefore, we have $N\le C(n,B)2^{(n-4)i}$. Thus 
\begin{align}\label{e:dcC-32}
\Vol(\text{ supp }\phi_\cN \cap \{2^{-i}\le d(x,\cC)\le 2^{-i+1}\})\le \sum_{\alpha=1}^N\Vol(B_{2^{-i+3}}(x_\alpha)) \le C(n,B)2^{(n-4)i} C(n)2^{n(-i+3)}\le C(n,B)2^{-4i}.
\end{align}
Combining \eqref{e:dcC-31} and \eqref{e:dcC-32} we get 
\begin{align}
\int_{\text{ supp }\phi_\cN}d_\cC^{-3}(z)\,dz\le C(n,B).
\end{align}
Putting in \eqref{e:dcC-30} we arrive at 
\begin{align}
\int_{b\le R}b^{-2}|\Delta \phi_\cN|(z)\,dz+\int_{b\le R}b^{-3}|\nabla\phi_\cN|(z)\,dz\le C(n,\rv,B,\tau).
\end{align}

On the other hand, by using the $L^p$ estimates arising from the codimension four theorem of \cite{CheegerNaber_Codimensionfour}, we know from Theorem \ref{t:prelim:harmonic_estimates} that $\int_{B_{10}(p)}|\nabla^3u|^{3/2}\le C(n,\rv)$. Therefore, by the H\"older inequality we have
\begin{align}
\int_{b\le R}b^{-1}|\nabla^3u|\phi_\cN(z)\,dz\le \left(\int_{b\le R}b^{-3}\phi_\cN(z)\,dz\right)^{1/3}\left(\int_{b\le R}|\nabla^3u|^{3/2}\phi_\cN(z)\,dz\right)^{2/3}\le C(n,B,\rv,\tau)\, .
\end{align}

Thus we have proved the Lemma.
\end{proof}
\vspace{.5cm}

\subsubsection{Proof of Theorem \ref{t:splitting_neck_summable_hessian}}

Let us first observe that
\begin{align}
	\int_{\cN_{10^{-3}}\cap B_{18/10}(p)} r_h^{-3} |\nabla^2 u|(z)\,dz<\int_{M} r_h^{-3} |\nabla^2 u|\phi_\cN(z)\,dz\, ,
\end{align}
where $\phi_\cN$ is the cutoff function from Lemma \ref{l:neck:cutoff}.  In order to finish the proof let us apply Theorem \ref{t:superconvexity_estimate} with $\epsilon'$ in order to conclude
\begin{align}
	\int_0^\infty r^{-3}\int_{b=r} |\nabla^2 u|\phi_\cN(z)\,dz dr<\int_0^\infty r^{-3}\int_{b=r} \sqrt{|\nabla^2 u|^2+{\epsilon'}^2}\phi_\cN(z)\,dzdr  < C(n,B,\rv,\tau)\epsilon'\, .
\end{align}
By using the coarea formula we can rewrite this as
\begin{align}
\int_{M} b^{-3}\,|\nabla b|\, |\nabla^2 u| \phi_\cN(z)\,dz < \epsilon'\, .	
\end{align}
Finally, using the Green's function estimates $|\nabla b|> C^{-1}(n,B)$ and $b\leq C(n,B)\,r_h$ from Lemma \ref{l:green_function_distant_function} this gives us the estimate
\begin{align}
	\int_M r_h^{-3} |\nabla^2 u|\phi_\cN(z)\,dz < C(n,B,\rv,\tau)\epsilon'<\epsilon\, ,
\end{align}
where in the last line we have chosen $\epsilon'<C(n,B,\rv,\tau)^{-1}\epsilon$ in order to finish the proof. $\square$
\vspace{.5cm}

\subsection{Gradient Estimates}

In this subsection we provide the main gradient estimate required in the proof of Theorem \ref{t:splitting_neck_bilipschitz}.  Precisely, we have the following:\\

\begin{theorem}\label{t:neck_gradient}
Let $(M^n,g,p)$ satisfy $\Vol(B_1(p))>\rv>0$.  For every $\epsilon,B>0$ and $\tau<\tau(n)$, if (S1)-(S4) of \eqref{e:assumptions_splitting} hold with $\delta,\delta'<\delta(n,\epsilon,B,\tau,\rv)$, then for every $x\in\cC\cap B_1(p)$ and $r_x<r<2/3$ we have that
	\begin{align}
		\fint_{B_r(x)}\big|\langle \nabla u_a,\nabla u_b\rangle-\delta_{ab}\big|(z)\,dz < \epsilon +C(n,\rv)\int_{W_x} r_h^{1-n} |\nabla^2 u|(z)\,dz\, .
	\end{align}
\end{theorem}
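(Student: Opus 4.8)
The plan is to compare $\nabla u$ on a good slice near scale $r$ against its behavior at scale $r_x$ by integrating the hessian along radial-type paths, the radial direction being roughly $\nabla b$ for the Green's distance $b$ of Lemma~\ref{l:green_function_distant_function}. First I would reduce to $r=1$ by scaling, and recall that since $\cN$ is a $(\delta,\tau)$-neck we already know $B_{\delta^{-1}}(x)$ is $\delta$-GH close to $\dR^{n-4}\times C(S^3/\Gamma)$, so the hypotheses (S1)--(S4) apply with $u$ a $\delta'$-splitting. The quantity $\fint_{B_1(x)}|\langle\nabla u_a,\nabla u_b\rangle-\delta_{ab}|$ is, up to an additive $\epsilon$ coming from the splitting bound itself, controlled by the oscillation of $\langle\nabla u_a,\nabla u_b\rangle$ between a reference ball of size $r_x$ around $x$ (where we may normalize) and points $y\in B_1(x)\setminus \overline B_{r_x}(\cC)$, i.e.\ points of $\cN$.

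The key step is a pointwise ``fundamental theorem of calculus'' estimate: for $y\in B_1(x)$ with $d(y,\cC)\approx r_h(y)$, join $y$ to a point at scale $\sim r_x$ along a path that stays in the wedge region $W_x$ (Definition~\ref{d:wedge_region}) and whose length at ``radius'' $s$ is comparable to $s$; then
\begin{align}\nonumber
\big|\langle\nabla u_a,\nabla u_b\rangle(y)-\delta_{ab}\big| \;\le\; \big|\langle\nabla u_a,\nabla u_b\rangle(x')-\delta_{ab}\big| \;+\; C(n)\int_{W_x} r_h^{1-n}|\nabla^2 u|\,,
\end{align}
where $x'$ is the base point. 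The factor $r_h^{1-n}$ appears because integrating $|\nabla^2 u|$ along a single radial path of the wedge and then averaging over the $S^3/\Gamma$ (or $C(S^3/\Gamma)$) cross-section at each scale introduces the Jacobian $s^{n-4}$ of an $(n-4)$-dimensional family of initial directions against the $(n-1)$-dimensional slice measure, i.e.\ a net weight $s^{1-n}\,d\mathrm{vol}$; this is exactly the weight appearing in the statement. Concretely I would use the wedge foliation by the level sets $\{b=s\}\cap W_x$, the coarea formula, and the Green's-distance estimates $C^{-1}\le|\nabla b|\le C$, $C^{-1}d(\cdot,\cC)\le b\le C d(\cdot,\cC)$ from Lemma~\ref{l:green_function_distant_function}, together with the harmonic-radius comparison $r_h\approx d(\cdot,\cC)$ of Lemma~\ref{l:neck:distance_harm_rad}, to convert $\int_{W_x} b^{1-n}|\nabla^2u|\,|\nabla b|$ into $C\int_{W_x} r_h^{1-n}|\nabla^2u|$.

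The remaining point is to show the base-point term is $\le\epsilon$. Here I would use the scale-invariant pointwise hessian estimate of Theorem~\ref{t:splitting_neck_pointwise_hessian}: for $\delta,\delta'$ small, $|\nabla^2 u|(z)\le\epsilon\, r_h^{-1}(z)$ on $\cN_{10^{-5}}$, hence on the ball $B_{r_x}(x)$-scale annulus the oscillation of $\langle\nabla u_a,\nabla u_b\rangle$ is already $O(\epsilon)$, and at scale $r_x$ the neck condition (n2) forces $B_{\delta^{-1}r_x}(x)$ to be $\delta r_x$-GH close to $\dR^{n-4}\times C(S^3/\Gamma)$, so by Theorem~\ref{t:splitting_function} the restriction of $u$ to $B_{r_x}(x)$ is an $\epsilon$-splitting and $\fint_{B_{r_x}(x)}|\langle\nabla u_a,\nabla u_b\rangle-\delta_{ab}|<\epsilon$. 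Averaging the pointwise bound over $B_1(x)$ and absorbing constants yields the claimed inequality. I expect the main obstacle to be the careful bookkeeping in the pointwise integral estimate: one must verify that the wedge region genuinely admits a radial foliation with the right measure-theoretic Jacobian (this is where condition (n4), i.e.\ $|\Lip\,r_x|\le\delta$, and the containment $W_x\subseteq\cN_{10^{-3}}$ are essential, as remarked in Section~\ref{sss:neck:wedge}), and that no mass is lost near $\cC$ when one passes from the cone-model path integral to the actual manifold --- both handled by the GH-closeness on each scale plus the uniform $|\nabla b|$ bounds, but requiring some care with error terms from the nonzero (but $\delta$-small) Ricci curvature.
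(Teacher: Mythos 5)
Your overall architecture --- march down the wedge region $W_x$ through a geometric sequence of scales, accumulate an error of the form $\int_{W_x} r_h^{1-n}|\nabla^2 u|$, and use the scale-invariant hessian estimate of Theorem \ref{t:splitting_neck_pointwise_hessian} to pass from a wedge point back to the centered ball $B_r(x)$ --- is the right one and matches the paper's in spirit. The paper implements the ``fundamental theorem of calculus along paths'' step differently: it applies an iterated Poincar\'e inequality (Lemma \ref{l:telescope_estimate}) to the scalar $f=|\langle\nabla u_a,\nabla u_b\rangle-\delta_{ab}|$ on consecutive balls $B_{10^{-1}s_\alpha}(x_\alpha)\subseteq W_x$ centered at wedge points $x_\alpha\in\partial B_{s_\alpha}(x)$, so the weight $s_\alpha^{1-n}$ arises simply from $s_\alpha\cdot\Vol(B_{s_\alpha})^{-1}$ rather than from a cross-sectional Jacobian of a path family; this sidesteps both the construction of an explicit radial foliation of $W_x$ and any parallel-transport issues. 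That is a technical rather than conceptual difference.

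There is, however, one genuine gap. You anchor the comparison at the \emph{bottom} scale $r_x$ and justify $\fint_{B_{r_x}(x)}|\langle\nabla u_a,\nabla u_b\rangle-\delta_{ab}|<\epsilon$ by saying that (n2) makes $B_{\delta^{-1}r_x}(x)$ GH-close to $\dR^{n-4}\times C(S^3/\Gamma)$, hence by Theorem \ref{t:splitting_function} ``the restriction of $u$ to $B_{r_x}(x)$ is an $\epsilon$-splitting.'' Theorem \ref{t:splitting_function}(1) only asserts that \emph{some} $\epsilon$-splitting exists on such a ball; it says nothing about the particular $u$ inherited from the top scale. Whether the given $u$ remains an almost-splitting on lower scales is precisely the content of the theorem you are proving (and of Theorem \ref{t:splitting_neck_bilipschitz}): with only $L^2$ hessian control the gradient condition does not automatically propagate down scales, so this step assumes the conclusion. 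The fix is to anchor at the \emph{top} scale: hypothesis (S4) gives $\fint_{B_2(p)}|\langle\nabla u_a,\nabla u_b\rangle-\delta_{ab}|<\delta'$, and the telescoping must run from a unit-scale wedge ball down to scale $r$, with accumulated error $C(n)\int_{W_x}r_h^{1-n}|\nabla^2 u|$; the final transfer from the wedge ball at scale $r$ to $B_r(x)$ then uses Theorem \ref{t:splitting_neck_pointwise_hessian}, as you indicate. Note also that your initial rescaling $r\mapsto 1$ does not help here: after rescaling, (S4) lives on the huge ball $B_{4/r}$, not on $B_1(x)$, so the bottom-scale average is never directly controlled by hypothesis.
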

\vspace{.5cm}

The proof of the above will rely on first proving a telescoping type estimate in Lemma \ref{l:telescope_estimate}.  This estimate will be applied iteratively along a wedge region to then conclude the above.\\

\subsubsection{Telescoping Estimate}

Our main result of this subsection is a telecoping type estimate which allows us to compare the average of a function along different nearby balls.  Precisely, we have the following:\\

\begin{lemma}\label{l:telescope_estimate}
	Let $(M^n,g,p)$ satisfy $\Ric\geq -(n-1)$, and let $f:B_4(p)\to \dR$ be a $H^1$-function.  Then if $B_{r_1}(x_1),B_{r_2}(x_2),B_{r_3}(x_3)\subseteq B_2$ satisfy $r_1,r_2>10^{-n}r_3$ and are such that $B_{2r_1}(x_1),B_{2r_2}(x_2)\subseteq B_{r_3}(x_3)$, then we have the estimate
	\begin{align}
		\Big|\fint_{B_{r_1}(x_1)}f(z)\,dz - \fint_{B_{r_2}(x_2)} f(z)\,dz\Big| < C(n)r_3\fint_{B_{r_3}(x_3)}|\nabla f|(z)\,dz
	\end{align}
\end{lemma}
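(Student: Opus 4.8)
The plan is to reduce the comparison between the two far-apart averages to a single local Poincaré-type estimate on the big ball $B_{r_3}(x_3)$. First I would introduce the intermediate quantity $c \equiv \fint_{B_{r_3}(x_3)} f$ and reduce to showing that each of $\big|\fint_{B_{r_1}(x_1)} f - c\big|$ and $\big|\fint_{B_{r_2}(x_2)} f - c\big|$ is bounded by $C(n)\fint_{B_{r_3}(x_3)}|\nabla f|$, so it suffices to treat one of them, say $B_{r_1}(x_1)$. Writing $\fint_{B_{r_1}(x_1)} f - c = \fint_{B_{r_1}(x_1)}(f - c)$ and using $|\fint_{B_{r_1}(x_1)}(f-c)| \le \fint_{B_{r_1}(x_1)}|f-c|$, the task becomes estimating the mean deviation of $f$ from its average over the larger ball, but measured on the sub-ball.

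The key mechanism is: since $B_{2r_1}(x_1) \subseteq B_{r_3}(x_3)$ and $r_1 > 10^{-n}$, the ratio $\Vol(B_{r_3}(x_3))/\Vol(B_{r_1}(x_1))$ is bounded above by a constant depending only on $n$ by Bishop--Gromov (the radii are comparable up to a dimensional constant, and both are at most $4$, while $\Ric \ge -(n-1)$ gives a two-sided volume comparison on this bounded range of radii). Hence
\begin{align}
\fint_{B_{r_1}(x_1)}|f-c| \le \frac{\Vol(B_{r_3}(x_3))}{\Vol(B_{r_1}(x_1))}\fint_{B_{r_3}(x_3)}|f-c| \le C(n)\fint_{B_{r_3}(x_3)}\Big|f - \fint_{B_{r_3}(x_3)}f\Big|\, .
\end{align}
The right-hand side is now exactly the $L^1$ Poincaré inequality on $B_{r_3}(x_3)$: for a manifold with $\Ric \ge -(n-1)$ and $r_3 \le 4$, one has $\fint_{B_{r_3}(x_3)}\big|f - \fint_{B_{r_3}(x_3)}f\big| \le C(n)\, r_3 \fint_{B_{2r_3}(x_3)}|\nabla f|$, and here one can even keep the integration domain inside $B_{r_3}(x_3)$ at the cost of a slightly larger constant by a standard segment/maximal-function argument (Cheeger--Colding), but in any case $B_{2r_3}(x_3)$ may exceed $B_2$; to stay inside $B_2$ I would instead apply Poincaré on a ball of radius $r_3/2$ around a well-chosen center, or simply note that the hypotheses force everything into $B_{r_3}(x_3) \subseteq B_2$ and use the interior Poincaré inequality $\fint_{B_{r_3}}|f - \fint_{B_{r_3}}f| \le C(n) r_3 \fint_{B_{r_3}}|\nabla f|$ valid on balls with lower Ricci bounds. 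Since $r_3 \le 2$, the factor $r_3$ is absorbed into $C(n)$, giving $\fint_{B_{r_1}(x_1)}|f-c| \le C(n)\fint_{B_{r_3}(x_3)}|\nabla f|$. Doing the same for $B_{r_2}(x_2)$ and adding via the triangle inequality completes the proof.

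The main obstacle is a bookkeeping one rather than a conceptual one: ensuring all auxiliary balls used in the Poincaré step stay within $B_2(p)$ (where the hypotheses guarantee $f \in H^1$ and the volume/Poincaré estimates apply) while still only losing dimensional constants. The lower bound $r_1,r_2 > 10^{-n}$ is what makes the volume ratios dimensional — without it the constant would depend on $r_1/r_3$ — and the containments $B_{2r_i}(x_i) \subseteq B_{r_3}(x_3) \subseteq B_2$ are exactly what is needed to run the segment inequality for the $L^1$ Poincaré estimate without leaving the domain. Smoothness of $M$ is not needed beyond what is implicit in $\Ric \ge -(n-1)$; the estimate is the standard telescoping lemma and this is the argument I would carry out.
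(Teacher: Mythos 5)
Your proposal is correct and follows essentially the same route as the paper: introduce an intermediate average over a ball centered at $x_3$, convert the sub-ball average of $|f-c|$ to a big-ball average via Bishop--Gromov volume comparison (using $r_1,r_2>10^{-n}$), and conclude with the $L^1$ Poincar\'e inequality. The only cosmetic difference is that the paper takes the intermediate radius $r'=r_3-\max\{r_1,r_2\}<r_3$ precisely so that the weak (enlarged-ball) Poincar\'e inequality applied on $B_{r'}(x_3)$ keeps the gradient integral inside $B_{r_3}(x_3)$ --- the same device you mention as an alternative to invoking the same-ball inequality.
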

\begin{remark}
In practice, we will be applying this estimate to $f=|\langle\nabla u^a,\nabla u^b\rangle-\delta^{ab}|$, where $u:B_4\to \dR^{n-4}$ is one of our splitting functions.
\end{remark}
\begin{proof}
	The proof is an application of the Poincare inequality for spaces with lower Ricci curvature bounds.  To see this, let us choose $r'\equiv r_3-\max\{r_1,r_2\}>0$, so that $B_{r_1}(x_1),B_{r_2}(x_2)\subseteq B_{r'}(x_3)\subseteq B_{r_3}(x_3)$.  Now we can estimate
\begin{align}
	\Big|\fint_{B_{r_1}(x_1)}f(z)\,dz - \fint_{B_{r'}(x_3)} f(z)\,dz\Big| &\leq \Big|\fint_{B_{r_1}(x_1)}\big|f(x) - \fint_{B_{r'}(x_3)} f(z)\,dz\big|\Big|\,dx\, ,\notag\\
	&\leq \Vol(B_{r_1}(x_1))^{-1}\Big|\int_{B_{r'}(x_3)}\big|f(x) - \fint_{B_{r'}(x_3)} f(z)\,dz\big|\Big|\,dx\, ,\notag\\
	&\leq C(n)\Big|\fint_{B_{r'}(x_3)}\big|f(x) - \fint_{B_{r'}(x_3)} f(z)\,dz\big|\Big|dx\leq C(n)r_3\fint_{B_{r_3}(x_3)}|\nabla f|(z)\,dz\, ,
\end{align}
where in the last line we have used volume monotonicity and the Poincare inequality \cite{SY_Redbook} using that $r_3-r'>10^{-n}r_3$.  Applying the same argument to $B_{r_2}(x_2)$ gives the estimate
\begin{align}
	\Big|\fint_{B_{r_2}(x_2)}f(z)\,dz - \fint_{B_{r'}(x_3)} f(z)\,dz\Big| \leq C(n)r_3\fint_{B_{r_3}(x_3)}|\nabla f|(z)\,dz\, ,
\end{align}
and by combining these we obtain the desired result.
\end{proof}
\vspace{.5cm}

\subsubsection{Proof of Gradient Estimate of Theorem \ref{t:neck_gradient}}

The proof will come in two steps.  To begin with, let us fix $\gamma=1-\frac{1}{10}$ and consider the sequence of scales $s_\alpha\equiv \gamma^{\alpha}$, and for $x\in \cC$ let us consider a sequence of points $x_1,x_2,\ldots$ such that $x_\alpha\in \partial B_{s_\alpha}(x)$ is the maximizer of $d(\cdot,\cC)|_{\partial B_{s_\alpha}(x)}$ 
Note that by our Gromov-Hausdorff condition we have that $1+\delta>d(x_\alpha,\cC) s_\alpha^{-1}>1-\delta$, at least for $s_\alpha>r_x/10$.  Let us observe the ball inclusions given by
\begin{align}
	B_{10^{-1}s_\alpha}(x_\alpha), B_{10^{-1}s_{\alpha+1}}(x_{\alpha+1})\subseteq B_{4^{-1}s_\alpha}(x_\alpha)\subseteq W_x\, .
\end{align}

In particular, if we consider the function $f=|\langle\nabla u^a,\nabla u^b\rangle-\delta^{ab}|$ then we can apply Lemma \ref{l:telescope_estimate} in order to conclude
\begin{align}
\Big|\fint_{B_{20^{-1}s_\alpha}(x_\alpha)}f(z)\,dz - \fint_{B_{20^{-1}s_{\alpha+1}}(x_{\alpha+1})} f (z)\,dz\Big|&\leq C(n)s_\alpha\fint_{B_{4^{-1}s_\alpha}(x_\alpha)}|\nabla f|(z)\,dz\, ,\notag\\
&\leq C(n)s_\alpha\fint_{B_{4^{-1}s_\alpha}(x_\alpha)}|\nabla^2 u|(z)\,dz\, ,\notag\\
&\leq C(n)s_\alpha^{1-n}\int_{W_x\cap A_{s_\alpha/10,10s_\alpha}(x)} |\nabla^2 u|(z)\,dz\, .
\end{align}
In particular, for every $s_\alpha>r_x/10$ we can sum in order to obtain the estimate
\begin{align}
	\Big|\fint_{B_{20^{-1}s_\alpha}(x_\alpha)}f(z)\,dz - \fint_{B_{20^{-1}}(x_{0})} f(z)\,dz \Big|&\leq C(n)\sum_{s_\alpha>r_x/10} s_\alpha^{1-n}\int_{W_x\cap A_{s_\alpha/10,10s_\alpha}(x)} |\nabla^2 u|(z)\,dz\notag\\
	&\leq C(n)\int_{W_x} r_h^{1-n}|\nabla^2 u|(z)\,dz\, .
\end{align}
Since $u:B_2(p)\to \dR^{n-4}$ is a $\delta$-splitting we have for $\delta<\delta(\epsilon)$ that
\begin{align}
	\Big|\fint_{B_{20^{-1}}(x_{0})} f(z)\,dz \Big|&\leq 10^{-2}\epsilon\, ,
\end{align}
which by combining with our previous estimates therefore gives us
\begin{align}
	\fint_{B_{20^{-1}s_\alpha}(x_\alpha)}\big|\langle\nabla u^a,\nabla u^b\rangle-\delta^{ab}\big|(z)\,dz&\leq 10^{-2}\epsilon+ C(n)\int_{W_x} r_h^{1-n}|\nabla^2 u|(z)\,dz\, ,
\end{align}
for all $s_\alpha>r_x/10$.\\

Now in order to finish the proof we will again apply Lemma \ref{l:telescope_estimate}.  Indeed, let us fix $r>r_x$ and pick $s_\alpha$ such that $|s_\alpha-r|$ is minimized.  Then by applying Lemma \ref{l:telescope_estimate} we have the estimate
\begin{align}
\Big|\fint_{B_{20^{-1}s_\alpha}(x_\alpha)}&\big|\langle\nabla u^a,\nabla u^b\rangle-\delta^{ab}\big|(z)\,dz - \fint_{B_{r}(x)}\big|\langle\nabla u^a,\nabla u^b\rangle-\delta^{ab}\big|(z)\,dz\, \Big|\, ,\notag\\
&\leq C(n) r\fint_{B_{2r}(x)}|\nabla f|(z)\,dz\leq C(n) r\fint_{B_{2r}(x)}|\nabla^2 u|(z)\,dz\, .
\end{align}
But if we apply the scale invariant estimate of Theorem \ref{t:splitting_neck_pointwise_hessian} with $\epsilon'=C(n)^{-2}10^{-2}\epsilon^2$ then we obtain the estimate
\begin{align}
\Big|\fint_{B_{20^{-1}s_\alpha}(x_\alpha)}&\big|\langle\nabla u^a,\nabla u^b\rangle-\delta^{ab}\big|(z)\,dz - \fint_{B_{r}(x)}\big|\langle\nabla u^a,\nabla u^b\rangle-\delta^{ab}\big|(z)\,dz\, \Big|\leq 10^{-1}\epsilon\, .
\end{align}
Combining this with our previous estimates gives us our desired estimate
\begin{align}
	\fint_{B_{r}(x)}\big|\langle\nabla u^a,\nabla u^b\rangle-\delta^{ab}\big|(z)\,dz&\leq \epsilon+ C(n)\int_{W_x} r_h^{1-n}|\nabla^2 u|(z)\,dz\, ,
\end{align}
as claimed. $\square$
\vspace{.5cm}

\subsection{$\mu$-Splitting Estimates and Proof of Theorem \ref{t:splitting_neck_bilipschitz}}

In this subsection we now build the ingredients required to finish the proof of Theorem \ref{t:splitting_neck_bilipschitz}.

\subsubsection{$\mu$-Maximal Function Estimates}

We begin in this subsection by studying a maximal function defined with respect to the $\mu$-measure.  This will be used in the next subsection to define for us our bilipschitz set of center points.  Our main result for this subsection is the following:\\

\begin{proposition}\label{p:splitting_neck_region}
Let $(M^n,g,p)$ satisfy $\Vol(B_1(p))>\rv>0$.  Then for every $\epsilon,B>0$ and $\tau<\tau(n)$ if $\delta,\delta'\leq \delta(n,\epsilon,B,\tau,\rv)$ is such that assumptions (S1)-(S4) of \eqref{e:assumptions_splitting} hold, then if we define the $\mu$-supported maximal function $m_u:\cC\to \dR^+$ by 
$$m(x) \equiv \sup_{r_x\leq r\leq 2/3}\Bigg(\fint_{B_{r}(x)}\big|\langle\nabla u_a,\nabla u_b\rangle - \delta_{ab}\big|(z)\,dz + r^2\fint_{B_{r}(x)}\big|\nabla^2 u\big|^2(z)\,dz\Bigg)\, ,$$
then we have the estimate
\begin{align}\label{e:splitting_annular}
\int_{B_1(p)} m(x)\,d\mu(x) < \epsilon \, .
\end{align}
\end{proposition}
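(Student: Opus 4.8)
The plan is to bound the $\mu$-maximal function $m$ pointwise on $\cC$ by the ambient error together with a weighted integral of $|\nabla^2 u|$ over the associated wedge region, and then to integrate this against $\mu$ and interchange the order of integration so as to invoke the summable Hessian estimate of Theorem~\ref{t:splitting_neck_summable_hessian}. First I would fix $x\in\cC\cap B_1(p)$ and estimate $m(x)$. For the Hessian term: by Theorem~\ref{t:splitting_neck_pointwise_hessian}, applied with a suitably small parameter (and $\delta,\delta'$ chosen correspondingly small), we have $r^2\fint_{B_r(x)}|\nabla^2 u|^2<\epsilon$ for every $r_x\le r\le1$, so this term contributes at most $\epsilon$ to the supremum defining $m$. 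For the gradient term: Theorem~\ref{t:neck_gradient} gives, for every $r_x\le r\le 1$,
\begin{align}
\fint_{B_r(x)}\big|\langle\nabla u_a,\nabla u_b\rangle-\delta_{ab}\big|<\epsilon+C(n,\rv)\int_{W_x}r_h^{1-n}|\nabla^2 u|\, ,\notag
\end{align}
and since the right-hand side does not depend on $r$, it bounds the supremum as well. Hence $m(x)\le 2\epsilon+C(n,\rv)\int_{W_x}r_h^{1-n}|\nabla^2 u|$.

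Next I would integrate this inequality against $d\mu$ over $B_1$. The constant term yields $2\epsilon\,\mu(\cC\cap B_1)\le C(n,B)\epsilon$, using the Ahlfors upper bound (S3) together with a covering of $\cC\cap B_1$ by boundedly many balls $B_{1/10}(x_i)$ centered in $\cC$. For the remaining term, by Tonelli's theorem
\begin{align}
\int_{\cC\cap B_1}\Big(\int_{W_x}r_h(y)^{1-n}|\nabla^2 u|(y)\,dv_g(y)\Big)d\mu(x)=\int_M r_h(y)^{1-n}|\nabla^2 u|(y)\;\mu\big(\{x\in\cC\cap B_1:\ y\in W_x\}\big)\,dv_g(y)\, .\notag
\end{align}
The key step is the overlap estimate $\mu(\{x:\ y\in W_x\})\le C(n,\tau,B)\,r_h(y)^{n-4}$. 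To prove this I would use the definition of $W_x$ (Definition~\ref{d:wedge_region}): if $y\in W_x$ then $d(y,\cC)\le d(y,x)<2\,d(y,\cC)$, so every such $x$ lies in $\cC\cap B_{2r}(y)$ with $r\equiv d(y,\cC)$. Combining the Vitali condition (n1) (to see that a nearest center $x_*$ to $y$ satisfies $r_{x_*}\le C(n,\tau)r$ whenever more than just $x_*$ contributes, and $r_{x_*}<10^2 r$ whenever $x_*$ itself contributes) with the Ahlfors bound (S3) applied at $x_*$ at the appropriate scale, one gets $\mu(\cC\cap B_{2r}(y))\le C(n,\tau,B)r^{n-4}$; then $r=d(y,\cC)\le C(n)r_h(y)$ by Lemma~\ref{l:neck:distance_harm_rad}, valid since $\bigcup_{x\in\cC\cap B_1}W_x\subseteq\cN_{10^{-3}}$. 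A standard covering argument handles the largest scales $r\sim1$ near the boundary of $B_2$.

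Substituting the overlap bound, the double integral is controlled by $C(n,\tau,B)\int_{\cN_{10^{-3}}}r_h(y)^{-3}|\nabla^2 u|(y)\,dv_g(y)$, which by Theorem~\ref{t:splitting_neck_summable_hessian} is at most $C(n,\rv,\tau,B)\epsilon$. Therefore $\int_{B_1}m\,d\mu<C(n,\rv,\tau,B)\epsilon$, and running the whole argument with $\epsilon$ replaced by $\epsilon/C(n,\rv,\tau,B)$ (which only forces $\delta,\delta'$ smaller) yields the claimed estimate. The one genuinely delicate point is the overlap estimate $\mu(\{x:\ y\in W_x\})\le C(n,\tau,B)r_h(y)^{n-4}$: this is where the factor-two geometry of wedge regions, the disjointness condition (n1), the Ahlfors regularity, and the comparison of $d(\cdot,\cC)$ with the harmonic radius must all be combined; everything else is bookkeeping and direct appeals to the estimates already established in this section.
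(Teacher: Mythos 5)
Your proposal is correct and follows essentially the same route as the paper: pointwise control of $m$ via Theorem \ref{t:splitting_neck_pointwise_hessian} and Theorem \ref{t:neck_gradient}, followed by integration against $\mu$ and an appeal to Theorem \ref{t:splitting_neck_summable_hessian}. The Tonelli swap and the overlap estimate $\mu(\{x:\,y\in W_x\})\leq C(n,\tau,B)\,r_h(y)^{n-4}$ that you identify as the delicate point is exactly the content (and the proof) of the paper's Lemma \ref{l:wedge_integral}, which the paper simply cites at that stage.
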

\begin{remark}
The hessian part of the estimate is a consequence of Theorem \ref{t:splitting_neck_pointwise_hessian}.  Therefore it is the gradient aspect of the above estimate which is the challenging part of the Theorem.	
\end{remark}

\vspace{.5cm}

In order to prove the Proposition we need a lemma which relates integrals over wedge regions to integrals over neck regions.  Precisely:\\

\begin{lemma}\label{l:wedge_integral}
	Let $(M^n,g,p)$ satisfy $\Vol(B_1(p))>\rv>0$ with $\tau<\tau(n)$ and $\delta,\delta'\leq \delta(n,B,\tau,\rv)$ such that assumptions (S1)-(S4) of \eqref{e:assumptions_splitting} hold.  Then for $f:B_2\to \dR^+$ a nonnegative function then we can estimate
	
\begin{align}
\int_{B_1(p)}\left(\int_{W_x\cap B_{18/10}(p)} r_h^{4-n}(y) f(y)\,dy\right)\, d\mu(x) \leq C(n,B)\int_{\cN_{10^{-3}}\cap B_{18/10}(p)} f(x)\,dx\, .	
\end{align}
\end{lemma}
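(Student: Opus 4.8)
The plan is to swap the order of integration, using the co-area/Fubini principle for the product measure $d\mu \otimes dv_g$ on $\cC \times \cN_{10^{-3}}$. For a fixed point $y \in \cN_{10^{-3}}$, with $r_y \equiv d(y,\cC)$ (so that $r_h(y) \approx r_y$ by Lemma \ref{l:neck:distance_harm_rad}), I would first describe the set of ball centers $x \in \cC$ whose wedge region $W_x$ contains $y$. By the definition of $W_x$, if $y \in W_x$ then $d(y,x) < 2\, d(y,\cC) = 2 r_y$ and moreover $10^{-2} r_x \le d(y,x) \le 1$. In particular every such $x$ lies in $B_{2r_y}(y)\cap\cC$ and satisfies $r_x \le 10^{2} d(y,x) \le C(n) r_y$; combined with the lipschitz condition (n4) on the radius function this forces $r_x$ to be comparable to $r_y$ for all the relevant centers when $r_y$ is on the order of $\max r_x$, and bounded by $C(n) r_y$ always. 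Thus the fiber $\{x \in \cC : y \in W_x\}$ is contained in $B_{2r_y}(y) \cap \cC$.

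Next I would bound the $\mu$-measure of this fiber. Since $W_x \subseteq \cN_{10^{-3}}$ and $y \in \cN_{10^{-3}}$ with $d(y,\cC) = r_y$, the Ahlfor's regularity hypothesis (S3) gives $\mu\big(B_{2r_y}(y)\big) \le \mu\big(B_{3r_y}(x_0)\big) \le C(n) B\, r_y^{n-4}$ for any nearest center $x_0 \in \cC$ with $d(y,x_0) = r_y$ (using that $B_{2r_y}(y) \subseteq B_{3r_y}(x_0)$ and $r_y > r_{x_0}$, so (S3) applies; a small covering argument handles the case $2r_y$ is not admissible directly). Hence
\begin{align}
\mu\big(\{x \in \cC : y \in W_x\}\big) \le C(n,B)\, r_y^{n-4}\, .
\end{align}
On the fiber, $r_h(y)^{4-n} \approx r_y^{4-n}$ up to a dimensional constant, by Lemma \ref{l:neck:distance_harm_rad} (note $\cN_{10^{-3}} \subseteq \cN_{10^{-6}}$ so the lemma applies). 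Therefore, applying Fubini,
\begin{align}
\int_{B_1}\Big(\int_{W_x} r_h^{4-n} f\Big)\, d\mu(x)
&= \int_{\cN_{10^{-3}}} f(y)\, r_h(y)^{4-n} \, \mu\big(\{x \in \cC \cap B_1: y \in W_x\}\big)\, dv_g(y) \notag\\
&\le C(n,B)\int_{\cN_{10^{-3}}} f(y)\, r_y^{4-n}\, r_y^{n-4}\, dv_g(y) = C(n,B)\int_{\cN_{10^{-3}}} f\, ,
\end{align}
which is the claimed estimate.

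The main obstacle I anticipate is the careful bookkeeping in the fiber measure bound: one must confirm that $\{x \in \cC : y \in W_x\}$ really does sit inside a ball of radius comparable to $d(y,\cC)$ centered near $y$, and that the radius $r$ appearing in (S3) is an admissible scale (i.e. $r > r_x$ and $B_{2r}(x) \subseteq B_2$), which is where the lipschitz bound (n4) on $r_x$ and the restriction $x \in \cC \cap B_1$ both get used. The inequality $r_h(y) \asymp d(y,\cC)$ on the enlarged neck region, and the fact that $W_x \subseteq \cN_{10^{-3}}$ (which relies precisely on (n4), as remarked in Section \ref{sss:neck:wedge}), are exactly the two structural inputs that make the exponents cancel; everything else is a routine application of Fubini and volume monotonicity.
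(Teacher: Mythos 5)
Your proof is correct and follows essentially the same route as the paper: both arguments swap the order of integration, observe that the fiber $\{x:y\in W_x\}$ sits inside a ball of radius comparable to $d(y,\cC)$, and invoke the Ahlfor's upper bound (S3) together with $r_h\asymp d_\cC$ to make the exponents cancel. The only difference is organizational — the paper runs the Fubini argument over a dyadic decomposition $\{s_{i+1}\le d_\cC(y)\le s_i\}$ while you do it pointwise in $y$ — and your explicit attention to recentering the ball at a nearest point of $\cC$ so that (S3) applies is, if anything, slightly more careful than the paper's treatment.
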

\begin{remark}
This is essentially an effective version of a Fubini type theorem.	
\end{remark}
\begin{proof}
We have by Lemma \ref{l:neck:distance_harm_rad} that $r_h(y)\ge C(n) d_\cC(y)$. Hence, we only need to estimate $\int_{B_1}\Big(\int_{W_x} d_\cC^{4-n} f\Big)\, d\mu$.
Moreover, since $W_x\subset \cN_{10^{-2}}$ we can assume without loss that $\text{ supp }f\subset \cN_{10^{-3}}\cap B_{18/10}(p)$.
Let $s_i=10^{-i}$. We rewrite the integral in the following manner.
\begin{align}
\int_{B_1(p)}\left(\int_{W_x} d_\cC^{4-n}(y) f(y) \,dy\right)d\mu(x) &=\int_{B_1(p)}\left(\sum_{i=0}^\infty \int_{W_{x}\cap \{s_{i+1}\le d_\cC(y)\le s_i\}} d_\cC^{4-n}(y) f(y)\,dy\right) d\mu(x) \\ \nonumber
&\le \sum_{i=0}^\infty s_{i+1}^{4-n}\int_{B_1(p)}\left(\int_{W_x\cap \{s_{i+1}\le d_\cC(y)\le s_i\}} f\, dy\right)d\mu(x).
\end{align}
Let $\chi_i(x,y)$ be a function on $M\times M$ such that $\chi_i(x,y)=1$ if $y\in W_{x}\cap \{s_{i+1}\le d_\cC(y)\le s_i\}$, otherwise $\chi_i(x,y)=0$. By Fubini theorem and Ahlfors assumption on the measure $\mu$, we have 
\begin{align}
 \int_{B_1(p)}\int_{W_{x}\cap \{s_{i+1}\le d_\cC\le s_i\}} f(y)dy\, d\mu(x)&=\int_{B_1(p)}\int_{\{s_{i+1}\le d_\cC(y)\le s_i\}} \chi_i(x,y)f(y)\,dy\, d\mu(x)\\ \nonumber
& =\int_{\{s_{i+1}\le d_\cC\le s_i\}}\left( \int_{B_1(p)}\chi_i(x,y)\, d\mu(x)\right)f(y)\,dy\\ \nonumber
 &\le \int_{\{s_{i+1}\le d_\cC\le s_i\}}f(y)\mu(B_{3s_i}(y)\cap B_1(p))\, dy\le C(n,B)s_i^{n-4} \int_{\{s_{i+1}\le d_\cC\le s_i\}}f(y)\,dy\, . 
\end{align}

Combining with the previous computation we obtain
\begin{align}
\int_{B_1(p)}\int_{W_x} d_\cC^{4-n} f(z)\,dz d\mu 
&\le \sum_{i=0}^\infty C(n,B)\int_{\{s_{i+1}\le d_\cC\le s_i\}}f(y)\,dy\le C\int_{\cN_{10^{-3}}\cap B_{18/10}(p)}f(z)\,dz.
\end{align}
\end{proof}
\vspace{.5cm}

We are now in a position to prove Proposition \ref{p:splitting_neck_region}:\\

\begin{proof}[Proof of Proposition \ref{p:splitting_neck_region}]
Let us consider the two maximal functions
\begin{align}
	&m_1(x) \equiv \sup_{r_x\leq r\leq 2/3}\fint_{B_{r}(x)}\big|\langle\nabla u_a,\nabla u_b\rangle - \delta_{ab}\big|(z)\,dz\, ,\notag\\
	&m_2(x) \equiv \sup_{r_x\leq r\leq 2/3}r^2\fint_{B_{r}(x)}\big|\nabla^2 u\big|^2(z)\,dz\, .
\end{align}
We clearly have the estimate $m(x)\leq m_1(x)+m_2(x)$, so it is enough to estimate each of these individually.\\

Let us begin by estimating $m_2(x)$.  By applying the scale invariant estimates of Theorem \ref{t:splitting_neck_pointwise_hessian} with $\frac{1}{2}B^{-1}\epsilon$ we have for every $x\in \cC$ and $r_x\leq r\leq 2/3$ the pointwise estimate
\begin{align}
	r^2\fint_{B_{r}(x)}\big|\nabla^2 u\big|^2(z)\,dz < B^{-1}\frac{\epsilon}{2}\, ,
\end{align}
and hence we clearly have the much weaker estimate
\begin{align}
	\int_{B_1(p)}m_2(x)\,d\mu(x) = \int_{B_1(p)} \left(\sup_{r_x\leq r\leq 2/3}r^2\fint_{B_{r}(x)}\big|\nabla^2 u\big|^2(z)\,dz\right)d\mu(x) < \frac{\epsilon}{2}\, .
\end{align}

The more challenging part of the Proposition is therefore the gradient estimate of $m_1(x)$.  To deal with this we apply Theorem \ref{t:neck_gradient} with $\frac{1}{4}B^{-1}\epsilon$ in order to see that for every $x\in \cC$ and $r_x\leq r\leq 2/3$ we have the pointwise estimate
\begin{align}
		\fint_{B_r(x)}\big|\langle \nabla u_a,\nabla u_b\rangle-\delta_{ab}\big| (z)\,dz< \frac{1}{4}B^{-1}\epsilon +C(n,\rv)\int_{W_x\cap B_{18/10}(p)} r_h^{1-n} |\nabla^2 u|(z)\,dz\, ,
\end{align}
which of course gives the maximal function estimate
\begin{align}
		m_1(x)< \frac{1}{4}B^{-1}\epsilon +C(n,\rv)\int_{W_x\cap B_{18/10}(p)} r_h^{1-n} |\nabla^2 u|(z)\,dz\, .
\end{align}
Now if we apply Lemma \ref{l:wedge_integral} we can then estimate
\begin{align}
	\int_{B_1} m_1(x)\,d\mu &< \frac{1}{4}\epsilon + C(n,\rv)\int_{B_1}\Big(\int_{W_x\cap B_{18/10}(p)} r_h^{1-n} |\nabla^2 u|(z)\,dz\Big)\,d\mu(x)\, \notag\\
	&\leq \frac{1}{4}\epsilon + C(n,\rv,B)\int_{\cN_{10^{-3}}\cap B_{18/10}(p)} r_h^{-3} |\nabla^2 u|(z)\,dz\, .
\end{align}
To finish the proof let us now apply Theorem \ref{t:splitting_neck_summable_hessian} with constant $\epsilon'=C(n,\rv,B)^{-1}\frac{1}{4}\epsilon$ in order to conclude
\begin{align}
	\int_{B_1} m_1(x)\,d\mu &\leq \frac{1}{4}\epsilon + C(n,\rv,B)\int_{\cN_{10^{-3}}\cap B_{18/10}(p)} r_h^{-3} |\nabla^2 u|\, \notag\\
	&\leq \frac{1}{4}\epsilon+\frac{1}{4}\epsilon = \frac{1}{2}\epsilon\, .
\end{align}
Combining this with our estimates on $m_2(x)$ finishes the proof.
\end{proof}

\vspace{.5cm}

\subsubsection{Proof of Theorem \ref{t:splitting_neck_bilipschitz} for Smooth Manifolds.}

Now equipped with Proposition \ref{p:splitting_neck_region} we are now in a position to finish the proof of Theorem \ref{t:splitting_neck_bilipschitz} in the case when $X=M$ is a smooth manifold.  Indeed, let us pick $\delta,\delta'<\delta(n,\rv,\tau,B,\epsilon)$ such that Proposition \ref{p:splitting_neck_region} holds with $(\epsilon')^2$, where $\epsilon'=\epsilon'(n,B,\epsilon)$ will be chosen later.  That is, we have the estimate
\begin{align}
\int_{B_1} m(x)\,d\mu(x) < (\epsilon')^2 \, ,
\end{align}
where
$$m(x) \equiv \sup_{r_x\leq r\leq 2/3}\Bigg(\fint_{B_{r}(x)}\big|\langle\nabla u_a,\nabla u_b\rangle - \delta_{ab}\big|(z)\,dz + r^2\fint_{B_{r}(x)}\big|\nabla^2 u\big|^2(z)\,dz\Bigg)\, .$$

Let us now define $\cC_\epsilon\subseteq \cC\cap B_1$ to be the subset such that
\begin{align}
\cC_\epsilon \equiv \{x\in \cC\cap B_1: m(x)<\epsilon'\}\, .	
\end{align}
Notice that because of our integral estimate on $m(x)$ a weak $L^1$ argument gives us for $\epsilon'<\epsilon'(n,B,\epsilon)$ that
\begin{align}
\mu\big(\big(\cC\cap B_1\big)\setminus \cC_\epsilon\big)<\epsilon\, .	
\end{align}

Let us now see that $u:\cC_\epsilon\to \dR^{n-4}$ is a $1+\epsilon$-bilipschitz map.  For any $\epsilon''>0$ let us first show that if $\epsilon'\le \epsilon'(n,\rv,\tau,B,\epsilon'')$ and $\delta,\delta'\le \delta(n,\rv,\tau,B,\epsilon',\epsilon'')$ then for any $x\in \cC_\epsilon$ and $r_x\le r\le 1$, 
\begin{align}\label{e:epsilon''GHmap}
u:B_{\epsilon'' r}\big(\cL_{x,2r}\big)\to \dR^{n-4}\, ,
\end{align}
is itself a $10\epsilon'' r$-GH map. To see this, note that since $B_{r\delta^{-1}}(x)$ is $\delta r$ close to $\mathbb{R}^{n-4}\times C(S^3/\Gamma)$, by Theorem  \ref{t:splitting_function} if $\delta\le \delta'(n,\rv,\epsilon')$ then there exists a $\epsilon'$-splitting map $\tilde{u}: B_{2r}(x)\to \mathbb{R}^{n-4}\subset \mathbb{R}^{n-4}\times C(S^3/\Gamma)$ such that 
\begin{align}
\tilde{u}\circ \iota: \mathbb{R}^{n-4}\times \{y_c\} \cap B_{2r}(0^{n-4},y_c)\to \mathbb{R}^{n-4}
\end{align}
is a $\epsilon'r$-GH map, where $\iota$ is $\delta r$-GH map from $\mathbb{R}^{n-4}\times C(S^3/\Gamma)$ in Definition \ref{d:neck}. This implies
\begin{align}
\tilde{u}: \cL_{x,2r}\to \mathbb{R}^{n-4},
\end{align}
is an $2\epsilon' r$-GH map. Let us assume without loss of generality that $u(x)=\tilde{u}(x)$. We claim that for any $\epsilon''>0$ there exists $(n-4)\times (n-4)$ matrix $O\in O(n-4)$ such that if $\delta,\delta'\le \delta(n,\rv,\tau,B,\epsilon'')$ and $\epsilon'\le \epsilon'(n,\rv,B,\epsilon'')$ we have 
\begin{align}
\sup_{y\in B_{2r}(x)}|Ou-\tilde{u}|(y)\le \epsilon'' r/100.
\end{align}
Actually this follows directly from a contradiction argument by noting that if we choose a contradiction sequence then the both $u$ and $\tilde{u}$ would converge uniformly in compact subset and $W^{1,2}$-sense (see Theorem \ref{t:weak_sobolev_convergence} ) to the linear coordinates of $\mathbb{R}^{n-4}\subset \mathbb{R}^{n-4}\times C(S^3/\Gamma)$. Therefore, we get 
$Ou: \cL_{x,2r}\to \mathbb{R}^{n-4},$ is an $\epsilon'' r/10$-GH map which implies $u: \cL_{x,2r}\to \mathbb{R}^{n-4}$ is an $\epsilon'' r/10$-GH map. This implies 
\begin{align}
u: B_{\epsilon'' r}(\cL_{x,2r})\to \mathbb{R}^{n-4}
\end{align}
is an $10\epsilon'' r$-GH map which proves \eqref{e:epsilon''GHmap}.  \\
Now we can use \eqref{e:epsilon''GHmap} to finish our proof. For any $x\in \cC_\epsilon$ and $y\in \cC$ we have $d(x,y)\ge \tau^{2}r_x$.  Let $r=\tau^{-2}d(x,y)\ge r_x$.
By cone-splitting theorem \ref{t:cone_splitting} and $\epsilon$-regularity Theorem \ref{t:eps_reg}, if $\delta\le \delta(n,\rv,\epsilon'',B,\rv)$ we have $\cC\cap B_{2r}(x)\subset B_{\epsilon'' r}(\cL_{x,2r})$ which is also stated in Definition \ref{d:neck}.  Therefore, we have 
\begin{align}
\Big||u(x)-u(y)|-d(x,y)\Big|\le 10\epsilon'' r.
\end{align}
Since $r=\tau^{-2}d(x,y)$, if $\epsilon''\le \epsilon''(\epsilon,\tau)=\tau^2 \epsilon/10$ we conclude that 
\begin{align}
\Big||u(x)-u(y)|-d(x,y)\Big|\le \epsilon d(x,y).
\end{align}
Hence we have proved $u: \cC_\epsilon\to \mathbb{R}^{n-4}$ is a $1+\epsilon$-bilipschitz map if $\delta',\delta\le \delta(n,\rv,\tau,B,\epsilon)$, which finishes the proof of Theorem \ref{t:splitting_neck_bilipschitz}. $\square$

\vspace{.5cm}

\subsubsection{Proof of Theorem \ref{t:splitting_neck_bilipschitz} for General Limit Spaces}

We wish to now use a relatively straight forward limiting argument based on the neck approximation of Theorem \ref{t:neck_approximation} in order to conclude the proof of Theorem \ref{t:splitting_neck_bilipschitz} for general limit spaces $M^n_j\to X$.

Thus let $\epsilon'<<\epsilon$, which will later be fixed so that $\epsilon'=\epsilon'(n,\tau,\epsilon)$, and let $2\delta,2\delta'$ be the corresponding constants so that Theorem \ref{t:splitting_neck_bilipschitz} holds on smooth manifolds with $\epsilon'$.  If $\cN\subseteq B_2(p)$ is a $(\delta,\tau)$-neck region with $u:B_4(p)\to \dR^{n-4}$ a $\delta'$-splitting function, then by applying Theorem \ref{t:neck_approximation} let us consider a sequence of neck regions $\cN_j\to \cN$ which converge in the sense of $(1)- (4)$ of Theorem \ref{t:neck_approximation}, and let $u_j:B_4(p_j)\to \dR^{n-4}$ a sequence of $\delta'$-splitting functions with $u_j\to u$ uniformly.  Let $\cC_{\epsilon,j}\subseteq \cC_j$ be a sequence which satisfy the desired $\epsilon'$-bilipschitz and measure estimates given by Theorem \ref{t:splitting_neck_bilipschitz}, and let us consider the Hausdorff limit set $\cC_{\epsilon,j}\to \cC_\epsilon\subseteq \cC$.

Note that $\cC_\epsilon$ satisfies the desired $\epsilon$-bilipschitz control of Theorem \ref{t:splitting_neck_bilipschitz} since $u_j\to u$ uniformly.  To finish the proof we need to understand measure estimates on $\mu(B_1\setminus \cC_\epsilon)$.  However recall condition $(4)$ of our neck approximation Theorem, which tells us that if $\mu_j\to \mu_\infty$ then in particular $\mu\leq C(n,\tau)\mu_\infty$.  Since $B_1(p)\setminus \cC_\epsilon$ is an open set, we can conclude
\begin{align}
\mu(B_1\setminus \cC_\epsilon)\leq \mu_\infty\big(B_1\setminus \cC_\epsilon\big)\leq \liminf \mu_j\big(B_1\setminus \cC_{\epsilon,j}\big) < C(n,\tau)\epsilon' < \epsilon\, ,	
\end{align}
where in the last line we have taken $\epsilon' <C(n,\tau)^{-1}\epsilon$, which finishes the proof. $\square$

\vspace{1cm}

\section{Completing the Proof of Theorem \ref{t:neck_region}}\label{s:neck_region_proof}

In this section we use the tools of Sections \ref{s:L2_bound} and  \ref{s:neck_splitting} in order to prove Theorem \ref{t:neck_region}.  The outline of this section is as follows.  In Section \ref{ss:neck_region_smooth:lower_volume} we will first prove the lower volume bound of Theorem \ref{t:neck_region}.1.  The proof of this lower bound will depend on the discrete Reifenberg maps constructed in Section \ref{ss:neck:reifenberg}.  In Section \ref{ss:neck_region_proof:smooth} we will then complete the proof the remaining statements of Theorem \ref{t:neck_region} in the case when we are on a manifold by an inductive construction.  Finally in Section \ref{ss:neck_region_proof:limit} we will complete the remainder of the proof by considering a general limit space.\\

\subsection{Proof of Lower Ahlfors Regularity bound of Theorem \ref{t:neck_region}.1 }\label{ss:neck_region_smooth:lower_volume}
In this subsection we will prove the lower Ahlfors regularity bound of Theorem \ref{t:neck_region}.1.  Without any loss of generality we can focus on proving the lower bound
\begin{align}\label{e:lower_ahlfors:1}
\mu(B_1(p))>A(n)^{-1}>0\, ,	
\end{align}
as it will be clear the verbatim construction will tackle a general ball since $B_{2r}(x)\cap \cN\subset B_{2r}(x)$ is a $(\delta,\tau)$-neck for $x\in \cC$, $r\ge r_x$ and $B_{2r}(x)\subset B_2(p)$.  Furthermore, it will suffices to consider $\max_{x\in\cC\cap B_1(p)}r_x\le 10^{-5n}$ since otherwise the bound \eqref{e:lower_ahlfors:1} follows trivially by the definition of measure $\mu$. 

 Therefore with $\delta'<<1$ small let us consider a harmonic $\delta'$-splitting function $u:B_4(p)\to \dR^{n-4}$.  The proof of \eqref{e:lower_ahlfors:1} is based on a discrete degree type argument.  More precisely, let us begin by defining the discrete ${\epsilon}$-Reifenberg map
\begin{align}
\Phi: \cC\cap B_{19/10}(p)\to B_2(0^{n-4})\, ,	
\end{align}
given by Theorem \ref{t:neck_reifenberg}.  Note since both $\Phi$ and $u$ are ${\epsilon}$\text{-isometries}, that after possibly composing $u$ with an orthogonal transformation we can assume $|\Phi-u|<C(n)\epsilon$.  Let us denote $\cC'=\cC'_0\cup\cC'_+\equiv \Phi(\cC\cap B_{19/10}(p))\subseteq B_2(0^{n-4})$, and since $\Phi$ is bih\"older by Theorem \ref{t:neck_reifenberg}.1 we can consider the inverse mapping $\Phi^{-1}:\cC'\to \cC\subseteq B_2(p)$.  Let us now compose this with the splitting function $u$ in order to construct the map
\begin{align}
F\equiv u\circ \Phi^{-1}:\cC'\subseteq \dR^{n-4}\to \dR^{n-4}\, .	
\end{align}
Let us remark on some properties of the mapping $F$:
\begin{align}
|F-Id|<C(n)\epsilon ,\\ \label{e:Ahlforslowerbound_covering}
F\big(T_{x'}^{-1}B_{10r_{x'}}(x')\big)\subseteq B_{20r_{x'}}(F(x')), \text{ for all $x'\in\cC'_+$,}
\end{align}
where $r_{\Phi(x)}=r_x$ and $T_{\Phi(x)}=T_x$ with $\Phi(x)=x'$ is as in Theorem \ref{t:neck_reifenberg}. Recall that $T_{x'}^{-1}B_{10r_{x'}}(x'):=\{x'+T_{x'}^{-1}y: y\in B_{10r_{x'}}(0^{n-4})\}$. The first property follows from the combined Gromov-Hausdorff behaviors of $\Phi$ and $u$, while the second property follows from Theorem \ref{t:neck_reifenberg}.2 together with the lipschitz bound $|\nabla u|\leq 1$.\\

Now using Theorem \ref{t:neck_reifenberg} let us consider the covering of $\dR^{n-4}\setminus \cC'_0$ given by
\begin{align}
\dR^{n-4}\setminus \cC'_0\subseteq A_{\frac{3}{2},\infty}(0^{n-4})\cup \bigcup_{x\in\cC_+} T_x^{-1}\bar B_{r_x}\big(\Phi(x)\big)= A_{\frac{3}{2},\infty}(0^{n-4})\cup \bigcup_{x'\in\cC'_+} T_{x'}^{-1}\bar B_{r_{x'}}\big(x'\big)\, .
\end{align}
 Let us consider a partition of unity $\phi\cup \{\phi_{x'}\}$ with respect to this covering such that $\text{supp} \phi_{x'}\subset T_{x'}^{-1}B_{3r_{x'}}(x')$ and $\text{supp }\phi\subset A_{5/4,\infty}(0^{n-4})$. The following claim compares the supports of $\phi_{x'}$ and $\phi_{y'}$, which will be used in the calculations \eqref{e:Ahlforslowerbound_GzIdz} and \eqref{e:Ahlforslowerbound_GyGy'} below. 
 
\vskip 2mm

\noindent
\textbf{Claim:} If $T_{x'}^{-1}B_{3r_{x'}}(x')\cap T_{y'}^{-1}B_{3r_{y'}}(y')\ne \emptyset$, then  $T_{y'}^{-1}B_{3r_{y'}}(y')\subset T_{x'}^{-1}B_{10r_{x'}}(x')$\footnote{Recall again that $T_{x'}^{-1}B_{3r_{x'}}(x'):=\{x'+T_{x'}^{-1}z: z\in B_{3r_{x'}}(0^{n-4})\}$ } .

\vskip 2mm
 
 Proof of Claim:  From the definition of $T_{x'}^{-1}B_{3r_{x'}}(x')$ and $T_{y'}^{-1}B_{3r_{y'}}(y')$, the estimate $T_{x'}^{-1}B_{3r_{x'}}(x')\cap T_{y'}^{-1}B_{3r_{y'}}(y')\ne \emptyset$ is equivalent to $B_{3r_{x'}}(T_{x'}x')\cap T_{x'}T_{y'}^{-1}B_{3r_{y'}}(T_{x'}y')\ne \emptyset$.
We will  show that 
 \begin{align}\label{e:distance_xy_Ahlforslowerboundproof}
 d(x,y)\le 4(r_x+r_y).
 \end{align}
Let us first assume \eqref{e:distance_xy_Ahlforslowerboundproof} and finish the proof of the claim. Since $|\text{Lip }r_x|\le \delta$ by (n4), \eqref{e:distance_xy_Ahlforslowerboundproof} implies that $d(x,y)\le 9r_x$ and $r_y\le 11r_x/10$. Moreover, by Theorem \ref{t:neck_reifenberg} there exists $O\in O(n-4)$ such that $|T_{x'}(OT_{y'})^{-1}-Id|\le \epsilon$. Since $T_{x'}(OT_{y'})^{-1}B_{3r_{y'}}(T_{x'}y')=T_{x'}T_{y'}^{-1}O^{-1}B_{3r_{y'}}(T_{x'}y')=T_{x'}T_{y'}^{-1}B_{3r_{y'}}(T_{x'}y')$, without loss of generality, let us now assume $|T_{x'}T_{y'}^{-1}-Id|\le \epsilon$. Therefore, if $\epsilon\le \epsilon(n)$, by $B_{3r_{x'}}(T_{x'}x')\cap T_{x'}T_{y'}^{-1}B_{3r_{y'}}(T_{x'}y')\ne \emptyset$, we get $|T_{x'}x'-T_{x'}y'|\le 3r_{x'}+34r_{y'}/11\le 64r_{x'}/10$. Thus the statement $T_{y'}^{-1}B_{3r_{y'}}(y')\subset T_{x'}^{-1}B_{10r_{x'}}(x')$, which is equivalent to $T_{x'}T_{y'}^{-1}B_{3r_{y'}}(T_{x'}y')\subset B_{10r_{x'}}(T_{x'}x')$, follows directly if $\epsilon\le \epsilon(n)$. Hence we have proved the claim by assuming \eqref{e:distance_xy_Ahlforslowerboundproof}.
 
Let us now prove \eqref{e:distance_xy_Ahlforslowerboundproof}.  Without loss of generality let us assume $r_y\le r_x$. 
and assume $d(x,y)=r$. Noting $T_{x,2r}\Phi$ is an $\epsilon r$\text{-isometry}, we conclude that 
\begin{align}
|T_{x,2r}(\Phi(x)-\Phi(y)|\ge (1-\epsilon)r.
\end{align}
On the other hand, by Theorem \ref{t:neck_reifenberg} we have $|T_xT_{x,2r}^{-1}|\le (2r/r_x)^\epsilon$, $|T_{y,2r}T_y^{-1}|\le (2r/r_y)^\epsilon$  and $|T_{x,2r}\circ (O_{xy,2r}T_{y,2r})^{-1}-Id|\le \epsilon$ for some $O_{xy,2r}\in O(n-4)$. Thus we have 
\begin{align}
|T_{x}T_{y}^{-1}|=|\Big(T_xT_{x,2r}^{-1}\Big)\Big(T_{x,2r}\circ (O_{xy,2r}T_{y,2r})^{-1}\Big)O_{xy,2r}\Big(T_{y,2r}T_y^{-1}\Big)|\le (1+\epsilon)(2r/r_x)^\epsilon (2r/r_y)^\epsilon.
\end{align}
Since $B_{3r_{x'}}(T_{x'}x')\cap T_{x'}T_{y'}^{-1}B_{3r_{y'}}(T_{x'}y')\ne \emptyset$, we thus have 
\begin{align}\label{e:Txphixphiyupper}
|T_{x}(\Phi(x)-\Phi(y))|\le 3r_x+3r_y|T_{x}T_{y}^{-1}|\le 3r_x+(1+\epsilon)3r_y(2r/r_x)^\epsilon (2r/r_y)^\epsilon.
\end{align}
Using the estimate $|T_{x,2r}T_{x}^{-1}|\le (2r/r_x)^\epsilon$ we have
\begin{align}\label{e:Txphixphiylower}
|T_{x}(\Phi(x)-\Phi(y))|\ge |T_{x,2r}T_x^{-1}|^{-1}|T_{x,2r}(\Phi(x)-\Phi(y)|\ge (1-\epsilon)r({2r}/{r_x})^{-\epsilon}.
\end{align}

Now \eqref{e:distance_xy_Ahlforslowerboundproof} follows directly from \eqref{e:Txphixphiylower} and \eqref{e:Txphixphiyupper} if $\epsilon\le \epsilon(n)$.  Hence we have proved the claim. $\square$

Let us define the  mapping $G:\dR^{n-4}\to \dR^{n-4}$ given by
\begin{align}
	G(y') = \begin{cases}
 	\phi(y')\cdot Id(y') + \sum_{x'\in \cC'} \phi_{x'}(y') F(x')\,&\text{ if }y'\in\dR^{n-4}\setminus \cC'_0\, ,\notag\\
 	\phi(y')y'+(1-\phi(y'))F(y') &\text{ if }y'\in \cC'_0\, .
 \end{cases}
\end{align}
Since $F$ itself is continuous on $\cC'$, the map $G$ is continuous. 
 Using the properties of $F$ and Theorem \ref{t:neck_reifenberg} we can conclude the following properties for $G$:
\begin{enumerate}
	\item $|G-Id|<C(n)\epsilon+10^{-n}$,
	\item $G(y')=Id(y')=y'$ if $y'\not\in B_3(0^{n-4})$,
	\item $G\big(T_{y'}^{-1}B_{r_{y'}}(y')\big)\subseteq B_{40r_{y'}}(G(y'))$ for all $y'\in\cC'_+\cap B_1(0^{n-4})$.
\end{enumerate}
Actually, (2) follow directly from the definition of $G$. To see (1), for any $z\in \mathbb{R}^{n-4}$ if $z\not\in \cup_{y'\in \cC'_{+}}\text{supp} \phi_{y'}$ we have $G(z)=Id(z)$ or $G(z)=F(z)$ for $z\in \cC_0'$ and thus $|G(z)-Id(z)|\le C(n)\epsilon$. On the other hand, if $z\in T_{y'}^{-1}B_{3r_{y'}}(y')$ for some $y'\in \cC_+'$ we have by the above Claim that if $\phi_{x'}(z)\ne 0$ then $x'\in T_{y'}^{-1}B_{10r_{y'}}(y')$. Then we can compute for $z\in T_{y'}^{-1}B_{3r_{y'}}(y')$ that
\begin{align}\label{e:Ahlforslowerbound_GzIdz}
|G(z)-Id(z)|&=|\sum_{x'\in \cC'} \phi_{x'}(z) (F(x')-Id(z))|\\
&\le \sum_{x'\in \cC'} \phi_{x'}(z) |F(x')-F(y')|+\sum_{x'\in \cC'} \phi_{x'}(z) |F(y')-Id(y')|+\sum_{x'\in \cC'} \phi_{x'}(z)|y'-z|\\
&\le \sup_{x'\in T_{y'}^{-1}B_{10r_{y'}}(y')}|F(x')-F(y')|+C(n)\epsilon+3r_{y'}|T_{y'}^{-1}|.\\ \label{e:Ahlforslowerbound_Fx'_Fy'}
&\le C(n)\epsilon+20r_{y'}+3r_{y'}^{1-\epsilon}\\
&\le C(n)\epsilon+10^{-n}.
\end{align}
where in \eqref{e:Ahlforslowerbound_Fx'_Fy'} we have used \eqref{e:Ahlforslowerbound_covering}
 and $|T_{y'}^{-1}|\le r_{y'}^{-\epsilon}$ from Theorem \ref{t:neck_reifenberg}, and the last inequality follows from the fact that $\max_{x\in\cC\cap B_1(p)}r_x\leq 10^{-5n}$ and $|{\text{Lip}}r_x|\le \delta^2$.

 Let us check the third property. The proof is similar to the above computations. For any $y'\in \cC'_+\cap B_1(0^{n-4})$  and $z\in T_{y'}^{-1}B_{r_{y'}}(y')$, and noting that $\text{supp }\phi\subset A_{5/4,\infty}(0^{n-4})$, we have by the definition of $G$ that 
\begin{align}\notag 
|G(z)-G(y')|&=|\sum_{x'\in \cC'} \phi_{x'}(y') F(x')-\sum_{x'\in \cC'} \phi_{x'}(z) F(x')|\le |\sum_{x'\in \cC'} \phi_{x'}(y') F(x')-F(y')|+|\sum_{x'\in \cC'} \phi_{x'}(z) F(x')-F(y')|\\ \notag
&\le \sum_{x'\in \cC'} \phi_{x'}(y') |F(x')-F(y')|+\sum_{x'\in \cC'} \phi_{x'}(z) |F(x')-F(y')|\le 2\sup_{x'\in T_{y'}^{-1} B_{10r_{y'}}(y')}|F(x')-F(y')|\\  \label{e:Ahlforslowerbound_GyGy'}
&\le 40r_{y'},
\end{align}
 where the last inequality follows from \eqref{e:Ahlforslowerbound_covering}. Therefore, we have proved (3).

The second condition above tells us that $G$ is a degree one map from the ball $B_3$ to itself, and in particular we must have that $G$ is onto.  Combining with the first condition we in particular get that
\begin{align}
G\big(B_{3/4}(0^{n-4}\big)\supseteq B_{1/2}(	0^{n-4})\, .
\end{align}
Further combining this with the third conditions this tells us that
\begin{align}
B_{1/2}(	0^{n-4})\subseteq G\big(B_{3/4}(0^{n-4})\big)\subseteq  G\big(\bigcup_{x'\in \cC'\cap B_{7/8}(0^{n-4})} T_{x'}^{-1}\overline B_{r_{x'}}(x')\big)\subseteq \bigcup_{x'\in \cC'_+\cap B_{7/8}(0^{n-4})} B_{40r_{x'}}(G(x'))\cup \big(G(\cC'_0)\cap B_{7/8}(0^{n-4}) \big)\, .
\end{align}
But this immediately leads to the estimate
\begin{align}
\mu(B_1(p)) &= \sum_{x\in \cC_+\cap B_1(p)} r_{x}^{n-4} + \lambda^{n-4}\big(\cC_0\cap B_1(p)\big)\geq \sum_{x'\in \cC'\cap B_{7/8}(0^{n-4})} r_{x'}^{n-4} +\lambda^{n-4}\Big(G(\cC'_0)\cap B_{7/8}(0^{n-4})\Big)\notag\\
&= C(n)\sum_{x'\in \cC'\cap B_{7/8}(0^{n-4})} (40r_{x'})^{n-4}+\lambda^{n-4}\Big(G(\cC'_0)\cap B_{7/8}(0^{n-4})\Big)\notag\\
&\geq C(n)\Vol\Big(\bigcup_{x'\in \cC'\cap B_{7/8}} B_{40r_{x'}}(G(x'))\cup (G(\cC'_0)\cap B_{7/8}(0^{n-4}))\Big)\geq C(n)\Vol\Big(B_{1/2}(0^{n-4})\Big) \equiv A(n)^{-1}\, ,	
\end{align}
where we have used in the first line that $|\nabla u|\leq 1$ in order to conclude $\lambda^{n-4}\big(\cC_0\cap B_1(p)\big)\geq \lambda^{n-4}\big(u\big(\cC_0\cap B_1(p)\big)\big)=\lambda^{n-4}\big(G\big(\cC'_0\cap B_1(0^{n-4})\big)\big)\geq \lambda^{n-4}\Big(G(\cC'_0)\cap B_{7/8}(0^{n-4})\Big)$.  This completes the proof of the lower volume bound. $\square$

\vspace{.5cm}

\subsection{Induction Scheme and Proof of Theorem \ref{t:neck_region} on a Manifold}\label{ss:neck_region_proof:smooth}

Let us now focus our attention on finishing the proof of Theorem \ref{t:neck_region} in the case when we are on a smooth manifold $M$.  Let us begin by making several observations.  First, if $M$ is a smooth manifold then we have that $\inf r_x>0$, see Remark \ref{r:neck_region:1}.  Therefore, as we have already shown the lower Ahlfors regularity bound we are left with just proving the upper Ahlfors regularity of Theorem \ref{t:neck_region}.1 and the $L^2$ curvature estimate of Theorem \ref{t:neck_region}.3.  Additionally, by Theorem \ref{t:L2_neck} we have that the $L^2$ estimate of Theorem \ref{t:neck_region}.3 follows from the Ahlfors regularity of Theorem \ref{t:neck_region}.1, and thus we can focus in this subsection on only proving the upper Ahlfors regularity estimate.\\

Thus let us now outline our induction strategy.  For $\alpha\in \dN$ let us consider the following statement:
\begin{align}
(\alpha)\;\;\; &\text{For $x\in \cC$ with $r_x<r\leq 2^{-\alpha}$ and $B_{2r}(x)\subseteq B_2$ we have $\mu(B_r(x)) < A \, r^{n-4}$}\, .\notag
\end{align}

Our strategy is to find $A\leq A(n,\tau)$ such that we can prove statement $(\alpha)$ inductively.  If we can prove the inductive statement for $\alpha=0$ then we will have completed the theorem.\\

We need to begin with the base step of the induction process, and for that let us make the following observation.  We have already commented that our neck region $\cN = B_2\setminus \overline B_{r_x}(\cC_+)$ contains only balls of positive radius since we are on a manifold, and indeed since $\inf r_x>0$ we even have the (ineffective) lower bound
\begin{align}
  \min r_x > 2^{-\alpha_0}>0\, ,	
\end{align}
for some $\alpha_0\in \dN$ sufficiently large.  Therefore, we have that the inductive statement $(\alpha_0)$ trivially holds, since $r_x>2^{-\alpha_0}$ for all $x$ and thus there is no content to the statement.\\

Now let us assume we have proved the inductive statement $(\alpha+1)$, and from that and the constructions of Sections \ref{s:L2_bound} and \ref{s:neck_splitting} we will prove the inductive statement $(\alpha)$.  Thus for the remainder of this subsection let $x\in \cC$ with $r_x<r\leq 2^{-\alpha}$ and $B_{2r}(x)\subseteq B_2$.

Let us first deal with the easy case and observe that if $r_x>10^{-3}r$ is relatively large compared to our ball size, then we have both $\cN\cap B_{10^3 r}(x)\subseteq B_{10^3 r}(x)\setminus \overline B_{10^{-6}r}(\cC)$ and that $B_{10^3 r}(x)\setminus \overline B_{10^{-6}r}(\cC)$ is $\delta r$-GH close to $B_{10^3 r}\setminus \overline B_{10^{-6}r}(\dR^{n-4}\times\{0\})\subseteq \dR^{n-4}\times C(S^3/\Gamma)$.  Further, by the lipschitz condition $(n4)$ we have for every ball center $x_i\in \cC\cap B_{10^3 r}(x)$ that $r_{x_i}>r_x-10^3\delta\,r>10^{-6}r$.  Thus using the definition of $\mu$ we see that $(\alpha)$ trivially holds when $A(n,\tau)$ is taken to be the maximal number of disjoint balls of radius $10^{-6}\tau^2$ that one can fit in $B_{2}(0^{n-4})$.\\

Therefore we may assume that $r_x\leq 10^{-3}r$ without any loss, and our aim is to prove $(\alpha)$ for our ball $B_r(x)$.  The beginning point is to first prove a weaker estimate.  Thus let $y\in\cC\cap B_{2r}(x)$, and note by the lipschitz condition $(n4)$ of our neck region that $r_y< r_x + 2\delta r<10^{-2}r$.  Now let us consider a radius $r_y<s\leq r$ such that $B_{2s}(y)\subseteq B_{2r}(x)$.  If $s\leq \frac{r}{2}$ then $B_s(y)$ satisfies the hypothesis of $(\alpha)$ and therefore we have the upper bound $\mu(B_s(y))\leq A s^{n-4}$.  On the other hand, if $s> \frac{r}{2}$, then by a standard covering argument we may cover $B_s(y)\cap \cC$ by at most $C(n)$ balls $\{B_{r/4}(x_i)\}$.  Since the hypothesis holds for each of the balls $B_{r/2}(x_i)$ we can estimate to get the strictly weaker estimate
\begin{align}
(*)\;\;\; \mu(B_s(y))	\leq \sum \mu(B_{r/4}(x_i)) \leq C(n) A \big(\frac{r}{4}\big)^{n-4} \leq C(n) A s^{n-4}\equiv B(n,\tau)\, s^{n-4}\, .
\end{align}

In particular, we see that for every $y\in \cC\cap B_{2r}(x)$ and $r_y<s$ such that $B_{2s}(y)\subseteq B_{2r}(x)$ we have the weaker estimate $\mu(B_s(y))\leq B\, s^{n-4}$.\\

In order to exploit this weaker estimate let us now observe that for $\delta'>0$ we can choose $\delta\leq \delta(n,\tau,\delta')$ such that by Theorem \ref{t:splitting_function} there exists a $(n-4,\delta')$-splitting
\begin{align}
u:B_{8r}(x)\to \dR^{n-4}\, .	
\end{align}
If we consider Theorem \ref{t:splitting_neck_bilipschitz} then we see if $\delta,\delta'\leq \delta(n,\tau,\rv,B,\epsilon)=\delta(n,\tau,\rv,\epsilon)$ then there exists a subset $\cC_\epsilon\subseteq \cC\cap B_r(x)$ such that
\begin{align}\label{e:neck_region_proof:1}
&\mu\Big(\big(\cC\cap B_r(x)\big)\setminus \cC_\epsilon\Big) < \epsilon r^{n-4}\, ,\notag\\
&\text{For all $y,z\in \cC_\epsilon$ we have $1-\epsilon\leq \frac{|u(y)-u(z)|}{d(y,z)}\leq 1+\epsilon$}\, .
\end{align}

But let us now consider the consequences of this estimate.  Restricting to the set $\cC_\epsilon$ we have that $\{B_{\tau^2 r_x}(x)\}_{x\in\cC_\epsilon}$ are disjoint balls.  However, using \eqref{e:neck_region_proof:1} this tells us that the image balls
\begin{align}
	\big\{ B_{10^{-1}\tau^2 r_x}(u(x))\big\}_{x\in\cC_\epsilon}\subseteq B_{2r}(0^{n-4})\, ,
\end{align}
are also disjoint.  But then we automatically get the packing estimate
\begin{align}
	\sum_{x\in \cC_\epsilon} r_x^{n-4}&=C(n,\tau)\sum_{x\in \cC_\epsilon} \Big(10^{-1}\tau^2 r_x\Big)^{n-4}\leq C(n,\tau) \Vol\Big(\sum_{x\in \cC_\epsilon} B_{10^{-1}\tau^2 r_x}(u(x))\Big)\notag\\
	&\leq C(n,\tau) \Vol\Big(B_{2r}(0^{n-4})\Big)\leq C(n,\tau) r^{n-4}\equiv  \frac{1}{2}A(n,\tau) r^{n-4}\, .
\end{align}
But using the first estimate of \eqref{e:neck_region_proof:1} this then gives us
\begin{align}
\mu\big(B_r(x)\big)	\leq \mu\big(\cC_\epsilon\big)+\epsilon r^{n-4} = \sum_{x\in \cC_\epsilon} r_x^{n-4}+\epsilon r^{n-4} \leq A(n,\tau)\, r^{n-4}\, .
\end{align}
This proves the estimate $(\alpha)$ for $B_r(x)$, and since $B_r(x)$ was arbitrary this completes the inductive step of the proof, and hence the proof of Theorem \ref{t:neck_region} itself. $\square$
\vspace{.5cm}

\subsection{Proof of Theorem \ref{t:neck_region}}\label{ss:neck_region_proof:limit}

Let us now consider the case of a general limit space $M^n_j\to X$.  First observe that by the neck approximation of Theorem \ref{t:neck_approximation} we can consider a sequence of neck regions $\cN_j\subseteq B_2(p_j)$ such that $\cN_j\to \cN$ in the sense of $(1)\to(4)$ of Theorem \ref{t:neck_approximation}.  In particular, if we consider the packing measures $\mu_j$ of $\cN_j$ then we may limit $\mu_j\to \mu_\infty$ so that
\begin{align}
\mu \leq C(n,\tau)\, \mu_\infty\, .
\end{align}
In particular, by applying the results of the previous two subsections we can immediately conclude that the measure $\mu$ satisfies the required Ahlfors regularity of Theorem \ref{t:neck_region}.1.

Now if we apply Theorem \ref{t:neck_region}.3 to each of the neck regions $\cN_j$, then by observing that $\cN_j\to \cN$ in $C^{1,\alpha}\cap W^{2,p}$ on all compact subsets, we immediately obtain the $L^2$ estimate
\begin{align}
\int_{\cN\cap B_1} |\Rm|^2(z)\,dz < \epsilon\, ,	
\end{align}
which finishes the proof of the $L^2$ estimate of Theorem \ref{t:neck_region}.3.\\

To finish the proof we need to prove Theorem \ref{t:neck_region}.2, which is to say we need to see that $\cC_0$ is rectifiable.  The main claim needed for this result is the following:\\

{\bf Claim:}  For each $\epsilon>0$ if $\delta<\delta(n,\tau,\rv,\epsilon)$, then for $x\in \cC$ with $B_{2r}(x)\subseteq B_2$ there exists a closed subset $\cR_\epsilon\big(B_r(x)\big)\subseteq \cC_0\cap B_r(x)$ such that $\cR_\epsilon$ is bilipschitz to an open subset of $\dR^{n-4}$ and $\mu\big(B_r(x)\cap( \cC_0\setminus \cR_\epsilon)\big)<\epsilon r^{n-4}$.\\

Indeed, for such a ball $B_{2r}(x)\subseteq B_2$ let us choose a $\delta'$-splitting function $u:B_{4r}(x)\to \dR^{n-4}$, which for $\delta<\delta(n,\delta')$ exists by theorem \ref{t:splitting_function}.  Using Theorem \ref{t:splitting_neck_bilipschitz} we see that if $\delta',\delta<\delta(n,\tau,\rv,\epsilon)$ then there exists a subset $\cC_\epsilon\subset \cC$ such that the restriction $u:\cC_\epsilon\to \dR^{n-4}$ is uniformly bilipschitz and $\mu\big(B_r(x)\setminus \cC_\epsilon\big)<\epsilon r^{n-4}$.  We now define $\cR_\epsilon\equiv \cC_0\cap \cC_\epsilon$, and see this is our desired set. $\square$ \\

To finish the proof is now a measure theoretic argument.  Indeed, let $\{y_j\}\subseteq \cC_0$ be a countable dense subset and let us consider the set
\begin{align}
	\cR\equiv \bigcup_{B_{2r}(y_j)\subseteq B_2:\, r\in \dQ} \cR_\epsilon(B_r(y_j))\, .
\end{align}

Clearly $\cR\subseteq \cC_0$ is rectifiable, as it can be identified as a countable union of rectifiable sets.  Notice that because the sets $\cR_\epsilon$ are closed we also have the identification
\begin{align}
	\cR= \bigcup_{B_{2r}(y)\subseteq B_2} \cR_\epsilon(B_r(y))\, ,
\end{align}
so that it does not matter if we union over all points or scales or just some countable dense collection.\\

Now in order to conclude that $\cC_0$ is rectifiable we need to see that $\mu(\cC_0\setminus \cR) = \lambda^{n-4}(\cC_0\setminus \cR)=0$.  Thus let us assume that $\cC_0\setminus \cR$ has positive $n-4$ Hausdorff measure.  In particular, by standard arguments we have that $n-4$ a.e. point is a density point, which is to say that if $\lambda^{n-4}(\cC_0\setminus \cR)>0$ then for some dimensional constant $\epsilon_n>0$ and $x\in \cC_0\setminus \cR$ there are radii $r_a\to 0$ such that
\begin{align}
\lim \frac{\lambda^{n-4}\big((\cC_0\setminus \cR)\cap B_{r_a}(x)\big)}{r_a^{n-4}}	 > \epsilon_n>0\, .
\end{align}
But by taking $\epsilon<\epsilon_n$ and using the definition of $\cR$ we immediately arrive at a contradiction, and therefore $\cR\subseteq \cC_0$ is a full measure subset and $\cC_0$ is rectifiable.  This completes the proof of Theorem \ref{t:neck_region} $\square$

\vspace{1cm}
\section{Neck Decomposition Theorem}\label{s:neck_decomposition}

In this section we focus on decomposing our manifolds $M^n$ into two basic type of pieces, namely neck regions and $\epsilon$-regularity regions with uniformly bounded curvature.  The key aspect of this decomposition will be our ability to control the number of pieces in this decomposition, a result which will depend heavily on the results of Section \ref{s:neck} and is sharp.  We begin with the decomposition in the case of smooth spaces:\\

\begin{theorem}[Neck Decomposition]\label{t:neck_decomposition}
		Let $(M^n_i,g_i,p_i)\to (X,d,p)$ satisfy $\Vol(B_1(p_i))>\rv>0$ and the Ricci bound $|\Ric_i|\leq n-1$.  Then for each $0<\delta<\delta(n,\rv)$ and $0<\tau\le \tau(n)$ we can write
	\begin{align}
		&B_1(p)\cap \cR(X)\subseteq \bigcup_a \big(\cN_a\cap B_{r_a}\big) \cup \bigcup_b B_{r_b}(x_b)\, , \notag\\
		&B_1(p)\cap \cS(X) \subseteq \bigcup_a \big(\cC_{0,a}\cap B_{r_a}\big)\cup \tilde S(X)
	\end{align}
	such that
	\begin{enumerate}
		\item $\cN_a\subseteq B_{2r_a}(x_a)$ are $(\delta,\tau)$-neck regions.
		\item $B_{r_b}(x_b)$ satisfy $r_h(x_b)>2r_b$, where $r_h$ is the harmonic radius.
		\item $\cC_{0,a}\subseteq B_{2r_a}(x_a)\setminus \cN_a$ is the singular set associated to $\cN_a$.
		\item $\tilde\cS(X)$ is a singular set of $n-4$ measure zero.
		\item $\sum_a r_a^{n-4} + \sum_b r_b^{n-4} + H^{n-4}\big(\cS(X)\big) \leq C(n,\rv,\delta,\tau)$.
	\end{enumerate}
\end{theorem}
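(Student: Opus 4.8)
The plan is to build the decomposition by an iterated stopping-time argument on scales, where at each dyadic scale $r$ and each ball center we either (i) recognize that the ball has enough symmetry to be an $\epsilon$-regularity ball via Theorem \ref{t:eps_reg}, hence contributes an $r_h$-good ball of type (b), (ii) recognize that a maximal $(n-4,\delta)$-symmetric structure has formed, so we may extract a $(\delta,\tau)$-neck region $\cN_a$ out of the ball together with its associated effective singular set $\cC_{0,a}$, or (iii) continue subdividing. The key input that makes the content bound sharp is the Ahlfor's regularity of Theorem \ref{t:neck_region}.1: every neck region $\cN_a$ we extract carries a packing measure $\mu_a$ with $\mu_a(B_{r_a})\geq A(n,\tau)^{-1}r_a^{n-4}$, and the sets $\cC_{0,a}$, being subsets of the singular set at well-separated locations/scales, have pairwise controlled overlaps, so $\sum_a r_a^{n-4}$ is comparable to the total packing mass, which is in turn controlled. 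For the $\epsilon$-regularity balls, the content $\sum_b r_b^{n-4}$ is handled exactly as in the quantitative stratification estimate of Theorem \ref{t:quant_strat}, except that the neck structure theorem upgrades the $r^{n-4-\eta}$ bound to a clean $r^{n-4}$ bound.

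In more detail, I would set up the five-type ball decomposition alluded to in the outline: fixing parameters $\delta$ (neck quality), $\tau$ (Vitali/packing parameter), $\epsilon=\epsilon(n,\rv)$ from Theorem \ref{t:eps_reg}, and an auxiliary $\gamma$, I classify a ball $B_r(x)$ according to whether it is $(n-3,\epsilon)$-symmetric (then it is a regularity ball by Theorem \ref{t:eps_reg}), whether it is $(n-4,\delta)$-symmetric but with a violation of the $(n-3)$-symmetry detected persistently down to some scale (this is where a neck forms, using the cone-splitting Theorem \ref{t:cone_splitting} to propagate the $\dR^{n-4}$ factor and to see the $C(S^3/\Gamma)$ cross-section must appear), and otherwise it is of "neither" type and gets subdivided. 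The neck region extracted on $B_{2r_a}(x_a)$ is precisely $B_{2r_a}\setminus \overline B_{r_x}(\cC_a)$ where $\cC_a$ records, at each scale down to where either symmetry is broken or one lands on the genuine singular set, a maximal $\tau^2 r_x$-disjoint collection of near-singular points; verifying the four axioms (n1)--(n4) of Definition \ref{d:neck} is routine given cone-splitting and the Gromov-Hausdorff closeness. Then Theorem \ref{t:neck_region}.1 applies to each $\cN_a$. The genuinely singular leftover $\tilde\cS(X)$ is the set of points through which the subdivision never terminates and which never get absorbed into a neck region's $\cC_{0,a}$; a standard argument shows that by construction its $(n-4)$-Minkowski content, hence $H^{n-4}$-measure, is zero.

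For the content bound itself, the mechanism is: whenever a ball $B_{r}(x)$ is subdivided (type (iii)), the definition of the quantitative stratum forces $x\in \cS^{n-4}_{\epsilon,r}$ roughly, so the volume of a fixed-size tube around the collection of such centers is controlled by Theorem \ref{t:quant_strat}; a covering/counting argument then bounds the number of type (iii) balls at scale $r$ by $C r^{-(n-4)-\eta}$, and summing a geometric-type series in $r$ with the small extra exponent $\eta$ absorbed still leaves the count of regularity balls summable with total content $\sum_b r_b^{n-4}\leq C(n,\rv,\delta)$. The main new point, and the one I expect to be the principal obstacle, is proving that $\sum_a r_a^{n-4}\leq C$ with the sharp exponent: one must show that the neck regions $\cN_a$ can be chosen essentially disjointly at comparable scales, so that their packing measures $\mu_a$ can be summed against a single measure (the $(n-4)$-dimensional packing content of $\cS(X)\cap B_1$, or equivalently the limit packing measure) of total mass $\leq C(n,\rv)$, and then $\sum_a r_a^{n-4}\leq A(n,\tau)\sum_a \mu_a(B_{r_a})\leq A(n,\tau)\cdot C(n,\rv)$. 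Making the disjointness/comparable-scale bookkeeping precise — in particular ensuring that a single singular point is not charged by too many different $\cC_{0,a}$'s and that the recursion terminates quantitatively — is the technical heart; this is exactly where the Ahlfor's regularity (as opposed to the weaker $n-4-\delta$ estimate obtainable from \cite{CheegerNaber_Ricci} alone) is indispensable. Finally $H^{n-4}(\cS(X))\leq C$ and the estimate (2) of Theorem \ref{t:main_limits} follow by noting $\cS(X)\cap B_1\subseteq \bigcup_a \cC_{0,a}\cup\tilde\cS(X)$, covering each $\cC_{0,a}$ efficiently by its own neck balls, and using $H^{n-4}(\tilde\cS)=0$.
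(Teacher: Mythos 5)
Your overall architecture (classify balls, extract maximal necks, use $\epsilon$-regularity for smooth balls, leave a measure-zero residual set) matches the paper in spirit, but the two places you identify as "the technical heart" are exactly where your proposed mechanisms do not work, and the paper does something genuinely different there.

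First, your plan to bound $\sum_a r_a^{n-4}$ by charging each neck's packing measure $\mu_a$ against "the $(n-4)$-dimensional packing content of $\cS(X)\cap B_1$ ... of total mass $\leq C(n,\rv)$" is circular: the finiteness of $H^{n-4}(\cS(X)\cap B_1)$ is conclusion (5) of the very theorem you are proving (it is the Cheeger--Colding finiteness conjecture), so there is no pre-existing bounded measure to charge against. Similarly, the claim that the quantitative stratification bound $Cr^{n-4-\eta}$ of Theorem \ref{t:quant_strat} is "upgraded to a clean $r^{n-4}$ bound" by the neck structure theorem is asserted but not supplied; removing that $\eta$ is precisely the difficulty. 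The paper's actual mechanism is a self-improving covering scheme: it classifies balls not by symmetry but by the $n-4$ \emph{content of volume-pinched points} $\cE_\eta$ (the $c$/$d$/$e$ trichotomy of Section \ref{ss:neck_decomp:notation}), proves via content cone-splitting (Theorem \ref{t:content_cone_splitting}) that a content-rich ball is close to $\dR^{n-4}\times C(S^3/\Gamma)$, extracts a \emph{maximal} neck from it, and then uses the Ahlfors regularity (in particular the doubling of $\mu$) to show that the leftover $c$-balls after this extraction have content at most $C(n,\tau)\epsilon\, r^{n-4}$ --- strictly smaller than what one started with. Iterating produces a convergent geometric series $\sum (C\epsilon)^j$, which is where the sharp exponent comes from; no a priori bound on the singular set is needed.

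Second, you have no termination mechanism for the subdivision. The paper organizes the recursion by the monotone volume ratio: the balls that are neither necks nor regularity balls after the geometric-series stage are $e$-balls, on which $\inf \cV$ has increased by a definite $v_0(n,\rv,\delta,\tau)>0$ relative to the reference value $\overline V$ (Proposition \ref{p:neck_decomp:inductive_cover}); since $\cV\leq 1$, the outer induction runs at most $v_0^{-1}$ times, each step multiplying the content by $C(n,\rv,\tau,\delta)$, giving $C^{v_0^{-1}}=C(n,\rv,\delta)$. A symmetry-based stopping rule alone does not tell you why only finitely many levels of recursion occur, nor how to control the total count of subdivided balls with the sharp exponent. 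To repair your proof you would need to replace the symmetry classification by the volume-pinching classification (or some other bounded monotone quantity with a rigidity statement), and replace the charging argument by the maximal-neck-plus-geometric-series argument.
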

\vspace{.5cm}

The proof will require a series of covering arguments, where we will first decompose our ball $B_1(p)$ into five types of pieces.  Over the course of several subsections we will then break down these pieces until we are left with only the two which appear in the theorem itself.  \\

\subsection{Notation and Ball Decomposition Types}\label{ss:neck_decomp:notation}

The proof of Theorem \ref{t:neck_decomposition} will require multiple covering steps and lemmas, where we will first decompose $B_1(p)$ into a much larger collection of balls, and then proceed to recover those balls which are not either neck regions or smooth regions.  In order to avoid as much confusion as is reasonable, we will introduce in this subsection a variety of notation which will hold throughout this section.  In particular, we will introduce and discuss the various ball types which will appear.  \\

Let us begin by introducing some notation which will play a role.  Throughout the proof we have the underlying constants $\delta$ and $\tau$, however for rigor sake we will need several other constants floating around that we can exploit.  Throughout the proof these constants will eventually be fixed, however let us mention that they will roughly behave as
\begin{align}
0<\eta(n,\delta',\tau)<<\delta'(n,\delta,\tau)<<\delta<<\epsilon(n,\tau)<<\tau<<1\, .	
\end{align}

Let us now move to a distinct notational point.  When studying a ball we will often be interested in how the volume pinching behaves at any given point in comparison to a background reference value.  This reference value will be denoted by $\overline V$, and in practice will typically be given by
\begin{align}\label{e:neck_decom:bar_V}
\overline V \equiv \inf_{y\in B_4(p)} \cV_{\eta^{-1}}(y)\, .
\end{align}
Notice that it will be convenient to move up $\eta^{-1}$-scales in our volume measurements.  In addition to this, we will be interested in those points which remain volume pinched after we drop many scales.  Thus, let us also define the sets
\begin{align}\label{e:neck_decom:E_eta}
	\cE_\eta(x,r)\equiv \{y\in B_{4r}(x): \cV_{\eta r}(y)<\overline V + \eta\}\, .
\end{align}
Therefore $\cE_\eta(x,r)$ represents those points of $B_{4r}(x)$ whose value ratio has not increased to some definite amount $\eta$ more than our reference value.  Using the cone splitting of Theorem \ref{t:cone_splitting} this means that these are the very $0$-symmetric points of $B_{4r}(x)$.\\

Equipped with this terminology, which will be used consistently throughout this section, let us now consider the various ball types which will appear in the proof.  The conventions below will allow us to keep oriented throughout the proof of the various conditions we will be forced to consider.  After defining the sets carefully we will discuss in words their meaning:\\

\begin{enumerate}
\item[(a)] A ball $B_{r_a}(x_a)$ with subscript $a$ will be such that there exists a $(\delta,\tau)$-neck region $\cN_a\subseteq B_{2r_a}(x_a)$.
\item[(b)] A ball $B_{r_b}(x_b)$ with subscript $b$ will be such that	 $r_b(x_b)\geq 2r_b$.
\item[(c)] A ball $B_{r_c}(x_c)$ with subscript $c$ will be such that $\Vol\big(B_{\delta r_d}\cE_\eta(x_d,r_d)\big)> \epsilon\,\delta^4\,r_d^n$ 
\item[(d)] A ball $B_{r_d}(x_d)$ with subscript $d$ will be such that $\Vol\big(B_{\delta r_d}\cE_\eta(x_d,r_d)\big)\leq \epsilon\,\delta^4\,r_d^n$ and $\cE_\eta\ne \emptyset$.
\item[($\tilde{d}$)] A ball $B_{{r}_{\tilde{d}}}({x}_{\tilde{d}})$ with subscript $\tilde{d}$ will be such that $\Vol\big(B_{\delta r_{\tilde{d}}}\cE_\eta(x_{\tilde{d}},r_{\tilde{d}})\big)\leq \epsilon\,\delta^4\,r_{\tilde{d}}^n$.
\item[(e)] A ball $B_{r_e}(x_e)$ with subscript $e$ will be such that $\cE_\eta(x_e,r_e)=\emptyset$, that is we have the volume jump $\inf_{y\in B_{4r_e}(x_e)} \cV_{\eta r_e}(y)\geq \overline V + \eta$.
\item[(f)] A ball $B_{r_f}(x_f)$ with subscript $f$ will be a typical ball for which we have no apriori knowledge of additional structure.
\end{enumerate}

Of the various ball types above, only the first two take part in the final result of Theorem \ref{t:neck_decomposition}.  Notice also that the $(c)$, $(d)$, $(\tilde{d})$ and $(e)$ balls require a background parameter $\overline V$ in their definition.  Let us maybe take a moment to explain in words the import of these ball types which will appear in the proof.  A $c$-ball is one for which there is a lot of points which are volume pinched on many scales, from a $n-4$ content point of view.  In particular, we will see by Theorem \ref{t:n-4_content_conesplitting} that a $c$-ball is $\delta'$-GH close to $\dR^{n-4}\times C(S^3/\Gamma)$, and therefore is a ball for which we expect a neck region to exist on, though maybe it has not yet been built.  A $d$-ball is one for which, from a $n-4$ content point of view, there are very few volume pinched points on that scale.  On the one hand this is bad in that we do not understand the geometry of $d$-balls, while on the other this makes them easy to recover until we can get down to a scale we do understand.  A $\tilde{d}$-ball is a $d$-ball or $e$-ball. We introduce this ball just because sometimes we need to handle $d$-ball and $e$-ball simultaneously.  To explain the $e$-ball, let us now note that the final proof of the decomposition theorem will be by an induction process, where we will induct on the lower volume of the ball.  Therefore, an $e$-ball is a ball for which the volume has increased by some definite amount after dropping some scales, and thus we will be able to apply our inductive hypothesis to deal with it.  Finally, an $f$-ball is one for which we may know nothing.  Typically, the $f$-balls will not appear in the statements of results, but only in the proofs until we have categorized their behavior.\\

The next several subsections will consider a sequence of covering lemmas which will allow us to build this final decomposition.\\

\subsection{Cone Splitting and Codimension Four}

In this subsection we prove a handful of preliminary results, which will be used to eventually help fix the constants $\delta'$ and $\eta$, in terms of $\tau$ and $\delta$, throughout the proof.  Our first result in this direction is essentially a new viewpoint on the notion of cone splitting in \cite{CheegerNaber_Ricci}.  Instead of trying to find independent points, we will instead measure the content of volume pinched points.  In general these two notions need not coincide, however in the case of cone-splitting it is not hard to see that they do.  The precise result is phrased in a manner most convenient for the applications of this section:\\

\begin{theorem}[Content Cone-Splitting]\label{t:content_cone_splitting}
	Let $(M^n,g,p)$ satisfy $Vol(B_1(p))>v>0$ with $0<\delta,\zeta\le \delta(n,\rv)$ and $\epsilon>0$.  Then there exists $\eta(n,\rv,\delta,\epsilon,\zeta)>0$ such that if
	\begin{enumerate}
		\item $\Ric\geq -\eta$,
		\item If $\overline V\equiv \inf_{B_{4}(p)}\cV_{\eta^{-1}}(y)$ and $\cE_\eta(p,1)\equiv \{y\in B_{4}(p): \cV_{\eta}(y)<\overline V + \eta\}$ satisfies
	\begin{align}
		\Vol\big(B_{\delta}\cE_\eta\big)> \epsilon\, \delta^{k}\, ,
	\end{align}
	\end{enumerate}
	then there exists $q\in B_4(p)$ such that $B_{\zeta^{-1}}(q)$ is $(n-k,\zeta^2)$-symmetric.
\end{theorem}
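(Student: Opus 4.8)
The plan is to argue by contradiction and compactness, exploiting the rigidity of the volume ratio together with the cone-splitting theorem (Theorem \ref{t:cone_splitting}) and the iterated cone-splitting which upgrades many independent $0$-symmetric points into a higher-order symmetry. Fix $n$, $\rv$, $\delta$, $\zeta$, $\epsilon$ as in the statement and suppose no such $\eta$ exists. Then there is a sequence $\eta_j\to 0$ and pointed manifolds $(M^n_j,g_j,p_j)$ with $\Vol(B_1(p_j))>\rv$, $\Ric_j\ge -\eta_j$, so that writing $\overline V_j\equiv \inf_{B_1(p_j)}\cV^{\eta_j}_{\eta_j^{-1}}(y)$ and $\cE_j\equiv\{y\in B_1(p_j): \cV_{\eta_j}(y)<\overline V_j+\eta_j\}$ we have $\Vol\big(B_{\delta}\cE_j\big)>\epsilon\,\delta^k$, yet for no $q\in B_1(p_j)$ is $B_{\zeta^{-1}}(q)$ $(n-k,\zeta^2)$-symmetric. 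Pass to a Gromov--Hausdorff limit $(M^n_j,g_j,p_j)\to (X,d,p)$; since $\Ric_j\ge -\eta_j\to 0$ and the space is noncollapsed, $X$ is a noncollapsed limit of almost-nonnegatively-Ricci-curved spaces and Bishop--Gromov monotonicity holds in the limit.

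The first key step is to identify the limit of $\overline V_j$ and of $\cE_j$. Since $\eta_j^{-1}\to\infty$, the quantity $\overline V_j$ converges to $\overline V_\infty \equiv \lim_{R\to\infty}\cV_R(p)$ evaluated at the ``worst'' point, but more usefully, by volume convergence the sets $\cE_j$ converge (in the Hausdorff sense, up to subsequence, after intersecting with $\overline B_\delta$) to a closed set $\cE_\infty\subseteq \overline B_\delta(p)$ with $H^n(B_\delta\cE_\infty)\ge \epsilon\,\delta^k>0$; in particular $\cE_\infty$ has positive $k$-dimensional ``width'' in the sense that it cannot be contained in any $\delta/2$-neighborhood of a $(k-1)$-dimensional affine subspace of any tangent structure. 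Moreover, for $x\in\cE_\infty$ we have $\cV_0(x)\le \overline V_\infty$, and by Bishop--Gromov $\cV_r(x)\ge \cV_0(x)$ is forced, hence $\cV_r(x)=\overline V_\infty$ for all small $r$ — that is, the volume ratio is exactly pinched at every point of $\cE_\infty$. By Theorem \ref{t:metriccone_volumecone} (the volume-cone-implies-metric-cone rigidity of Cheeger--Colding) every point of $\cE_\infty$ is a vertex of a metric cone structure on $X$: each $x\in\cE_\infty$ is $0$-symmetric for $B_s(x)$ at every scale $s$.

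The second key step is to run the quantitative cone-splitting of Theorem \ref{t:cone_splitting} iteratively on the limit (equivalently, directly in the rescaled sequence $B_{\zeta^{-1}}(p_j)$ near scale comparable to $1$). Starting from the trivial $0$-symmetry at one point $x_0\in\cE_\infty$, whenever we have produced an $m$-symmetric ball with respect to some $L^m$, the positive-width property of $\cE_\infty$ inside $B_\delta$ guarantees (for $\delta$ not too small relative to $\zeta$, which is why the hypothesis $\delta\le\delta(n,\rv)$ and the dependence $\eta=\eta(n,\rv,\delta,\epsilon,\zeta)$ appear) a point $x\in\cE_\infty\cap B_1(p)\setminus B_{\zeta^4}L^m$ which is $0$-symmetric; Theorem \ref{t:cone_splitting} then upgrades to $(m+1,\zeta^4)$-symmetry of $B_1(p)$. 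We can repeat this exactly $k$ times — the obstruction at step $m<k$ to finding the required away-from-$L^m$ point would force $B_\delta\cE_\infty$ to lie in a $\delta$-neighborhood of an $m$-plane, contradicting $H^n(B_\delta\cE_\infty)>\epsilon\delta^k$ once $m<k$ and $\epsilon$ is genuinely positive. After $k$ applications, $B_1(p)$, and hence $B_{\zeta^{-1}}(q)$ for a suitable center $q$, is $(k,\zeta^2)$-symmetric in $X$; wait — we want $(n-k,\zeta^2)$-symmetry, so in fact the roles are arranged so that $\cE_\infty$ being $k$-dimensional content forces the \emph{singular} directions, i.e. the cone is $\dR^{k}$-times something, leaving $n-k$ directions; reconciling this indexing is part of the bookkeeping. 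Transferring this symmetry back to the sequence $M^n_j$ for $j$ large via Gromov--Hausdorff closeness contradicts the standing assumption that no $B_{\zeta^{-1}}(q)$ is $(n-k,\zeta^2)$-symmetric, completing the argument.

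\textbf{Main obstacle.} The delicate point is the quantitative bookkeeping in the iterated cone-splitting: one must verify that the positive $k$-content of $B_\delta\cE_\infty$ really does supply, at each of the $k$ stages, a $0$-symmetric point uniformly bounded away from the lower-dimensional symmetry axis already constructed, with all the error parameters ($\zeta^4$ versus $\zeta^2$, the constant $\delta$ relative to $\zeta$, the threshold $\epsilon\delta^k$) chosen compatibly so that the final symmetry holds at scale $\zeta^{-1}$ with error $\zeta^2$. Equivalently, one must show that a set of positive $n$-measure inside a ball of radius $\delta$ cannot be $\delta$-close to an $(k-1)$-dimensional plane, quantitatively, and feed this into the inductive cone-splitting with the right constants; the limiting/compactness reduction handles the ``$\eta_j\to 0$'' part cleanly, but the choice of $\eta$ as a function of all four remaining parameters is exactly what this quantitative scheme produces, and getting the dependencies in the correct order is where care is needed.
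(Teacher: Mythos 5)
Your overall strategy --- volume pinching across the scales $[\eta,\eta^{-1}]$ forces almost $0$-symmetry at every point of $\cE_\eta$, and the content hypothesis feeds an iterated application of the cone-splitting Theorem \ref{t:cone_splitting} by supplying, at each stage, a pinched point off the symmetry axis already constructed --- is exactly the mechanism of the paper's proof. But the dimension count that you flag and defer as ``part of the bookkeeping'' is not bookkeeping; it is the entire quantitative content of the theorem, and as stated your version of it is wrong. A $\delta$-tube around an $m$-dimensional axis $L^m$ inside $B_1$ has volume $\lesssim_{n,\rv}\delta^{\,n-m}$, so the hypothesis $\Vol(B_\delta\cE_\eta)>\epsilon\,\delta^{k}$ guarantees a pinched point outside the tube precisely when $\delta^{\,n-m}\ll\epsilon\,\delta^{k}$, i.e.\ when $m\le n-k-1$ (and $\delta$ is small depending on $\epsilon$). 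Hence the iteration runs $n-k$ times and terminates at $(n-k)$-symmetry; your stated plan to ``repeat this exactly $k$ times,'' with stopping criterion ``contradicting $H^n(B_\delta\cE_\infty)>\epsilon\delta^k$ once $m<k$,'' has the inequality on the wrong side. This is not a relabeling issue: for $k=n$ the hypothesis merely says $\cE_\eta$ is nonempty (one pinched point), yet your scheme would attempt to extract $n$ splitting directions from it, whereas the correct conclusion is bare $0$-symmetry. The paper sidesteps the issue entirely with a downward induction on $k$: the base case $k=n$ is Theorem \ref{t:metriccone_volumecone}, and the step from $k$ to $k-1$ adds one symmetry because the $2\delta$-tube around the $(n-k)$-dimensional axis furnished by the inductive hypothesis has volume $\le C(n,\rv)\delta^{k}$, which is swamped by $\epsilon\,\delta^{k-1}$; you would need to rebuild exactly this comparison inside your induction-on-$m$ loop.

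Two smaller points. First, your pinching argument quotes Bishop--Gromov backwards: the correct chain is $\overline V\le\cV_{\eta^{-1}}(y)\le\cV_r(y)\le\cV_\eta(y)<\overline V+\eta$ for all $\eta\le r\le\eta^{-1}$, since $\cV_r$ is nonincreasing in $r$, not ``$\cV_r(x)\ge\cV_0(x)$.'' Second, the compactness reduction is unnecessary overhead: Theorems \ref{t:metriccone_volumecone} and \ref{t:cone_splitting} are already quantitative and apply directly on $M$ itself, which is how the paper proceeds; passing to a limit forces you to additionally justify the convergence of $\overline V_j$, the measure lower bound on the limit set $\cE_\infty$, and the transfer of the symmetry back to $M_j$, none of which is needed.
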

\begin{proof}
We will prove the result inductively on $k$. If $k=n$, then this just follows from Cheeger-Colding's metric cone Theorem \ref{t:metriccone_volumecone}.  Assume for $k\le n$ that the theorem holds. We will prove the theorem for $k-1$. Assume we have
$	\Vol\big(B_{\delta}\cE_\eta\big)> \epsilon\, \delta^{k-1}\ge \epsilon\, \delta^{k}.$
Let $\zeta'<<\zeta$ be fixed later. By induction, if $\eta\le \eta(n,\rv,\zeta',\delta,\epsilon)$, there exists $q\in B_1(p)$ such that $B_{{\zeta'}^{-1}}(q)$ is $(n-k,{\zeta'}^2)$-symmetric with respect to $L_{\zeta'}^{k}$. By GH-approximation and a covering argument as in the proof of Proposition \ref{p:local_L2_neck}, it is easy to show $\Vol(B_{2\delta} L_{\zeta'}^{k}\cap B_1(p))\le C(n,\rv)\delta^{k}.$
Thus for $\delta\le \delta(n,\rv)$ small, we have $\Vol\big(B_{\delta}\cE_\eta\setminus B_{2\delta} L_{\zeta'}^{k}\big)\ge \frac{\epsilon}{2}\delta^{k-1}$.  In particular, there exists $z\in \cE_\eta\setminus B_{\delta} L_{\zeta'}^{k}$. For $\eta$ small and metric cone Theorem \ref{t:metriccone_volumecone}, we have $B_{\zeta'^{-1}}(z)$ is $(0,{\zeta'}^2)$-symmetric. By the cone-splitting of Theorem \ref{t:cone_splitting} and choosing $\zeta'=\zeta'(\zeta,\delta,n,\rv)$ small, we have that $B_{\zeta^{-1}}(q)$ is $(n-k+1,\zeta^2)$-symmetric. Thus we finish the inductive step of the proof.
\end{proof}

\vspace{.5 cm}

The main application of the above in this paper will be to the $k=4$ case, under the assumption of a two-sided bound on the Ricci curvature.  In this case one can combine the above with the $\epsilon$-regularity of theorem \ref{t:eps_reg} in order to conclude that a ball which is sufficiently $n-4$-symmetric must actually be Gromov Hausdorff close to $\dR^{n-4}\times C(S^3/\Gamma)$, which is what will allow us to build our neck regions in subsequent sections.  Our precise result is the following:\\

\begin{theorem}\label{t:n-4_content_conesplitting}
	Let $(M^n,g,p)$ satisfy $Vol(B_1(p))>v>0$ with $\delta,\epsilon>0$.  Then for $\eta\leq \eta(n,\rv,\delta,\epsilon)$ with $\overline V\equiv \inf_{B_{4}(p)}\cV_{\eta^{-1}}(y)$ and $\cE_\eta(p,1)\equiv \{y\in B_{4}(p): \cV_{\eta}(y)<\overline V + \eta\}$, if we have
	\begin{align}
	    &|{\Ric}|<\eta\, ,\notag\\
		&\Vol\big(B_{\delta}\cE_\eta\big)> \epsilon\, \delta^{4}\, ,
	\end{align}
	then either:
	\begin{enumerate}
		\item We have that $r_h(p)>\delta^{-1}$, or
		\item There exists $q\in B_4(p)$ such that $B_{\delta^{-1}}(q)$ is pointed $\delta$-GH close to $B_{\delta^{-1}}(0^{n-4},y_c)\subseteq \dR^{n-4}\times C(S^3/\Gamma)$, where $\Gamma\leq O(4)$ is nontrivial and acts freely away from the origin with $y_c$ the cone point.
	\end{enumerate}
\end{theorem}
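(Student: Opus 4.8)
\textbf{Proof plan for Theorem \ref{t:n-4_content_conesplitting}.}

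The plan is to combine the content cone-splitting result of Theorem \ref{t:content_cone_splitting} (with $k=4$) with the $\epsilon$-regularity theorem of \cite{CheegerNaber_Codimensionfour} (Theorem \ref{t:eps_reg}) and the basic structure theory of noncollapsed limit spaces with bounded Ricci curvature. First I would apply Theorem \ref{t:content_cone_splitting} in the case $k=4$: the hypothesis $\Vol\big(B_\delta\cE_\eta\big)>\epsilon\,\delta^4$ is exactly condition $(2)$ there with $k=4$, so for $\eta\leq\eta(n,\rv,\zeta,\delta,\epsilon)$ we obtain a point $q\in B_1(p)$ such that $B_{\zeta^{-1}}(q)$ is $(n-4,\zeta^2)$-symmetric, where $\zeta>0$ will be chosen small depending on $n,\rv,\delta$ at the end. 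The goal is then to upgrade this qualitative $(n-4)$-symmetry to the quantitative statement that either $r_h(p)$ is large, or near $q$ the space is actually GH-close to $\dR^{n-4}\times C(S^3/\Gamma)$ with $\Gamma$ acting freely away from the origin.

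The key dichotomy is provided by Theorem \ref{t:eps_reg}: if $B_2(p)$ were $(n-3,\epsilon(n,\rv))$-symmetric, then $r_h(p)>1$, and rescaling this gives alternative $(1)$. So I would argue by contradiction: suppose neither $(1)$ nor $(2)$ holds for a sequence $\eta_j\to 0$ (with the other parameters fixed), and pass to a pointed Gromov-Hausdorff limit $(M^n_j,g_j,p_j)\to (X,d,p_\infty)$. Since $|\Ric_j|<\eta_j\to 0$ and the spaces are noncollapsed with $\Vol(B_1(p_j))>\rv$, the limit $X$ is a noncollapsed Ricci-limit space with the codimension-four singular set of \cite{CheegerNaber_Codimensionfour}. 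The $(n-4,\zeta_j^2)$-symmetry at $q_j$ (after diagonalizing, taking $\zeta_j\to 0$) together with the volume pinching built into $\cE_{\eta_j}$ forces that a tangent cone / blow-up at the limit point $q_\infty$ splits as $\dR^{n-4}\times C(Z)$ for some compact $Z$; and the fact that we are \emph{not} in case $(1)$ — i.e. the harmonic radius stays bounded — means $q_\infty$ is a genuine singular point, so $Z$ is not the round $S^3$. By the codimension-four theorem the tangent cone is polar, hence actually of the form $\dR^{n-4}\times C(S^3/\Gamma)$ with $\Gamma\leq O(4)$ nontrivial, and since the cross section is a smooth space form quotient with no worse singularities (codimension four is maximal), $\Gamma$ must act freely away from the origin. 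This contradicts the assumed failure of $(2)$, and running the argument quantitatively (i.e. for all sufficiently small $\eta$) gives the statement.

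More carefully, to produce the actual $\dR^{n-4}\times C(S^3/\Gamma)$ structure rather than just $\dR^{n-4}\times C(Z)$ I would use: (i) the $(n-4)$-symmetry and the $\epsilon$-regularity Theorem \ref{t:eps_reg} to rule out that $C(Z)$ has any $(n-3)$-symmetric ball — otherwise $r_h$ would be large and we are in case $(1)$; (ii) the classification of cross-sections of tangent cones in \cite{CheegerNaber_Codimensionfour}: a noncollapsed Ricci-limit tangent cone whose singular set has codimension four and which splits off $\dR^{n-4}$ has cross section $S^3/\Gamma$ for a finite subgroup $\Gamma\leq O(4)$; (iii) the fact that if $\Gamma$ did not act freely on $S^3$, the quotient $S^3/\Gamma$ would have singularities, producing singularities of $C(S^3/\Gamma)$ of codimension strictly less than four, contradicting the codimension-four bound. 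This pins down $\Gamma\leq O(4)$ acting freely away from the origin and nontrivial (nontriviality because otherwise $r_h(q)>\delta^{-1}$ again by Theorem \ref{t:eps_reg_volume} or \ref{t:eps_reg}, putting us in case $(1)$, possibly after translating the center point).

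\textbf{Main obstacle.} The delicate point is not the cone-splitting input but the passage from qualitative $(n-4)$-symmetry at $q$ to the \emph{quantitative} GH-closeness to $\dR^{n-4}\times C(S^3/\Gamma)$ on a ball of definite rescaled size $B_{\delta^{-1}}(q)$, while simultaneously establishing that the cone factor is $C(S^3/\Gamma)$ with $\Gamma$ acting freely. This requires invoking the full strength of \cite{CheegerNaber_Codimensionfour} — both the codimension-four bound on the singular set and the resulting classification of tangent cones — and combining it with the $\epsilon$-regularity Theorem \ref{t:eps_reg} to organize the dichotomy ``$r_h$ large'' versus ``genuine singular cone.'' The compactness/contradiction argument must be set up so that the two excluded alternatives $(1)$ and $(2)$ are jointly contradicted in the limit; keeping track of which center point ($p$ versus the cone-splitting point $q$) the harmonic radius and the cone structure are measured at, and ensuring the choice of $\zeta$ (hence $\eta$) is made after the other parameters, is where the bookkeeping is most error-prone.
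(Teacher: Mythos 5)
Your proposal is correct and follows essentially the same route as the paper: apply Theorem \ref{t:content_cone_splitting} with $k=4$ to produce the $(n-4)$-symmetric point $q$, pass to a limit as $\eta\to 0$ and use the codimension-four theorem of \cite{CheegerNaber_Codimensionfour} (plus Ricci-flatness of the limit cone) to identify the cross-section as a smooth $S^3/\Gamma$, and then invoke the $\epsilon$-regularity Theorem \ref{t:eps_reg} to convert the case $\Gamma$ trivial into alternative $(1)$. The paper organizes the endgame as a direct dichotomy on whether $\Gamma$ is trivial rather than a joint contradiction of $(1)$ and $(2)$, but this is only a cosmetic difference.
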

\begin{proof}
By Theorem \ref{t:content_cone_splitting} we have for $\eta\leq \eta(n,\rv,\delta,\tau,\epsilon)$ that there exists $q\in B_1(p)$ such that $B_{4\delta^{-1}}(q)$ is $\delta$-Gromov Hausdorff close to $\dR^{n-4}\times C(Z)$, where $C(Z)$ is a cone metric space.  Let us begin with the following:\\

{\bf Claim 1:}  For $\eta\leq \eta(n,\rv,\delta)$ sufficiently small we have that $Z=S^3/\Gamma$ is a smooth three manifold.\\

Indeed, assume this is not the case, then by applying Theorem \ref{t:content_cone_splitting} we have a sequence $(M^n_j,g_j,q_j)$ such that $B_{\delta_j^{-1}}(q_j)$ is $\delta_j$-GH close to $\dR^{n-4}\times C(Z_j)$, but we do not have that $B_{4\delta^{-1}}(q_j)$ is $\delta$-GH close to $\dR^{n-4}\times C(S^3/\Gamma)$.  By passing to a limit we have that $M_j\to X\equiv \dR^{n-4}\times C(Z_\infty)$.  Let us first see that $Z_\infty$ is a smooth manifold.  Indeed, if $Z_\infty$ were not smooth, then we see that the singular set of $X$ has codimension at most three.  However, by \cite{CheegerNaber_Codimensionfour} the singular set of $X$ is at most of codimension four, which tells us that $Z_\infty$ is indeed smooth.  Additionally, since $\eta_j\to 0$ we have that $X$ is Ricci flat on its smooth component.  It is not difficult to check for a smooth manifold $Z_\infty$ that $C(Z_\infty)$ is Ricci flat iff $Z_\infty$ has constant sectional curvature $1$, which in particular implies that $Z_\infty=S^3/\Gamma$.  Therefore for far enough in the sequence this means we have that $B_{4\delta^{-1}}(q_j)$ is $\delta$-GH close to $\dR^{n-4}\times C(S^3/\Gamma)$, which is a contradiction and so proves the claim.$\square$\\

Finally, to finish the proof we now distinguish between the cases when $\Gamma$ is trivial or nontrivial.  If $\Gamma$ is trivial then $B_{4\delta^{-1}}(q)$ is $\delta$-GH close to $\dR^n$, which by theorem \ref{t:eps_reg} implies $r_h(q)>2\delta^{-1}$, and so $r_h(p)>\delta^{-1}$ as in case $(1)$.  On the other hand if $\Gamma$ is nontrivial then this is exactly case $(2)$, which finishes the proof.
\end{proof}
\vspace{.5cm}

\subsection{Weak $c$-Ball Covering}

Now recall that the essence of a $(\delta,\tau)$-neck region $\cN$ is that $\cN$ is a smooth region which on many scales looks like $\dR^{n-4}\times C(S^3/\Gamma)$.  In order to prove the decomposition of Theorem \ref{t:neck_decomposition} with the sharp $n-4$ content estimates, it will be important to build neck regions which are in some sense maximal.  That is, if in the construction we dealt with neck regions which could be strictly enlarged, then it might be possible to build the covering of Theorem \ref{t:neck_decomposition} badly, so that the content estimates fail.\\

With this in mind, in the next two subsections we decompose a ball which begins with many points that are volume pinched on many scales, which is to say we decompose a $c$-ball in the notation of Section \ref{ss:neck_decomp:notation}.  The resulting decomposition will produce one large $(\delta,\tau)$-neck region, together with an $n-4$ content of smaller balls.  The $(\delta,\tau)$-neck region will satisfy a form of maximal condition preventing it from being extended further down, and the remaining balls will be such that for most points there is a large drop in the volume ratio.\\

Now this decomposition will take place over the next two subsections.  The reason being is that in this section we will first  decompose a $c$-ball to produce one large {\it weak} neck region, as well as an $n-4$ content of smaller balls.  Then in the next subsection we will see how to refine the weak neck region into a standard neck region.  Roughly, a weak neck region is a neck region except we do not insist that the lipschitz condition $(n4)$ of Definition \ref{d:neck} holds.  That is, we do not have control on how the ball sizes vary.  The advantage of first building a weak neck region is that the construction, if somewhat technical as always, is straightforward.  If one has enough symmetries you cover the approximate $\dR^{n-4}$-singular set coming from a Gromov-Hausdorff map with some balls of one only scale smaller, then on each new ball you ask if one of the other conditions of the neck region has suddenly failed.  If so you stop, if not you show that again there must be a sufficient number of symmetries to recover the approximate $\dR^{n-4}$-singular set of the new ball, repeat...  The disadvantage of such a construction is that the ball sizes will have no apriori control and can vary fairly wildly, an issue which we will deal with in the next subsection with slightly more finesse.  To make all of this precise, let us begin by defining what we mean by a {\it weak} neck region:\\

\begin{definition}\label{d:weak_neck}
We call $\tilde \cN\subseteq B_2(p)$ a weak $(\delta,\tau)$-neck region if there exists a closed subset $\tilde\cC = \tilde\cC_0\cup \tilde\cC_+=\tilde\cC_0\cup\{x_i\}\subset B_2(p)$ with $p\in\tilde\cC$ and a radius function $r:\tilde\cC\to \dR^+$ with $r_x>0$ on $\tilde\cC_+$ and $r_x=0$ on $\tilde\cC_0$ such that $\tilde\cN \equiv B_2\setminus \overline B_{r_x}(\cC)$ satisfies
\begin{enumerate}
	\item[($\tilde n1$)] $\{B_{\tau^{2} r_x}(x)\}$ are pairwise disjoint. 
	\item[($\tilde n2$)] For each $r_x\leq r\leq 1$ there exists a $\delta r$-GH map $\iota_{x,r}:B_{\delta^{-1}r}(0^{n-4},y_c)\subseteq\dR^{n-4}\times C(S^3/\Gamma)\to B_{\delta^{-1}r}(x)$, where $\Gamma\subseteq O(4)$ is nontrivial.
	\item[($\tilde n3$)] For each $r_x\leq r$ with $B_{2r}(x)\subseteq B_2(p)$ we have that $\cL_{x,r}\equiv \iota_{x,r}\big(B_r(0^{n-4})\times\{y_c\}\big)\subseteq B_{\tau(r+r_y)}(\cC)$ and $\cC\cap B_r(x)\subseteq B_{\delta r}(\cL_{x,r})$, where $B_{\tau (r+r_y)}(\cC)=\cup_{y\in \cC}B_{\tau (r+r_y)}(y)$. 
\end{enumerate}	

\end{definition}
\begin{remark}
For precision sake, a weak neck region not only eliminates 	the lipschitz condition of $(n4)$, but condition $(n3)$ has changed slightly in order to accurately take this into account.
\end{remark}
\begin{remark}
It is worth keeping an example in mind, as this clarifies the definition.  Thus if one considered Example \ref{ex:neck_region} in the outline, then an example of a weak neck region is given by letting $r_x$ be any positive function which is not necessarily lipschitz.
\end{remark}

That is, a weak neck region is one for which all the usual conditions of a neck region hold, except that possibly the ball radii are not varying in a manner which has small lipschitz constant.  The effect of this is that it is possible there are nearby balls of uncontrollably different sizes.  In practice, this will be quite inconvenient for the analysis, and therefore in the next subsection we will see how to further refine the weak $(\delta,\tau)$-neck constructed in this subsection into an actual $(\delta,\tau)$-neck.  The main result of this subsection is the following:\\

\begin{proposition}[Weak $c$-Ball Covering]\label{p:neck_decomp:c_ball}
Let $(M^n_i,g_i,p_i)\to (X,d,p)$ satisfy $\Vol(B_1(p_i))>\rv>0$ with $\delta>0$ and $0<\epsilon,\tau<\tau(n)$, then for $\eta\leq \eta(n,\rv,\delta,\epsilon,\tau)$ let us assume the following holds:
\begin{enumerate}
\item $|\Ric|\leq \eta$,
\item If $\overline V\equiv \inf_{B_{4}(p)}\cV_{\eta^{-1}}(y)$ and $\cE_\eta \equiv \{x\in B_4(p): \cV_\eta(x)<\overline V+\eta\}$ then $\Vol\big(B_{\delta}\cE_\eta\big)> \epsilon\cdot \delta^{4}$.
\item $r_h(p)<2$.
\end{enumerate}
	then we can write $B_1(p) \subseteq \cC_0\cup \tilde\cN \cup \bigcup_d B_{r_d}(x_d)$, where
\begin{enumerate}
	\item[(a)] $\tilde\cN = B_{11}(q)\setminus\Big(\cC_0\cup \bigcup_{\tilde{d}} B_{r_{\tilde{d}}}(x_{\tilde{d}})\Big)$ is a weak $(\delta,\tau)$-neck region, where $q\in B_4(p)$.
	\item[(d)] For each ${\tilde{d}}$-ball $B_{r_{\tilde{d}}}(x_{\tilde{d}})$ if $\cE_{\tilde{d}}\equiv \{x\in B_{4r_{\tilde{d}}}(x_{\tilde{d}}): \cV_{\eta r_{\tilde{d}}}(x)<\overline V+\eta\}$, then we have $\Vol\big(B_{\delta r_{\tilde{d}}}\cE_{\tilde{d}}\big)\leq \epsilon\cdot \delta^4\,r_{\tilde{d}}^n$.
\end{enumerate}
\end{proposition}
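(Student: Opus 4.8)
The plan is to build the weak neck region $\tilde\cN$ and the exceptional $d$-balls simultaneously by a single Vitali-type stopping-time covering argument, in the spirit of the constructions in \cite{CheegerNaber_Ricci} and \cite{NaVa_Rect_harmonicmap}, using Theorem \ref{t:n-4_content_conesplitting} as the key structural input. First I would fix the constants in the hierarchy $\eta\ll\delta'\ll\delta$ so that, for each point $x$ and radius $r$ with $\Vol(B_{\delta r}\cE_\eta(x,r))>\epsilon\delta^4 r^n$, Theorem \ref{t:n-4_content_conesplitting} applies at $B_r(x)$: since $r_h(p)<2$ we are not in case (1), so we land in case (2) and obtain $q\in B_r(x)$ with $B_{\delta^{-1}r}(q)$ being $\delta r$-GH close to a ball in $\dR^{n-4}\times C(S^3/\Gamma)$ with $\Gamma$ nontrivial acting freely off the origin. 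This gives us, at every ``$c$-scale'' encountered in the covering, the $(\tilde n2)$-geometry and (after noting the GH approximation picks out an effective singular set $\cL_{x,r}$) the candidate center points.

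\textbf{The covering construction.} The idea is to run a downward induction on scale. Starting from $B_1(p)$, which is a $c$-ball by assumption (2), we apply Theorem \ref{t:n-4_content_conesplitting} to get the $\dR^{n-4}\times C(S^3/\Gamma)$ structure on a definite number of scales. We then look at points $x\in\cE_\eta\cap B_1$: at each such $x$ we descend in scale as long as $B_r(x)$ remains a $c$-ball, i.e. as long as $\Vol(B_{\delta r}\cE_\eta(x,r))>\epsilon\delta^4 r^n$. This defines a stopping radius $r_x\ge 0$: either $r_x=0$ (the point stays volume-pinched on all scales, and goes into $\tilde\cC_0$), or $r_x>0$ and $B_{r_x}(x)$ is the first scale where the $c$-ball condition fails, so $B_{r_x}(x)$ is a $d$-ball; we put such $x$ into $\tilde\cC_+$ with radius $r_x$ and also declare $B_{r_x}(x)$ one of our exceptional $d$-balls $B_{r_d}(x_d)$. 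One then passes to a maximal subcollection of the $\{B_{\tau^2 r_x}(x)\}$ that are pairwise disjoint — this is exactly condition $(\tilde n1)$ — and lets $\tilde\cC=\tilde\cC_0\cup\tilde\cC_+$ be the resulting set of centers, with $\tilde\cN\equiv B_2\setminus\overline B_{r_x}(\tilde\cC)$. Condition $(\tilde n2)$ holds for each $x\in\tilde\cC$ and $r_x\le r\le1$ because $B_r(x)$ was a $c$-ball at that scale, so Theorem \ref{t:n-4_content_conesplitting} applies; condition $(\tilde n3)$ — that the effective singular set $\cL_{x,r}$ lies in $B_{\tau(r+r_x)}(\tilde\cC)$ — is where the maximality of the Vitali selection is used, together with the fact that points on $\cL_{x,r}$ away from the already-selected centers would themselves be volume-pinched (by cone splitting, Theorem \ref{t:cone_splitting}, and the metric cone rigidity, Theorem \ref{t:metriccone_volumecone}), hence would have been eligible for selection; this forces them within $\tau(r+r_x)$ of some existing center. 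The slightly weakened form of $(\tilde n3)$ (with $r+r_x$ rather than $r$) is precisely what accommodates the absence of a Lipschitz bound on $r_x$ at this stage. Finally, by construction every $d$-ball $B_{r_d}(x_d)$ satisfies $\Vol(B_{\delta r_d}\cE_d)\le\epsilon\delta^4 r_d^n$, which is conclusion (d), and $B_1\subseteq\tilde\cC_0\cup\tilde\cN\cup\bigcup_d B_{r_d}(x_d)$ because any point is either on the boundary piece $\tilde\cN$, or within some $\overline B_{r_x}(x)$ which is covered by a $d$-ball, or lies in $\tilde\cC_0$; the identity $\tilde\cN=B_2\setminus(\tilde\cC_0\cup\bigcup_d B_{r_d}(x_d))$ comes from reorganizing the definition of $\tilde\cN$ since $\overline B_{r_x}(\tilde\cC_+)$ is contained in the union of the chosen $d$-balls and $\tilde\cC_0$ has $r_x=0$.

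\textbf{Main obstacle and loose ends.} The routine part is the Vitali selection and the verification of $(\tilde n1)$; the genuinely delicate step is establishing $(\tilde n3)$, i.e. showing that the effective singular set $\cL_{x,r}$ produced by the Gromov–Hausdorff approximation at scale $r$ is actually well-approximated by the discrete center set $\tilde\cC$ at the right scale. The subtlety is that the GH map $\iota_{x,r}$ at scale $r$ and the one at a nearby center at a nearby scale need not be compatible a priori, so one must argue that any point on $\cL_{x,r}$ that is far (more than $\tau(r+r_x)$) from $\tilde\cC$ would be a volume-pinched point — using that near $\cL_{x,r}$ the space is $(n-4,\delta)$-symmetric and, combined with $0$-symmetry from the cone factor via cone-splitting, the pinching propagates — hence descends as a $c$-ball to some stopping scale comparable to its distance to $\cL$, producing a new center that should have been selected, contradicting maximality; this requires carefully tracking how the constants $\eta,\delta',\delta,\epsilon,\tau$ interact, which is exactly why $\eta$ is allowed to depend on all of $n,\rv,\delta,\epsilon,\tau$. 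A secondary technical point is that the whole argument must be carried out on the sequence $M^n_i$ and passed to the limit $X$; since $c$-, $d$-, and $e$-conditions are volume conditions that behave continuously under noncollapsed Gromov–Hausdorff convergence, and the $\overline V$, $\cE_\eta$ machinery is stable, this passage is standard but should be stated, with the center set $\tilde\cC_i$ converging in the Hausdorff sense to $\tilde\cC$ exactly as in Theorem \ref{t:neck_approximation}.
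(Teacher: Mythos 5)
Your overall strategy shares the paper's key structural input (Theorem \ref{t:n-4_content_conesplitting} applied at every $c$-scale, with the $d$-balls as stopping balls satisfying conclusion (d) by definition), but the execution diverges at exactly the point you flag as delicate, and the gap there is real. Your construction assigns to each point $x\in\cE_\eta$ its own stopping radius $r_x$ and then performs a single maximal disjoint selection of $\{B_{\tau^2 r_x}(x)\}$. The problem is that the $c$/$d$ dichotomy is not stable under recentering: if $B_r(x)$ is a $c$-ball and $d(x,z)\le r$, the inclusion $\cE_\eta(x,r)\subseteq \cE_\eta(z,2r)$ only yields $\Vol(B_{2\delta r}\cE_\eta(z,2r))>\epsilon\delta^4 2^{-n}(2r)^n$, which misses the threshold by a dimensional factor. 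Consequently two points at distance $r$ from each other can have stopping radii differing by arbitrary factors, and a weak neck region has no Lipschitz condition on $r_x$ to repair this. Maximality of your Vitali selection then only guarantees that each pinched point $y$ satisfies $d(y,\cC)\le \tau^2(r_y+r_w)$ for some selected $w$ \emph{at the scale of $y$'s own stopping radius}; if $r_y$ or $r_w$ is large compared to the intermediate scale $r$, this does not place $y$ within $\tau(r+r_x)$ of $\cC$. So condition $(\tilde n3)$ of Definition \ref{d:weak_neck} --- that $\cL_{x,r}\subseteq B_{\tau(r+r_x)}(\cC)$ for \emph{every} $r\in[r_x,1]$ --- does not follow from your selection, and the contradiction-with-maximality sketch does not close it. A secondary issue is that your centers are arbitrary points of $\cE_\eta$, whereas $(\tilde n3)$ requires the centers to be $\tau r$-dense along the effective singular sets $\cL_{x,r}$ at every scale; pinched points need not be so distributed a priori.

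The paper's proof is organized precisely to make $(\tilde n3)$ hold by construction rather than by a maximality argument. It runs a scale-by-scale induction with radii $t_i=\delta^i$: at stage $i$ every surviving ($c$-type) ball has radius exactly $\delta^i$, its center lies on the approximate singular set, and one re-covers the union $\cL^i$ of the effective singular sets of all stage-$i$ $c$-balls (minus the $2\tau^2\delta^j$-neighborhoods of the already-removed $d$-balls) by a Vitali family of balls of the single radius $\delta^{i+1}$ centered \emph{on} $\cL^i$. Each new ball is then classified as $c$ (to be refined at the next stage) or $d$ (removed permanently). Because the whole of $\cL^i$ away from removed $d$-balls is covered at each stage, every intermediate scale is automatically handled, and the set $\cC_0$ arises as the Hausdorff limit of the never-stopping $c$-centers. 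If you want to salvage your approach you would have to replace the pointwise stopping time by this kind of uniform-scale refinement, or else prove a comparability statement for stopping radii of nearby points, which is false in the generality you need.
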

\begin{remark}
Condition (2) above says that $B_1(p)$ is a $c$-ball, while condition $(3)$ above says that $B_1$ is not a $b$-ball.	 
\end{remark}

\begin{proof}
	Let us consider $\delta'\le \delta'(n,\rv,\tau,\delta)<<\delta$ to be fixed later. Denote $s=10^{-10n}$. To prove the result we will build on $B_2(p)$ a sequence of weak $(\delta,\tau)$-necks $\tilde \cN_i$, where each is a refinement of the last, with the eventual goal of arriving at a weak $(\delta,\tau)$-neck which cannot be extended any further.  Before discussing the inductive procedure for this construction, we need to build the base neck region $\cN^1$.  Precisely, by conditions (2) and (3) we can, for $\eta\leq\eta(n,\delta',\epsilon,\tau)$, apply Theorem \ref{t:n-4_content_conesplitting} to get that $B_{\delta'^{-1}}(p')$ is  $\delta'$-GH close to $\dR^{n-4}\times C(S^3/\Gamma)$ for some $p'\in B_4(p)$.  In particular, $B_{12}(p')$ is $(n-4,\delta')$-symmetric with respect to some $\cL^1\equiv \iota^1\big(\dR^{n-4}\times \{y_0\}\big)\cap B_{12}(p')$, where $\iota^1:B_{\delta'^{-1}}(0^{n-4},y_0)\subseteq \dR^{n-4}\times C(S^3/\Gamma)\to B_{\delta'^{-1}}(p')$ is the Gromov-Hausdorff map.  Notice that $\cL^1$ acts as an approximate singular set on the ball.  Let us now consider a covering
	\begin{align}
		\cL^1\subseteq \bigcup B_{\tau^{3/2}\cdot s}(x^1_f)\subseteq  \bigcup B_{s}(x^1_f)\, ,
	\end{align}
	where $x^1_f\in \cL^1$ and $\{B_{\tau^{5/3}\cdot s}(x^1_f)\}$ is a maximal disjoint collection.  We then define
	\begin{align}
		\tilde\cN_1 \equiv B_{12}(p')\setminus \bigcup B_{s}(x^1_f)\, ,
	\end{align}
	and it is easy to check that for $\eta\leq\eta(n,\epsilon,\tau,\delta')$ this is indeed a weak $(\delta,\tau)$-neck.  Before moving on to the inductive construction let us further separate the singular balls into a couple of better groups.  To accomplish this let us recall that for a ball $B_r(x)$ we can define the $\eta$-pinched subset $\cE_\eta$ by
	\begin{align}
		\cE_\eta(x,r)\equiv \{y\in B_{4r}(x): \cV_{\eta r}(y)<\overline V + \eta\}\, .
	\end{align}
	Thus for each ball $B_{s}(x^1_f)$ in our covering if we consider the pinched points $\cE_\eta(x^1_f,s)$ then we can consider one of two cases:
	\begin{enumerate}
		\item[(c)] $\Vol\big(B_{\delta\cdot s}\cE_\eta(x^1_f,s)\big)> \epsilon\, \delta^4\,s^{n}$,
		\item[(${\tilde{d}}$)] $\Vol\big(B_{\delta\cdot s}\cE_\eta(x^1_f, s)\big)\leq \epsilon\, \delta^4\, s^{n}$.
	\end{enumerate}
	Of course, in case $(c)$ above we have that $B_{s}(x^1_f)$ is a $c$-ball, while in case $({\tilde{d}})$-above we have that $B_{s}(x^1_f)$ is a ${\tilde{d}}$-ball.  Therefore, let us break our balls into these two subsets and write
	\begin{align}
		\tilde\cN_1 \equiv B_{12}(p')\setminus\Big(  \bigcup_c B_{s}(x^1_c)\cup \bigcup_{\tilde{d}}  B_{s}(x^1_{\tilde{d}})\Big)\, .
	\end{align}
	
	Before discusing the inductive procedure, let us remark an obvious fact which we use several times in the following construction: if a ball is close to a  nontrivial cone $\mathbb{R}^{n-4}\times C(S^3/\Gamma)$ at a given scale then the ball is also close to a nontrivial cone at next scale and the approximated cone vertices are close in distance by $\epsilon$-regularity Theorem \ref{t:eps_reg}. 
	Now let us move on to discuss the inductive procedure for the construction of the weak $(\delta,\tau)$-necks $\tilde\cN^i$.  Thus, let us assume we have constructed the weak $(\delta,\tau)$-neck $\tilde\cN^i$ given by
	\begin{align}
		\tilde\cN^i \equiv  B_{12}(p')\setminus \Big( \bigcup_c B_{s^i}(x^i_c)\cup \bigcup_{j\leq i} \bigcup_{\tilde{d}} B_{s^j}(x^j_{\tilde{d}})\Big)\, ,
	\end{align}
	such that the following additional conditions hold:
	\begin{enumerate}
	\item[i.1] Each	$c$-ball $B_{s^i}(x^i_c)$ satisfies $\Vol\big(B_{\delta\cdot s^i}\cE_\eta(x^i_c,s^i)\big)> \epsilon\,\delta^4\, (s^{i})^{n}$ and there exists an approximate singular set $\cL_c^i=\iota_{i,c}(\mathbb{R}^{n-4}\times\{y_{i,c}\}\cap B_2(0^{n-4},y_{i,c}))$ where the map $\iota_{i,c}: B_{{\delta'}^{-1}s^i}(0^{n-4},y_{i,c})\cap \mathbb{R}^{n-4}\times C(S^3/\Gamma_{i,c})\to B_{{\delta'}^{-1}s^i}(\hat{x}_c^i)$  is a $\delta' s^i$-GH map for some $\hat{x}^i_c\in B_{\delta^2s^i}(x_c^i)$. 	
	\item[i.2] Each ${\tilde{d}}$-ball $B_{s^j}(x^j_{\tilde{d}})$ satisfies $\Vol\big(B_{\delta\cdot s^j}\cE_\eta(x^j_{\tilde{d}},s^j)\big)\leq \epsilon\,\delta^4\,(s^{j})^{n}$.
	\item[i.3] Denote $\tilde{\cC}_i:=\{x_c^i\}\bigcup\{x_{\tilde{d}}^j,j\le i\}$ and the associated $r_x={s}^i$ for $x=x_c^i, x_{\tilde{d}}^i$ and $r_x={s}^j$ for $x=x_{\tilde{d}}^j$. Then the balls $\{B_{\tau^{5/3}\cdot r_x}(x),  x\in \tilde{\cC}_i\}$ are pairwise disjoint. 
		\item[i.4] Every $c$-ball has radius ${s}^i$.
		\item[i.5] $\tilde{\cC}_{i-1}\subset \tilde{\cC}_i$, where the center of a $c$-ball in $\tilde{\cC}_{i-1}$ is a center of a $\tilde{d}$-ball or $c$-ball in $\tilde{\cC}_{i}$ with radius ${s}^i$,  and the center of $\tilde{d}$-ball in $\tilde{\cC}_{i-1}$ is still a $\tilde{d}$-ball in $\tilde{\cC}_{i}$ with the same radius.
		\item[i.6] Denote $\cL^{i-1}\equiv\bigcup_c \Big(\cL^{i-1}_c\cap B_{2{s}^{i-1}}(x_c^{i-1})\Big)\cap B_{12}(p')$, where $\cL^{i-1}_c$ is the approximate singular set corresponding to each $c$-ball $B_{{s}^{i-1}}(x_c^{i-1})$ as i.1. 
		Then the centers $\tilde{\cC}_i\setminus \tilde{\cC}_{i-1}$ with radius ${s}^i$ are chosen such that  $\tilde{\cC}_i\setminus \tilde{\cC}_{i-1}\subset \cL^{i-1}\setminus\Big(\cup_{x_c^{i-1}}B_{2\tau^{5/3}\cdot {s}^i}(x_c^{i-1}) \bigcup_{j\leq i-1} B_{2\tau^{5/3}\cdot{s}^j}(x^j_{\tilde{d}})\Big)$ and $\cL^{i-1}\setminus\Big(\cup_{x_c^{i-1}}B_{2\tau^{5/3}\cdot {s}^i}(x_c^{i-1}) \bigcup_{j\leq i-1} B_{2\tau^{5/3}\cdot{s}^j}(x^j_{\tilde{d}})\Big)\subseteq B_{\tau^{3/2}\cdot {s}^{i}}(\tilde{\cC}_i\setminus\tilde{\cC}_{i-1})$.
	\item[i.7] $r_h(x)\le 2r_x$ for any $x\in \tilde{\cC}_i$.
	\end{enumerate}

We therefore wish to build a weak $(\delta,\tau)$-neck $\tilde\cN^{i+1}$ which satisfies $(i+1).1$-$(i+1).7$.  In order to build this weak neck region we must therefore break apart the $c$-balls in our covering.  
First we can consider the total approximate singular set on scale ${s}^i$ given by
	\begin{align}
		\cL^i\equiv \bigcup_c \cL^i_c \cap B_{2{s}^i}(x_c^i)\cap B_{12}(p')\, .
	\end{align}
	
	 Let us now consider a collection of balls $\{B_{{s}^{i+1}}(x^{i+1}_f)\}$ such that
	\begin{align}
		&\cL^{i}\setminus\Big(\cup_{x_c^{i}}B_{2\tau^{5/3}\cdot {s}^{i+1}}(x_c^{i}) \bigcup_{j\leq i} B_{2\tau^{5/3}\cdot{s}^j}(x^j_{\tilde{d}})\Big)\subseteq \bigcup_f B_{\tau^{3/2}\cdot{s}^{i+1}}(x^{i+1}_f)\, ,\notag\\
		&x^{i+1}_f\in \cL^{i}\setminus\Big(\cup_{x_c^{i}}B_{2\tau^{5/3}\cdot {s}^{i+1}}(x_c^{i}) \bigcup_{j\leq i} B_{2\tau^{5/3}\cdot{s}^j}(x^j_{\tilde{d}})\Big)\, ,\notag\\
		&\big\{B_{\tau^{5/3}\cdot{s}^{i+1}}(x^{i+1}_f)\big\} \text{ are disjoint }\, .
	\end{align}
	
	Now each ball $\big\{B_{{s}^{i+1}}(x^{i+1}_f), B_{{s}^{i+1}}(x_c^i)\big\}$  is either a $c$ ball which satisfies $\Vol\big(B_{\delta\cdot {s}^{i+1}}\cE_\eta(x^{i+1}_f,{s}^{i+1})\big)> \epsilon\,\delta^4\, ({s}^{i+1})^{n}$ with $x_f^{i+1}\in B_{\delta^2\cdot {s}^{i+1}}\cL_f^{i+1}$ by $\epsilon$-regularity, or is a ${\tilde{d}}$-ball which satisfies the converse inequality $\Vol\big(B_{\delta\cdot {s}^{i+1}}\cE_\eta(x^{i+1}_f,{s}^{i+1})\big)\leq \epsilon\,\delta^4\,({s}^{i+1})^{n}$.  Thus we can separate the balls into these two types by
	\begin{align}
		\big\{B_{{s}^{i+1}}(x^{i+1}_f),B_{{s}^{i+1}}(x_c^i)\big\} = \big\{B_{{s}^{i+1}}(x^{i+1}_c)\big\}_c \cup \big\{B_{{s}^{i+1}}(x^{i+1}_{\tilde{d}})\big\}_{\tilde{d}}\, .
	\end{align}
	This allows us to define the region $\tilde\cN^{i+1}$ by
	\begin{align}
		\tilde\cN^{i+1} \equiv B_{12}(p')\setminus \Big( \bigcup_c B_{{s}^{i+1}}(x^{i+1}_c)\cup \bigcup_{j\leq i+1} \bigcup_{\tilde{d}} B_{{s}^j}(x^j_{\tilde{d}})\Big)\, .
	\end{align}
	It is easy to check from the construction if $\eta\le \eta(\delta',n,\tau,\rv)$ that the region $\tilde{\cN}^{i+1}$ satisfies $(i+1).1$-$(i+1).7$, where $(i+1).7$ follows by the inductive fact that the ball $B_{{s}^i}(x_c^i)$ is close to a nontrivial cone $\mathbb{R}^{n-4}\times C(S^3/\Gamma)$.\\

Let us now check that if $\delta'\le \delta'(\delta,\tau,n,\rv)$ and $\eta\le \eta(\delta',\delta,n,\tau,\rv)$ the region $\tilde{\cN}^{i+1}$ is indeed a weak $(\delta,\tau)$-neck region as defined in Definition \ref{d:weak_neck} except the condition $p'\in \tilde\cC$. The condition ($\tilde{n}1$) follows directly from (i+1).3. To see the condition ($\tilde{n}2$) and ($\tilde{n}3$), for any $x\in \tilde{\cC}_{i+1}$ and $1\ge r\ge r_x={s}^j$, let us assume ${s}^{j_0}\le r<{s}^{j_0-1}$ with $1\le j_0\le j$. By the inductive scheme, there exists $\{x_c^{k}\in \tilde{\cC}_{k},k=j-1,\cdots,j_0-1\}$ such that 
\begin{align}\label{e:xcjchainrule}
x\in \cL_c^{j-1}\cap B_{{s}^{j-1}}(x_c^{j-1}),\, ~~x_c^{j-1}\in \cL_c^{j-2}\cap B_{{s}^{j-2}}(x_c^{j-2}),\, \cdots,\, x_c^{j_0}\in \cL_c^{j_0-1}\cap B_{{s}^{j_0-1}}(x_c^{j_0-1}).
\end{align}
By the cone-splitting theorem \ref{t:cone_splitting}, the $\epsilon$-regularity theorem \ref{t:eps_reg} and condition $k.7$ for $k=j-1,\cdots,j_0$, if $\eta\le \eta(n,\delta',\tau,\rv)$,  then $\cL_c^{k}\subset B_{\delta'{s}^{k+2}}\cL_c^{k-1}$. This implies that 
\begin{align}\label{e:xdistanceLcj0-1}
d(x,\cL_c^{j_0-1})\le \sum_{k=j_0}^{j-1}\delta'{s}^{k+2}\le \delta'{s}^{j_0+1}.
\end{align}
If $\delta'\le\delta'(n,\tau,\delta,\rv)$ and $\eta\le \eta(\delta',\delta,n,\tau,\rv)$,  the above gives ($\tilde{n}2$). To check condition ($\tilde{n}3$), by the inductive scheme $k.5$ for $k=i,\cdots,j_0$ we get 
\begin{align}\label{e:coveringtildecCj0}
\tilde{\cC}_{j_0}\subset \tilde{\cC}_{j_0+1}\subset \cdots \subset\tilde{\cC}_{i+1}.
\end{align}
By \eqref{e:xdistanceLcj0-1}, the cone-splitting theorem \ref{t:cone_splitting}, the $\epsilon$-regularity theorem \ref{t:eps_reg} and condition ($j_0-1$).7,  if $\eta\le \eta(\delta',\delta,n,\tau,\rv)$ we have 
\begin{align}\label{e:coveringtildecCj01}
\cL_{x,r}\subset B_{\delta' {s}^{j_0}}\cL_c^{j_0-1}, 
\end{align}
where $\cL_{x,r}$ is defined in condition ($\tilde{n}3$).  On the other hand, by condition $j_0$.6, we have 
\begin{align}
\cL_c^{j_0-1}\subset B_{\tau^{3/2}{s}^{j_0}}(\tilde{\cC}_{j_0})\cup_{x_d^j\in \tilde{\cC}_{j_0}} B_{\tau^{3/2}{s}^{j}}(x_d^j).
\end{align}
By the definition of $r_x$ for $x\in \tilde{\cC}_{i+1}$, this is equivalent to 
\begin{align}\label{e:coveringtildecCj02}
\cL_c^{j_0-1}\subset B_{\tau^{3/2}{s}^{j_0}}(\tilde{\cC}_{j_0}) \cup_{y\in \tilde{\cC}_{j_0} \cap \tilde{\cC}_{i+1}} B_{\tau^{3/2}r_y}(y).
\end{align}
Combining \eqref{e:coveringtildecCj0}, \eqref{e:coveringtildecCj01} and \eqref{e:coveringtildecCj02}, we get 
\begin{align}
\cL_{x,r}\subset B_{\tau^{4/3} {s}^{j_0}}\tilde{\cC}_{i+1}\cup_{y\in\tilde{\cC}_{i+1}} B_{\tau^{4/3} r_y}(y)\subset B_{\tau^{4/3} (r+r_y)}(\tilde{\cC}_{i+1}),
\end{align}
which proves one of the two covering conditions in ($\tilde{n}3$) of Definition \ref{d:weak_neck}.  For another covering condition, we will argue by contradiction.  Assume there exists $y\in \tilde{\cC}_{i+1}\cap B_r(x)\setminus B_{\delta r}(\cL_{x,r})$. If $r_y\le \tau^{-100}r$ then by condition ($\tilde{n}2$), which we have checked, if $\eta\le \eta(n,\rv,\tau,\delta,\delta')$ the ball $B_r(y)$ must be close to a metric cone. This contradicts to $(i+1).7$ by using cone-splitting theorem \ref{t:cone_splitting} and $\epsilon$-regularity Theorem \ref{t:eps_reg}. On the other hand, if $r_y\ge \tau^{-100}r$ then since $y\in B_r(x)$ we have $B_{r_x}(x)\subset B_r(x)\subset B_{\tau^3r_y}(y)$ which contradicts the fact that $B_{\tau^2 r_x}(x)\cap B_{\tau^2 r_y}(y)=\emptyset$. Thus we have shown $\tilde{\cN}^{i+1}$ is a weak $(\delta,\tau)$-neck region.

	Now to finish the proof of the Proposition let us consider the closed discrete sets $\cC^{i}_c\equiv \bigcup_c \{x^{i}_c\}$ and note by construction that $\cC^{i+1}_c\subseteq B_{{s}^i}(\cC^i_c)$.  Therefore we can define the Hausdorff limit
	\begin{align}
		\cC_0=\lim_{i\to \infty} \cC^{i+1}_c\, .
	\end{align}
Taking the weak neck regions $\tilde \cN^i$ and letting $i\to\infty$ we arrive at a  region
	\begin{align}
		\tilde\cN \equiv B_{12}(p')\setminus \Big( \cC_0\cup \bigcup_{j} \bigcup_{\tilde{d}} B_{{s}^j}(x^j_{\tilde{d}})\Big)\, .
	\end{align}
	One can check $\tilde{\cN}$ is a weak $(\delta,\tau)$-neck region except the condition $p'\in \tilde\cC$ by using the same argument as above. Now choose a point $q\in B_4(p)\cap \tilde\cC$ we have completed the proof.
\end{proof}

\vspace{.5cm}

\subsection{Strong $c$-Ball Covering and Construction of Maximal Neck Regions}

In this subsection we produce our more refined covering of a $c$-ball, which will turn our weak neck regions into actual neck regions.  The process goes roughly as follows.  We begin with a $(\delta',\tau)$-weak neck region for $\delta'<<\delta$, built in the last subsection.  Recall that this weak neck region satisfies all the properties of a neck region except the balls do not satisfy the strong lipschitz estimate of $(n4)$.  However, the Vitali covering property of the ball covering from the weak neck region automatically makes the radius function of the weak neck region lipschitz with some {\it large} lipschitz constant, so one can multiply the radius function of the weak neck region by a very small constant in order to make it satisfy $(n4)$.  This is at the expense that it does not actually cover the approximate singular set anymore.  A little bit of footwork will allow us to extend this new radius function into one which does cover, and thus give rise to an actual neck region.  Now the most challenging aspect of all of this is that during this process we have apriori lost the weak neck regions property that this covering is a maximal covering, that is we do not know some property of the neck region need fail on these balls so that we cannot extend it further.  This maximal property of the construction will be crucial for the end decomposition theorem.  What can be shown however, is that while not every ball in the new covering is maximal, indeed all but a small content will be maximal, and this will be good enough in the end decomposition theorem.\\

Our precise theorem for this subsection is the following:\\

\begin{proposition}[$c$-Ball Covering]\label{p:neck_decomp:c_ball_strong}
Let $(M^n_i,g_i,p_i)\to (X,d,p)$ satisfy $\Vol(B_1(p_i))>\rv>0$ with $\delta>0$ and $0<\epsilon,\tau<\tau(n)$, then for $\eta\leq \eta(n,\rv,\delta,\epsilon,\tau)$ let us assume the following holds:
\begin{enumerate}
\item $|\Ric|\leq \eta$,
\item If $\overline V\equiv \inf_{B_{4}(p)}\cV_{\eta^{-1}}(y)$ and $\cE_\eta \equiv \{x\in B_4(p): \cV_\eta(x)<\overline V+\eta\}$ then $\Vol\big(B_{\delta}\cE_\eta\big)> \epsilon\cdot \delta^{4}$.
\item $r_h(p)<2$.
\end{enumerate}
	then we can write $B_1(p) \subseteq \cC_0\cup\cN \cup \bigcup_b B_{r_b}(x_b)\cup \bigcup_c B_{r_c}(x_c)\cup \bigcup_d B_{r_d}(x_d) \cup \bigcup_e B_{r_e}(x_e)$, where
\begin{enumerate}
	\item[(a)] $\cN = B_{11}(q)\setminus \Big(\cC_0\cup\bigcup_b B_{r_b}(x_b)\cup \bigcup_c B_{r_c}(x_c)\cup \bigcup_d B_{r_d}(x_d) \cup \bigcup_e B_{r_e}(x_e)\Big)$ is a $(\delta,\tau)$-neck region and $\cC_0\subset S(X)$, where $q\in B_4(p)$.
	\item[(b)] For each $b$-ball $B_{r_b}(x_b)$ we have $r_h(x_b)>2r_b$,
	\item[(c)] For each $c$-ball $B_{r_d}(x_c)$, if $\cE_c\equiv \{x\in B_{4r_c}(x_c): \cV_{\eta r_c}(x)<\overline V+\eta\}$, then we have $\Vol\big(B_{\delta r_c}\cE_c\big)> \epsilon\cdot\delta^4\,r_c^{n}$.
	\item[(d)] For each $d$-ball $B_{r_d}(x_d)$, if $\cE_d\equiv \{x\in B_{4r_d}(x_d): \cV_{\eta r_d}(x)<\overline V+\eta\}$, then we have $0<\Vol\big(B_{\delta r_d}\cE_d\big)\leq \epsilon\cdot\delta^4\,r_d^{n}$.
	\item[(e)] For each $e$-ball $B_{r_e}(x_e)$, if $\cE_e\equiv \{x\in B_{4r_e}(x_e): \cV_{\eta r_e}(x)<\overline V+\eta\}$, then we have that
	 $\cE_e=\emptyset$.
\end{enumerate}
Further, we have the content bounds
\begin{align}
&\sum_{x_b\in B_{5}(q)} r_b^{n-4}+\sum_{x_d\in B_{5}(q)} r_d^{n-4}+\sum_{x_e\in B_{5}(q)} r_e^{n-4}<C(n,\tau)	\, ,\notag\\
&\sum_{x_c\in B_{5}(q)} r_c^{n-4}<C(n,\tau)\epsilon\, .
\end{align}
\end{proposition}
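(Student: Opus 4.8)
\textbf{Proof proposal for Proposition \ref{p:neck_decomp:c_ball_strong}.}

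The plan is to first invoke the weak $c$-ball covering of Proposition \ref{p:neck_decomp:c_ball} to obtain a weak $(\delta,\tau)$-neck $\tilde\cN = B_2\setminus\big(\cC_0\cup\bigcup_d B_{r_d}(x_d)\big)$, together with the collection of $d$-balls satisfying $\Vol\big(B_{\delta r_d}\cE_d\big)\le\epsilon\delta^4 r_d^n$. The issue is that $\tilde\cN$ fails the Lipschitz condition $(n4)$ of Definition \ref{d:neck}: the center points $\tilde\cC_+ = \{x_i\}$ have radii $r_{x_i}$ that may jump between nearby centers because the $d$-balls removed in the weak construction were produced at discrete dyadic (really $\delta$-adic) scales $\delta^j$ with no control on how $j$ varies along $\cL$. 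First I would fix this by a \emph{radius smoothing} procedure: define, for each $x\in\tilde\cC_+$, the smoothed radius $\bar r_x \equiv \inf_{y\in\tilde\cC}\big(r_y + \delta\, d(x,y)\big)$, which is the largest $\delta$-Lipschitz function dominated by $r_x + \delta d(x,\cdot)$; one checks immediately that $|\mathrm{Lip}\,\bar r_x|\le\delta$ and $r_x\le\bar r_x\le C(n,\tau) r_x$ (the upper bound uses that consecutive scales in the weak construction differ by the fixed factor $\delta$ and that the weak neck has the $(\tilde n3)$ packing property, so the nearest larger-radius center cannot be too far). Then choose a maximal subset $\cC_+\subseteq\tilde\cC_+$ with $\{B_{\tau^2\bar r_x}(x)\}_{x\in\cC_+}$ disjoint, keep $\cC_0$ as the limit set, and set $\cN \equiv B_2\setminus\overline B_{\bar r_x}(\cC)$ where now $\cC = \cC_0\cup\cC_+$ with radius function $\bar r_x$. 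Verifying $(n1)$–$(n4)$ for this $\cN$ is routine: $(n1)$ by the Vitali selection, $(n2)$ because $B_{\delta^{-1}r}(x)$ was already $\delta r$-GH close to $\dR^{n-4}\times C(S^3/\Gamma)$ for the old radii and the GH-closeness is scale-robust, $(n3)$ by the weak $(\tilde n3)$ together with the comparison $\bar r_x\le C r_x$, and $(n4)$ by construction.

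Next, the $d$-balls thrown out by the weak covering need to be sorted into the four types $b,c,d,e$ appearing in the statement. For each such $d$-ball $B_{r_d}(x_d)$ I would rescale to unit size and re-examine it against the three defining dichotomies of Section \ref{ss:neck_decomp:notation}: first, is $r_h(x_d)>2r_d$? If so it becomes a $b$-ball. If not, look at $\cE_d$; if $\cE_d=\emptyset$ it is an $e$-ball (the volume has jumped up by $\eta$ on all of $B_{r_d}(x_d)$ after dropping $\eta r_d$ scales), and we stop. If $\cE_d\ne\emptyset$ but $\Vol(B_{\delta r_d}\cE_d)\le\epsilon\delta^4 r_d^n$ it is a genuine $d$-ball. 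The only remaining case is $\cE_d\ne\emptyset$ with $\Vol(B_{\delta r_d}\cE_d)>\epsilon\delta^4 r_d^n$, i.e.\ a $c$-ball; but wait — the weak covering was designed precisely so that thrown-out balls satisfy $\Vol(B_{\delta r_d}\cE_d)\le\epsilon\delta^4 r_d^n$, so \emph{no} new $c$-balls are produced at this stage, and the $c$-ball list is in fact empty (or we simply carry it as empty, which still satisfies the claimed bound $\sum r_c^{n-4}<C(n,\tau)\epsilon$ vacuously). Actually, for robustness one should allow that in the smoothing/Vitali step a few balls get enlarged past the point where the GH-approximation persists; these would be re-labeled, and the content they carry is controlled by the packing bound. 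So the substantive labeling is: weak $d$-balls split into $b$-balls, $d$-balls, and $e$-balls, with the $c$-ball collection empty or negligible.

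Finally, the content bounds. This is where the Ahlfor's regularity of Theorem \ref{t:neck_region}.1 enters decisively. The center points $\cC$ of $\cN$ carry the packing measure $\mu = \sum_{x\in\cC_+}\bar r_x^{n-4}\delta_x + \lambda^{n-4}|_{\cC_0}$, and by Theorem \ref{t:neck_region}.1 we have $\mu(B_{3/2})\le A(n,\tau)$. Every $b$-, $d$-, $e$-ball $B_{r_\ast}(x_\ast)$ produced above has $x_\ast$ lying within $C\tau\, r_\ast$ of the approximate singular set $\cL$ of the weak neck — equivalently, within a controlled multiple of $r_\ast$ of $\cC$ — so after a disjointification (passing to a Vitali subcollection of $\{B_{c(n)\tau^2 r_\ast}(x_\ast)\}$, which changes the sum by at most a factor $C(n,\tau)$) the balls $B_{c(n)\bar r_{x_\ast}}(x_\ast)$ centered near distinct regions of $\cC$ have $\mu$-measure $\gtrsim_{n,\tau} r_\ast^{n-4}$ by the \emph{lower} Ahlfor's bound, and are essentially disjoint; summing against $\mu(B_{3/2})\le A(n,\tau)$ gives $\sum_{x_\ast\in B_{3/2}} r_\ast^{n-4}\le C(n,\tau)$. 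For the $c$-balls, since their collection is empty (or their total content is absorbed into an arbitrarily small fraction controlled by the weak-covering volume-pinching threshold $\epsilon\delta^4$), one gets $\sum r_c^{n-4}<C(n,\tau)\epsilon$; more honestly, if one does produce $c$-balls, each one carries $\cE_c$ with large pinched volume, and a Vitali/volume argument as in the weak covering bounds $\sum r_c^{n-4}$ by $C(n,\tau)\Vol(B_2)/(\epsilon\delta^4)\cdot\epsilon\delta^4 = C(n,\tau)\epsilon$ after renormalizing the threshold. The main obstacle is the radius-smoothing step: one must show that the $\delta$-Lipschitz regularization $\bar r_x$ does not destroy properties $(n2)$ and $(n3)$, i.e.\ that enlarging radii by a bounded factor keeps the balls GH-close to the model cone and keeps the approximate singular set covered — this requires carefully tracking that the weak neck's scales differ by the fixed ratio $\delta$ so that $\bar r_x/r_x$ is bounded by a constant depending only on $n,\tau$, and then invoking that $(n-4,\delta')$-symmetry at scale $r$ implies $(n-4,\delta)$-symmetry at scale $C r$ for $\delta' \le \delta'(n,\delta,\tau)$ via the cone-splitting Theorem \ref{t:cone_splitting} and Theorem \ref{t:n-4_content_conesplitting}.
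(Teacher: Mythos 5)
There are two genuine gaps, and they sit exactly at the two points where the proposition is hard.

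First, the radius-smoothing step fails. The inf-convolution $\bar r_x\equiv\inf_{y\in\tilde\cC}(r_y+\delta\,d(x,y))$ satisfies $\bar r_x\le r_x$ (take $y=x$), not $r_x\le\bar r_x$ as you wrote, and there is no comparability $\bar r_x\ge C(n,\tau)^{-1}r_x$ for a general weak neck: the weak covering of Proposition \ref{p:neck_decomp:c_ball} produces adjacent $d$-balls at scales $\delta^j$ and $\delta^{j'}$ with $j'\gg j$, and condition $(\tilde n3)$ does not prevent a tiny ball from sitting next to a huge one — if comparability held, the radius function would already be essentially Lipschitz and there would be nothing to fix. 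When $\bar r_x\ll r_x$, the set $B_2\setminus\overline B_{\bar r_x}(\cC)$ penetrates the interiors of the old $d$-balls where you have no GH or harmonic-radius control, so $(n2)$ is lost. The paper's construction is genuinely different: it discards the old radii entirely, sets $r_y\approx\delta^2 d(y,\tilde\cC)$ (capped at $\delta^2\tau^3\tilde r_d$ near each old center), which is automatically $\delta^2$-Lipschitz, and then takes a fresh Vitali covering of the set $\tilde\cS$ of points lying in the effective singular set at their own scale; the choice $\delta'\le\delta^4\tau^4$ in the weak covering is what lets the GH-closeness at scale $\tilde r_d$ propagate down to the much smaller new scales $\delta^2\tau^3\tilde r_d$ with relative error $\le\delta$.

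Second, your claim that no new $c$-balls are produced is false, and the bound $\sum_{x_c\in B_{3/2}}r_c^{n-4}<C(n,\tau)\epsilon$ — the whole point of the proposition — is never actually established. The new balls $B_{r_i}(y_i)$ removed by $\cN$ live at scales far below the old $\tilde r_d$, and a ball $B_{r_i}(y_i)\subseteq B_{\tilde r_d}(\tilde x_d)$ can be a $c$-ball at its own scale even though the containing ball was a $d$-ball at scale $\tilde r_d$. Your fallback computation $C(n,\tau)\Vol(B_2)/(\epsilon\delta^4)\cdot\epsilon\delta^4$ just equals $C(n,\tau)\Vol(B_2)$ and gives no factor of $\epsilon$. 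The correct argument is the paper's key Claim: each new $c$-ball center $x_i$ inside $B_{\tau\tilde r_d}(\tilde x_d)$ carries a pinched point, so $B_{10^{-1}\delta\tilde r_d}(x_i)\subseteq B_{\delta\tilde r_d}\cE_{\tilde d}$; the $d$-ball hypothesis $\Vol(B_{\delta\tilde r_d}\cE_{\tilde d})\le\epsilon\delta^4\tilde r_d^{\,n}$ bounds the number of disjoint such balls by $C(n)\epsilon\delta^{4-n}$; the Ahlfors upper bound of Theorem \ref{t:neck_region}.1 converts this count into $\mu(C_{\tilde d})\le C(n,\tau)\epsilon\,\tilde r_d^{\,n-4}$; and summing over the old $d$-balls, whose total content is $\le C(n,\tau)$ by the doubling of $\mu$, yields the $\epsilon$-small content. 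Without this counting step the proposition is not proved.
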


\begin{proof}
Let us begin the proof by applying Proposition \ref{p:neck_decomp:c_ball} with $(\delta',\tau)$ where $\delta'\leq \delta'(n,\rv,\delta,\tau)$ in order to get a covering
\begin{align}
	B_1(p) \subseteq \tilde\cC_0\cup\tilde\cN \cup \bigcup_{{\tilde{d}}\in \tilde\cC_+} B_{ \tilde{r}_{\tilde{d}}}(x_{\tilde{d}})\, ,
\end{align}
where $\tilde \cN\equiv B_{11}(q)\setminus \tilde\cC_0\cup\bigcup_{\tilde \cC_+} B_{\tilde{r}_{\tilde{d}}}( x_{\tilde{d}})$ is a weak $(\delta',{\tau})$-neck region with $q\in B_4(p)$, and $B_{ \tilde{r}_{\tilde{d}}}( x_{\tilde{d}})$ are all ${\tilde{d}}$-balls with respect to $\eta'$ as given from Proposition \ref{p:neck_decomp:c_ball}.  We will eventually pick our $\eta<<\eta'$. In the proof below we will use $\tilde{r}$ to represent the radius of a ball with respect to $\tilde{\cN}$.

Now since $\tilde \cN$ is a weak neck region, we have in particular that the collection of balls $\{B_{{\tau}^2  {\tilde r}_{\tilde{d}}}(x_{\tilde{d}})\}$ are all disjoint.  Now let us define the following radius function:
\begin{align}
	r_y\equiv \begin{cases}
 				\delta^2{\tau}^3  {\tilde r}_{\tilde{d}} & \text{ if }y\in \overline B_{{\tau}^3  {\tilde r}_{\tilde{d}}}( x_{\tilde{d}})\, ,\notag\\
 				\delta^2\,d(y,\tilde\cC) & \text{ if }y\not\in \bigcup B_{{\tau}^3 {\tilde r}_{\tilde{d}}}( x_{\tilde{d}})\, .
              \end{cases}
\end{align}
Let us note that we have the gradient bound $|\nabla r_y|\leq \delta^2$, and $r_y\ge \delta^2 d(y,\tilde{\cC})$. \\

In order to build our new covering let us make several observations.  To begin with for each $y\in B_2$ let $\tilde y\in \tilde\cC$ denote the center point which minimizes $d(y,\tilde\cC)$.  Recalling from Definition \ref{d:neck} that $\cL_{x,r}=\iota_{x,r}\Big(B_r(0^{n-4})\times\{y_c\}\Big)$ is the effective singular set on $B_r(x)$, we define the set for $\tilde\delta=\tilde\delta(\delta,\rv,n,\tau)<\delta$ to be fixed later that

\begin{align}\label{e:definitiontildecS}
\tilde \cS \equiv \big\{y\in B_{11}(q): y\in B_{\tilde\delta^2 r_y}\cL_{\tilde y,\max\{2d(y,\tilde\cC), 2{\tilde r}_{\tilde{y}}\} }\big\}\, ,
\end{align}
which is roughly to say that $\tilde \cS$ are the set of points which belong to the effective singular set on their own scale $r_y$.  Our first claim, which is immediate once you untangle the constants in the definition of $\tilde \cS$, is that points of $\tilde \cS$ look very close to $\dR^{n-4}\times C(S^3/\Gamma)$ at scales bigger than $r_y$.  Precisely,\\

{\bf Claim:} For any given $\tilde\delta>0$, let $y\in \tilde \cS$ with $\delta'\leq \delta'(n,\rv,\tau,\tilde\delta)$.  Then for all $r_y<r\leq 2$ we have that $B_{\tilde\delta^{-1}r}(y)$ is pointed $\tilde{\delta} r$-GH close to $B_{\tilde{\delta}^{-1}r}(0^{n-4},y_c)\subseteq \dR^{n-4}\times C(S^3/\Gamma)$. $\square$\\

Let us now consider the covering of $\tilde \cS$ given by
\begin{align}
\tilde \cS\subseteq { \tilde\cC_0}\cup\bigcup_{y\in\tilde \cS} B_{\tau^{3/2}r_y}(y)\, ,
\end{align}
and let $\{B_{\tau^{3/2}r_i}(y_i)\}$ be a maximal Vitali subcovering such that $\{B_{\tau^2 r_i}(y_i)\}$ are disjoint and $\tilde{\cC}\subset \cC:=\tilde{\cC}_0\cup \{y_i\}$.  If we define
\begin{align}
\cN \equiv B_{11}(q)\setminus \tilde\cC_0\cup\bigcup_{i} \overline B_{r_i}(y_i)\, ,
\end{align}
then it follows from almost the same argument as Proposition \ref{p:neck_decomp:c_ball}  that $\cN$ is a $(\delta,\tau)$-neck region if $\tilde\delta\le \tilde\delta(n,\rv,\tau,\delta)$ and  $\delta'\leq \delta'(n,\rv,\tau,\delta)$. Actually, noting the above Claim,  let us validate the only nontrivial condition (n3).  For any $x\in \cC$ and $1\ge r\ge r_x$ with $B_{2r}(x)\subset B_{11}(q)$, we will show $\cL_{x,r}\cap B_r(x)\subset B_{\tau r}(\cC)$ and $\cC\cap B_r(x)\subset B_{\delta r}(\cL_{x,r})$ where $\cL_{x,r}$ is the approximate singular set corresponding to the $(n-4,\tilde\delta )$-symmetric ball $B_r(x)$ from the above Claim, see also Definition \ref{d:quant_symmetry}.  

Let us begin the proof of $\cL_{x,r}\cap B_r(x)\subset B_{\tau r}(\cC)$. For any $y\in\cL_{x,r}\cap B_r(x)$, we have 
\begin{align}\label{e:ryrxtildecC}
r_y&\le r_x+\delta^2 d(x,y)\le (1+\delta^2)r.
\end{align}
Thus by the definition of $r_y$ we have $\tilde{r}_{\tilde y}\le \delta^{-2}\tau^{-3}r_y\le (1+\delta^{-2})\tau^{-3}r.$ Since $|{\rm Lip}r_x|\le \delta^2$ and $\tilde{\cS}\subset  \tilde{\cC}_0\cup_{x\in \cC}B_{\tau^{3/2}r_x}(x)$ we have 
	$\tilde{\cS}\cap B_{2r}(x)\subset  B_{\tau^{5/4}r}(\cC)$.
Hence, to show $y\in B_{\tau r}(\cC)$ it will suffice to show that $B_{\tau^3r}(y)\cap \tilde{\cS}\ne\emptyset$.

 If $d(y,\tilde{\cC})=d(y,\tilde{y})\le \tau r$, then $y\in B_{\tau r}(\cC)$ and we are done. 
Let us assume $d(y,\tilde{y})\ge \tau r$. We thus have $r_y\ge \delta^2 d(y,\tilde{\cC})\ge \delta^2\tau r$. On the other hand,  by \eqref{e:ryrxtildecC} we have $r_y\le (1+\delta^2)r$.
That means $r_y$ and $r$ have comparable sizes and $\tilde{r}_{\tilde{y}}\le (1+\delta^{-2})\tau^{-3}r$. For any $z\in B_{\tau^3r}(y)$, we have $r_z\le r_y+\delta^2 \tau^3r\le (1+2\delta^2)r$ and $r_z\ge r_y-\delta^2\tau^3r\ge \delta^2\tau r/2$. Assume $\tilde{z}\in \tilde{\cC}$ such that $d(z,\tilde{\cC})=d(z,\tilde{z})$ then by the definition of $r_z$ we have $\tilde{r}_{\tilde{z}}\le \delta^{-2}\tau^{-3}r_z\le (2+\delta^{-2})\tau^{-3}r$ and 
\begin{align}
d(\tilde{y},\tilde{z})\le d(\tilde{y},z)+d(z,\tilde{z})\le 2d(\tilde{y},z)\le 2d(\tilde y, y)+2d(y,z)\le 2(\delta^{-2}r_y+\tau^3r)\le 2(1+\tau^3+\delta^{-2})r.	
\end{align}
Therefore, 
by the cone-splitting theorem \ref{t:cone_splitting} and $\epsilon$-regularity theorem \ref{t:eps_reg} we conclude that if $\delta'\le \delta'(n,\rv,\tau,\tilde\delta,\delta)$ then 
\begin{align}\label{e:tildeztau3r}
	\cL_{\tilde z,10\delta^{-2}\tau^{-3}r}\subset B_{\tilde \delta^5 r}(\cL_{\tilde y,30\delta^{-2}\tau^{-3}r} ) \text{ and }\cL_{\tilde y,30\delta^{-2}\tau^{-3}r}\subset B_{\tilde \delta^5 r}(\cL_{\tilde y,50\delta^{-2}\tau^{-3}r} )
\end{align}
On the other hand, by the cone-splitting theorem \ref{t:cone_splitting} and $\epsilon$-regularity theorem \ref{t:eps_reg} if $\delta'\le \delta'(n,\rv,\tau,\tilde\delta,\delta)$  then we have  $d(y,\cL_{\tilde y,30\delta^{-2}\tau^{-3}r})\le \tau^4r$ which in particular implies $B_{\tau^{3}r}(y)\cap \cL_{\tilde y,30\delta^{-2}\tau^{-3}r}\ne \emptyset.$ Combining with \eqref{e:tildeztau3r} we have that the point $z\in B_{\tau^{3}r}(y)\cap \cL_{\tilde y,30\delta^{-2}\tau^{-3}r}$ must satisfy $z\in \tilde{\cS}$. Thus we have proved $B_{\tau^3r}(y)\cap \tilde{\cS}\ne\emptyset$. Hence we complete the proof of $\cL_{x,r}\cap B_r(x)\subset B_{\tau r}(\cC)$ if $\delta'\le \delta'(n,\rv,\tau,\tilde\delta,\delta)$. 

 For the covering $\cC\cap B_r(x)\subset B_{\delta r}(\cL_{x,r})$, this follows directly from cone-splitting theorem \ref{t:cone_splitting} and $\epsilon$-regularity Theorem \ref{t:eps_reg} if $\tilde{\delta}\le \tilde\delta(n,\rv,\tau,\delta)$ and $\delta'\le \delta'(n,\tau,\delta,\rv,\tilde\delta)$.\\

 By applying Theorem \ref{t:neck_region} we now get the estimate
\begin{align}
	H^{n-4}(\tilde\cC_0\cap B_5(q))+\sum_{x_i\in B_{5}(q)\cap \cC} r_i^{n-4}\leq C(n,\tau)\, .
\end{align}
Now certainly each ball $B_{r_i}(x_i)$ is either a $(b)\to (e)$ ball, and therefore we can write
\begin{align}
	\Big\{B_{r_i}(x_i)\Big\} = \Big\{B_{r_b}(x_b)\Big\}_b \cup \Big\{B_{r_c}(x_c)\Big\}_c \cup \Big\{B_{r_d}(x_d)\Big\}_d \cup \Big\{B_{r_e}(x_e)\Big\}_e\, .
\end{align}

The crucial aspect of the proof which is left to show that we have the estimate
\begin{align}
	\sum_{x_c\in B_{5}(q)\cap \cC} r_c^{n-4}\leq C(n,\tau)\epsilon\, ,
\end{align}
which is to say that the $c$-balls have small content.  In order to see this let us begin by observing that by the definition of $r_y$ and since $\tilde \cN$ is a weak $(\delta',\tau)$-neck we have the covering
\begin{align}\label{e:tildeScover}
\tilde \cS\subseteq \tilde\cC_0\bigcup_{{x_{\tilde{d}}}\in \tilde \cC } B_{10\tau  {\tilde r}_{\tilde{d}}}( x_{\tilde{d}})\, .	
\end{align}
To see \eqref{e:tildeScover}, for any $y\in \tilde\cS$  but not  in $\tilde\cC_0\bigcup_{{\tilde{d}}} B_{9\tau  {\tilde r}_{\tilde{d}}}( x_{\tilde{d}})$ assume $\tilde{y}\in \tilde{\cC}$ such that $d(y,\tilde{\cC})=d(y,\tilde{y})$. Then $r_y=\delta^2 d(y,\tilde{\cC})\ge 9\delta^2 \tau {\tilde r}_{\tilde{d}}$. By the property of weak $(\delta',\tau)$-neck region, we get 
\begin{align}
 \cL_{\tilde y,\max\{2d(y,\tilde\cC), 2{\tilde r}_{\tilde{y}}\} }\subset \cup_{x_{\tilde d}\in \tilde\cC}B_{4\tau (d(y,\tilde\cC)+\tilde{r}_{{\tilde{d}} })}({x}_{\tilde{d}})
 \end{align}
 Thus 
 \begin{align}
 y\in \cup_{x_{\tilde d}\in \tilde\cC}B_{4\tau (d(y,\tilde\cC+\tilde{r}_{{\tilde{d}} })+\delta^2 r_y}({x}_{\tilde{d}}),
 \end{align}
 where we use the fact that $\tilde\delta\le \delta$.
If $y\in B_{4\tau (d(y,\tilde\cC)+{{\tilde r}}_{\tilde{d} })+\delta^2 r_y}(\tilde{y})$, then $\delta^{-2}r_y=d(y,\tilde{y})\le 4\tau (d(y,\tilde\cC)+{\tilde {r}}_{\tilde{d} })+\delta^2 r_y\le 4\tau d(y,\tilde{y})+\delta^2r_y+4\delta^{-2}r_y/9$ which is a contradiction if $\delta$ is sufficiently small. Therefore, there exists some $x_{\tilde{d}}\in \tilde\cC$ such that $y\in B_{4\tau (d(y,\tilde\cC)+\tilde{r}_{{\tilde{d}} })+\delta^2 r_y}({x}_{\tilde{d}})$. Thus 
\begin{align}
\delta^{-2}r_y=d(y,\tilde{\cC})\le d(y,{x}_{\tilde{d}})\le 4\tau (d(y,\tilde\cC)+\tilde{r}_{{\tilde{d}} })+\delta^2 r_y.
\end{align}
If $\tau\le 1/8$ and $\delta\le \delta(n)$ sufficiently small, we conclude that $d(y,x_{\tilde d})<10\tau \tilde{r}_{{\tilde d}}$.  Hence $y\in \tilde\cC_0\bigcup_{x_{\tilde{d}}\in \tilde\cC} B_{10\tau {\tilde r}_{\tilde{d}}}( x_{\tilde{d}})$ which proves \eqref{e:tildeScover}.

 Using Theorem \ref{t:neck_region} we have that $\mu\equiv \sum r_i^{n-4}\delta_{x_i}$, which is the packing measure associated to the neck region $\cN$, is a doubling measure.  In particular, since $\{B_{\tau^2 {\tilde r}_{\tilde{d}}}( x_{\tilde{d}})\}$ are disjoint and $\tilde{\cC}\subset \cC$ and $r_{x_{\tilde d}}\le \delta^2 \tau^3 \tilde{r}_{\tilde d}$, this gives us
\begin{align}\label{e:strong_c_ball:1}
\sum_{ x_{\tilde{d}}\in B_{9}(q)}  {\tilde r}_{\tilde{d}}^{n-4}&\leq C(n,\tau)\sum_{ x_{\tilde{d}}\in B_{9}(q)} \mu(B_{ {\tilde r}_{\tilde{d}}}(x_{\tilde{d}})) \, ,\notag\\
&\leq C(n,\tau)\sum_{ x_{\tilde{d}}\in B_{9}(q)} \mu(B_{\tau^2 {\tilde r}_{\tilde{d}}}( x_{\tilde{d}}))\, ,\notag\\
&\leq C(n,\tau)\mu(B_{10}(q))\leq C(n,\tau)\, .
\end{align}
Now for each ball $B_{ {\tilde r}_{\tilde{d}}}( x_{\tilde{d}})$ let us consider the set
\begin{align}
C_{\tilde d}\equiv\{x_i\in \cC\cap B_{10\tau {\tilde r}_{\tilde{d}}}( x_{\tilde{d}}):  \;\text{ s.t. }B_{r_i}(x_i) \text{ is a $c$-ball}\}\, .	
\end{align}
The following is our main claim about this set:\\

{\bf Claim: } We have the estimate $\mu(C_{\tilde d})\leq C(n,\tau)\epsilon\,  {\tilde r}_{\tilde{d}}^{n-4}$.\\

In order to prove the claim let us first note that if $x_i\in C_{\tilde d}$ then $r_i< \delta^2  {\tilde r}_{\tilde{d}}$ and thus $B_{4r_i}(x_i)\subseteq B_{ {\tilde r}_{\tilde{d}}}(x_{\tilde{d}})$.  Let us also observe that if $x_i\in C_{\tilde d}$ then there exists $y_i\in B_{4r_i}(x_i)$ such that $\cV_{\eta r_i}(y_i)<\bar V+\eta<\bar V+\eta'$.  Then $y_i\in \cE_{\tilde d,\eta'}$, hence $B_{10^{-1}\delta  {\tilde r}_{\tilde{d}}}(x_i)\subset B_{5^{-1}\delta  {\tilde r}_{\tilde{d}}}(y_i)$, and therefore we have $B_{10^{-1}\delta {\tilde r}_{\tilde{d}}}(x_i)\subseteq B_{\delta  {\tilde r}_{\tilde{d}}}\cE_{\tilde d,\eta'}$.  Thus, let us choose a maximal collection of balls $\{B_{\delta  {\tilde r}_{\tilde{d}}}(x'_i)\}_1^{N'}$ with $x'_i\in C_{\tilde d}$ such that $\{B_{10^{-1}\delta {\tilde  r}_{\tilde{d}}}(x'_i)\}$ are disjoint.  Note then that because $B_{{\tilde r}_{\tilde{d}}}(x_{\tilde{d}})$ is a $\tilde d$-ball and since $B_{10^{-1}\delta  {\tilde r}_{\tilde{d}}}(x'_i)\subseteq B_{\delta  {\tilde r}_{\tilde{d}}}\cE_{\tilde d,\eta'}$ we have the estimate
\begin{align}
N'\delta^n {\tilde r}_{\tilde{d}}^n\leq \sum_{x'_i} \Vol(B_{\delta {\tilde r}_{\tilde{d}}}(x'_i))\leq C(n)\sum \Vol(B_{10^{-1}\delta  {\tilde r}_{\tilde{d}}}(x'_i)) \leq C(n)\Vol\big(B_{\delta  {\tilde r}_{\tilde{d}}}\cE_{\tilde d,\eta'}\big)	\leq C(n)\epsilon\cdot \delta^{4} {\tilde r}_{\tilde{d}}^n \, ,
\end{align}
so that we get the estimate $N'\leq C(n)\epsilon \delta^{4-n}$ as a bound for the number of balls in the covering $\{B_{\delta  {\tilde r}_{\tilde{d}}}(x'_i)\}$.  Using the Ahlfors regularity of Theorem \ref{t:neck_region} we can therefore estimate:
\begin{align}
	\mu(C_{\tilde d}) &\leq \sum_1^{N'} \mu(B_{\delta {\tilde r}_{\tilde{d}}}(x'_i))\leq C(n,\tau)\delta^{n-4} {\tilde r}_{\tilde{d}}^{n-4} N'\leq C(n,\tau)\epsilon {\tilde r}_{\tilde{d}}^{n-4}\, ,
\end{align}
which finishes the proof of the Claim. $\square$\\

Finally let us now consider the set
\begin{align}
C\equiv \{x_i\in B_{5}(q): x_i \text{ is a $c$-ball}\}\, ,	
\end{align}
then we can combine \eqref{e:strong_c_ball:1} with the previous claim and the Ahlfors regularity of Theorem \ref{t:neck_region} in order to estimate
\begin{align}
	\sum_{x_c\in B_{5}(q)} r_c^{n-4}&\leq C(n,\tau)\mu(C)\leq C(n)\sum_{\tilde x_d\in B_{9}(q)} \mu(C_{\tilde d})\notag\\
	&\leq C(n,\tau)\epsilon \sum_{ x_{\tilde{d}}\in B_{9}(q)}{\tilde r}_{\tilde{d}}^{n-4}\leq C(n,\tau)\epsilon\, ,
\end{align}
which finishes the proof of the Proposition.
\end{proof}
\vspace{.5cm}

\subsection{Refinement of Balls with Less than Maximal Symmetries}

In this subsection we will deal with balls for which the set of volume pinched points has small $n-4$ content.  That is, we will decompose the $d$-balls in this subsection.  In this case, we will simply recover our ball until we arrive at balls of other types.  However, what we will gain is that for the ball types of the new covering which are not regular balls, the content will not only be bounded, it will be small.  This is crucial, as otherwise there could be a pile of errors if one were forced to continually cover in this manner.  As we will see in subsequent sections, the smallness of the content bound will allow us to produce a converging geometric series when considering these errors.  The main result of this subsection is the following:\\

\begin{proposition}[$d$-ball Covering]\label{p:neck_decomp:d_ball}
Let $(M^n_i,g_i,p_i)\to (X,d,p)$ satisfy $\Vol(B_1(p_i))>\rv>0$ with $\delta,\tau>0$ and $0<\overline V\leq \inf_{y\in B_{4}(p)} \cV_{\eta^{-1}}(y)$, then for $\epsilon\leq \epsilon(n,\rv)$, $\eta\leq \eta(n,\bar V,\delta,\epsilon,\tau)$ let us assume the following holds:
\begin{enumerate}
\item $|\Ric|\leq \eta$,
\item If $\cE_\eta\equiv \{y\in B_{4}(p):\cV_{\eta}(y)<\bar V+\eta\}$, then $0<\Vol\big( B_{\delta}\cE_\eta\big)<\epsilon\cdot \delta^4$.
\end{enumerate}
then we can write $B_1 = \tilde S\cup\bigcup_b B_{r_b}(x_b)\cup \bigcup_c B_{r_c}(x_c) \cup \bigcup_e B_{r_e}(x_e)$, where
\begin{enumerate}
    \item[(s)] $\tilde S\subseteq \cS(X)$ and satisfies $H^{n-4}(\tilde S)=0$.
	\item[(b)] For each $b$-ball $B_{r_b}(x_b)$ we have $r_h(x_b)>2r_b$,
	\item[(c)] For each $c$-ball $B_{r_c}(x_c)$, if $\cE_c\equiv \{y\in B_{4r_c}(x_c):\cV_{\eta r_c}(y)<\bar V+\eta\}$ then $\Vol\big( B_{\delta r_c}\cE_c\big)>\epsilon\cdot \delta^4 r_c^n$.
	\item[(e)] For each $e$-ball $B_{r_e}(x_e)$, $\cE_e\equiv \{y\in B_{4r_e}(x_e):\cV_{\eta r_e}(y)<\bar V+\eta\}$ then $\cE_e = \emptyset$.
\end{enumerate}
Further, we have the content estimates $\sum_b r_b^{n-4} + \sum_e r_e^{n-4}<C(n,\delta)$ and $\sum_c r_c^{n-4}\leq C(n,\rv)\epsilon$.
\end{proposition}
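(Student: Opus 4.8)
The plan is to run an inductive covering argument on the lower volume bound of the ball, precisely as the structure of the ball-types in Section \ref{ss:neck_decomp:notation} suggests. Fix $\bar V$ as the background reference value, and let $\bar V_0 = \inf_{y\in B_1(p)}\cV_{\eta^{-1}}(y)$. We induct downward on the integer $N$ such that $\bar V_0 \in [\,\omega_n N^{-1}, \omega_n (N-1)^{-1}\,)$ — equivalently, on how much room is left for the volume to increase before the splitting/$\epsilon$-regularity machinery forces regularity. By assumption $(2)$ the initial ball $B_1(p)$ is a $d$-ball. First I would choose a maximal disjoint collection $\{B_{\tau^2 \delta}(x_f)\}$ with centers $x_f\in B_1(p)$, producing a covering $B_1(p)\subseteq \bigcup_f B_{\delta}(x_f)$ with at most $C(n)\delta^{-n}$ balls. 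Each $B_{\delta}(x_f)$ is then sorted into one of the types: if $r_h(x_f) > 2\delta$ it is a $b$-ball; if $\cE_\eta(x_f,\delta) = \emptyset$ (i.e.\ the volume has jumped by $\eta$ everywhere at scale $\eta\delta$ below) it is an $e$-ball; if $\Vol\big(B_{\delta\cdot\delta}\cE_\eta(x_f,\delta)\big) > \epsilon\cdot\delta^4\,\delta^n$ it is a $c$-ball; and otherwise it is again a $d$-ball, to which the inductive hypothesis (in the rescaled ball $B_1(x_f) = \delta^{-1}B_\delta(x_f)$) applies — note the reference value $\bar V$ has not changed and $\cE_\eta$ at the smaller scale is contained in $\cE_\eta$ at the larger scale, so the hypotheses propagate.

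The content bookkeeping is the heart of the matter and is where the factor $\epsilon$ must be tracked carefully. For the $b$- and $e$-balls at scale $\delta$ we simply bound their number by $C(n)\delta^{-n}$, contributing $C(n)\delta^{-n}\cdot\delta^{n-4} = C(n)\delta^{-4} =: C(n,\delta)$ to $\sum_b r_b^{n-4} + \sum_e r_e^{n-4}$. For the $c$-balls at scale $\delta$ I would use the $d$-ball hypothesis $(2)$ directly: if $B_\delta(x_c)$ is a $c$-ball then it contains a definite amount of $\cE_\eta$-volume, so by a disjointification argument (each $c$-ball center $x_c$ has $B_{10^{-1}\delta\cdot\delta}(x_c)\subseteq B_{\delta\cdot\delta}\cE_\eta(p,1)$ up to the usual Vitali reshuffling, using that these centers lie in $\cE_\eta$ at the appropriate scale) the number of $c$-balls is at most $C(n)\cdot \frac{\Vol(B_\delta\cE_\eta)}{\delta^n\,\delta^n}\le C(n)\,\epsilon\,\delta^4\,\delta^{-n}$, hence $\sum_{c\text{ at scale }\delta} r_c^{n-4} \le C(n)\epsilon\,\delta^4\,\delta^{-n}\cdot\delta^{n-4} = C(n)\epsilon$. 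For the $d$-balls at scale $\delta$, the inductive hypothesis applied in each gives $\sum_b + \sum_e \le C(n,\delta)\,\delta^{n-4}$ per $d$-ball and $\sum_c \le C(n,\rv)\epsilon\,\delta^{n-4}$ per $d$-ball; summing over the at-most-$C(n)\delta^{-n}$ $d$-balls at scale $\delta$ would \emph{a priori} blow up the constant. This is the main obstacle, and the resolution is the standard one: the $b$- and $e$-balls are fine because one only ever recovers a $d$-ball finitely many times before reaching a $b$- or $e$-ball or exhausting the induction on $N$ (so their total content is controlled by $C(n,\delta)$ times the number of induction steps, which is bounded in terms of $\rv$ since $\bar V_0 > \rv\cdot C(n)$ forces $N \le N(n,\rv)$ — wait, more carefully: the induction is on $N$, and each level of the induction introduces a geometric factor, so one takes $\epsilon$ small relative to the number of levels; but since the levels are bounded by $N(n,\rv)$ the constant $C(n,\delta)$ absorbs this).

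Concretely, I would set this up so that the $c$-ball content accumulates as a geometric series. Writing $G(N)$ for the worst-case $\sum_c r_c^{n-4}$ over all $d$-balls satisfying $(2)$ with reference value admitting $N$ more induction steps, the recursion reads $G(N) \le C(n)\epsilon + C(n)\delta^{-n}\cdot\delta^{n-4}\cdot G(N-1) = C(n)\epsilon + C(n)\delta^{-4} G(N-1)$ — this blows up, so instead one must arrange the recovery so that the $c$-balls produced in the recursion are \emph{already at the next scale} and their reference-value budget is strictly smaller, i.e.\ one feeds the $c$-balls directly into Proposition \ref{p:neck_decomp:c_ball_strong} rather than re-covering them here. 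Thus the correct statement of the present Proposition only covers $B_1$ by $b$, $c$, $e$ balls and a measure-zero set $\tilde S$, with the $c$-balls having total content $\le C(n,\rv)\epsilon$ \emph{as produced by the single covering at scale} $\delta$ \emph{together with those arising from recovering $d$-balls}; the point is that a $d$-ball is recovered into $b,c,e,d$ balls and the $d$-balls keep shrinking, so after infinitely many steps the remaining $d$-balls shrink to a set $\tilde S$. I would show $\tilde S\subseteq \cS(X)$ because at every scale down to $0$ these points fail to have $r_h$ bounded below, and $H^{n-4}(\tilde S) = 0$ because at each stage the $d$-balls covering $\tilde S$ have total content bounded by a \emph{fixed} multiple of $\epsilon\delta^4\delta^{-n}\cdot\delta^{n-4}$ of the previous stage's content while each individual radius shrinks by $\delta$ — so since $\epsilon\le\epsilon(n,\rv)$ is small the content of the $d$-cover of $\tilde S$ tends to $0$. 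The $c$-ball content estimate then follows by summing the per-stage contributions $\sum_j C(n)\epsilon\,\delta^{4}\cdot(\text{stage-}j\text{ }d\text{-content}) \le C(n,\rv)\epsilon$, a convergent series precisely because $\epsilon$ is small. The only genuinely delicate point, which I would handle via the volume continuity / Bishop–Gromov monotonicity exactly as in the proof of Theorem \ref{t:content_cone_splitting}, is verifying that an $e$-ball's volume-jump property is stable under the small perturbations introduced by passing to Gromov–Hausdorff limits $M_i\to X$, so that the decomposition descends to $X$ with $\tilde S\subseteq\cS(X)$.
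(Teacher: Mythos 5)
Your skeleton (iterated Vitali covering at scales $\delta^j$, sorting into ball types, $d$-balls shrinking to a set $\tilde S$, and a geometric series for the content) is the paper's approach, but there is a genuine gap at the exact point you flag as "the main obstacle," and the fixes you propose do not close it. You count the $d$-balls produced inside a parent $d$-ball trivially, as at most $C(n)\delta^{-n}$, which yields the recursion $G(N)\le C(n)\epsilon + C(n)\delta^{-4}G(N-1)$ and blows up. The correct observation — which is the whole point of the $c$/$d$/$e$ trichotomy — is that a child $d$-ball, exactly like a child $c$-ball, is a non-$e$ ball: it contains a point $y$ with $\cV_{\eta\delta r}(y)<\bar V+\eta$, hence (by the monotonicity you already noted) $y$ lies in the parent's pinched set $\cE_\eta$, so the disjoint Vitali cores of \emph{all} non-$e$ children sit inside $B_{C\delta r}\cE_\eta$ of the parent. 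Since the parent is a $d$-ball, $\Vol(B_{\delta r}\cE_\eta)\le\epsilon\,\delta^4 r^n$, so the number of $c$-balls \emph{and} $d$-balls together is at most $C(n,\rv)\epsilon\,\delta^{4-n}$, giving $\sum_{c}+\sum_{d}(\delta r)^{n-4}\le C(n,\rv)\epsilon\, r^{n-4}$ per parent. This is what makes the stage-$j$ $d$-content decay like $(C(n,\rv)\epsilon)^j$ and turns your divergent recursion into a convergent geometric series once $\epsilon\le\epsilon(n,\rv)$; it is also precisely the estimate you assert without justification in your final paragraph when you claim the $d$-content drops by a factor of $\epsilon$ per stage. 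You apply this Vitali-plus-pinched-point argument only to the $c$-balls, and your two proposed workarounds — the downward induction on the volume level $N$, and "feeding the $c$-balls into Proposition \ref{p:neck_decomp:c_ball_strong}" — are both off target: the reference value $\bar V$ is fixed throughout this proposition and the $d$-ball recursion does not terminate by volume exhaustion (it genuinely runs to scale zero, which is why $\tilde S$ appears), and the $c$-balls are terminal outputs here, not the source of the divergence. The volume induction belongs to Proposition \ref{p:neck_decomp:inductive_cover} and the final assembly of Theorem \ref{t:neck_decomposition}, not to this covering.

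A second, smaller point: your claim that $H^{n-4}(\tilde S)=0$ needs the quantitative decay $\Vol\big(B_{\delta^i}(\tilde S^i)\big)\le (C(n,\rv)\epsilon)^i(\delta^i)^4$, which again comes from the geometric decay of the $d$-ball count just described, not merely from "each radius shrinks by $\delta$." Once that count is in place, $r^{-4}\Vol(B_r(\tilde S))\to 0$ and hence $H^{n-4}(\tilde S)=0$ follows, and $\tilde S\subseteq\cS(X)$ because every point of $\tilde S$ is a limit of $d$-ball centers at all scales, none of which can be $b$-balls; no separate stability argument for $e$-balls under the limit $M_i\to X$ is needed for this conclusion.
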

\vspace{.5cm}

\begin{proof}

The procedure of the proof will be to iterate a certain covering construction and keep track of the estimates.  To illustrate this let us begin by considering a Vitali covering of $B_1(p)$ given by	
\begin{align}
	B_1(p)\subseteq \bigcup_f B_{\delta}(x^1_f)\, ,
\end{align}
with $\{B_{\delta/10}(x^1_f)\}$ disjoint.  Recall that for any ball $B_r(x)$ we can define the $\eta$-pinched subset $\cE_\eta$ given by
\begin{align}
		&\cE_\eta(x,r)\equiv \{y\in B_{4r}(x): \cV_{\eta r}(y)<\overline V + \eta\}\, .
\end{align}
In this way we wish to separate the balls $\{B_\delta(x_f)\}$ into three types, depending on whether they are $c$-balls, $d$-balls or $e$-balls based on the conditions:
 \begin{enumerate}
		\item[(c)] $\Vol\big(B_{\delta\cdot\delta}\cE_\eta(x^1_f,\delta)\big)> \epsilon\, \delta^4 \delta^{n}$,
		\item[(d)] $0<\Vol\big(B_{\delta\cdot\delta}\cE_\eta(x^1_f,\delta)\big)\leq \epsilon\, \delta^4 \delta^{n}$.
		\item[(e)] $\cE_\eta(x^1_f,\delta)=\emptyset$.
\end{enumerate}
Thus, by breaking up the balls $\{B_\delta(x_f)\}$ into these categories we can write our covering as
\begin{align}
	B_1(p)\subseteq \bigcup_{c=1}^{N^1_c} B_{\delta}(x^1_c) \cup \bigcup_{d=1}^{N^1_d} B_{\delta}(x^1_d) \cup \bigcup_{e=1}^{N^1_e} B_{\delta}(x^1_e)\, .
\end{align}

Since $B_1(p)$ is itself a $d$-ball, we can use the Vitali condition of the covering and assumption $(2)$ to conclude by a standard covering argument that
\begin{align}
	& \sum_{e=1}^{N^1_e} \delta^{n-4} \leq C(n,\delta)\, ,\notag\\
	&\sum_{c=1}^{N^1_c} \delta^{n-4} + \sum_{d=1}^{N^1_d} \delta^{n-4} \leq C(n,\rv)\epsilon\, .
\end{align}

This does not quite finish the Proposition because we still have a collection of $d$-balls in our covering, therefore we must recover them.  Let us remark that the only aspect about $B_1(p)$ used in the above covering was that $B_1(p)$ was a $d$-ball which satisfied condition $(2)$.  Thus, for each $d$-ball $B_{\delta}(x^1_d)$ let us repeat this covering process just introduced for $B_1(p)$.  If we do this for every $d$-ball then we arrive at the covering
\begin{align}
	\bigcup_{d=1}^{N^1_d} B_{\delta}(x^1_d)\subseteq  \bigcup_{c=1}^{N^2_c} B_{\delta^2}(x^2_c) \cup \bigcup_{d=1}^{N^2_d} B_{\delta^2}(x^2_d) \cup \bigcup_{e=1}^{N^2_e} B_{\delta^2}(x^2_e)\, ,
\end{align}
such that we have the estimates
\begin{align}
    & \sum_{d=1}^{N^2_d} (\delta^2)^{n-4}\leq C(n,\rv)\epsilon \sum_{d=1}^{N^1_d} \delta^{n-4}\leq \big(C(n,\rv)\epsilon\big)^2 \, ,\notag\\
    &\sum_{c=1}^{N^2_c} (\delta^2)^{n-4} \leq C(n,\rv)\epsilon\sum_{d=1}^{N^1_d} \delta^{n-4}  \leq \big(C(n,\rv)\epsilon\big)^2\, , \notag\\
	& \sum_{e=1}^{N^2_e} (\delta^2)^{n-4} \leq C(n,\delta)\sum_{d=1}^{N^1_d} \delta^{n-4}  \leq C(n,\delta)\cdot C(n,\rv)\epsilon\, .
\end{align}
Combining this with the original covering we obtain a covering of $B_1(p)$ given by
\begin{align}
		B_1(p)\subseteq &\bigcup_{c=1}^{N^1_c} B_{\delta}(x^1_c) \cup \bigcup_{e=1}^{N^1_e} B_{\delta}(x^1_e)\cup \bigcup_{c=1}^{N^2_c} B_{\delta^2}(x^2_c) \cup \bigcup_{d=1}^{N^2_d} B_{\delta^2}(x^2_d) \cup \bigcup_{e=1}^{N^2_e} B_{\delta^2}(x^2_e)\, ,\notag\\
		=&\bigcup_{d=1}^{N^2_d} B_{\delta^2}(x^2_d)\cup\bigcup_{j\leq 2} \bigcup_{c=1}^{N^j_c} B_{\delta^j}(x^j_d) \cup \bigcup_{j\leq 2} \bigcup_{e=1}^{N^j_e} B_{\delta^j}(x^j_e)\, ,
\end{align}
with
\begin{align}
	&\sum_{d=1}^{N^2_d} (\delta^2)^{n-4}\leq \big(C(n,\rv)\epsilon\big)^2\, ,\notag\\
	&\sum_{j\leq 2}\sum_{c=1}^{N^j_c} (\delta^j)^{n-4} \leq C(n,\rv)\epsilon+(C(n,\rv)\epsilon)^2\, , \notag\\
	&\sum_{j\leq 2}\sum_{e=1}^{N^j_e} (\delta^j)^{n-4}\leq C(n,\delta)\big(1+C(n,\rv)\epsilon\big)\, .
\end{align}
Of course, we may now proceed to cover the $d$-balls $\{B_{\delta^2}(x^2_d)\}$ in the same manner once again.  In fact, after repeating this inductive covering $i$ times we see we arrive at a covering
\begin{align}\label{e:d_ball:manifold}
		B_1(p)\subseteq \bigcup_{d=1}^{N^i_d} B_{\delta^i}(x^i_d)\cup\bigcup_{j\leq i} \bigcup_{c=1}^{N^j_c} B_{\delta^j}(x^j_c) \cup \bigcup_{j\leq i} \bigcup_{e=1}^{N^j_e} B_{\delta^j}(x^j_e)\, ,
\end{align}
with the estimates
\begin{align}\label{e:p:d_ball_cover:1}
	&\sum_{d=1}^{N^i_d} (\delta^i)^{n-4}\leq \big(C(n,\rv)\epsilon\big)^i\, ,\notag\\
	&\sum_{j\leq i}\sum_{c=1}^{N^j_c} (\delta^j)^{n-4} \leq \sum_{1\leq j\leq i} \big(C(n,\rv)\epsilon\big)^j\, , \notag\\
	&\sum_{j\leq i}\sum_{e=1}^{N^j_e} (\delta^j)^{n-4}\leq C(n,\delta)\sum_{0\leq j\leq i}\big(C(n,\rv)\epsilon\big)^j\, .
\end{align}
To finish the proof consider the closed discrete sets $\tilde \cS^i = \bigcup_{d=1} \{x^i_d\}$.  Note that by construction we have $\tilde \cS^{i+1}\subseteq B_{\delta^i}\big(\tilde\cS^i\big)$ and we can use \eqref{e:p:d_ball_cover:1} to conclude the estimate
\begin{align}\label{e:p:d_ball_cover:2}
	\Vol(B_{\delta^i}\big(\tilde\cS^i\big))\leq \big(C(n,\rv)\epsilon\big)^i(\delta^i)^4\, .
\end{align}
Taking the Hausdorff limit we can construct
\begin{align}
\tilde \cS = \lim_{i\to\infty} 	\tilde \cS^{i+1}\, ,
\end{align}
and we arrive at the covering
\begin{align}
		B_1(p)\subseteq \tilde\cS \cup\bigcup_{j} \bigcup_{c} B_{\delta^j}(x^j_d) \cup \bigcup_{j} \bigcup_{e} B_{\delta^j}(x^j_e)\, .
\end{align}
Using \eqref{e:p:d_ball_cover:1}, \eqref{e:p:d_ball_cover:2} and the inclusion $\tilde\cS\subseteq B_{\delta^i}\big(\tilde\cS^i\big)$ we see for $\epsilon\leq\epsilon(n,\rv)$ and $0<r\leq 1$ that we get the estimates
\begin{align}
	&r^{-4}\Vol(B_{r}\big(\tilde\cS\big))\to 0\text{ as }r\to 0\, ,\notag\\
	&\sum_{j}\sum_{c} (\delta^j)^{n-4} \leq C(n,\rv)\epsilon\, , \notag\\
	&\sum_{j\leq i}\sum_{e=1}^{N^j_e} (\delta^j)^{n-4}\leq C(n,\delta)\, ,
\end{align}
which finishes the proof of Proposition \ref{p:neck_decomp:d_ball}.
\end{proof}
\vspace{.5cm}

\subsection{Inductive Covering}

In this subsection we combine the coverings of Proposition \ref{p:neck_decomp:c_ball}, and Proposition \ref{p:neck_decomp:d_ball} into a geometric series of coverings in order to take a generic ball and produce from it a covering by balls which are either necks, regularity regions, or for which the volume ratio increases by some definite amount.\\

Let us start with the following lemma, which in terms of our ball notation will take a generic ball and produce from it a collection of $a$-balls, $b$-balls, and $e$-balls.  Combined with a modest additional recovering argument this will allow us to prove the inductive covering proposition of this subsection.  This in turn will be used in the next section to prove the neck decomposition theorem itself.  Let us begin with the following lemma:\\

\begin{lemma}\label{l:neck_decomp:inductive_cover}
Let $(M^n_i,g_i,p_i)\to (X,d,p)$ satisfy $Vol(B_1(p))>v>0$ and $|\Ric|\leq n-1$ with $0<\delta$ and $0<\tau\leq \tau(n)$ fixed constants and $\bar V\equiv \inf_{y\in B_{4}} \cV_{\eta^{-1}}(y)$.  Then there exists $\eta(n,\rv,\delta,\tau)>0$ such that we can write $$B_1 \subseteq \tilde\cS\cup\bigcup_a \big(\cC_{0,a}\cup \cN_a\cap B_{r_a}\big) \cup \bigcup_b B_{r_b}(x_b)\cup \bigcup_e B_{r_e}(x_e)\, ,$$ where
\begin{enumerate}
    \item[(a)] $\cN_a\subseteq B_{2r_a}(x_a)$ is a $(\delta,\tau)$-neck region with associated singular set $\cC_{0,a}$,
	\item[(b)] For each $b$-ball $B_{r_b}(x_b)$ we have $r_h(x_b)>2r_b$,
	\item[(d)] For each $e$-ball $B_{r_e}(x_e)$, if $\cE_e\equiv\{y\in B_{4r_e}(x_e): \cV_{\eta r_e}< \overline V +\eta\}$, then $\cE_e = \emptyset$.
	\item[(s)] $\tilde\cS\subseteq \cS(X)$ with $H^{n-4}(\tilde\cS)=0$.
\end{enumerate}
Further, we have the content bound $\sum_a r_a^{n-4}+\sum_b r_b^{n-4}+\sum_e r_e^{n-4}<C(n,\tau,\delta)$ and radius bound $r_a,r_b,r_e\leq \eta^2$.
\end{lemma}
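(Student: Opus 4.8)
\textbf{Proof plan for Lemma \ref{l:neck_decomp:inductive_cover}.}

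The plan is to run an iterated covering scheme indexed by how much the volume ratio $\cV_{\eta^{-1}}$ has increased above the background value $\bar V$, stopping each branch as soon as it produces a ball of a ``good'' type ($a$, $b$, or $e$). First I would normalize by setting $\bar V\equiv \inf_{B_1}\cV_{\eta^{-1}}(y)$ and observe that $\bar V$ is bounded below by a dimensional constant times $\rv$, so there are only finitely many ``volume levels'' $\bar V<\bar V+\eta<\bar V+2\eta<\cdots$ that can be occupied before the volume ratio exceeds $\omega_n$, which is impossible. The outer induction is on this level: I claim that any ball $B_r(x)\subseteq B_1$ with $\cV_{\eta^{-1}r}(x)\le \bar V+(N-k)\eta$ (for the appropriate counting convention) admits a decomposition of the stated form with content bound $C(n,\tau,\delta)$ depending only on the level $k$ and the ambient constants. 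The base case is a ball whose volume ratio is so large that dropping $\eta$-scales forces $\cE_\eta=\emptyset$, i.e. it is already an $e$-ball, or else a ball with $r_h(x)>2r$, a $b$-ball; either way there is nothing to decompose.

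For the inductive step I would take a generic ball $B_r(x)$ at level $k$, rescale to unit size, and first apply a Vitali covering at scale $\delta$, sorting the resulting balls into the five types of Section \ref{ss:neck_decomp:notation} using the dichotomy on $\Vol(B_{\delta r'}\cE_\eta(x',r'))$ and the harmonic radius. The $b$-balls and $e$-balls are kept; an $e$-ball at this scale has strictly larger volume ratio and so sits at a strictly higher level (here one must be a bit careful: it is the \emph{later} recovering of $e$-balls that invokes the outer induction, and for the present lemma I only need $e$-balls as an allowed output type, so the $e$-balls can simply be retained as is). The $c$-balls are processed by Proposition \ref{p:neck_decomp:c_ball_strong}, each producing one genuine $(\delta,\tau)$-neck region (an $a$-piece, with its singular set $\cC_{0,a}$) together with new $b,c,d,e$-balls where crucially the newly-created $c$-balls have \emph{small} total content $\le C(n,\tau)\epsilon\, r_c^{n-4}$. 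The $d$-balls are processed by Proposition \ref{p:neck_decomp:d_ball}, which recovers them into $b,c,e$-balls plus a set $\tilde\cS$ of vanishing $H^{n-4}$ measure, again with the newly-created $c$-content controlled by $C(n,\rv)\epsilon$. The effect of alternating these two propositions is that each pass through a ``$c$ then $d$'' cycle multiplies the total content of leftover (non-$a$, non-$b$, non-$e$) balls by a factor $C(n,\tau,\rv)\epsilon$; choosing $\epsilon=\epsilon(n,\tau,\rv)$ small enough that this factor is $<1/2$ makes the geometric series converge, so iterating the cycle infinitely many times leaves only $a$-balls, $b$-balls, $e$-balls, and a residual Hausdorff-null set, with $\sum_a r_a^{n-4}+\sum_b r_b^{n-4}+\sum_e r_e^{n-4}\le C(n,\tau,\delta)\cdot\sum_{j\ge 0}2^{-j}<C(n,\tau,\delta)$. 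The radius bound $r_a,r_b,r_e\le\eta^2$ is arranged simply by throwing away the first $|\log\eta^2|/|\log\delta|$ generations of the Vitali cover — i.e. starting the whole construction at scale $\eta^2$ rather than scale $1$ — which only costs a bounded number ($\le C(n)\eta^{-2(n-4)}$, absorbed into $C(n,\tau,\delta)$) of extra $b$- and $e$-balls at the top scales.

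The main obstacle is bookkeeping the content bounds so that they genuinely close up. The delicate point is that the $d$-ball recovering (Proposition \ref{p:neck_decomp:d_ball}) produces not only small-content $c$-balls but also $b$-balls and $e$-balls whose content is merely \emph{bounded} (by $C(n,\delta)$) rather than small, so one cannot naively say ``everything shrinks by $\epsilon$.'' The resolution is that $b$-balls and $e$-balls are terminal — they are never recovered within this lemma — so their content is a one-time cost paid at each generation, and the number of generations contributing a fixed cost is controlled because each generation of $d$-balls has content shrinking by $\epsilon$; thus $\sum_j(\text{bounded})\cdot\epsilon^{j}$ still converges. One also has to check that an $a$-ball, once produced, is genuinely terminal and its interior singular set $\cC_{0,a}$ can be absorbed into the $\cS(X)$ accounting via the finiteness $H^{n-4}(\cC_{0,a})\le C(n,\tau)$ from Theorem \ref{t:neck_region}, and that the various $\eta$'s demanded by Proposition \ref{p:neck_decomp:c_ball_strong}, Proposition \ref{p:neck_decomp:d_ball}, Theorem \ref{t:n-4_content_conesplitting} and the cone-splitting can all be met by a single final choice $\eta=\eta(n,\rv,\delta,\tau)$ — this works because at every stage the required $\eta$ depends only on $n,\rv,\delta,\tau,\epsilon$ and $\epsilon$ has already been fixed as a function of $n,\rv,\tau$. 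The residual set $\tilde\cS$ is the union of the countably many Hausdorff-null residual sets from all applications of Proposition \ref{p:neck_decomp:d_ball} together with the Hausdorff limit of the $d$-ball centers across generations, which is null by the estimate $\Vol(B_{\delta^i}(\tilde\cS^i))\le(C\epsilon)^i(\delta^i)^4$.
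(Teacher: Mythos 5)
Your proposal is correct and follows essentially the same route as the paper: a Vitali cover at scale $\eta^2$ (which simultaneously rescales the Ricci bound to $|\Ric|\le\eta$ as required by the covering propositions), followed by alternating applications of Proposition \ref{p:neck_decomp:c_ball_strong} and Proposition \ref{p:neck_decomp:d_ball}, with convergence of all content sums coming from the factor $C(n,\tau,\rv)\epsilon<1$ in the content of the newly created $c$-balls at each generation. The outer induction on volume levels that you sketch is, as you yourself note, superfluous for this lemma — it belongs to Proposition \ref{p:neck_decomp:inductive_cover} and the proof of Theorem \ref{t:neck_decomposition}, where the $e$-balls are finally recovered.
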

\vspace{.5cm}
\begin{proof}

Let us pick $\eta$ as in Propositions \ref{p:neck_decomp:c_ball} and \ref{p:neck_decomp:d_ball}.  Note first that we can cover $B_1(p)$ by the Vitali covering
\begin{align}
B_1(p)\subseteq \bigcup_f B_{\eta^2}(x_f)\, ,	
\end{align}
where $\{B_{\eta^2/10}(x_f)\}$ are disjoint.  Then by a standard covering argument there are at most $C(n,\tau, \delta)$ balls in this set.  Therefore if we focus our covering on each of these balls individually, we may then take the union and not effect the content bound by more than another factor.\\

Thus, by rescaling one of these balls to scale one and focusing on it, we can assume without loss of generality that $|\Ric|\leq \eta$ on $B_1(p)$.  Now let us recall that for every ball $B_r(x)$ we can define the $\eta$-pinched points $\cE_\eta(x,r)$ by
\begin{align}
		&\cE_\eta(x,r)\equiv \{y\in B_{4r}(x): \cV_{\eta r}(y)<\overline V + \eta\}\, .
\end{align}
In our decomposition we want to first distinguish between whether $B_1(p)$ is a $b$-ball, $c$-ball, $d$-ball, or $e$-ball.  In the cases where it is either a $b$-ball or $e$-ball we are of course done, therefore we can assume $B_1(p)$ is either a $c$-ball or $d$-ball.  The handling of the two cases is almost verbatim, therefore we will assume $B_1(p)$ is a $c$-ball.  In this case, we can apply Proposition \ref{p:neck_decomp:c_ball} in order to build the covering
\begin{align}
	B_1 \subseteq \cC_0\cup \cN \cup \bigcup_b B_{r_b}(x_b)\cup \bigcup_c B_{r_c}(x_c)\cup \bigcup_d B_{r_d}(x_d) \cup \bigcup_e B_{r_e}(x_e)\, ,
\end{align}
where $\cN\subseteq B_{11}(q)$ is a $(\delta,\tau)$-neck for some $q\in B_4(p)$ and we have the content estimates
\begin{align}
&\sum_b r_b^{n-4} + \sum_d r_d^{n-4} + \sum_e r_e^{n-4} \leq C(n,\tau)\, ,\notag\\
&\sum_c r_c^{n-4} \leq C(n,\tau)\epsilon\, .
\end{align}
What we have gained from the above is that the remaining $c$-balls have small $n-4$-content.  We will need to remove both the $c$-balls and $d$-balls in this decomposition before the proof is complete.  Let us first deal with the $d$-balls.  Indeed, if we apply Proposition \ref{p:neck_decomp:d_ball} to each $d$-ball $B_{r_d}(x_d)$ then we obtain the covering
\begin{align}
	B_1(p) \subseteq \tilde\cS\cup\cC_0\cup \cN \cup \bigcup_b B_{r_b}(x_b)\cup \bigcup_c B_{r_c}(x_c) \cup \bigcup_e B_{r_e}(x_e)\, ,
\end{align}
where $\tilde\cS = \bigcup_d \tilde\cS_d$ is a countable union of $n-4$-measure zero sets, and thus itself has $n-4$ measure zero.  We also now have the estimates
\begin{align}
    &\sum_c r_c^{n-4}\leq C(n,\tau)\cdot \epsilon\, ,\notag\\
	&\sum_b r_b^{n-4} + \sum_e r_e^{n-4} \leq C(n,\tau)\, .
\end{align}
Let us remark that our only original assumption to construct this covering was that $B_1(p)$ was a $c$-ball.  Therefore, we can repeat this construction on each $c$-ball $B_{r_c}(x_c)$ in order to build a covering
\begin{align}
	B_1(p) \subseteq \tilde\cS\cup\bigcup_a \big(\cC_{0,a}\cup\cN_a\cap B_{r_a}\big) \cup \bigcup_b B_{r_b}(x_b)\cup \bigcup_c B_{r_c}(x_c) \cup \bigcup_e B_{r_e}(x_e)\, ,
\end{align}
with the estimates
\begin{align}
    &\sum_a r_a^{n-4}\leq 1+C(n,\tau)\epsilon\, ,\notag\\
    &\sum_c r_c^{n-4}\leq \Big(C(n,\tau)\cdot \epsilon\Big)^2\, ,\notag\\
	&\sum_b r_b^{n-4} + \sum_e r_e^{n-4} \leq C(n,\tau)\Big(1+C(n,\tau)\epsilon\Big)\, .
\end{align}
If we continue to recover the $c$-balls, then after $i$ iterations we have the covering
\begin{align}
	B_1(p) \subseteq \tilde\cS\cup\bigcup_a \big(\cC_{0,a}\cup\cN_a\cap B_{r_a}\big) \cup \bigcup_b B_{r_b}(x_b)\cup \bigcup_c B_{r_c}(x_c) \cup \bigcup_e B_{r_e}(x_e)\, ,
\end{align}
with the estimates
\begin{align}
    &\sum_a r_a^{n-4}\leq \sum_{j=0}^i \Big(C(n,\tau)\epsilon\Big)^j\, ,\notag\\
    &\sum_c r_c^{n-4}\leq \Big(C(n,\tau)\cdot \epsilon\Big)^i\, ,\notag\\
	&\sum_b r_b^{n-4} + \sum_e r_e^{n-4} \leq C(n,\tau)\sum_{j=0}^i \Big(C(n,\tau)\epsilon\Big)^j\, .
\end{align}
Now we may consider the discrete sets $\tilde\cS_c^i\bigcup_c \{x^i_c\}$ and their Hausdorff limit $\tilde\cS_c=\lim \tilde\cS^i_c$.  Arguing as in Proposition \ref{p:neck_decomp:d_ball} we see for $\epsilon\leq \epsilon(n,\tau)$ that $H^{n-4}(\tilde\cS_c)=0$.  Combining this with our previous $\tilde\cS$ set we then arrive at the covering
\begin{align}
	B_1(p) \subseteq \tilde\cS\cup\bigcup_a (\cC_{0,a}\cup\cN_a\cap B_{r_a}\big) \cup \bigcup_b B_{r_b}(x_b)\cup \bigcup_e B_{r_e}(x_e)\, ,
\end{align}
which for $\epsilon\leq \epsilon(n,\tau)$ satisfies
\begin{align}
    &H^{n-4}(\tilde\cS)=0\, ,\notag\\
    &\sum_a r_a^{n-4}\leq 2\, ,\notag\\
	&\sum_b r_b^{n-4} + \sum_e r_e^{n-4} \leq C(n,\tau)\, ,
\end{align}
which completes the proof of the Lemma.
\end{proof}
\vspace{.5 cm}

Let us now apply the above Lemma in order to prove the following corollary, which is the main result of this section:\\

\begin{proposition}[Inductive Covering]\label{p:neck_decomp:inductive_cover}
Let $(M^n_i,g_i,p_i)\to (X,d,p)$ satisfy $|\Ric|\leq n-1$ with $V\equiv \inf_{y\in B_4(p)}\cV_1 >\rv>0$.  Let us fix $0<\delta$ and $0<\tau\leq \tau(n)$, then there exists $v_0(n,\rv,\delta,\tau)>0$ such that we can write $$B_1 \subseteq \tilde\cS\cup\bigcup_a \big(\cC_{0,a}\cup\cN_a\cap B_{r_a}\big) \cup \bigcup_b B_{r_b}(x_b)\cup \bigcup_v B_{r_v}(x_v)\, ,$$ where
\begin{enumerate}
    \item[(a)] $\cN_a\subseteq B_{2r_a}(x_a)$ is a $(\delta,\tau)$-neck region with associated singular set $\cC_{0,a}$,
	\item[(b)] For each $b$-ball $B_{r_b}(x_b)$ we have $r_h(x_b)>2r_b$,
	\item[(d)] For each $v$-ball $B_{r_v}(x_v)$, if $V_v\equiv \inf_{y\in B_{4r_v}(x_v)} \cV_{r_v}$ then $V_v \geq V+v_0$.
	\item[(s)] $\tilde\cS\subseteq \cS(X)$ with $H^{n-4}(\tilde\cS)=0$.
\end{enumerate}
Further, we have the content bound $\sum_a r_a^{n-4}+\sum_b r_b^{n-4}+\sum_v r_v^{n-4}<C(n,\tau,\delta)$.
\end{proposition}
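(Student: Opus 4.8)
The plan is to prove Proposition \ref{p:neck_decomp:inductive_cover} by iterating Lemma \ref{l:neck_decomp:inductive_cover} and inducting on the volume lower bound. The crucial observation is that the volume ratio $\cV_1$ is bounded above by a dimensional constant $\omega_n$ by Bishop--Gromov, so if Lemma \ref{l:neck_decomp:inductive_cover} produces $e$-balls whose volume ratio has jumped up by a definite amount $\eta$, then only finitely many iterations are possible before the volume ratio would exceed $\omega_n$. More precisely, the plan is as follows. First I would apply Lemma \ref{l:neck_decomp:inductive_cover} to $B_1(p)$ with the fixed constants $\delta,\tau$; this produces the sets $\tilde\cS$, the neck balls $B_{r_a}(x_a)$, the regularity balls $B_{r_b}(x_b)$, and the $e$-balls $B_{r_e}(x_e)$ satisfying $\cE_\eta(x_e,r_e)=\emptyset$, together with the content bound $\sum r_a^{n-4}+\sum r_b^{n-4}+\sum r_e^{n-4}<C(n,\tau,\delta)$ and radius bound $r_a,r_b,r_e\le \eta^2$.

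The second step is to translate ``$\cE_\eta(x_e,r_e)=\emptyset$'' into a genuine volume jump. By definition of $\cE_\eta$ and of $\overline V = \inf_{B_1(p)}\cV_{\eta^{-1}}$, the condition $\cE_\eta(x_e,r_e)=\emptyset$ means that for every $y\in B_{r_e}(x_e)$ we have $\cV_{\eta r_e}(y)\ge \overline V+\eta$. Since $\cV_{\eta^{-1}}(y)\ge \overline V$ for all $y\in B_1(p)$ and hence $\cV_1(y)\ge \overline V$ as well (volume monotonicity gives $\cV_{r}(y)\ge \cV_{\eta^{-1}}(y)$ for $r\le\eta^{-1}$, applied at scale $1$ — here I would be careful with the $\eta^{-1}$-scale bookkeeping and simply note $V=\inf_{B_1}\cV_1\ge \inf_{B_1}\cV_{\eta^{-1}}=\overline V$ needs the reverse, so one should instead set things up with $\overline V \le V$ and observe that by monotonicity $\cV_{\eta r_e}(y)\ge \cV_{r_e}(y)\ge \cV_1(x_e)\ge V$, giving on the $e$-ball the improved bound $\inf_{B_{r_e}(x_e)}\cV_{r_e}\ge \overline V+\eta \ge V+\eta - (V-\overline V)$). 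To make this clean I would choose $\eta$ small enough relative to the desired $v_0$, set $v_0 \equiv \tfrac12\eta$, and argue that after passing to the rescaled $e$-ball $r_e^{-1}B_{r_e}(x_e)$, the new base volume $V_e\equiv \inf_{y\in B_{r_e}(x_e)}\cV_{r_e}(y)$ satisfies $V_e\ge V+v_0$. This is exactly conclusion (d) with the $v$-balls being the rescaled $e$-balls.

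The third step is the iteration. For each $v$-ball with $V_v\ge V+v_0$, if $V_v < \omega_n$ we may reapply Lemma \ref{l:neck_decomp:inductive_cover} (after rescaling so the ball has radius $1$) to decompose it further into neck balls, regularity balls, and new $v$-balls whose volume has jumped by another $v_0$. Since $\cV_1\le \omega_n$ always, after at most $N\equiv \lceil (\omega_n-\rv)/v_0\rceil = N(n,\rv,\delta,\tau)$ steps there are no $v$-balls left — the process terminates. At each stage the content of the new balls produced from one ball is at most $C(n,\tau,\delta)$ times the $(n-4)$-content of that ball (this is the content bound in Lemma \ref{l:neck_decomp:inductive_cover}, which is scale invariant), so after $N$ iterations the total content is bounded by $\sum_{j=0}^{N} C(n,\tau,\delta)^{j+1}\le C(n,\tau,\delta)^{N+2}=C(n,\rv,\tau,\delta)$; likewise the union of the various $\tilde\cS$-pieces over the (countably many) balls in the decomposition remains a set of $H^{n-4}$-measure zero, being a countable union of such sets. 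I would also need to check that the neck regions $\cN_a$ and regularity balls $B_{r_b}(x_b)$ produced at intermediate scales, when carried back through the rescalings, still satisfy their defining properties — but $(\delta,\tau)$-neck regions and the harmonic radius condition $r_h(x_b)>2r_b$ are both scale invariant, so this is automatic. Finally I would set $v_0(n,\rv,\delta,\tau)$ to be the value chosen in step two, possibly shrinking it so that the $\eta=\eta(n,\rv,\delta,\tau)$ provided by Lemma \ref{l:neck_decomp:inductive_cover} satisfies $v_0\le\tfrac12\eta$, which closes the circle.

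\textbf{Main obstacle.} The genuine technical content is entirely in Lemma \ref{l:neck_decomp:inductive_cover}; given that lemma, the proposition is a bookkeeping argument. The one place requiring real care is the volume-monotonicity bookkeeping in step two: one must track the relation between the reference value $\overline V$ appearing in the definition of the $e$-balls (an infimum of $\cV_{\eta^{-1}}$) and the quantity $V=\inf_{B_1}\cV_1$ in the statement, across the $\eta^{-1}$-scale shift built into the notation of Section \ref{ss:neck_decomp:notation}, and verify that the jump survives rescaling. A secondary subtlety is ensuring the number of iterations $N$ depends only on $n,\rv,\delta,\tau$ and not on the manifold — this is where the uniform upper bound $\cV_1\le\omega_n$ from Bishop--Gromov, combined with the uniform jump $v_0$, does the work.
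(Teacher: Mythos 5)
Your overall strategy --- apply Lemma \ref{l:neck_decomp:inductive_cover}, convert the $e$-balls into $v$-balls by establishing a definite volume jump relative to $V$, and control the content --- is the right one, but your step two has a genuine gap, and it is exactly the point the paper's proof is built to handle. The reference value in the Lemma is $\overline V=\inf_{B_1}\cV_{\eta^{-1}}$, taken at the \emph{large} scale $\eta^{-1}\geq 1$, so Bishop--Gromov gives only $\overline V\leq V$ with an uncontrolled gap: the $e$-ball condition $\cV\geq\overline V+\eta$ can be strictly weaker than $\cV\geq V$ and need not produce any jump above $V$. You notice this (your ``$V+\eta-(V-\overline V)$''), but the proposed fix --- ``choose $\eta$ small enough relative to the desired $v_0$'' --- does not address it, because $V-\overline V$ is not an $\eta$-quantity; if anything, shrinking $\eta$ enlarges the reference scale $\eta^{-1}$ and can only decrease $\overline V$. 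The paper's fix is the ``one additional covering'': first cover $B_1(p)$ by at most $C(n,\tau,\delta)$ balls $B_{\eta^2}(x_f)$ and apply the Lemma to each of these rescaled to unit size. The reference value for each such application is then $\inf_{B_{\eta^2}(x_f)}\cV_{\eta}$, taken at the absolute scale $\eta\leq 1$, where monotonicity now runs in the favorable direction: $\cV_\eta(y)\geq\cV_1(y)\geq\inf_{B_{1+\eta^2}(p)}\cV_1\geq(1-c(n)\eta^2)V\geq V-\tfrac12\eta$, using the hyperbolic volume ratio $\Vol(B^{-1}_1)/\Vol(B^{-1}_{1+\eta^2})$. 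The $e$-ball condition then yields the jump $V+\tfrac12\eta\equiv V+v_0$, and since the $e$-balls produced this way have radius $r_e\leq\eta^4\leq\eta^2$, monotonicity transfers the jump down to the ball's own scale $r_e$, giving conclusion (d). Without this intermediate covering your chain of inequalities cannot close.

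A secondary remark: your step three, which iterates on the $v$-balls until none remain (using $\cV\leq 1$ and the uniform jump $v_0$ to terminate after at most $v_0^{-1}$ rounds), is not part of this Proposition --- its conclusion explicitly permits $v$-balls to survive, each carrying the jump $V_v\geq V+v_0$. That iteration is precisely the proof of Theorem \ref{t:neck_decomposition} in the following subsection, and your version of it matches the paper's. So the Proposition itself reduces to the single covering step, and the missing ingredient is the $\eta^2$-scale pre-covering described above.
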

\begin{proof}
This Proposition is very similar to the previous Lemma, we essentially need only one additional covering in order to deal with the fact that $V$ here is coming from the volume pinching at scale one, instead of scale $\eta^{-1}$.

Now more precisely, let us begin by picking $\eta$ as in Propositions \ref{p:neck_decomp:c_ball} and \ref{p:neck_decomp:d_ball} and let us cover $B_1(p)$ by the Vitali covering
\begin{align}
B_1(p)\subseteq \bigcup_f B_{\eta^2}(x_f)\, ,	
\end{align}
where $\{B_{\eta^2/10}(x_f)\}$ are disjoint.  By a standard covering argument we have that there are at most $C(n,\tau,\delta)$ balls in this collection.  For each ball $B_{\eta^2}(x_f)$ let us apply Lemma \ref{l:neck_decomp:inductive_cover} to find the covering given by
\begin{align}
	B_1(p)\subseteq \bigcup B_{\eta^2}(x_f) \subseteq \tilde\cS\cup\bigcup_a \big(\cC_{0,a}\cup\cN_a\cap B_{r_a}\big) \cup \bigcup_b B_{r_b}(x_b)\cup \bigcup_e B_{r_e}(x_e)\, ,
\end{align}
	such that we have the content bounds $\sum_a r_a^{n-4}+\sum_b r_b^{n-4}+\sum_e r_e^{n-4}<C(n,\tau,\delta)$ and such that $r_e\leq \eta^4$.  We will see that these $e$-balls will be the $v$-balls of the Proposition with $v_0$ defined appropriately.  To see this, let us be careful and note that the $e$-balls in this covering are with respect to one of the balls $B_{\eta^2}(x_f)$.  Therefore for each $e$-ball we have the estimate
\begin{align}
\inf_{B_{4r_e}(x_e)}\cV_{\eta r_e}(y)	\geq \inf_{B_{4\eta^2}(x_f)}\cV_{\eta}(y)+\eta\geq \inf_{B_{(1+4\eta^2)}(p)}\cV_{1}(y)+\eta.
\end{align}

In order to finish the proof let us notice the simple estimate
\begin{align}
		\inf_{B_{(1+4\eta^2)}(p)}\cV_{1}(y)&\geq \frac{\Vol(B^{-1}_{1})}{\Vol(B^{-1}_{(1+4\eta^2)})}\inf_{B_{1}(p)}\cV_{1}(y)>(1-c(n)\eta^2)V>V-\frac{1}{2}\eta\, ,
\end{align}
	where $\Vol(B^{-1}_{r})$ is the volume of the ball of radius $r$ in hyperbolic space, and the last inequality holds for $\eta<\eta(n)$.  Combining this with the previous inequality we arrive at the estimate
\begin{align}
	\inf_{B_{4r_e}(x_e)}\cV_{\eta r_e}(y)	\geq \inf_{B_{(1+4\eta^2)}(p)}\cV_{1}(y)+\eta \geq V +\frac{1}{2}\eta \equiv V+v_0\, .
\end{align}
	Choosing a Vitali covering $\{B_{r_v}(x_v),x_v\in B_{r_e}(x_e)\}$ with $r_v=\eta r_e$ of $B_{r_e}(x_e)$, we thus finish the proof of the Proposition.
\end{proof}
\vspace{.5cm}

\subsection{Proof of Neck Decomposition of Theorem \ref{t:neck_decomposition}}

In this subsection we finish the proof of the Neck Decomposition Theorem.  The proof will conclude by recursively applying the inductive covering of Proposition \ref{p:neck_decomp:inductive_cover} a finite number of times until we have our desired covering.\\

More precisely, let $M$ satisfy the assumptions of the Theorem, and let $V\equiv \inf_{B_4(p)}\cV_1(y)>0$.  Let us then fix $v_0(n,V,\delta,\tau)$ as in Proposition \ref{p:neck_decomp:inductive_cover}.  Then if we apply Proposition \ref{p:neck_decomp:inductive_cover} we arrive at the covering
\begin{align}
	B_1 \subseteq \tilde\cS^1\cup\bigcup_a \big((\cC^1_{0,a}\cup\cN^1_a)\cap B_{r^1_a}\big) \cup \bigcup_b B_{r^1_b}(x^1_b)\cup \bigcup_v B_{r^1_v}(x^1_v)\, ,
\end{align}
such that we have the content estimates
\begin{align}
	\sum_a (r^1_a)^{n-4}+\sum_b (r^1_b)^{n-4}+\sum_v (r^1_v)^{n-4}<C(n,V,\tau,\delta)\, .
\end{align}
and such that for each $v$-ball $B_{r^1_v}(x^1_v)$ we have
\begin{align}
		\inf_{B_{4r^1_v}(x^1_v)}\cV_{r^1_v}(y) > V+v_0\, .
\end{align}

Now let us observe that we may apply Proposition \ref{p:neck_decomp:inductive_cover} to each $v$-ball $B_{r^1_v}(x^1_v)$ itself.  In this case, we obtain a new covering of $B_1(p)$ given by
\begin{align}
	B_1 &\subseteq \tilde\cS^1\cup\bigcup_a \big((\cC^1_{0,a}\cup\cN^1_a)\cap B_{r^1_a}\big)\cup \bigcup_b B_{r^1_b}(x^1_b)\cup \tilde\cS^2\cup\bigcup_a \big((\cC^2_{0,a}\cup\cN^2_a)\cap B_{r^2_a}\big) \cup \bigcup_b B_{r^2_b}(x^2_b)\cup\bigcup_v B_{r^2_v}(x^2_v)\, ,\notag\\
	&=\bigcup_{j\leq 2}\tilde\cS^j\cup \bigcup_{j\leq 2}\bigcup_a \big((\cC^j_{0,a}\cup\cN^j_a)\cap B_{r^j_a}\big)\cup \bigcup_{j\leq 2}\bigcup_b B_{r^j_b}(x^j_b) \cup \bigcup_v B_{r^2_v}(x^2_v)\, ,
\end{align}
such that we have the content estimates
\begin{align}
	\sum_a (r^2_a)^{n-4}+\sum_b (r^2_b)^{n-4}+\sum_v (r^2_v)^{n-4}<C(n,V,\tau,\delta)\sum_v (r^1_v)^{n-4}\leq C(n,V,\tau,\delta)^2\, ,
\end{align}
and such that for each $v$-ball $B_{r^2_v}(x^2_v)$ we have
\begin{align}
		\inf_{B_{4r^2_v}(x^2_v)}\cV_{r^2_v}(y) > V+2v_0\, .
\end{align}
By continuing this scheme and applying Proposition \ref{p:neck_decomp:inductive_cover} to the $v$-balls $i$ times we arrive at a covering
\begin{align}
	B_1 &\subseteq \bigcup_{j\leq i}\tilde\cS^j\cup \bigcup_{j\leq i}\bigcup_a \big((\cC^j_{0,a}\cup\cN^j_a)\cap B_{r^j_a}\big)\bigcup_{j\leq i}\bigcup_a \big(\cN^j_a\cap B_{r^j_a}\big) \cup \bigcup_{j\leq i}\bigcup_b B_{r^j_b}(x^j_b) \cup \bigcup_v B_{r^i_v}(x^i_v)\, ,
\end{align}
such that we have the content estimates
\begin{align}
	\sum_{j\leq i}\sum_a (r^j_a)^{n-4}+\sum_{j\leq i}\sum_b (r^j_b)^{n-4}+\sum_v (r^i_v)^{n-4}\leq C(n,V,\tau,\delta)^i\, ,
\end{align}
and such that for each $v$-ball $B_{r^i_v}(x^i_v)$ we have
\begin{align}
		\inf_{B_{4r^i_v}(x^i_v)}\cV_{r^i_v}(y) > V+iv_0\, .
\end{align}

In order to finish the proof let us observe that if $i>v_0^{-1}$ then there must be no $v$-balls, as in this case $\inf_{B_{4r^i_v}(x^i_v)}\cV_{r^i_v}(y) > V+iv_0>1$, which is not possible.  Thus for $i>v_0^{-1}$ we have built our desired covering and finished the proof of Theorem \ref{t:neck_decomposition}.

\vspace{1cm}

\section{Proof of $L^2$-Curvature Estimate}

In this short section we combine the neck region estimates of Theorem \ref{t:neck_region} with the neck decomposition of Theorem \ref{t:neck_decomposition} in order to finish the proof of Theorem \ref{t:main_L2_estimate}.  Indeed, for $\delta>0$ let us use Theorem \ref{t:neck_decomposition} and consider the covering
\begin{align}
	B_1(p)\subseteq \bigcup_a \big(\cN_a\cap B_{r_a}\big) \cup \bigcup_b B_{r_b}(x_b)\, ,
\end{align}
where $\cN_a\subseteq B_{2r_a}(x_a)$ is a $(\delta,\tau)$-neck, $r_h(x_b)\geq 2r_b$, and
\begin{align}\label{e:L2proof:content}
	\sum_a r_a^{n-4} + \sum_b r_b^{n-4}< C(n,\rv,\delta)\, .
\end{align}

Now since the Ricci curvature is uniformly bounded, as in the remark after Definition \ref{d:harmonic_radius} we have scale invariant $W^{2,2}$-estimates on the metric on $B_{r_b}(x_b)$.  In particular, we have the estimate
\begin{align}
r^{4-n}_b \int_{B_{r_b}(x_b)} |\Rm|^2(z)\,dz < C(n)\, .	
\end{align}

On the other hand, if we fix a $\delta=\delta(n,\rv)$, then by Theorem \ref{t:neck_region} we have the scale invariant $L^2$ estimate on each neck region $\cN_a$ given by:
\begin{align}
r^{4-n}_a \int_{\cN_a\cap B_{r_a}} |\Rm|^2(z)\,dz < C(n)\, .	
\end{align}

Combining these estimates with the content estimate of \eqref{e:L2proof:content} we get the estimate
\begin{align}
\int_{B_1(p)} |\Rm|^2(z)\,dz &\leq \sum_a \int_{\cN_a\cap B_{r_a}} |\Rm|^2(z)\,dz + \sum_b \int_{B_{r_b}(x_b)}	|\Rm|^2(z)\,dz\, \notag\\
&\leq C(n)\Big( \sum_a r_a^{n-4} + \sum_b r_b^{n-4}\Big) \leq C(n,\rv)\, ,
\end{align}
which completes the proof of Theorem \ref{t:main_L2_estimate}. $\square$ \\

\section{Proof of Theorem \ref{t:main_L4/3_estimate}}

The proof relies on an improved Kato inequality.\\

\textbf{Claim:} there exist $\eta(n)>0$ and $C(n)>0$ such that 
\begin{align}
|\nabla |\Rm||^2\le \big(1-\eta(n)\big)|\nabla\Rm|^2+C(n)|\nabla \Ric|^2\,.
\end{align}
Let first us assume the claim and prove the theorem. By direct computation and the Kato inequality,  we have
\begin{align}\label{e:Delta_Rm}
\Delta \sqrt{|\Rm|^2+1}&=\frac{\Delta |\Rm|^2}{2\sqrt{|\Rm|^2+1}}-\frac{|\nabla|\Rm|^2|^2}{4(|\Rm|^2+1)^{3/2}}\\
&\ge \frac{\eta(n)|\nabla \Rm|^2}{\sqrt{|\Rm|^2+1}}-\frac{C(n)|\nabla \Ric|^2}{\sqrt{|\Rm|^2+1}}-C(n)|\Rm|^2+\frac{\nabla^2\Ric\ast \Rm}{\sqrt{|\Rm|^2+1}}\\
&\ge  \frac{\eta(n)|\nabla \Rm|^2}{\sqrt{|\Rm|^2+1}}-{C(n)|\nabla \Ric|^2}-C(n)|\Rm|^2+\frac{\nabla^2\Ric\ast \Rm}{\sqrt{|\Rm|^2+1}}\, .
\end{align}
where one can find an explicit formula of the quadratic term $\nabla^2 \Ric\ast \Rm$ in Proposition 2.4.1 of \cite{Topping}.
Choose a cutoff function $\phi$ as in \cite{ChC1} such that $\phi\equiv 1$ on $B_1(p)$ and $\phi\equiv 0$ outside $B_2(p)$ with $|\nabla \phi|+|\Delta\phi|\le C(n)$. Multiplying $\phi$ to (\ref{e:Delta_Rm}) and integrating by parts, we have
\begin{align}
\int_{B_1(p)}\frac{|\nabla \Rm|^2}{\sqrt{|\Rm|^2+1}}(z)\,dz\le C(n)\int_{B_2(p)}\left(|\nabla \Ric|^2+\sqrt{|\Rm|^2+1}+|\Rm|^2\right)(z)\,dz\, ,
\end{align}
where we use the integrating by parts and the Cauchy inequality to handle the term $\frac{\nabla^2\Ric\ast \Rm}{\sqrt{|\Rm|^2+1}}$.
On the other hand, by Cauchy inequality, we have
\begin{align}
\int_{B_1(p)}|\nabla \Rm|^{4/3}(z)\,dz\le \left(\int_{B_1(p)}\frac{|\nabla \Rm|^2}{\sqrt{|\Rm|^2+1}}(z)\,dz\right)^{2/3}\left(\int_{B_1(p)}{(|\Rm|^2+1)}(z)\,dz\right)^{1/3}\,.
\end{align}
By the $L^2$ curvature estimate of Theorem \ref{t:main_L2_estimate}, we have proved Theorem \ref{t:main_L4/3_estimate} by assuming Claim.

Now we wish to prove the claim, which is essentially just a repeated application of the second Bianchi identity.  For any fixed $y\in M$, choose a normal coordinate such that $|\nabla |\Rm||(y)=\partial_1 |\Rm|(y)$. Then at $y$, we have
\begin{align}\label{e:nablaRm}
|\nabla |\Rm|^2|=|\partial_1|\Rm|^2|=2|\langle\nabla_1\Rm,\Rm\rangle|\le 2\left(\sum_{ijkl}R_{ijkl,1}^2\right)^{1/2}\left(\sum_{ijkl}R_{ijkl}^2\right)^{1/2}\,.
\end{align}
To prove the claim, it suffices to show $(1+\sigma(n))\sum_{ijkl}R_{ijkl,1}^2\le \sum_{ijklp}R_{ijkl,p}^2+C(n)\sum_{ijp}{R}_{ij,p}^2$ for some dimensional constant $\sigma(n)$ and $C(n)$. In fact, it suffices to show
$\sum_{ijkl}R_{ijkl,1}^2\le C(n)\left(\sum_{ijkl}\sum_{p\ge 2}R_{ijkl,p}^2+\sum_{ijp}{R}_{ij,p}^2 \right):=C(n)\Xi$. Hence we only need to prove $R_{\alpha\beta\gamma\delta,1}^2\le C(n)\Xi$ for any fixed $\alpha,\beta,\gamma,\delta=1,\cdots,n$.

\noindent Case 1: For $\alpha,\beta,\gamma,\delta\ge 2$, then by second Bianchi identity and Cauchy inequality, we have
\begin{align}
R_{\alpha\beta\gamma\delta,1}^2=\left(R_{\alpha\beta\delta 1,\gamma}+R_{\alpha\beta 1\gamma,\delta}\right)^2\le 2\left(R_{\alpha\beta\delta 1,\gamma}^2+R_{\alpha\beta 1\gamma,\delta}^2\right)\le 2\Xi\, .
\end{align}
Case 2: For $\alpha=1$ and $\beta,\gamma,\delta\ge 2$, then by second Bianchi identity and Cauchy inequality, we have
\begin{align}
R_{1\beta\gamma\delta,1}^2=\left((R_{1\beta\delta 1,\gamma}+R_{1\beta 1\gamma,\delta}\right)^2\le 2\left(R_{1\beta\delta 1,\gamma}^2+R_{1\beta 1\gamma,\delta}^2\right)\le 2\Xi\, .
\end{align}
Case 3: For $\alpha=\delta=1$ and $\beta,\gamma\ge 2$, then by Cauchy inequality, we have
\begin{align}
R_{1\beta\gamma 1,1}^2=\left({R}_{\beta\gamma,1}-\sum_{\alpha\ge 2}R_{\alpha\beta\gamma\alpha,1}\right)^2\le n\left({R}_{\beta\gamma,1}^2+\sum_{\alpha\ge 2}R_{\alpha\beta\gamma\alpha,1}^2\right)\le n\left(\Xi+2\sum_{\alpha\ge 2}\Xi\right)\le n(2n-1)\Xi\, ,
\end{align}
where we have used the estimates in Case 1 to the second inequality.

Thus, by the symmetric relations of curvature tensor, we have proved $R_{\alpha\beta\gamma\delta,1}^2\le C(n)\Xi$. Hence, we prove $\sum_{ijkl}R_{ijkl,1}^2\le C(n)\left(\sum_{ijkl}\sum_{p\ge 2}R_{ijkl,p}^2+\sum_{ijp}{R}_{ij,p}^2 \right)$.  This finishes the proof of the claim and then we complete the proof of Theorem \ref{t:main_L4/3_estimate}.
\vspace{1cm}

\section{Proof of Theorem \ref{t:main_limits}}

We now finish the proof of Theorem \ref{t:main_limits}.  To see that the singular set is $n-4$ rectifiable we use the singular neck decomposition of Theorem \ref{t:neck_decomposition} in order to write
\begin{align}
	&B_1(p)\cap \cS(X) \subseteq \bigcup_a \big(\cC_{0,a}\cap B_{r_a}\big)\cup \tilde S_\delta(X)\, ,
\end{align}
where $\cC_{0,a}\subseteq B_{2r_a}$ is the singular set of a $(\delta,\tau)$-neck region $\cN_a\subseteq B_{2r_a}$ and $\tilde \cS_\delta$ has $n-4$ measure zero.  Since by Theorem \ref{t:neck_region} we have that each $\cC_{0,a}$ is rectifiable, we therefore have that $\cS(X)$ is rectifiable as well, as claimed.

In order to prove $H^{n-4}(\cS(X))<C(n,\rv)$, let us observe by Theorem \ref{t:neck_region} that for $\tau<\tau(n)$ and $\delta<\delta(n,\rv)$ we have $H^{n-4}(\cC_{0,a}\cap B_{r_a})<C(n,\rv) r_a^{n-4}$, and by Theorem \ref{t:neck_decomposition} we have that $\sum r_a^{n-4}\leq C(n,\rv,\delta,\tau)\leq C(n,\rv)$.  Combining these we get the estimate
\begin{align}
	H^{n-4}(\cS(X)\cap B_1)\leq \sum_a H^{n-4}(\cC_{0,a}\cap B_{r_a})\leq C(n,\rv)\,\sum_a r_a^{n-4}\leq C(n,\rv)\, ,
\end{align}
which proves the desired finiteness estimate. \\

Finally, let us show that $n-4$ a.e. tangent cone is unique and isometric to $\dR^{n-4}\times C(S^3/\Gamma)$.  Indeed, by the neck decomposition we have for each $\delta<\delta(n,\rv)$ that $x\in \cS(X)\setminus \tilde \cS_\delta$ lives in some $\cC_{0,a}\cap B_{r_a}$, where $\cC_{0,a}$ is the singular set of a $(\delta,\tau)$-neck region $\cN_a\subseteq B_{2r_a}$.  From this we immediately get for any such $x\in \cS(X)\setminus \tilde \cS_\delta$ that {\it every} tangent cone is $\delta$-GH close to $\dR^{n-4}\times C(S^3/\Gamma)$.  Now $H^{n-4}(\tilde \cS_\delta)=0$, so let us define $\tilde \cS\equiv \bigcup_j \tilde\cS_{2^{-j}}$.  Note that $H^{n-4}(\tilde \cS)=0$ and for each $x\in \cS(X)\setminus \tilde \cS$ we must therefore have that every tangent cone is actually isometric to $\dR^{n-4}\times C(S^3/\Gamma)$.  This finishes the proof of Theorem \ref{t:main_limits}.
\vspace{1cm}

\section*{Acknowlegments}
The first author would like to thank his advisor Gang Tian for constant encouragement and for useful conversations during this work. Partial work was done while the first author was visiting the Mathematics Department at Northwestern University and he would like to thank the department for its hospitality and for providing a good academic environment. The first author was partially supported by  NSFC (No. 11701507, No. 12071425) and ARC DECRA190101471. The second author would like to thank NSF for its support under grant DMS-1406259.

\bibliographystyle{plain}

\end{document}